\newtheorem{thm}{Theorem}[section]
\newtheorem{cor}[thm]{Corollary}
\newtheorem{lem}[thm]{Lemma}
\theoremstyle{definition}
\newtheorem{defn}[thm]{Definition}
\theoremstyle{remark}
\newtheorem{rem}[thm]{Remark}
\numberwithin{equation}{section}
\newtheorem{fact}[thm]{Fact}
\newtheorem{claim}[thm]{Claim}
\newcommand{\R}{\mathbb{R}}
\newcommand{\N}{\mathbb{N}}
\newcommand{\Z}{\mathbb{Z}}
\newcommand{\dom}{\mathrm{dom}}
\newcommand{\res}{\restriction}
\newcommand{\Gf}{\Gamma}
\newcommand{\pathd}{\rho}
\newcommand{\fzn}{F(2^{\Z^n})}
\newcommand{\fzs}{F(2^{\Z^2})}
\newcommand{\sR}{\mathcal{R}}
\newcommand{\sS}{\mathcal{S}}
\newcommand{\sQ}{\mathcal{Q}}
\newcommand{\sA}{\mathcal{A}}
\newcommand{\sP}{\mathcal{P}}
\newcommand{\sN}{\mathcal{N}}
\newcommand{\tsQ}{\tilde{\sQ}}
\newcommand{\tsP}{\tilde{\sP}}
\newcommand{\tsR}{\tilde{\mathcal{R}}}
\newcommand{\sm}{\setminus}
\newcommand{\sgn}{\text{sgn}}
\newcommand{\tR}{\tilde{R}}
\newcommand{\stR}{\tilde{\mathcal{R}}}
\newcommand{\actson}{\curvearrowright}
\def\Ddots{\mathinner{\mkern1mu\raise\p@
\vbox{\kern7\p@\hbox{.}}\mkern2mu
\raise4\p@\hbox{.}\mkern2mu\raise7\p@\hbox{.}\mkern1mu}}
\newcounter{mycounter}
\begin{document}

\title{Borel Combinatorics of Abelian Group Actions}
\author{Su Gao}
\address{School of Mathematical Sciences and LPMC, Nankai University, Tianjin 300071, P.R. China}
\email{sgao@nankai.edu.cn}
\thanks{The first author acknowledges the partial support of his research by the National Natural Science Foundation of China (NSFC) grants 12250710128 and 12271263. Research of the second author was partially supported by the U.S. NSF grant DMS-1800323.}

\author{Steve Jackson}
\address{Department of Mathematics, University of North Texas, 1155 Union Circle \#311430, Denton, TX 76203, U.S.A.}
\email{jackson@unt.edu}
\thanks{}

\author{Edward Krohne}
\email{krohneew@gmail.com}

\author{Brandon Seward}
\address{Department of Mathematics, University of California San Diego, San Diego, CA 92093, U.S.A.}
\email{b.m.seward@gmail.com}

\subjclass[2020]{Primary 03E15, 05C63; Secondary 05A18, 05C15}

\begin{abstract} We study the free part of the Bernoulli action of $\mathbb{Z}^n$ for $n\geq 2$ and the Borel combinatorics of the associated Schreier graphs. We construct orthogonal decompositions of the spaces into marker sets with various additional properties. In general, for Borel graphs $\Gamma$ admitting weakly orthogonal decompositions, we show that $\chi_B(\Gamma)\leq 2\chi(\Gamma)-1$ under some mild assumptions. As a consequence, we deduce that the Borel chromatic number for $F(2^{\mathbb{Z}^n})$ is $3$ for all $n\geq 2$. Weakly orthogonal decompositions also give rise to Borel unlayered toast structures. We also construct orthogonal decompositions of $F(2^{\mathbb{Z}^2})$ with strong topological regularity, in particular with all atoms homeomorphic to a disk. This allows us to show that there is a Borel perfect matching for $F(2^{\mathbb{Z}^n})$ for all $n\geq 2$ and that there is a Borel lining of $F(2^{\mathbb{Z}^2})$.
\end{abstract}

\date{\today}

\maketitle

\tableofcontents

\section{Introduction}

This paper is a contribution to the fast developing field of Borel combinatorics, which studies combinatorics of definable graphs and other structures on Polish spaces. For an overview of the entire field we refer the reader to the surveys \cite{KM} and \cite{Pikhurko2021}. 

The object of study for Borel combinatorics is a Borel graph $G$ on a Polish space $X$. Two main concepts that have been extensively explored are proper colorings and perfect matchings. A {\em proper coloring} of $G$ is a map $c: V(G)\to \kappa$, where $\kappa$ is a cardinal, such that for any $x, y\in V(G)$ with $(x,y)\in E(G)$, $c(x)\neq c(y)$. A {\em perfect matching} of $G$ is a map $m: V(G)\to V(G)$ such that for all $x\in V(G)$, $(x,m(x))\in E(G)$ and $m^2(x)=x$. Since these concepts are all functions, we may impose definability conditions on them and consider, for instance, continuous or Borel proper colorings (equipping the cardinal $\kappa$ with the discrete topology) and continuous or Borel perfect matchings, etc. 

The Borel graphs considered in this paper are Schreier graphs of marked group actions. A {\em marked group} is a pair $(\Gamma, S)$, where $\Gamma$ is a group and $S$ is a finite generating set of $\Gamma$. Usually we also require $S=S^{-1}$ and $1_\Gamma\not\in S$. When the generating set is standard or otherwise understood, we omit specifying the set $S$ and say that $\Gamma$ is a marked group. The {\em Cayley graph} $G(\Gamma, S)$ of a marked group $(\Gamma, S)$ is defined by
$$ V(G(\Gamma, S))=\Gamma $$
and
$$ E(G(\Gamma, S))=\{(g,h)\in \Gamma^2\,:\, \exists s\in S\ h=sg\}. $$
When there is a Borel action of a marked group $\Gamma$ on a Polish space $X$, the {\em Schreier graph} of the action $\Gamma\actson X$ on $X$, denoted $\Sigma(\Gamma, S)$, is defined by
$$ V(\Sigma(\Gamma, S))=X $$
and
$$ E(\Sigma(\Gamma, S))=\{(x,y)\in X^2\,:\, \exists s\in S\ y=s\cdot x\}. $$
The Schreier graph will be particularly nice when the action $\Gamma\actson X$ is free; in this case the Schreirer graph on each orbit of the action will be a copy of the Cayley graph $G(\Gamma,S)$.

In this paper the particular marked groups that we consider are $\mathbb{Z}^n$ with their standard sets of generators $\{\pm e_1,\dots, \pm e_n\}$ where for $1\leq i\leq n$, the $i$-th coordinate of $e_i$ is $1$ and the other coordinates are $0$. Most of the time we will be considering the {\em Bernoulli shift} action of $\mathbb{Z}^n$ on $2^{\Z^n}=\{0,1\}^{\Z^n}$, where for $g\in \Z^n$ and $x\in 2^{\Z^n}$, 
$$ (g\cdot x)(h)=x(g+h) $$
for all $h\in \Z^n$. In addition, since we are mostly interested in free actions, we consider the {\em free part} of the Bernoulli shift action:
$$ F(2^{\Z^n})=\{x\in 2^{\Z^n}\,:\, \forall 0\neq g\in \Z^n\ g\cdot x\neq x\}. $$

The systematic study of Borel combinatorics started with \cite{KST1999}, in which Kechris, Solecki, and Todorcevic introduced and studied the notion of Borel chromatic numbers. When there exists a Borel proper coloring $c: V(G)\to \kappa$ for a countable cardinal $\kappa$, we call the least such $\kappa$ the {\em Borel chromatic number} for $G$. A fundamental result proved in \cite{KST1999} states that, if in a Borel graph $G$ every vertex has degree $\leq k$ for some finite $k$, then the Borel chromatic number of $G$ is $\leq k+1$. Marks \cite{Marks2016} showed that this bound is optimal using Borel determinacy methods. He constructed marked group actions which give rise to Borel acyclic $k$-regular graphs with their Borel chromatic numbers achieving any number from $2$ to $k+1$. More recently, Conley, Jackson, Marks, Seward, and Tucker-Drob \cite{CJMST2020} improved these examples to hyperfinite Borel graphs, more specifically, Borel graphs which come from marked group actions where the orbit equivalence relation is hyperfinite. 

In the case of $F(2^{\Z^n})$, it is well known by an ergodicity argument that the Borel chromatic number is at least $3$, although the chromatic number of the Cayley graph of $\Z^n$ is $2$. For $F(2^{\Z})$ it is also well known that the Borel chromatic number is exactly $3$. For $F(2^{\Z^n})$ where $n\geq 2$, Gao and Jackson \cite{GJ2015} constructed a continuous proper coloring with $4$ colors. In \cite{GJKSinf} the authors of the present paper proved that the continuous chromatic number of $F(2^{\Z^n})$, where $n\geq 2$, is exactly $4$. We also announced that the Borel chromatic number of $F(2^{\Z^n})$ is $3$. In this paper we present a proof.

\begin{thm}\label{thm:main1} Suppose $n\geq 2$ and $\Z^n\actson X$ is a free Borel action of $\Z^n$ on a Polish space $X$. Then the Borel chromatic number of the Schreier graph on $X$ is $3$.
\end{thm}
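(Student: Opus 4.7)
The plan is to split the claim into upper and lower bounds. The upper bound $\chi_B\le 3$ for any free Borel $\Z^n$-action will be derived from a general principle established earlier in the paper: any Borel graph $\Gf$ that admits a weakly orthogonal Borel decomposition satisfies $\chi_B(\Gf)\le 2\chi(\Gf)-1$. Since the Cayley graph of $\Z^n$ is bipartite, $\chi=2$, and this yields $\chi_B\le 3$ once we exhibit such a decomposition of the Schreier graph on $X$. The matching lower bound $\chi_B\ge 3$ is a standard ergodicity argument, already valid in the paradigmatic case $X=F(2^{\Z^n})$.

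First I would build a Borel partition of $X$ into finite \emph{atoms}, each atom an axis-aligned rectangular patch in an orbit (i.e., a set of the form $\{r\cdot x_0:r\in R\}$ for some box $R\subset\Z^n$ and base point $x_0$). The construction uses a Borel marker lemma in the style of Gao--Jackson: produce an initial Borel marker set that is syndetic and separated at a chosen scale, Voronoi-tile by these markers along each orbit, then rectify the cells to axis-aligned boxes. The real content is the \emph{weakly orthogonal} condition: whenever two atoms are adjacent in the Schreier graph, their common boundary must lie in a single coordinate hyperplane inside the orbit, and one side of this boundary can be chosen Borel-measurably as the ``primary'' atom. I expect this step --- upgrading a rectangular decomposition to a weakly orthogonal one in the Borel category, for an arbitrary free Borel action rather than only the Bernoulli shift --- to be the main obstacle, and the heart of the argument.

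Given the decomposition, the $3$-coloring is assembled in two layers. On the interior of each atom I use the canonical $\Z^n$-parity coloring $c(g)=\sum_i g_i\pmod 2\in\{0,1\}$, which is a proper $2$-coloring because an axis-aligned rectangle induces a bipartite Schreier subgraph. Along each shared boundary hyperplane between adjacent atoms the two parity colorings may disagree; at the conflict vertices on the primary side I overwrite the parity color with the third color~$2$. Because the conflict vertices all lie in a single hyperplane of the orbit, they form an independent set in the Schreier graph, so the rewrite creates no new conflicts on the primary side; the secondary side is untouched, and color~$2$ is distinct from both parity colors. This is exactly the $\chi=2$ instance of the general $2\chi-1$ mechanism.

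For the lower bound in the paradigmatic case $X=F(2^{\Z^n})$, a Borel proper $2$-coloring would supply a Borel selection of one of the two parity classes on each orbit; but this set is carried onto its complement by each generator $e_i$, contradicting the ergodicity of the Bernoulli shift of $\Z^n$. Combined with the upper bound this yields $\chi_B=3$ and completes the proof.
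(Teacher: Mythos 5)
There is a genuine gap, and it sits exactly at the step you flagged as routine: the assembly of the $3$-coloring. Your repair scheme recolors, with the third color, \emph{all} conflict vertices on the primary side of a shared boundary hyperplane, and justifies this by claiming that a subset of a coordinate hyperplane is independent in the Schreier graph. That is false for $n\ge 2$: the hyperplane $\{g\in\Z^n: g_i=c\}$ contains pairs of points differing by a generator $e_j$ with $j\ne i$, so it is full of edges. Concretely, if two adjacent rectangular atoms carry parity colorings that disagree, they disagree along the \emph{entire} interface (both atoms are bipartite and connected across the face), so your conflict set is a whole $(n-1)$-dimensional face; painting it monochromatically with color $2$ creates adjacent vertices of the same color. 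There are further collisions where recolored vertices from different interfaces meet near corners. So the ``two-layer'' coloring does not produce a proper coloring, and it is not the mechanism behind the $2\chi-1$ bound.

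The paper's actual route is different in two essential ways. First, the weakly orthogonal decomposition is not a single Borel partition into rectangles with a coherent choice of ``primary side''; it is a \emph{multi-scale} sequence of Borel boundary sets $X_k$ (coming from the nested orthogonal marker regions $\sR_k$) satisfying a quantitative non-accumulation condition: a component of $X\setminus X_k^\infty$ that comes within distance $Q$ of some $X_n^\infty$ must actually touch it, and each component is close to only boundedly many levels. Second, the $2\chi-1$ bound is proved via the Conley--Miller argument: from the decomposition one extracts (Theorem~\ref{thm:toast}) a Borel set $Y$ such that both $\Gf^{(2)}\res Y$ and $\Gf^{(2)}\res(X\setminus Y)$ have finite components; one properly $2$-colors $Y$ componentwise, deletes one color class to get $Y'$, observes that $X\setminus Y'$ still has finite components, and $2$-colors it with two fresh colors --- total $3$. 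A single-scale rectangular partition cannot substitute for this: the union of all atom boundaries has infinite connected components, so no one-layer construction of $Y$ is available, and producing the multi-scale structure (with orthogonality of faces \emph{across} scales) is the real work of Sections~2--3. Your lower bound via ergodicity for $F(2^{\Z^n})$ is the standard one and matches the paper.
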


A classical theorem of K\"onig in graph theory (which is a special case of Hall's marriage theorem) asserts that if $G$ is a $k$-regular bipartite graph then $G$ has a perfect matching. The Borel version of the statement turns out to be false in general. Marks \cite{Marks2016} constructed, for every $k\geq 2$, a $k$-regular acyclic Borel bipartite graph with no Borel perfect matching. These Borel graphs come from actions of free groups with finitely many generators. In this paper we prove that there is a Borel matching for $F(2^{\Z^n})$ for any $n\geq 2$.

\begin{thm}\label{thm:main2} Suppose $n\geq 2$ and $\Z^n\actson X$ is a free Borel action of $\Z^n$ on a Polish space $X$. Then there is a Borel perfect matching for the Schreier graph on $X$.
\end{thm}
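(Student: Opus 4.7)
The plan is to reduce to the case $n=2$ and then exploit the orthogonal decomposition of $\fzs$ with disk-like atoms promised in the abstract. The reduction is painless: if $\Z^n\actson X$ is free and Borel with $n\ge 2$, then the restricted action of $\Z^2=\langle e_1,e_2\rangle\le\Z^n$ is still free and Borel, and its Schreier graph uses only the generators $\pm e_1,\pm e_2$, so it is a spanning subgraph of the full Schreier graph. Any Borel perfect matching of the restricted graph is automatically a Borel perfect matching of the full graph. Thus it suffices to construct a Borel perfect matching for an arbitrary free Borel $\Z^2$-action.

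For the $n=2$ case, apply the paper's orthogonal decomposition to partition $X$ into bounded Borel atoms, each (after identification with a finite subset of a $\Z^2$-orbit) a rectangle-like region homeomorphic to a disk, with side lengths in some controlled range $[N,2N]$. Since $[N,2N]$ contains even integers once $N\ge 1$, one may tune the construction so that every atom has at least one side of even length; if an atom happens to have all sides odd, use the disk-like topological regularity to canonically pair it with an adjacent atom along a common face and absorb it into a larger atom that does have an even side. All such refinements can be carried out in a Borel, equivariant manner.

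Within each atom $R$ having an even $e_i$-side, define the local matching by pairing each $x\in R$ with $x\pm e_i$ in alternating fashion along its $e_i$-fibers; this uses only Schreier edges and yields a perfect matching of $R$. The union of these local matchings is a Borel perfect matching on $X$, since the decomposition is Borel and the local rule depends only on the combinatorial type of the atom together with the distinguished orientation from the marked generating set. The main obstacle throughout is the second step: forcing the even-side property while preserving Borelness and the controlled geometry of the atoms is the technical heart of the argument, and it is precisely the strong topological regularity of the disk-like atoms that provides the rigidity needed to perform either the direct parity selection or the canonical pair-merging consistently across the Polish space.
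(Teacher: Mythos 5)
Your reduction to $n=2$ is exactly the paper's first move and is correct. But the heart of your argument — ``within each atom $R$ having an even $e_i$-side, pair along $e_i$-fibers'' — does not apply to the objects the orthogonal decomposition actually produces. The $k$-atoms (Corollary~\ref{cor:atomdisk}) are homeomorphic to disks, but they are \emph{not} rectangles: their boundaries are rectangular polygons built from many short horizontal and vertical segments produced by the iterated ``fractalizing'' adjustments $\sR^m_m \to \sR^m_{m-1}\to\cdots\to\sR^m_1$. There is no ``$e_i$-side of even length'' to match along, and alternating along $e_i$-fibers inside such a region simply does not close up into a perfect matching.

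More fundamentally, you are ignoring the parity obstruction that the paper spends an entire subsection controlling. A finite region $R\subseteq\Z^2$ admits a perfect matching of its induced subgraph only if its total charge $\sum_{x\in R}(-1)^{x_1+x_2}$ vanishes, and for the atoms produced by the basic construction this is false in general; they may carry arbitrarily large total charge if the marker distances grow fast. Your ``absorb an all-odd atom into a neighbor'' fix is not a local remedy: adjacent atoms can \emph{both} be charged, chains of charged atoms can appear, and there is no canonical, Borel way to pair them off — this is precisely the kind of local symmetry-breaking obstruction that makes Borel matching hard. The paper's actual route is quite different: it proves a finite Stokes theorem (Theorem~\ref{thm:stokes}) relating total charge to a boundary corner sum, modifies the orthogonal marker construction (Lemmas~\ref{lem:bounded_charge}, \ref{lem:bounded_charge_atoms}) by inserting small rectangles along boundary segments so that every $k$-atom has total charge at most a fixed $K$, and then builds the matching by an \emph{induction over levels}: each $k$-atom is matched except for at most $K$ points pushed to a controlled buffer region near a level-$k$ boundary, and these leftovers are absorbed during the matching of the enclosing $(k+1)$-atom using the machinery of currents (Lemmas~\ref{curcon}--\ref{lem:rectangle}). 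No single level of the decomposition is matched all at once — the unmatched points propagate upward forever, with only the uniform bound $K$ keeping the construction under control. Your proposal has no mechanism to handle this, so as written it does not give a proof.
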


A related concept in combinatoric  is {\em proper edge-coloring} for a graph $G$, which is a map $k: E(G)\to \kappa$, where $\kappa$ is a cardinal, such that for any $e, f\in E(G)$ such that $e$ and $f$ share exactly one vertex, $k(e)\neq k(f)$. The least cardinal $\kappa$ that admits a proper edge-coloring $k:E(G)\to \kappa$ is called the {\em edge chromatic number} of $G$. A classical theorem of Vizing states that if in a graph $G$ every vertex has degree $\leq k$, then the edge chromatic number of $G$ is $\leq k+1$. By a theorem of K\"onig, a $k$-regular bipartite graph $G$ has edge chromatic number $k$. The Borel versions of these results turn out all to be false in general. Marks \cite{Marks2016} constructed $k$-regular acyclic Borel graphs which all have Borel chromatic number $2$ but the Borel edge chromatic numbers vary arbitrarily from $k$ to $2k-1$. 

Recently, Greb\'ik--Rozho\v{n} \cite{GRinf} and Weilacher \cite{Weilacherinf} independently showed that the Borel edge chromatic number for $F(2^{\Z^n})$, where $n\geq 2$, is $2n$. This implies our Theorem~\ref{thm:main2}. Bencs--Hr\v{u}skov\'{a}--T\'{o}th \cite{BHTinf} showed the same theorem for $n=2$, which is a weaker result but sufficient to deduce our theorem. Of course, the methods used in these proofs are all different from the one presented in this paper. In contrast, in \cite{GJKSinf} the authors of the present paper showed that the continuous edge chromatic number of $F(2^{\Z^2})$ is $5$. More recently, Gao--Wang--Wang--Yan \cite{GWWY} showed that the continuous edge chromatic number of $F(2^{\mathbb{Z}^n})$ for $n\geq 2$ is exactly $2n+1$.

In this paper we also consider the combinatorial concepts of line section and lining in a Schreier graph of a marked group action. Following \cite{GJKSinf}, a {\em line section}  of $\Sigma(\Gamma, S)$ is a subgraph $G$ where each vertex in $G$ has degree $2$. If $G$ is a line section, we call each connected component of $G$ a {\em $G$-line}. A line section $G$ is {\em complete} if $G$ meets every orbit of the action. A line section $G$ is {\em single} if for each $x\in X$, the intersection of $G$ with the orbit of $x$ is a nonempty single $G$-line. A {\em lining} $G$ is a single line section where the vertex set of $G$ is the entire space, and in particular it is a complete line section. A line section $G$ on $X$ is {\em Borel} ({\em clopen}, resp.) if for each generator $e\in S$, the set $\{x\in X\,:\, (x, e\cdot x)\in G\}$ is Borel (clopen, resp.).

The authors of the present paper studied the existence of clopen line sections in \cite{GJKSinf}. We showed that there do not exist clopen single line sections of $F(2^{\Z^2})$; in particular, there are no clopen linings of $F(2^{\Z^2})$. In this paper we prove that there exists a Borel lining of $F(2^{\Z^2})$.

\begin{thm} \label{thm:main3}
Suppose $\Z^2\actson X$ is a free Borel action of $\Z^2$ on a Polish space $X$. Then there is a Borel lining for the Schreier graph on $X$. 
\end{thm}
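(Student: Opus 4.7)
The plan is to reduce to the Bernoulli shift $F(2^{\Z^2})$ by the universality of the Bernoulli action among free Borel $\Z^2$-actions, and then to exploit the Borel orthogonal decomposition with all atoms homeomorphic to a disk promised in the abstract. I will in fact use a Borel hierarchy of such decompositions: an increasing sequence $(\mathcal{D}_n)_{n\in\N}$ of Borel partitions of $F(2^{\Z^2})$ whose atoms at level $n$ are finite simply connected subregions of $\Z^2$-orbits, each level-$(n+1)$ atom is a union of level-$n$ atoms together with a thick collar of cells, and the atoms exhaust each orbit. Such a hierarchy should be constructible from iterated applications of the disk decomposition, together with the unlayered toast structures that the paper obtains from weakly orthogonal decompositions.

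First I would construct, for each base-level atom $D\in\mathcal{D}_0$, a Hamiltonian path $\pi_D$ of the Schreier graph restricted to $D$ whose two endpoints lie on $\bnd D$. Since $D$ is a finite simply connected region of $\Z^2$, such a path is produced by a boustrophedon-style sweep, and a canonical selection is made in a Borel way using the $2^{\Z^2}$-labels on the vertices. Next, I would inductively extend the construction: given a level-$(n+1)$ atom $D'$ containing level-$n$ atoms $D_1,\dots,D_k$ with Hamiltonian paths $\pi_{D_1},\dots,\pi_{D_k}$ already built, the collar $B=D'\sm\bigcup_i D_i$ is a planar region whose union with the $D_i$ is the disk $D'$. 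I would first build a path through $B$ that covers every vertex of $B$ and passes adjacent to each endpoint $u_{D_i},v_{D_i}$, and then splice in $\pi_{D_i}$ at those locations, producing a Hamiltonian path $\pi_{D'}$ of $D'$ whose endpoints lie on $\bnd D'$. Taking the nested union $\bigcup_n \pi_{D_n(x)}$ over the atoms containing a point $x$ yields a connected degree-$2$ Borel subgraph covering the entire orbit of $x$.

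The main obstacle will be to ensure that this limit subgraph is a single bi-infinite line rather than a closed cycle or a pair of one-sided infinite rays. To handle this, I plan to enforce the invariant that at every level the two endpoints $u_{D_n},v_{D_n}$ lie on topologically opposite portions of $\bnd D_n$; more precisely, at the splicing stage I would choose the new endpoints to be two boundary vertices of $D_{n+1}$ of maximal $\ell^\infty$-distance from the previous atom $D_n$, on the two sides of $\bnd D_n$ separating the plane. The thickness of the collar, guaranteed by the toast structure, then forces both endpoints to strictly escape from every fixed base point as $n\to\infty$, ruling out closed cycles and one-sided rays and yielding the desired Borel lining. Verifying that the splicing preserves this ``opposite endpoints'' invariant --- and in particular that the combinatorial topology of each disk atom supports such a choice --- is where I expect the delicate case analysis to live.
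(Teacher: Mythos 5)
There is a genuine gap: your proposal ignores the parity (bipartiteness) obstruction, which is the central difficulty the paper's proof is organized around. The Schreier graph of $\Z^2$ is bipartite, so a Hamiltonian path on a finite region $D$ alternates between the two parity classes; it exists only if the ``total charge'' $\sum_{x\in D}(-1)^{x_1+x_2}$ has absolute value at most $1$, and prescribing the two endpoints imposes further charge constraints. A finite simply connected subregion of $\Z^2$ need not satisfy this (the plus-shaped pentomino has charge $\pm 3$ and admits no Hamiltonian path at all), so your base step --- ``a boustrophedon-style sweep'' of an arbitrary disk atom with endpoints on the boundary --- fails outright. The same obstruction recurs at every splicing stage: the collar $B=D'\sm\bigcup_i D_i$ minus the spliced-in endpoints must itself carry a Hamiltonian path with prescribed endpoints, which again requires its charge to vanish, and the charges of the sub-atoms accumulate as you go up the hierarchy. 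The paper's proof exists precisely to control this: it proves a finite Stokes theorem expressing the charge of a region as a boundary sum, re-runs the orthogonal marker construction so that every atom (and every boundary path segment) has charge bounded by an absolute constant $K$, imposes an intermediate growth condition on the scales so that each $k$-atom contains boundedly many $(k-1)$-atoms, and then develops a calculus of $p_0\times q_0$ ``parity change regions'' (Theorem~\ref{thm:mtt} and its corollaries) that transports the residual charge along buffer regions of fixed even width $w$ to a controlled set of unmatched cells on the next level's boundary. None of this is recoverable from the disk decomposition alone.

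A secondary point: your worry about the limit being a cycle or two rays, and the proposed ``opposite endpoints'' invariant, is solving the wrong problem. In the paper's argument the two endpoints of each stage's path are kept \emph{adjacent} on the boundary of the buffered atom (so that splicing into the next level amounts to deleting one edge of the larger lining and inserting the sub-path); since the nested finite paths are connected and their endpoints migrate to the boundary of ever-larger atoms, the increasing union is automatically a single bi-infinite line. The delicate case analysis you anticipate at the endpoints is real, but it lives in the charge bookkeeping (how many parity change regions sit on each exit segment, and how far they must be from corners), not in the planar topology of where the endpoints sit.
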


Recently, Chandgotia--Unger \cite{CU2024} has proved the existence of Borel linings in higher dimensions with a different method.

The method we use to prove all these combinatorial results can be collectively called the {\em orthgonal marker method}. The simplest form of this method was first developed by Gao and Jackson in \cite{GJ2015} to tackle the hyperfiniteness problem for countable abelian group actions. Later Schneider and Seward \cite{SSinf} extended the orthogonal marker method to countable locally nilpotent groups and used it to show the hyperfiniteness of the orbit equivalence relations of their actions. It was already clear in \cite{GJ2015} that the orthognal marker regions we constructed not only can facilitate a proof of hyperfiniteness but also allows us to prove results of combinatorial nature about the Schreier graphs. In fact, it was shown in \cite{GJ2015} that for any $n\geq 2$, there is a continuous proper coloring of $F(2^{\Z^n})$ with $4$ colors.

In this paper we further develop the orthogonal marker method and use it to prove our theorems stated above. In Section 2 we first present a review of the orthogonal marker method and prove some improvements. In general, an {\em orthogonal marker structure} is a sequence $\{\sR_k\}_{k\geq 1}$ of partitions of the phase space $F(2^{\Z^n})$ such that each element in $\sR_k$ (which we refer to as a {\em marker region}) is a finite subset of an orbit with certain properties. We regard each $\sR_n$ as a {\em layer} of the orthogonal marker structure, and {\em orthogonality} refers to the relationship between  marker regions from different layers. The main improvement we explore in Section 2 is that, by controlling the geometric parameters used in the construction of $\sR_k$, we are able to obtain orthogonal marker regions that are connected. It turns out that connectedness is a key consideration in the  construction of combinatorial objects such as proper colorings, perfect matchings, and linings. 

Starting from Section 3 we adopt a slightly different point of view when we consider an orthogonal marker structure $\{\sR_k\}_{k\geq 1}$. Here, for each $k\geq 1$ we consider all the partitions $\sR_m$ for $m\geq k$ and the coarsest common refinement of all of them, denoted $\sA_k$. The elements of $\sA_k$ are called {\em $k$-atoms} of the orthogonal marker structure. We also consider the decomposition of $\sA_k$ into finite connected components and define notions of strong and weak orthogonality.  In Section 4 we work with weakly orthogonal decompositions with polynomial bound and some additional boundedness condition on the orthogonality constant and prove some results about their Borel chromatic numbers. These results imply Theorem~\ref{thm:main1}. We also show that weak orthogonal decompositions give rise to a combinatorial structure known as {\em toast} in the literature (defined in \cite{GJKS2022} and \cite{GJKSinf}). In particular, we show that there exists a Borel unlayered toast on $F(2^{\mathbb{Z}^n})$ for all $n\geq 1$.

In Section 5 we prove the main technical theorem of this paper about orthgonal decompositions. We show that for $n=2$, the orthogonal marker structure $\{\sR_k\}_{k\geq 1}$ can be constructred so that all $k$-atoms are homeomorphic to disks. This has consequences about the exact structures of $k$-atoms as well as their relationship with the $(k-1)$-atoms, which allow us to perform constructions in Section 6 of a Borel perfect matching (Theorem~\ref{thm:main2}) and in Section 7 of a Borel lining (Theorem~\ref{thm:main3}).

\section{Orthogonal markers and structures for $\fzn$} \label{sec:omzn}

We review the orthogonal marker structures for the equivalence relation $F_n$
given by the left shift action of the group $\Z^n$ on the free part $\fzn$
of the space $2^{\Z^n}$. These arguments are presented in detail in \cite{GJ2015}.
Here we review the construction, summarize the main results, and
present some improvements. In \S\ref{sec:odzn} we will introduce a
slightly different point of view which we refer to as an {\em orthogonal decomposition}.
The notion of an orthogonal decomposition will be a key component of many of our arguments.

\subsection{Review of clopen rectangular partition construction}

A frequent way to construct clopen rectangular partitions is to start with
$d$-marker sets. We recall this definition.

\begin{defn} \label{def:ms}
Let $d>0$ be an integer. A \textit{$d$-marker set} $M_d \subseteq \fzn$ is a set satisfying
the following:
\begin{enumerate}
\item \label{def:ms1}
$\forall x\neq y \in M_d\ \rho(x,y)>d$.
\item \label{def:ms2}
$\forall x \in \fzn\ \exists y \in M_d\ \rho(x,y)\leq d$. 
\end{enumerate}
\end{defn}

We often refer to $d$ as a \textit{marker distance}. We say the $d$-marker set $M_d$ is clopen if the set $M_d$ is a relatively clopen set in $\fzn$.

For each marker distance $d$ there is a clopen partition $\sR_d$ of
$\fzn$ into rectangles with side lengths in $\{ d, d+1\}$. More
precisely, $\sR_d$ is a subequivalence relation of $F_n$ with the
property that on each $F_n$ class $[x]$, $\sR_d \res [x]$ is a
partition of $[x]$ into finite sets, each of which is a rectangle in
the sense that it is of the form $R\cdot y$ for some $y \in [x]$ where
$R\subseteq \Z^n$ is a rectangle (i.e., $R=\Z^n \cap \prod_i
[a_i,b_i]$).  Also, $\sR_d$ is clopen in the sense that we have the
relative clopenness in $\fzn$ of the
set $C$ of $x \in \fzn$ such that $[x]_{\sR_d}=R\cdot x$ for some $R$
of the form $R=\prod_i [0,c_i]$. Intuitively, elements of the set $C$
are the ``bottom-left" corners of all rectangles in the partition
given by $\sR_d$. We will, with a slight abuse of terminolgy, refer to $\sR_d$
as a ``clopen partition'' or ``clopen finite subequivalence relation'' though
$\sR_d$ is not relatively clopen as a subset of $\fzn\times \fzn$.

In \cite{GJ2015} a clopen rectangular  partition $\sR_d$ with side lengths in
$\{ d, d+1\}$ is constructed by first starting with a $d'$-marker set $M_{d'}$ with $d' \gg d$.
About each point $x \in M_{d'}$ we consider the  ``cube''  $C=C_x$ in $[x]$ centered at
$x$ with side lengths $d'$. These cubes clearly cover $\fzn$ by property (\ref{def:ms2}) of Definition~\ref{def:ms}. We refer to this collection of cubes as a \textit{clopen covering} of $\fzn$. For each of these
cubes $C$ we ``adjust'' it by moving each of its faces outward with a distance bounded by a
small fraction of $d'$ (say no more than $\frac{1}{10}d'$). When doing the adjustment we ensure that
each of the faces $f$ of $C$ stays a certain distance, bounded below by a
fixed fraction $\epsilon d'$ of $d'$ (where $\epsilon$ is a fixed constant depending only
on $n$), away from any parallel face $f'$ of a cube $C'$ within a Hausdorff distance of $5d'$ from $C$.
To do these adjustments we use a \textit{big-marker-little-marker} procedure. More precisely, we use an auxiliary distance $d'' \gg d'$ and a
$d''$-marker set $M_{d''} \subseteq M_d$ (actually, to make $M_{d''}\subseteq M_{d'}$
we must relax slightly the inequalities in Definition~\ref{def:ms} by replacing
$d$ with $(1-\eta)d$ for property (\ref{def:ms1}) and $(1+\eta)d$ for property (\ref{def:ms2}) for some small $\eta$). We use $M_{d''}$
to organize the adjustment process, first adjusting the points of $M_{d''}$,
then at step $i$ adjusting the cubes centered at points in
$g_i\cdot M_{d'}$ not already considered, where $\{g_i\}$ is an enumeration of all $g\in \Z^n$ within distance $d''/2$ to the origin. As $d'' \gg d'$, the cubes being adjusted
at any given step do not interfere with each other. After finitely many steps
(roughly $(d'')^n$ many) we will have adjusted all of the cubes. This results in
a clopen partition $\sR_{d'}$ of $\fzn$ into regions each of which is the disjoint union of finitely many rectangles $R$
of side lengths between $\epsilon d'$ and $\frac{6}{5}d'$. If $d'$
is large enough so that $\epsilon d' > d^2$, then we proceed with a \textit{finite subdivision algorithm} to further subdivide
the $\sR_{d'}$ regions into rectangles with side lengths in $\{d, d+1\}$.
This produces the desired clopen rectangular partition $\sR_d$. The full details of this construction can be found in \cite{GJ2015}.

In summary, the key steps of the clopen rectangular partition construction for a marker distance $d$ are the following:
\begin{enumerate}
\item Find a clopen covering of $\fzn$ by rectangles of side lengths $d'\gg d$;
\item Using a big-marker-little-marker procedure, adjust the rectangles in the clopen covering so that any parallel faces of rectangles are at least $\epsilon d'$ apart, for some constant $\epsilon$ depending only on $n$; the adjusted clopen covering gives rise to a clopen partition of $\fzn$ into rectangular polygonal regions;
\item \label{step3} Use a finite subdivision algorithm to divide each of the rectangular polygonal regions in the clopen partition into rectangles of side lengths $d$ or $d+1$.
\end{enumerate}

In the following we give a variant of the finite subdivision algorithm in dimension $n=2$ so that in the final clopen rectangular partition, no points will be close to 4 different rectangles. 

\begin{lem}\label{lem:no4corners}
Let $0<\epsilon\leq 1/2$ and $d>0$ be a sufficiently large integer. Let
$R$ be a rectangular polygonal region in $\Z^2$ such that any parallel
sides of $R$ have a perpendicular distance at least $12 d$. Let $S$ be a
subset of the boundary of $R$ such that each point in $S$ is at least
$\frac{1}{60}\epsilon d$ apart from any corner of $R$ and any two
points of $S$ are at least $\epsilon d$ apart. Then there is a
partition $\sR$ of $R$ into rectangles such that
\begin{enumerate}
\item[\rm (i)] any rectangle in $\sR$ has side lengths between
  $\frac{1}{2} d$ and $d$;
\item[\rm (ii)] for any rectangle $\tilde{R}$ in $\sR$, any corner of
  $\tilde{R}$ is at least $\frac{1}{60}\epsilon d$ apart from any
  point in $S$;
\item[\rm (iii)] no point of $R$ is within $\frac{1}{60}\epsilon d$ of
  four different rectangles in $\sR$.
\end{enumerate}
\end{lem}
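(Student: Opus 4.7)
The plan is to adapt the standard finite subdivision algorithm of \cite{GJ2015} (step~(\ref{step3}) above) by exploiting the wider length range $[\frac{1}{2}d, d]$ (as opposed to $\{d, d+1\}$): the extra slack lets us introduce two refinements, one that keeps cuts away from $\frac{1}{60}\epsilon d$-neighborhoods of $S$, and one that staggers parallel cuts so that four different rectangles of $\sR$ cannot all lie within $\frac{1}{60}\epsilon d$ of a single point.

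First, I would decompose $R$ into a coarse collection $\sR_0$ of axis-aligned rectangles of side lengths between roughly $4d$ and $12d$ by the standard rectification procedure (cutting from each concave corner of $\partial R$), choosing each rectifying cut so that its endpoint on $\partial R$ is at distance at least $\frac{1}{60}\epsilon d$ from every point of $S$. Because points of $S$ on any given edge are $\epsilon d$-separated and each rectifying cut has $\Omega(d)$ slack in its position, the total forbidden length for each cut is a small fraction of the available interval once $d$ is large enough relative to $1/\epsilon$.

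Second, I would subdivide each $R' \in \sR_0$ strip by strip. Pick horizontal cuts $y_0 < y_1 < \cdots < y_n$ across $R'$ with consecutive gaps in $[\frac{1}{2}d, d]$, each $y_j$ chosen at distance at least $\frac{1}{60}\epsilon d$ from every point of $S$ on a vertical edge of $R'$. Inside each horizontal strip, pick vertical cuts in the same way, now avoiding the $\frac{1}{60}\epsilon d$-neighborhoods of $S$-points on the horizontal edges. Crucially, stagger the vertical cut positions between consecutive strips by at least $\frac{1}{30}\epsilon d$: since $\epsilon \leq 1/2$ this shift fits easily within the $\Omega(d)$ slack per cut, and it guarantees that no two vertical cuts from adjacent strips come within $\frac{1}{60}\epsilon d$ of each other across the shared horizontal cut.

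Conditions (i) and (ii) are then immediate: interior corners sit at distance at least $\frac{1}{2}d \gg \frac{1}{60}\epsilon d$ from $\partial R$, and boundary corners were explicitly chosen to miss the $\frac{1}{60}\epsilon d$-neighborhoods of $S$. For (iii), observe that a point $p$ within $\frac{1}{60}\epsilon d$ of four rectangles of $\sR$ would have to sit near the intersection of a horizontal cut $y_j$ with two distinct vertical cuts (one from each adjacent strip) that are themselves within $\frac{1}{60}\epsilon d$ of each other, which the staggering forbids. The main obstacle I expect is the bookkeeping along shared boundaries between adjacent large rectangles $R', R'' \in \sR_0$: the cuts chosen inside $R'$ impose constraints on those in $R''$ through the shared boundary. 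I would process $\sR_0$ in an order, treat cuts already fixed on a shared boundary as inherited constraints when subdividing the next large rectangle, and then verify that the total forbidden length on any individual cut remains a small fraction of $\Omega(d)$, which is again achievable for $d$ sufficiently large relative to $1/\epsilon$.
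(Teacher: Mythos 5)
Your overall architecture (a coarse partition into large rectangles, followed by a fine subdivision whose cuts dodge the $\frac{1}{60}\epsilon d$-neighborhoods of $S$ and are staggered across parallel interfaces, with constraints inherited along shared boundaries) matches the paper's four-step algorithm in spirit, and the second stage of your plan is essentially sound. The genuine gap is in your first stage. A rectifying cut emanating from a concave corner of $\partial R$ must pass through that corner, so its supporting line is completely determined; it has \emph{no} positional slack, contrary to your claim that ``each rectifying cut has $\Omega(d)$ slack in its position.'' Consequently its far endpoint --- the point where the extension first meets $\partial R$ again --- is forced, and nothing prevents it from landing within $\frac{1}{60}\epsilon d$ of a point of $S$ (breaking (ii), since that endpoint is a corner of the rectangles on either side of the cut) or within $\frac{1}{60}\epsilon d$ of another forced endpoint, a corner of $R$, or a collinear edge of $\partial R$ (breaking (iii), since such an endpoint is already a T-junction where three rectangles meet, and one more nearby feature produces a fourth). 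The paper's proof is organized precisely to avoid this: it first draws, for each horizontal boundary side $l$, a parallel line $l'$ at distance between $3d$ and $4d$ inside $R$ --- these cuts \emph{do} have $\Omega(d)$ slack, and a preliminary claim shows both of their endpoints can simultaneously be placed $\frac{1}{60}\epsilon d$ away from corners and from $S$ --- and only then extends the vertical boundary edges, stopping each extension at the \emph{first} such line $l'$, so that the forced far endpoints land in the interior on a step-one line rather than on $\partial R$ near $S$.

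A secondary omission: your analysis of (iii) treats crosses inside a single coarse rectangle and misalignment across a shared boundary, but not the T-junction corners of the coarse partition itself. Near a point where three coarse rectangles meet, any fine cut of the refinement passing within $\frac{1}{60}\epsilon d$ creates a fourth nearby rectangle, so the refinement must also avoid the $\frac{1}{60}\epsilon d$-neighborhoods of these coarse corners. The paper handles this by enlarging the avoid-set $S'$ of each rectangle to include points adjacent to corners of neighboring rectangles from earlier steps, and by checking that $S'$ retains the $\epsilon d$-separation property (so the forbidden length per cut stays a small fraction of the $\Omega(d)$ slack). Your ``inherited constraints'' mechanism can absorb this, but you would need to state it and verify the separation property is preserved as corners accumulate.
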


\begin{proof}
We first note that if $p$ is any point on the boundary of $R$ then
within $\frac{1}{10}\epsilon d$ of $p$ there is a point $q$ on the
boundary of $R$ such that $q$ is at least $\frac{1}{60}\epsilon d$
apart from any corner of $R$ or any point of $S$. In fact, if $p$ is
at least $\frac{1}{60}\epsilon d$ apart from any corner of $R$ or any
point of $S$, then we let $q=p$. Otherwise, some corner of $R$ or
point of $S$ is within $\frac{1}{60}\epsilon d$ of $p$. Consider all
the points on the boundary of $R$ that are within
$\frac{1}{20}\epsilon d$ of $p$. If there are no other corner of $R$
or point of $S$ in this set, then some point $q$ can be selected from
this set to satisfy the requirement. Otherwise, it must be the case
that both a corner of $R$ and a point of $S$ are within
$\frac{1}{20}\epsilon d$ of $p$. Now consider the set of all points on
the boundary of $R$ that are within $\frac{1}{10}\epsilon d$ of
$p$. By our assumption about $R$ and $S$, there are no other points in
this set that is either a corner of $R$ or a point of $S$. A point $q$
can be selected from this set to satisfy the requirement.

It is worth noting that the point $q$ can be chosen from either side
of $p$ on the boundary of $R$ as long as its choice is not blocked by
the existence of a corner on one side. This observation will be useful
in our argument below.

Next, we claim that if $p$ is any point on the boundary of $R$ that is
not itself a corner and $l$ is the unique line of the boundary of $R$
that contains $p$, then there is a point $q$ on $l$ within
$\frac{1}{5}\epsilon d$ of $p$ such that $q$ is at least
$\frac{1}{60}\epsilon d$ apart from any corner of $R$ or any point of
$S$, and, letting $s$ be the ray starting from $q$ that is
perpendicular to $l$ and pointing to the interior of $R$, then the
very first intersection of $s$ with another part of the boundary of
$R$ is a point $r$ that is also at least $\frac{1}{60}\epsilon d$
apart from any corner of $R$ or any point of $S$. 

To prove this claim
we need to consider a number of cases. However, the argument for all
these cases are similar. We only give the argument for one
case. Suppose $p$ is within $\frac{1}{60}\epsilon d$ of a corner of
$R$ and within $\frac{1}{30}\epsilon d$ of a point in $S$. This is illustrated in Figure~\ref{fig:lem1.2}.
\begin{figure}[h] 
\begin{tikzpicture}[scale=0.05]

\pgfmathsetmacro{\a}{0.5}
\pgfmathsetmacro{\b}{0.1}

\draw[thick] (0,0) to (0,70) to (35,70);
\draw[thick] (100,0) to (100,70);



\draw[fill=yellow, opacity=\b] (0,0) rectangle (100,70);




\node at (-5,65) {$p$};
\node at (50,50) {$R$};
\node at (-5, 55) {$S$};
\node at (-5, 35) {$q'$};
\node at (-5, 25) {$q$};
\node at (-5, 10) {$l$};
\node at (105, 10) {$m$};
\node at (105, 25) {$r$};
\node at (105, 35) {$r'$};
\node at (30, 39) {$s'$};
\node at (30, 22) {$s$};

\draw[->] (0,25) to (100, 25);
\draw[->] (0,35) to (100, 35);
\draw[->] (0,65) to (100, 65);

\draw[fill=black] (0,65) circle (0.8);
\draw[fill=black] (0, 55) circle (0.8);
\draw[fill=black] (0, 35) circle (0.8);
\draw[fill=black] (0, 25) circle (0.8);
\draw[fill=black] (100, 25) circle (0.8);
\draw[fill=black] (100, 35) circle (0.8);

\end{tikzpicture}
\caption{Avoiding corners of $R$ and points of $S$} \label{fig:lem1.2}
\end{figure}
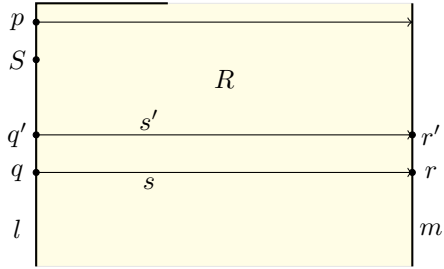
The
above mentioned corner of $R$ and the point in $S$ must be on two
opposite sides of $p$ on $l$. In this case we still can pick a point
$q'$ on $l$ within $\frac{1}{10}\epsilon d$ of $p$ such that $q'$ is at
least $\frac{1}{60}\epsilon d$ apart from any point of $S$. Obviously
$q'$ is on the opposite side of $p$ as the corner of $R$ is. Note that
by our assumptions the next point of $S$ is at least $\epsilon d$
apart from $p$, and therefore at least $\frac{9}{10}\epsilon d$ apart
from $q'$. Now let $s'$ be the ray through $q'$ that is perpendicular
to $l$ and pointing to the interior of $R$. Let $r'$ be the first
intersection of $s'$ with a boundary part of $R$, which we denote by
$m$. $m$ and $l$ are parallel. Draw a ray through $p$ that is
perpendicular to $l$ and pointing to the interior of $R$. Then on $m$,
on the opposite side of $r'$, there is not a corner of $R$ that is
within $\epsilon d$ of $r'$. This is because, otherwise, by comparing
the corner close to $p$ and such a corner, we obtain two parallel
sides of $R$ that are within perpendicular distance $d$ of each other,
contradicting our assumption. Thus, on this side of $r'$ of $m$ there
is a point $r$ within $\frac{1}{10}\epsilon d$ of $r'$ such that $r$
is at least $\frac{1}{60}\epsilon d$ apart from any point of $S$. Now
let $q$ be on $l$ and $s$ be the ray to give $r$ as the intersection
of $s$ with $m$. Then $p$ and $q$ are on opposite sides of $q'$ on
$l$, and $q$ is within $\frac{1}{10}\epsilon d$ of $q'$. Thus $q$ is
within $\frac{1}{5}\epsilon d$ of $p$ and is as required.

We use the above claim tacitly in the following finite subdivision
algorithm. The algorithm consists of four steps. In the first step,
for each horizontal boundary side $l$ of $R$ draw a horizontal line
$l'$ within $R$ so that the distance between $l$ and $l'$ is in
between $3 d$ and $4d$, and that the two intersections of $l'$ with
the boundary of $R$ are at least $\frac{1}{60}\epsilon d$ away from
any corner of $R$ or point of $S$. In the second step, for each
vertical boundary line $l$ of $R$ whose extension intersects the
interior of $R$, extend $l$ until it intersects the first horizontal
line that is created in the first step of the algorithm. The first two steps of the algorithm is illustrated in Figure~\ref{fig:lem1.22}.
\begin{figure}[h] 
\begin{tikzpicture}[scale=0.05]

\pgfmathsetmacro{\a}{0.5}
\pgfmathsetmacro{\b}{0.1}
  
\foreach \j in {0,1,2}   
{\draw[thick] (0+\j*90,0) to (50+\j*90,0) to (50+\j*90,30) to (70+\j*90,30) to (70+\j*90,80) to (30+\j*90,80) to (30+\j*90,60) to (0+\j*90,60) to (0+\j*90,0);}

\draw[->] (75,40) to (85,40);
\draw[->] (165,40) to (175, 40);

\foreach \j in {1,2}
{\draw (0+\j*90,10) to (50+\j*90,10);
\draw (0+\j*90, 40) to (70+\j*90, 40);
\draw (0+\j*90,50) to (70+\j*90, 50);
\draw (30+\j*90, 70) to (70+\j*90, 70);
}

\draw (50+180,30) to (50+180,40);
\draw (30+180,60) to (30+180,50);
\end{tikzpicture}
\caption{The first two steps of the subdivision algorithm} \label{fig:lem1.22}
\end{figure}
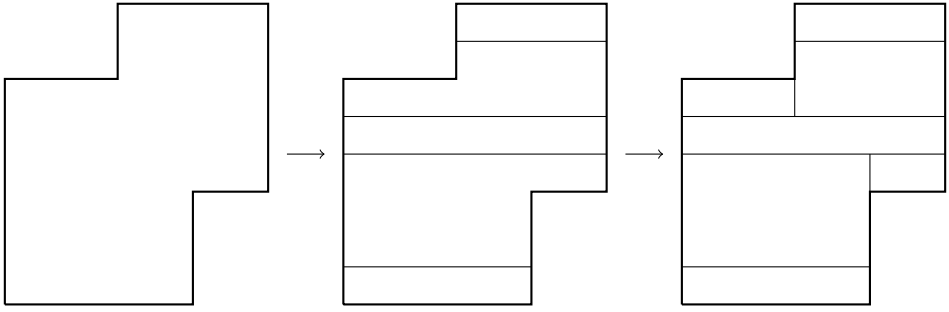
The first two
steps give a subdivision of $R$ into rectangles of side lengths at
least $3 d$. Also, no point is within $4d$ of four distinct rectangles.
For each of these rectangles, say $R'$, let $S'$
be the set of its non-corner boundary points that are either in $S$ or
next to a corner of another rectangle created in the first two
steps. Then any point of $S'$ is at least $\frac{1}{60}\epsilon d$
apart from a corner of $R'$ and any two points of $S'$ are at least
$\epsilon d$ apart if their distance is $>1$. Now in the third step of
the algorithm, we introduce vertical lines to each rectangle $R'$ to
subdivide it into smaller rectangles. For each $R'$, the distances
between any two vertical lines will be between $\frac{1}{2} d$ and
$d$, and the intersections of these new lines with the
boundary of $R'$ will be at least $\frac{1}{60}\epsilon d$ apart from any point
of $S'$. This can clearly be done to any $R'$ with the assumed
property of $S'$, using the fact that the edges of $R'$
have lengths at least $3d$. It takes a moment of reflection to see that after
this is done to each rectangle, we add to the $S'$ for a neighboring
rectangle, but we still maintain the property that any two points of
$S'$ is at least $\epsilon d$ apart if their distance is $>1$. Thus
this can be done to all rectangles one by one. Moreover, after all
these divisions by vertical lines are completed, each of the resulting
rectangles also satisfies the property that any two points of the new
$S'$ is at least $\epsilon d$ apart if their distance is $>1$ and at
least $\frac{1}{60} \epsilon d$ apart from a corner. In addition, the
resulting rectangles from the third step all have width between
$\frac{1}{2} d$ and $d$. Finally, in the fourth step of the
algorithm we introduce only horizontal lines to further subdivide the
rectangles resulting from the third step. The final resulting
rectangles will have side lengths between $\frac{1}{2} d$ and $d$, and
all of their corners will be at least $\frac{1}{60}\epsilon d$ apart
from other corners from neighboring rectangles or from a point of
$S$. Thus the proof of the lemma is complete.
\end{proof}

We can apply Lemma~\ref{lem:no4corners} to obtain clopen rectangular
partitions of $F(2^{\Z^2})$ in which no four rectangles come
together. We state this in the next lemma.

\begin{lem} \label{lem:n4c}
For all sufficiently large $d$ there is a clopen rectangular partition
on $\fzs$ having the following properties:
\begin{enumerate}
\item
All of the rectangles in the partition have side lengths between
$\frac{1}{2}d$ and $d$.
\item
No point is within distance $\frac{1}{120} d$ of four distinct rectangles of the
partition.
\end{enumerate}
\end{lem}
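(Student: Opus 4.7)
The plan is to follow the three-step clopen rectangular partition construction recalled above in dimension $n=2$, replacing only the finite subdivision algorithm of step (3) with the subdivision algorithm from Lemma~\ref{lem:no4corners}. I would set $\epsilon = 1/2$ in Lemma~\ref{lem:no4corners}, so that its separation constant $\tfrac{1}{60}\epsilon d$ becomes exactly $\tfrac{1}{120}d$, matching conclusion (2) of the present lemma. Fix $d$ sufficiently large and choose an auxiliary scale $d' \gg d$ large enough that $\epsilon d' > 12 d$, so the perpendicular-distance hypothesis of Lemma~\ref{lem:no4corners} will hold for every polygonal region produced in step (2). Starting from a $d'$-marker set and carrying out the big-marker-little-marker adjustment procedure, one obtains a clopen partition of $\fzs$ into rectangular polygonal regions whose parallel faces are at least $\epsilon d'$ apart.

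Next, I would apply Lemma~\ref{lem:no4corners} to the polygonal regions one at a time, processing them in an order analogous to the big-marker-little-marker scheme used for the adjustment. At each stage, a polygonal region $R$ is subdivided with $S$ chosen to be the set of corners that previously processed neighboring regions have already produced on $\partial R$. The input hypotheses of Lemma~\ref{lem:no4corners} must be preserved inductively, namely that points of $S$ are pairwise at least $\epsilon d$ apart and each is at least $\tfrac{1}{120} d$ from every corner of $R$. The output of Lemma~\ref{lem:no4corners} — in which every new boundary corner is at least $\tfrac{1}{120} d$ from any point of $S$ and any two new corners along an edge are at least $\tfrac{1}{2}d \geq \epsilon d$ apart — then provides a valid $S$-input when the remaining neighbors of $R$ are later processed.

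The main obstacle is showing that this inductive condition on $S$ can be maintained uniformly, since a single edge of $R$ may receive corners from several neighbors that have been handled on the opposite side of that edge. The flexibility built into Lemma~\ref{lem:no4corners} — explicitly, the remark that for any boundary point $p$ the adjusted point $q$ can be chosen on either side of $p$ along its boundary line — is exactly what is needed to stagger freshly created corners away from inherited ones. Because the edges of the polygonal regions have length at least $\epsilon d' \gg d$, the density of corners along any boundary segment stays well below the $\epsilon d$ spacing threshold throughout the whole process, so the induction goes through. Clopenness is preserved at every step because all choices in the adjustment and subdivision are finite choices made from clopen inputs.

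Finally, both conclusions of the lemma follow directly from the output of Lemma~\ref{lem:no4corners}. Conclusion (1) is built into Lemma~\ref{lem:no4corners}(i). For conclusion (2), suppose a point $p$ lies within $\tfrac{1}{120}d$ of four distinct rectangles of the final partition. If all four lie inside a single polygonal region $R$, this contradicts Lemma~\ref{lem:no4corners}(iii) applied to $R$. Otherwise at least two of the four rectangles come from distinct polygonal regions meeting along a common boundary segment, and then the four surrounding corners of $p$ violate the $\tfrac{1}{120}d$ separation guaranteed by Lemma~\ref{lem:no4corners}(ii) between newly produced corners inside $R$ and the points of $S$ inherited from the adjacent region. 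This gives the desired contradiction and completes the plan.
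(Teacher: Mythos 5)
Your overall architecture is the same as the paper's: keep steps (1)--(2) of the standard construction with $\epsilon d' > 12d$, and replace the subdivision step by Lemma~\ref{lem:no4corners} with $\epsilon=\frac{1}{2}$, feeding in a set $S$ of already-committed boundary points and processing the polygonal regions in the big-marker-little-marker order. Conclusion (1) and the ``all four rectangles inside one region'' case of conclusion (2) are handled exactly as in the paper.

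There is, however, one substantive omission in your choice of $S$. You take $S$ to consist only of the corners already produced on $\partial R$ by \emph{previously} processed neighbors; the paper's $S$ additionally contains the points of $\partial R$ that lie on an edge of a larger, \emph{not yet subdivided} rectangle belonging to a later stage $j>i$ of the big-marker-little-marker procedure. This forward-looking constraint is genuinely needed. When the stage-$j$ region $R'$ is eventually formed by subtracting $R$ (among others), its boundary has a corner $c$ precisely where an edge of the stage-$j$ rectangle crosses $\partial R$, and the subdivision of $R'$ necessarily places a rectangle with a corner at or adjacent to $c$; moreover the other side of that stage-$j$ edge near $c$ belongs to yet another region, contributing a further rectangle. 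If your subdivision of $R$ has placed one of its own corners within $\frac{1}{120}d$ of $c$ --- which nothing in your induction forbids, since $c$ is not a corner of any previously processed neighbor --- then within $\frac{1}{120}d$ of $c$ one finds two rectangles of $R$'s subdivision, one of $R'$'s, and one of the third region's: four distinct rectangles, violating conclusion (2). Adding these future junction points to $S$ (as the paper does) closes the gap, and your inductive spacing argument then goes through essentially unchanged, since these points are also $\gg \epsilon d$ apart by the orthogonality of the adjusted covering.
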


\begin{proof}

In order to get this we only need to modify step
(\ref{step3}) of the standard construction as sketched before Lemma~\ref{lem:no4corners}.
We also assume that $d' >d$ is sufficiently large so that
$\epsilon d' > 12 d$, where $\epsilon$ is as in $(2)$ of the
sketch before Lemma~\ref{lem:no4corners}. 
Using the big-marker-little-marker procedure
one produces a clopen covering of $\fzs$ by rectangles of side lengths
at least $d'$, and such that distinct parallel edges are at least $\epsilon d'$ apart. 
At each stage $i$ (as determined by the big-marker-little-marker procedure)
we subtract from each stage $i$ rectangle the rectangles that intersect it from earlier
stages. This produces a rectangular polyhedral region $R$ with the property
that parallel edges at least $\epsilon d' >12 d$ apart.
We simultaneously subdivide the rectangular polyhedron $R$ into smaller rectangular regions
using Lemma~\ref{lem:no4corners}. We apply the lemma using $\epsilon=\frac{1}{2}$
and the set $S$ being the points on the boundary of $R$ 
of distance $1$ from a perpendicular edge of a (smaller) rectangle produced at an
earlier stage together with the points on the boundary of $R$ which are on the
edge of a (larger, not yet subdivided) rectangle corresponding to a
later stage $j>i$ in the big-marker-little-marker procedure.
The resulting rectangular partition produced at the end has the desired property.
\end{proof}

\subsection{Review of orthogonal marker region construction} \label{subsec:omreview}
In \cite{GJ2015} we also gave a construction of the so-called orthogonal marker regions on $\fzn$.
For the orthogonal marker region construction we have a sequence $d_0<d_1<\cdots$
of marker distances. We refer to $d_k$ as the \textit{$k$-th marker distance}.
For our arguments we need the sequence of marker distances to grow reasonably fast.
We will implicitly assume in the following that there is a fixed constant $K=K(n)$ (depending on the dimension $n$)
such that $d_{k+1} >K d_k$ for all $k$. 

The orthogonal marker regions will be a sequence $\sR_m$ of clopen partitions of $\fzn$
into polygonal regions each of which is a union of rectangles.
We inductively define the $m$th clopen partition $\sR_m$. To begin, we let
$\sR^m_m$ be a clopen rectangular partition of $\fzn$ with side lengths in $\{ d_m, d_m+1\}$.
We may construct this partition so that parallel edges which are distance $> 1$ apart are
distance $> \epsilon_0 d_m$ apart, for some fixed $\epsilon_0 > 0$
(this was implicit in the argument of Lemma~\ref{lem:n4c} except here we do not
need Lemma~\ref{lem:no4corners} but rather can use a simpler
subdivision algorithm obtained by just extending the faces of the earlier
regions which intersect a rectangular region at some stage of the construction).

We successively modify the regions to produce clopen partitions
$\sR^m_{m-1}, \dots, \sR^m_1$, and we will let
$\sR_m=\sR^m_1$. Suppose we have defined $\sR^m_{k+1}$.  We again use
a big-marker-little-marker construction by working with $d''\gg d'\gg
d_k$ to produce rectangular marker regions $\tilde{\sR}$ of side
lengths between $\frac{1}{2} d_k$ and $d_k$. More specifically, let $\sR$
be a clopen covering of $\fzn$ by rectangles with side lengths
approximately $d'$. We then adjust the faces of rectangles in $\sR$ to
produce another clopen covering $\sR'$ such that parallel faces of
distinct regions in the covering are at least $3 \epsilon' d'$
apart, for some fixed $\epsilon'>0$. We may assume $\epsilon' d' >12 d_k$.
We then do a secondary adjustment to produce $\sR''$ by moving
the faces of the regions in $\sR'$ no more than $\epsilon' d'$ so
that the perpendicular distance between parallel faces of a region in
$\sR''$ and a nearby region of $\sR^j_k$ for all $k \leq j <m$ are at
least $\epsilon_1 d_k$ apart, for some fixed $\epsilon_1 >0$
which also depends only on $n$. 
We still have that parallel faces of distinct regions in
$\sR''$ are at least $\epsilon' d'>12 d_k$ apart.  We then do a subdivision
algorithm to produce a clopen partition $\tilde \sR$ so that each
rectangular region in the partition has side lengths between
$\frac{1}{2} d_k$ and $d_k$, and each face of a region in $\tilde \sR$
is at least $\epsilon_2 d_k$ from any parallel face of a nearby region
in $\sR^j_k$ for $k\leq j<m$, and at least $\epsilon_2 d_k$ from a parallel
face of a region in $\tilde \sR$ if it has distance $>1$ from that face.

We then get a clopen assignment to each
region $\tilde R$ of $\tilde \sR$ one of the regions of $\sR^m_{k+1}$
which intersect it. We then replace each region $R$ of $\sR^m_{k+1}$
with the union of the $\tilde R$ which are assigned to $R$. Thus we
have obtained $\sR^m_k$.  Note that each region $R$ of the partition
$\sR^m_{k}$ is a finite union of rectangular regions from the $\tilde
\sR$ partition.

If we let $\epsilon=\epsilon(n)=\min \{\epsilon_1,\epsilon_2\}$, then we have the
following properties of the $\sR^m_k$.

\begin{defn} \label{def:op}
The {\em orthogonality properties} of the $\sR^m_k$ are the following.
\begin{enumerate}
\item (bounded geometry) \label{op_a}
$\sR^m_k$ is a clopen  partition of $\fzn$, and each 
region $R$ in $\sR^m_k$ is a finite disjoint union of rectangles in $\tilde R$
with side lengths between $\frac{1}{2} d_k$ and $d_k$. 
\item (orthogonality to previous regions) \label{op_b}
Any face $F$ of a region in $\sR^m_k$ is at least $\epsilon d_k$ away from any parallel
face $F'$ of a nearby region in $\sR^j_k$ for any $k\leq j <m$.
\item (orthogonality to same regions) \label{op_c}
Any face $F$ of a region in $\sR^m_k$ is at least $\epsilon d_k$ away from any parallel
face $F'$ of a nearby region in $\sR^m_k$ unless $\rho(F,F')\leq 1$.
\setcounter{mycounter}{\value{enumi}}
\end{enumerate}
\end{defn}


As we said above, we set $\sR_m=\sR^m_1$. This completes the review of the construction
of the basic orthogonal marker regions, for a given set of distances 
$d_0<d_1\cdots$.  We note explicitly
for later purposes that for each dimension $n$, there is a fixed constant 
$K=K(n)$ such that if $d_{k+1}> K d_k$ for all $k$, then the orthogonal marker 
construction can be carried out.

It is not clear that the orthogonal marker regions that we constructed
above are connected and have connected boundaries. These connectedness
properties will be useful later, so we note below that they can be
arranged with a refinement or a variation of the above
construction.

A subset $S\subseteq \Z^n$ is \textit{connected} if it is connected in
the Cayley graph of $\Z^n$ with the standard generators. For any set
$S\subseteq \Z^n$ we define the \textit{boundary} of $S$, denoted by
$\partial S$, to be the set of points $x\in S$ such that $e\cdot
x\not\in S$ for one of the standard generators $e$. Since the regions
we consider will be finite unions of rectangles with reasonably big
side lengths, we will regard such $S\subseteq \Z^n$ as a subset of
$\R^n$ in the obvious way and speak of their connectedness, simple
connectedness, and their boundaries as subsets of $\R^{n-1}$. This
convention will make our discussion easier.

\begin{thm} \label{thm:om}
For some fixed $\epsilon=\epsilon_n>0$ there are clopen partitions
$\sR_m$ of $\fzn$ satisfying the orthogonality properties so that each
region $R$ of an $\sR_m$ partition is connected and also its boundary
$\partial R$ is connected. In fact each region $R$ is homeomorphic to
the closed unit ball in $\R^n$.
\end{thm}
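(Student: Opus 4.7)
The plan is to strengthen the inductive construction of $\sR^m_k$ reviewed above so that at every stage each region is homeomorphic to a closed $n$-ball; this automatically yields connected regions with connected boundaries. I proceed by downward induction on $k$ from $m$ to $1$. The base case $k=m$ is trivial: every region of $\sR^m_m$ is a rectangle. Assume inductively that every region of $\sR^m_{k+1}$ is homeomorphic to a closed $n$-ball.

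For the inductive step, the only freedom in the construction of $\sR^m_k$ from $\sR^m_{k+1}$ is the clopen assignment of each small region $\tilde R\in\tilde\sR$ (with side lengths between $\tfrac12 d_k$ and $d_k$) to one of the $\sR^m_{k+1}$ regions it intersects. I use the \emph{majority rule}: assign $\tilde R$ to the unique $R\in\sR^m_{k+1}$ with which $\tilde R$ has the largest-volume intersection, breaking ties by a fixed Borel ordering on the positions of $\tilde R$ and $R$. Because $\tilde R$ has diameter at most $n d_k$ and $d_k\ll d_{k+1}$, each $\tilde R$ meets only a bounded number of regions of $\sR^m_{k+1}$, so the choice is well-defined and Borel. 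Writing $R'$ for the union of those $\tilde R$ assigned to $R$, the symmetric difference $R\triangle R'$ is contained in the $d_k$-neighborhood of $\bnd R$.

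The key claim is that $R'$ is homeomorphic to $R$, and hence to a closed $n$-ball. Heuristically $R'$ is obtained from $R$ by moving each face of $R$ by at most $d_k$, with the offset on each flat portion of $\bnd R$ away from the codimension-$\geq 2$ skeleton given by a locally constant step function determined by the $\tilde R$-grid. An explicit homeomorphism $R\to R'$ may be built in two stages: first deform $\bnd R$ onto $\bnd R'$ by sliding each face onto its perturbed image, using orthogonality property~(\ref{op_b}) to guarantee that the $\tilde R$-faces serving as new boundary are at least $\epsilon d_k$ from the old faces of $\sR^m_{k+1}$ so that there is ample room for the slide; then extend to the interior using radial coordinates from any interior base point of $R$, which is available because of the inductive $n$-ball hypothesis. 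The geometry works out because the old faces of $R$ have length at least $\tfrac12 d_{k+1}\gg d_k$.

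The main obstacle is the codimension-$\geq 2$ skeleton of $\bnd R$, where two or more faces of $R$ meet. Near such a corner the ``slide each face'' picture degenerates, and one must verify that the majority rule does not cause $R'$ to develop a pinch, an extra handle, or a non-manifold point. For $n=2$ the remedy is provided by Lemma~\ref{lem:n4c}: by running the inner subdivision algorithm so that no point of $\fzs$ lies within $\tfrac{1}{120}d_{k+1}$ of four distinct regions of $\sR^m_{k+1}$, each $\tilde R$ meets at most one corner of $\bnd R$, and the local picture there reduces to a bounded-complexity case analysis showing that the new boundary is still a simple closed curve. In higher dimensions the analogous control follows by applying the same big-marker-little-marker adjustment to the $\sR^m_{k+1}$ construction to separate codimension-$\geq 2$ strata by much more than $d_k$, after which the local analysis is again finite. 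Finally we set $\sR_m=\sR^m_1$; the orthogonality constants $\epsilon_n$ are inherited from the existing construction.
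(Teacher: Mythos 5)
Your overall scaffold (downward induction on $k$, modifying only the rule assigning the auxiliary rectangles $\tilde R\in\tsR$ to regions of $\sR^m_{k+1}$, then exhibiting a boundary-preserving homeomorphism $R\cong R'$) is the paper's scaffold, but the specific assignment you propose --- majority of volume --- does not work, and the step you defer (``a bounded-complexity case analysis showing that the new boundary is still a simple closed curve'') is exactly where it fails. Concretely, let $R_1,R_2,R_3$ meet at a T-junction, say locally $R_1=\{x\le 0,\ y\ge 0\}$, $R_2=\{x\ge 0,\ y\ge 0\}$, $R_3=\{y\le 0\}$, with $d_k=120$ and $\tsR$-rectangles of side lengths in $[60,120]$. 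Take the junction rectangle $\tilde R=[-50,10]\times[-10,50]$: its intersections with $R_1,R_2,R_3$ have areas $2500$, $500$, $600$, so majority sends $\tilde R$ to $R_1$. Its unique left neighbour can be $V_1=[-110,-50]\times[-60,55]$ (areas $3300$ in $R_1$ and $3600$ in $R_3$, so $V_1\to R_3$) and its unique top neighbour $V_2=[-50,55]\times[50,110]$ (areas $3000$ in $R_1$ and $3300$ in $R_2$, so $V_2\to R_2$), while the bottom and right neighbours of $\tilde R$ do not meet $R_1$ at all. This tiling is compatible with Lemma~\ref{lem:n4c} (only three rectangles come together near each relevant corner, and the fourth is a definite distance away) and with the orthogonality requirements, yet every rectangle edge-adjacent to $\tilde R$ is assigned to $R_2$ or $R_3$, so $\tilde R$ becomes an isolated connected component of $R_1'$: the new region is not even connected, let alone a disk. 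The mechanism is robust to the choice of constants: $\tilde R$ pokes only slightly into $R_2$ and $R_3$, so its majority is $R_1$, while its $R_1$-side neighbours straddle the old faces almost evenly and tip the other way. This is precisely why the paper does not use any local greedy criterion at exceptional rectangles; instead it assigns the exceptional $\tilde R$ to $R_1$ and then \emph{forces} every rectangle within $2d_k$ of $\tilde R$ that meets $R_1$ into $R_1$ as well (and the remaining nearby ones into $R_2$), which is what keeps the junction coherent.

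Your higher-dimensional remedy also does not match a workable strategy: ``separating the codimension-$\ge 2$ strata by much more than $d_k$'' is not meaningful for a single region $R$ (the edges of $\partial R$ cannot be moved away from the faces they bound), and in any case the failure above already occurs at an ordinary triple junction in dimension $2$. What the paper actually does in general dimension is keep the assignment \emph{arbitrary} but refine the covering: rectangles are classified by type and subdivided using $n+1$ nested scales $\epsilon_0 d_k'<\cdots<\epsilon_n d_k'$, so that the rectangles meeting a $j$-edge are much smaller than the clearance of the $(j{+}1)$-strata; the added or deleted rectangles near each $j$-edge then have the product form $(T\cap F)\times C$ and can be contracted, and the local homeomorphisms glue because the thickened edges $E'(\epsilon)$ are pairwise disjoint (Claims~\ref{cpa} and~\ref{omclaim}). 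In short: with a single scale you need a bespoke, non-greedy assignment rule near junctions; with an arbitrary (or majority) assignment you need the multi-scale subdivision. Your proposal has neither, so the inductive step is not established.
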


The rest of this subsection is devoted to a proof of
Lemma~\ref{thm:om} for the dimension $n=2$ case. A different proof for
the general case will be given in the next subsection. We first give a
proof for the dimension $n=2$ case as the argument in this case
requires less modification of the construction, and the $n=2$ case
suffices for the applications we will have later in the paper.


We follow the general outline of the standard construction of
orthogonal marker regions but make several modifications. First, in
constructing $\sR^m_m$ we use Lemma~\ref{lem:n4c} and ensure
that no point is close to 4 rectangular marker regions. In the
subsequent steps, we will maintain this property at each step of the
construction.  Suppose we have defined $\sR^m_{k+1}$ and for each
region $R$ of $\sR^m_{k+1}$ we have that $R$ and $\partial R$ are
connected. In fact, we will inductively assume that the $\sR^m_{k+1}$
regions are homeomorphic to the closed unit disk via a homeomorphism
which maps the boundary to the unit circle (and so induces a homeomorphism between
the boundary and the unit circle). As in the standard orthogonal marker
regions construction, we define an auxiliary clopen rectangular
partition $\tilde \sR$ in the step of the construction of $\sR^m_{k}$
from $\sR^m_{k+1}$. In defining the $\tilde \sR$ partition,
we use scale $d_k$ and Lemma~\ref{lem:n4c} to get the partition $\tilde \sR$.
Thus, every point $x$ of $\fzs$ is close to at most $3$ distinct regions of
$\tilde \sR$ (i.e., no $4$ regions of $\tilde{\sR}$ come together).

The next step is to assign the regions $\tilde R$ of $\tilde \sR$
to the regions $R$ of $\sR^m_{k+1}$. Instead of doing the assignment arbitrarily as in the standard construction, we make the following modification. Call a region $\tilde R$ \textit{exceptional} if 
it intersects $3$ distinct regions of $\tilde \sR^m_{k+1}$. First, we assign the 
exceptional regions $\tilde R$ to regions $R$ of $\sR^m_{k+1}$ as follows. Let 
$R_1, R_2, R_3$ be the three regions of $\sR^m_{k+1}$ which intersect $\tilde R$. 
Label them so that $R_3$ is the region which contains an entire face of $\tilde R$
(see Figure~\ref{fig:twodconn}). Assign $\tilde R$ to $R_1$. Then, for the regions 
$\tilde R'$ in $\tilde \sR$ that are within $2d_k$ distance of the exceptional $\tilde R$, assign $\tilde{R}'$ to $R_1$ if $\tilde R'$ intersects $R_1$. 
Next, for those regions $\tilde R'$ in $\tilde \sR$ that are within $2 d_k$ distance to $\tilde{R}$, if they have not been assigned, assign them to $R_2$. Figure~\ref{fig:twodconn}
illustrates this assignment process near the exceptional rectangles 
of $\tilde R$.

\begin{figure}[h] 
\begin{tikzpicture}[scale=0.05]

\draw[thick] (0,50) to (0,0) to (100,0);
\draw[thick] (-1,50) to (-1,0) to (-100,0);
\draw[thick] (-100,-1) to (100,-1);

\draw[fill=red, opacity=0.5] (-10,-10) rectangle (8,8);
\draw[fill=red, opacity=0.5] (-15,8) rectangle (15,12);
\draw[fill=red, opacity=0.5] (-6,12) rectangle (9,18);
\draw[fill=red, opacity=0.5] (-10,18) rectangle (15,25);
\draw[fill=red, opacity=0.5] (-17,-5) rectangle (-10,8);
\draw[fill=red, opacity=0.5] (-30,-9) rectangle (-17,4);

\draw[fill=blue, opacity=0.5] (8,-4) rectangle (15,8);
\draw[fill=blue, opacity=0.5] (15,-8) rectangle (25,23);
\draw[fill=blue, opacity=0.5] (25,-3) rectangle (32,9);

\node at (-50,25) {$R_1$};
\node at (50,25) {$R_2$};
\node at (0,-20) {$R_3$};


\end{tikzpicture}
\caption{The assignment algorithm in dimension 2. The red rectangles are assigned to
$R_1$, and the blue ones to $R_2$.} \label{fig:twodconn} 
\end{figure}
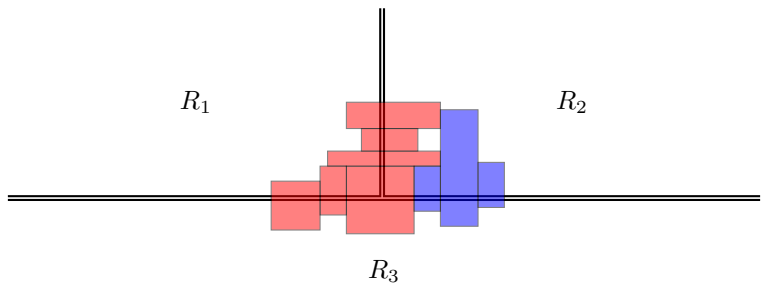

The rectangles $\tilde R'$ of $\tilde \sR$ which are not within $2d_k$
of an exceptional rectangle are assigned arbitrarily as before. The
regions of $\sR^m_k$ are obtained from this assignment as in the
standard construction. It is clear that the orthogonality properties
are satisfied as before. We only verify that the regions are homemorphic to
the unit disk (with boundaries mapping to the unit circle).

A region $R''$ of $\sR^m_k$ is obtained from a region $R$ of $\sR^m_{k+1}$
in two steps. First, for each exceptional rectangle $\tilde R$
which intersects $R$, we add or subtract from $R$ the rectangles of $\tilde \sR$
within $2d_k$ of $\tilde R$ according to the above algorithm (the region $R$
corresponds to one of the regions $R_1$, $R_2$, $R_3$ in the above description,
and which one may depend on $\tilde R$). Let $R'$ be the region obtained
from $R$ in this first step. In the second step we produce $R''$ from $R'$ by
adding or subtracting from $R'$ all of the rectangles $\tilde R$ in $\tilde \sR$
which intersect $R$ (or equivalently $R'$) not within $2d_k$ of an exceptional rectangle
intersecting $R$. We first show that $R'$ is homeomorphic to $R$ (and thus by induction
to the unit ball). Consider a single exceptional rectangle $\tilde R$ of $\tilde \sR$
which intersects $R$. With respect to $\tilde R$, that is,
in considereing the rectangles of $\tilde \sR$ within $2d_k$ of $\tilde R$,
$R$ may be considered as $R_1$, $R_2$, or $R_3$ in the algorithm.
If $R$ is considered as $R_1$, then all of the rectangles of
$\tilde \sR$ within $2d_k$ of $\tilde R$
which intersect $R$ are added to $R$ in forming $R'$. There is clearly a homeomophism
of this new region with $R$ which preserves boundaries, and is obtained by contracting
each of these rectangles in $\tilde \sR$ back to the boundary of $R$. These homeomorphism
can be taken to be the identity outside of $B(\tilde R,3d_k)$, the $3d_k$
neighborhood of $\tilde R$.
Suppose $R$ is considered as an $R_2$ region with respect to $\tilde R$. In this case
we obtain $R'$ within a $3d_k$ neighborhood of $\tilde R$ by
first removing the rectangles within $2d_k$ of $\tilde R$ which intersect
both $R_1$ and $R_2$, and then adding the rectangles which intersect $R_2$ but not $R_1$.
After removing the first set of rectangles the result is homeomorphic to $R_2$
by the same contraction argument used in the $R_1$ case. There is also a homeomorphism
which contracts the added rectangles back to the boundary of $R$. Both of these
homeomorphisms will be the identity outside of $B(\tilde R,3d_k)$.
Combining these two homeomorphisms gives a homeomorphism
between $R' \cap B(\tilde R,2d_k)$ with $R \cap B(\tilde R,2d_k)$ which is the
identity outside of $B(\tilde R,3d_k)$. In the third case where $R$ is considered as the
$R_3$ region, we remove all of the rectangles which intersect $R$ within $2d_k$ of $\tilde R$
in forming $R'$. Clearly a simple contraction of these rectangles to the boundary of $R$
gives the homeomorphism in this case as well.
Since the different exceptional rectangles which intersect $R$ are at least
$\epsilon d_{k+1}\gg 3d_k$ apart, we can combine these homeomorphisms
from these exceptional rectangles meeting $R$ to produce a (boundary preserving) homeomorphism
from $R'$ to $R$.

We next show $R''$ is homeomorphic to $R'$ by a boundary preserving homeomorphism.
For each edge $l$ of $R$, say $l$ is horizontal, let $M_l$ be the the set of rectangles
in $\tilde \sR$ which intersects $l$ and are not within $2d_k$ of
an exceptional rectangle $\tilde R$. Let $S_l$ be the rectangle  whose horizontal edges 
have distance $d_k+\epsilon$ from $l$, for some small $\epsilon>0$, and whose
left edge has $x$-coordinate $a-\epsilon$ where the leftmost rectangle $A$
in $M_l$ has $x$-coordinate $a$.
\begin{figure}[h] 
\begin{tikzpicture}[scale=0.05]

\draw[thick] (0,35) to (0,0) to (100,0);
\draw[thick] (-1,35) to (-1,0) to (-30,0);

\draw[fill=yellow, opacity=0.2] (-28,-30) rectangle (32,28);

\draw[fill=blue, opacity=0.5] (30,-4) rectangle (35,5);
\draw[fill=blue, opacity=0.5] (35,-7) rectangle (41,4);
\draw[fill=red, opacity=0.5] (41,-3) rectangle (45,6);
\draw[fill=blue, opacity=0.5] (45,-5) rectangle (50,5);
\draw[fill=red, opacity=0.5] (50,-6) rectangle (54,4);
\draw[fill=blue, opacity=0.5] (54,-3) rectangle (60,3);

\draw[thick] (80,8) to (29,8) to (29,-8) to (80,-8);

\node at (70,30) {$R$};

\node at (0, -20) {$B(\tilde R, 2d_k)$};

\node (A) at (15, 10) {$l$};
\draw[->]  (A) to (15,1);

\node (B) at (60, 20) {$S_l$};
\draw[->] (B) to (46,9);

\node (C) at (70, -20) {$M_l$};
\draw[->] (C) to (55, -4);


\end{tikzpicture}
\caption{The construction of $R''$ from $R'$. The blue rectangles are to be added and the red ones to be removed.} \label{fig:lem1.5f}
\end{figure}
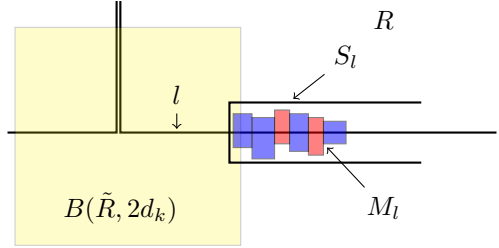
Likewise, the right edge of $S_l$
has $x$-coordinate $b+\epsilon$ where $b$ is the $x$-coordinate of the right edge
of the rightmost rectangle $B$ in $M_l$. If $l$ is vertical the rectangle $S_l$ is similarly defined. Figure~\ref{fig:lem1.5f} illustrates these definitions. Note that none of the
rectangles in $\tilde \sR$ which intersect a different edge $l'$ of $R$ will intersect
$S_l$. In fact, if $S_{l'}$ denotes the corresponding rectangle for the edge $l'$,
then $S_l\cap S_{l'}=\emptyset$ (if $\epsilon <\frac{1}{2} d_k$).
It is easy to see that there is homeomorphism of $S_l$ with itself which is the identity
on the boundary of $S_l$ and which maps $S_l \cap R'$ to $S_l \cap R''$.
Combining these homeomorphisms for the different edges of $R$ gives
a homeomorphism from $R'$ to $R''$.

This finishes a proof of Theorem~\ref{thm:om} for the $n=2$ case.

\subsection{Connected orthogonal marker regions} \label{comr}
In this subsection we give a proof of the general case of
Theorem~\ref{thm:om} that requires more substantial modifications of the
standard construction of orthogonal marker regions.

We still follow the general outline of the standard
construction. Specifically, we will construct an auxiliary clopen
rectangular partition $\tilde \sR$ in the construction of $\sR^m_k$
from $\sR^m_{k+1}$. However, in constructing $\tilde\sR$, we will use
$n+1$ many scale values $\epsilon_0 d'_k<\epsilon_1 d'_k<\cdots < \epsilon_{n} d'_k$
instead of the single value $d_k$ as before. At the end we will take $d_k=\epsilon_0 d'_k$.
By subdividing the rectangles in $\tilde \sR$
we will produce a clopen partition $\tilde S$ with side lengths between
$\frac{1}{2}d_k$ and $d_k$ and an assignment map which assigns to each $S\in \tilde S$
a region of $\sR^m_{k+1}$ so that the resulting regions define the
partition $\sR^m_k$ which satisfy Theorem~\ref{thm:om} for some $\epsilon >0$.

We first let $\sR$ be a clopen partition of
$\fzn$ into rectangles of side lengths between $\frac{1}{2} d'_k$ and
$d'_k$.  We may assume that for
some $\eta>0$ (which depends only on $n$) that (\ref{op_b}) and
(\ref{op_c}) of the orthogonality properties hold using the constant
$\eta$. Furthermore, we may assume that every face of a rectangle in
$\sR$ is also at least $\eta d'_k$ away from any parallel face of a
nearby rectangle in $\sR^m_{k+1}$.  Fix $\epsilon_0< \epsilon_1 <\cdots
<\epsilon_n$ with $\epsilon_n < \frac{1}{3}\eta$ and $\epsilon_j <
\frac{1}{12}\epsilon_{j+1}$ for all $j<n$.

Say a rectangle $R$ in $\sR$ 
is of \textit{type} $j$ (where $0\leq j \leq n$) if there are exactly $j$ distinct basic
unit vectors $e$ such that $R$ intersects a face of some $\sR^m_{k+1}$
region which is perpendicular to $e$. In particular, $R$ is of type 0 if it does not
intersect any face of an $\sR^m_{k+1}$ region, and it is of type $n$ if intersects
a corner of an $\sR^m_{k+1}$ region.


To construct the partition $\tilde\sR$, we further subdivide the
rectangles in $\sR$. We begin with the rectangles $R$ in $\sR$ of type
$n$. Note that $R$ is a rectangle of side lengths between $\frac{1}{2}d_k'$ and $d_k'$, whereas any region in $\sR^m_{k+1}$ is a disjoint union of finitely many rectangles each of which has side lengths at least $\frac{1}{2}d_{k+1}$, with $d_{k+1}\gg d_k'$. Also by our assumption, $R$ intersects $n$ many mutually perpendicular faces of a $\sR^m_{k+1}$ region. Since $\epsilon_n <\frac{1}{3}\eta\ll \frac{1}{2}$, we may subdivide $R$ into rectangles $S$ with side lengths between
$\frac{2}{3}\epsilon_n d'_k$ and $\epsilon_n d'_k$ such that the
perpendicular distance between any face of $S$ and a parallel face of an
$\sR^m_{k+1}$ region intersecting $R$ is at least $\frac{1}{4} \epsilon_nd'_k$. Let $\tilde
\sR_n$ be the resulting partition. Let $\sS_n$ be the collection of
all rectangles of type $n$ in $\tilde{\sR}_n$.  At the next step we
consider the rectangles $R$ in $\tilde{\sR}_n$ of type $n-1$. These
include both the rectangles in $\sR$ of type $n-1$ and the rectangles
$S$ in a subdivision from the first step which happen to be of type
$n-1$. We subdivide all of these rectangles into rectangles with side
lenghs between $\frac{2}{3} \epsilon_{n-1} d_k'$ and
$\epsilon_{n-1}d_k'$ in such a way that the distance from any of their
faces to a face of a region in $\sR^m_{k+1}$ is at least
$\frac{1}{4}\epsilon_{n-1}d_k'$. Let the resulting partition be $\tilde
\sR_{n-1}$.  Let $\sS_{n-1}$ be the collection of all rectangles in
these subdivisions which are still of type $n-1$.  Continuing in this
manner we define the partitions $\tilde \sR_n, \dots,
\tilde\sR_1=\tilde\sR$ each of which is a refinement of the previous
one in the sequence. We also obtain collections $\sS_n,\dots, \sS_1$,
and $\sS_j$ is the collection of all rectangles of type $j$ in the
partition $\tilde \sR$.  In summary, each rectangle in $\sS_j$ has
side lengths between $\frac{2}{3}\epsilon_j d_k'$ and $\epsilon_jd_k'$,
and any face of a rectangle in $\sS_j$ is at least $\frac{1}{4}
\epsilon_jd_k'$ from a parallel face in $\sR^m_{k+1}$. Also, each
rectangle in $\sS_j$ is of type $j$. 
We let $\tilde{\sR}$ be the
collection $\bigcup_j \sS_j$.  The standard arguments show that we may
assume that there is an $\epsilon'>0$ such that the orthogonality properties
\ref{op_b} and \ref{op_c} holds for the faces of regions in $\tilde \sR$ using
constant $\epsilon'$.

We define $\sR^m_{k}$ using $\tilde
\sR$ and an arbitrary assignment of the rectangles in $\tilde \sR$ to
regions of $\sR^m_{k+1}$ which intersect them.
Figure~\ref{fig:secondcon} illustrates this construction in dimension
2.

\begin{figure}[h] 
\begin{tikzpicture}[scale=0.05]

\draw[thick] (0,50) to (0,0) to (100,0);
\draw[thick] (-1,50) to (-1,0) to (-100,0);
\draw[thick] (-100,-1) to (100,-1);

\draw[fill=red, opacity=0.5] (-20,-30) rectangle (30,15);

\draw[fill=blue, opacity=0.5] (30,-4) rectangle (35,5);
\draw[fill=blue, opacity=0.5] (35,-7) rectangle (41,4);
\draw[fill=blue, opacity=0.5] (41,-3) rectangle (45,6);

\draw[fill=blue, opacity=0.5] (-4,15) rectangle (4,19);
\draw[fill=blue, opacity=0.5] (-7,19) rectangle (3,24);
\draw[fill=blue, opacity=0.5] (-3,24) rectangle (5,30);



\node at (-50,30) {$R_1$};
\node at (50,30) {$R_2$};
\node at (-50,-20) {$R_3$};

\node at (45, -20) {$S_2$};
\draw[->] (41,-20) to (31,-20);

\node at (60, 10) {$S_1$};
\draw[->] (55,9) to (46,5);


\end{tikzpicture}
\caption{The second construction in dimension 2.} \label{fig:secondcon}
\end{figure}
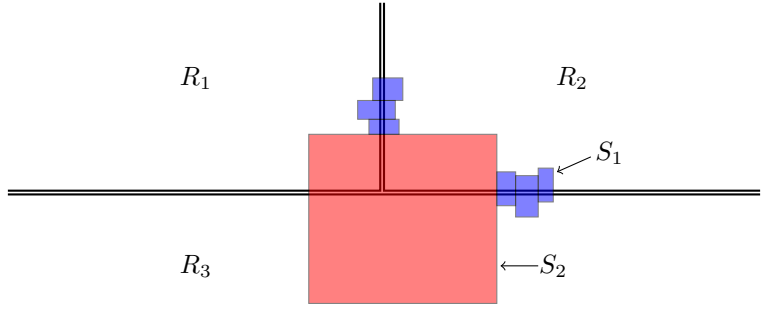
  
It is clear from our construction that the orthogonality properties hold for $\sR^m_k$.
We verify that each region of $\sR^m_k$ is homeomorphic to the closed unit ball
via a homemorphism which preserves the boundaries. This suffices, as we may
then take $d_k=\epsilon_0 d'_k$ and subdivide the rectangles in $\tilde \sR$
to have dimensions between $\frac{1}{2} d_k$ and $d_k$ and maintain the
orthogonality properties \ref{op_b}, \ref{op_c} for some constant $\epsilon>0$.

Fix a region $R$ of $\sR^m_{k+1}$. Let $A$ be the set of all rectangles in
$\tilde \sR$ which intersect $\partial R$. Let $A=B\cup C$ where
$B$ are those rectangles in $A$ which are assigned to $R$ and $C=A-B$.
Let $R'$ be the region of $\sR^m_k$ corresponding to $R$, that is,
$R'$ is the union of $R$ and all of the rectangles in $B$, minus all
of the rectangles in $C$. For $0 \leq j \leq n+1$, let $B_j$, $C_j$
be those rectangles in $B$, $C$ respectively of type $\geq j$. Let $A_j=B_j\cup C_j$.
Let $R_j=( R \cup (\cup B_j))- (\cup C_j)$. Thus, $R'=R_0$ and
$R=R_{n+1}$. We show for each $0 \leq j \leq n$ that $R_j$
is homeomorphic to $R_{j+1}$ by a boundary preserving homeomorphism.
Since $R_{n+1}=R$ is homeomorphic to the ball by induction, this suffices. 
Let $T_{j+1}=\cup A_{j+1}$. 

Consider a $j$-edge $E$ of $\partial R$. By a {\em $j$-edge} we mean a connected component of 
the intersection  of $j$ mutually perpendicular faces of $R$. 
Say the faces determining $E$ have normal vectors
$e_{k_1},\dots, e_{k_j}$. For $i\in \{k_1,\dots, k_j\}$, let $c_i$
be the common value of $\pi_i(x)$ for $x \in E$.
So, $E$ is of the form $E=I_1\times I_2 \times \cdots \times I_n$ where
$I_i$ is a proper interval for $i \notin \{ k_1,\dots,k_j\}$,
and for $i \in \{ k_1,\dots, k_j\}$ we have $I_i=\{ c_i\}$.
Let $\pi_E$ denote the projection map to $E$, so
$\pi_E(t_1,\dots,t_n)=(s_1,\dots,s_n)$ where $s_i=t_i$ if
$i \notin \{ k_1,\dots,k_j\}$, and $s_i=c_i$ otherwise. 

Fix $\epsilon >0$ with $\epsilon_j< \epsilon < \frac{1}{4} \epsilon_{j+1}$. 
Let $E(\epsilon)= J_1 \times J_2\times \cdots \times J_n$ where
$J_i=I_i$ for $i\notin \{ k_1,\dots, k_j\}$, and $J_i=[c_i-\epsilon d_k',
c_i+\epsilon d_k']$ for $i\in \{ k_1,\dots,k_j\}$. Roughly speaking,
$E(\epsilon)$ is the $\epsilon d_k'$-expansion of $E$ in the directions
$e_{k_1},\dots,e_{k_j}$.

Let $E'(\epsilon)= E(\epsilon)\sm T_{j+1}$. Let $F=E\sm T_{j+1}$.
Let 
$F(\epsilon)$ be the set of all $x=(t_1,\dots,t_n)$ such that $\pi_E(x)\in F$ and 
$t_i \in [c_i-\epsilon d_k', c_i+\epsilon d_k']$  for $i \in \{ k_1,\dots,k_j\}$.
So, $F(\epsilon)$ is the $\epsilon d_k'$-expansion of $F$ in
the directions $e_{k_1},\dots,e_{k_j}$.
For each $x\in E$, let $x(\epsilon)$ denote the set of $(t_1,\dots, t_n)$ such that $\pi_E(t_1,\dots, t_n)=x$ and $t_i\in [c_i-\epsilon d_k', c_i+\epsilon d_k']$ for $i\in \{k_1,\dots, k_j\}$.

\begin{claim} \label{cpa}
$E'({\epsilon})=F(\epsilon)$.
\end{claim}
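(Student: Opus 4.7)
The plan is to derive both inclusions from a single geometric equivalence: for every $y \in E(\epsilon)$ and every rectangle $S \in \tilde{\sR}$ of type $\geq j+1$, one has $y \in S$ if and only if $x := \pi_E(y) \in S$. Granted this, the forward inclusion $F(\epsilon) \subseteq E'(\epsilon)$ is automatic, for $y \in F(\epsilon) \cap T_{j+1}$ would lie in some such $S$, and then $x$ would too, contradicting $x \in F = E \sm T_{j+1}$; the reverse inclusion $E'(\epsilon) \subseteq F(\epsilon)$ follows by the symmetric argument.

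The first step is to show that $x$ and $y$ both lie in a common rectangle $R_1 \in \sR$. Let $R_1 = \prod_i [\alpha_i, \beta_i]$ be the rectangle of $\sR$ containing $y$; since $x$ and $y$ agree in all coordinates outside $\{k_1, \ldots, k_j\}$, it suffices to show $c_{k_l} \in [\alpha_{k_l}, \beta_{k_l}]$ for each $l$. Here I would invoke the assumed separation property that faces of $R_1$ are at distance $\geq \eta d_k'$ from any parallel face of a nearby rectangle in $\sR^m_{k+1}$, applied to the face $F_{k_l}$ of $R_0 \in \sR^m_{k+1}$ lying in $\{z : z_{k_l} = c_{k_l}\}$ (it is nearby $R_1$ since $|y_{k_l} - c_{k_l}| \leq \epsilon d_k'$). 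Because $\eta > 3 \epsilon_n > \epsilon$, the only way for $c_{k_l}$ to respect the $\eta d_k'$ gap while lying within $\epsilon d_k'$ of $[\alpha_{k_l}, \beta_{k_l}]$ is to lie in $[\alpha_{k_l} + \eta d_k',\, \beta_{k_l} - \eta d_k']$, which in particular gives $x \in R_1$.

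The second and main step uses the orthogonality built into the refinement $\tilde{\sR}$ of $\sR$. Suppose $S \in \tilde{\sR}$ has type $i' \geq j+1$ and contains $y$; then $S \in \sS_{i'}$, $S \subseteq R_1$, and, writing $S = \prod_i [a_i, b_i]$, each $k_l$-face of $S$ is at distance $\geq \frac{1}{4}\epsilon_{i'} d_k'$ from $F_{k_l}$. Since the choice $\epsilon_j < \frac{1}{12}\epsilon_{j+1}$ permits picking $\epsilon$ with $\epsilon_j < \epsilon < \frac{1}{4}\epsilon_{j+1} \leq \frac{1}{4}\epsilon_{i'}$, the same bookkeeping as in Step 1 forces $c_{k_l} \in [a_{k_l} + \frac{1}{4}\epsilon_{i'} d_k',\, b_{k_l} - \frac{1}{4}\epsilon_{i'} d_k']$; in particular $c_{k_l} \in [a_{k_l}, b_{k_l}]$, and combined with $x_i = y_i \in [a_i, b_i]$ for $i \notin \{k_1, \ldots, k_j\}$ this gives $x \in S$. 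The identical argument starting from $x \in S$ (using $x_{k_l} = c_{k_l} \in [a_{k_l},b_{k_l}]$ and $|y_{k_l}-c_{k_l}|\leq \epsilon d_k'$) yields $y \in S$, so the desired equivalence holds.

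The main obstacle is simply constant bookkeeping: one has to pick $\epsilon$ so that $\epsilon_j < \epsilon < \frac14 \epsilon_{j+1}$ is satisfied uniformly for every $j$ and every face $F_{k_l}$ meeting the relevant rectangles, and keep track of how $\eta$ and the chain $\epsilon_0 < \cdots < \epsilon_n$ compare. A secondary subtlety is the common-rectangle observation of Step 1: without it, a rectangle $S \subseteq R_1' \neq R_1$ meeting only one of $y,x$ would have to be ruled out separately, and this is precisely what the $\eta d_k'$-separation of the ambient partition $\sR$ buys us.
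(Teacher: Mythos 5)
Your proof is correct and rests on the same mechanism as the paper's: both reduce the claim to showing that each fiber $x(\epsilon)$ lies entirely inside or entirely outside every type-$\geq(j+1)$ rectangle meeting $E(\epsilon)$, using the separation $\frac14\epsilon_{j+1}d_k' > \epsilon d_k'$ between the faces of such rectangles and the parallel faces of $R$ that determine $E$. The only cosmetic differences are that the paper establishes the ``fiber avoids $T_{j+1}$'' direction by walking a coordinate-by-coordinate path from $x$ to a hypothetical $y\in x(\epsilon)\cap T_{j+1}$ and deriving the contradiction at the first crossing into $T_{j+1}$, whereas you argue directly that $y\in S$ forces $c_{k_l}$ into the $k_l$-extent of $S$ and hence $\pi_E(y)\in S$; and your Step 1 (placing $x$ and $y$ in a common rectangle of $\sR$) is not actually needed, since your Step 2 estimate is taken against faces of $\sR^m_{k+1}$ regions rather than of $\sR$.
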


\begin{proof}
It is clear that $F(\epsilon)\subseteq E(\epsilon)$. We show that for every point $x \in E$, if $x \in F$ then
$x(\epsilon) \subseteq E'(\epsilon)$, and if
$x \notin F$ then $x(\epsilon)
\subseteq T_{j+1}$. The claim then follows.

To see this, consider $x\in E$. First suppose $x \in T_{j+1}$.
Then $x$ lies both in $E$ and a rectangle $Q$ in $A_{j+1}$ which intersects $E$.
Since all faces determining $E$ are at least $\frac{1}{4}\epsilon_{j+1} d'_k>\epsilon d_k'$ from any parallel
faces of $Q$, it follows that $x(\epsilon) \subseteq Q$. Thus if $x\notin F$ we have $x(\epsilon)\subseteq T_{j+1}$.
Suppose now that $x \in F$. Suppose toward a contradiction that
$y \in x(\epsilon)\cap T_{j+1}$ (that is, this set is 
non-empty). Let $L_{k_1}$, $L_{k_2},\dots, L_{k_j}$ be the natural sequence of line segments
of length $\leq \epsilon d_k'$ which together form a path $p=(u_0=x, u_{k_1},\dots, u_{k_j}=y)$
from $x$ to $y$. Here
$L_{k_r}$ only involves changing the $k_r$ coordinate. More precisely,
$L_{k_r}$ is the set of points $z$ with $\pi_i(z)=\pi_i(u_{k_{r-1}})$
for $i \neq k_r$, and $\pi_{k_r}(z)$ is between $\pi_{k_r}(u_{k_{r-1}})=\pi_{k_r}(x)=c_{k_r}$ and $\pi_{k_r}(u_{k_r})=\pi_{k_r}(y)$.
Let $r$ be minimal such that $u_{k_r}\in T_{j+1}$. Say $u_{k_r}\in Q$,
where $Q$ is a rectangle in $A_{j+1}$. Note that $u_{k_{r-1}}$ is on the boundary of
$R$, in fact it lies on a face of $\partial R$ with normal vector $e_{k_r}$.
However, $u_{k_r}$ lies on a face of $Q$ which is perpendicular to $e_{k_r}$.
This contradicts the fact that any face of $Q$ is at least
$\frac{1}{4}\epsilon_{j+1} d_{k}'>\epsilon d_k'$ from a parallel face of $R$. 
\end{proof}

Figure~\ref{fig:Ep} illustrates the set $E'(\epsilon)$ in $3$-dimensions
where $E$ is the $j=1$ ``edge'' corresponding to the $x,y$ plane. 
The rectangles in green form
the set $T_2$ (those in $T_1\sm T_2$ are not shown). The region in blue is
$E'(\epsilon)$ (only the top half of it is shown).

\begin{figure} 
\includegraphics[width=3in, height=2in]{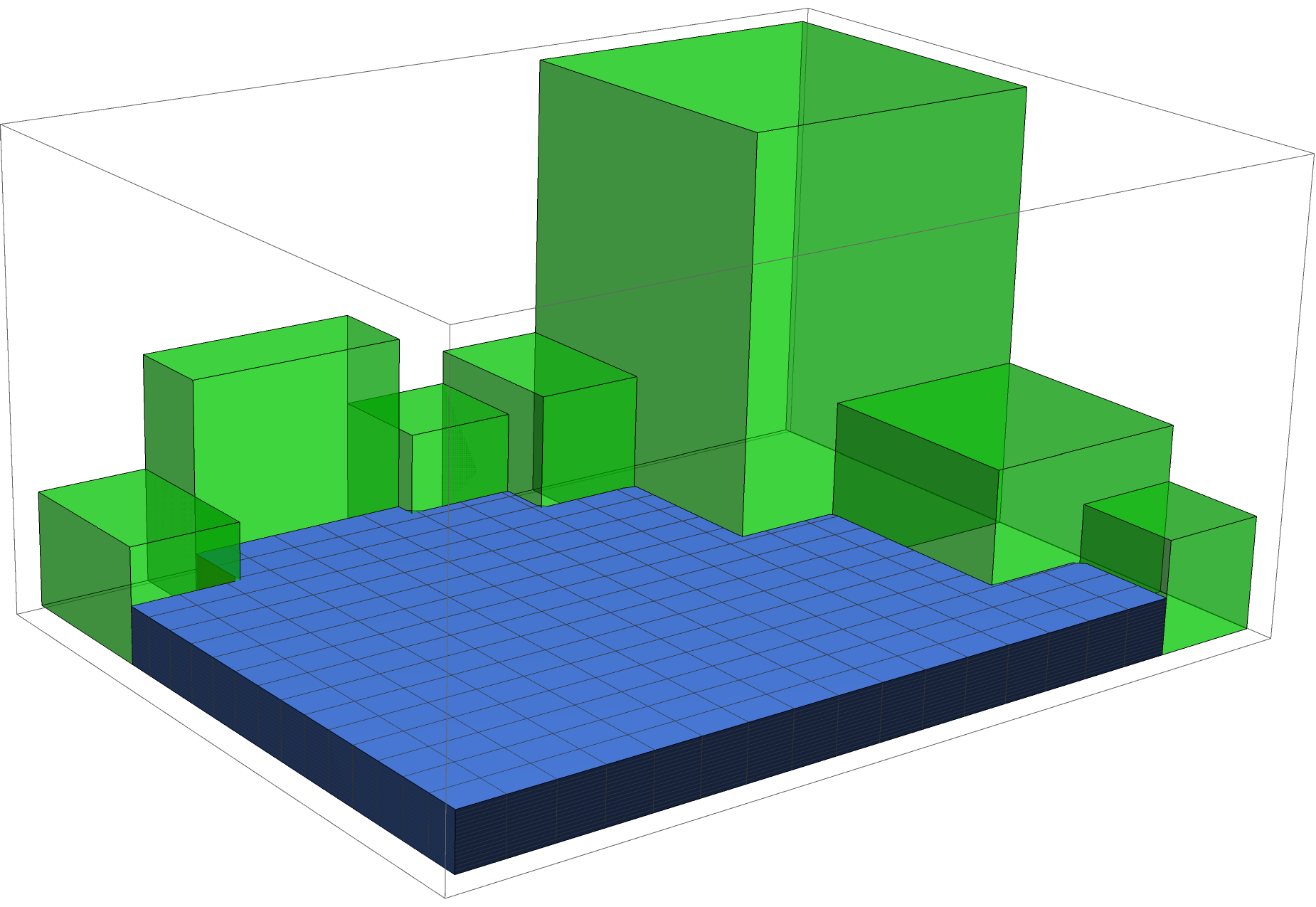}
\caption{The set $E'(\epsilon)$. } \label{fig:Ep}
\end{figure}

Note that by Claim~\ref{cpa}, $E'(\epsilon) \cap R_{j+1}$ is the set of points $x$
such that $\pi_E(x)\in F$ and for $i \in\{ k_1,\dots,k_j\}$,
$\pi_i(x) \in I'_i$ where $I'_i$ is an interval of the form
$[c_i,c_i+\epsilon d_k']$ or of the form $[c_i-\epsilon d_k',c_i]$.
Now, for each such $j$-edge $E$ of $R$ and corresponding 
$E'(\epsilon)$ there is a boundary preserving homeomorphism between
$E'(\epsilon) \cap R_{j}$ and $E'(\epsilon) \cap R_{j+1}$.
This is because, by Claim~\ref{cpa},
$E'(\epsilon) \cap R_j$ is obtained from $E'(\epsilon) \cap R_{j+1}$
by adding the rectangles of type $j$ of $B_j$ which intersect $F$,
and deleting those of $C_j$ which intersect $F$. Each of these rectangles $T$ is of the form
$(T\cap F) \times C$ where $C$ is a $j$-dimensional rectangle with side lengths $<\epsilon d_k'$.
The homeomorphism is induced by continuously contracting the edge lengths
of $C$ to $0$.

For each $j$-edge $E$ of $R$, let $\varphi_E$ be a boundary preserving
homeomorphism of $E'(\epsilon) \cap R_{j}$ with $E'(\epsilon) \cap R_{j+1}$.
we can take  $\varphi_E$ to be the identity on the boundary of $E'(\epsilon)$. 
To finish the proof of Theorem~\ref{thm:om} it suffices to show the following claim, which
shows that we can take the union of the $\varphi_E$ (and the identity on
$R_j\sm \bigcup_E E'(\epsilon)$, where the union ranges over all the $j$-edges $E$
of $R$).

\begin{claim}\label{omclaim}
Let $E_1$, $E_2$ be distinct $j$-edges of $R$. Then the
corresponding $E'_1(\epsilon)$, $E'_2(\epsilon)$
are disjoint.
\end{claim}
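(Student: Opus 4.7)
The approach is to argue by contradiction. Suppose $x \in E'_1(\epsilon) \cap E'_2(\epsilon)$; I aim to show $x \in T_{j+1}$, contradicting the identity $E'_\ell(\epsilon) = E_\ell(\epsilon) \setminus T_{j+1}$.

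I first dispose of the ``same intersection'' case in which $K_1 = K_2 = K$ and $c_{1,i} = c_{2,i}$ for every $i \in K$. Here $E_1$ and $E_2$ are distinct connected components of the same face-intersection of $R$, so they are separated in some direction $i^* \notin K$ by a distance inherited from the level-$(k+1)$ orthogonality (at least $\epsilon_2 d_{k+1}$, hugely exceeding $2\epsilon d_k'$); since $E_\ell(\epsilon)$ does not enlarge in directions outside $K$, already $E_1(\epsilon) \cap E_2(\epsilon) = \emptyset$.

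In every remaining case I exhibit a nearby corner $q$ of $R$ of type $\geq j+1$. Define $q$ by $\pi_i(q) = c_{1,i}$ for $i \in K_1$, $\pi_i(q) = c_{2,i}$ for $i \in K_2 \setminus K_1$, and $\pi_i(q) = \pi_i(x)$ for $i \notin K_1 \cup K_2$. The crux is to verify that $q$ simultaneously lies on $|L|$ mutually perpendicular faces of $R$, where $L = K_1 \cup K_2$ and so $|L| \geq j+1$. The key geometric input is an endpoint observation: for each $i \in K_2 \setminus K_1$, the coordinate $\pi_i(x)$ lies both in the extent $I_i^{(1)}$ of $E_1$ (from $x \in E_1(\epsilon)$) and within $\epsilon d_k'$ of $c_{2,i}$ (from $x \in E_2(\epsilon)$). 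Since the orthogonality of the $(k+1)$-level partition forces any two parallel $R$-faces in direction $e_i$ to be either within distance $1$ or at least $\epsilon_2 d_{k+1}$ apart, $c_{2,i}$ must coincide with (or be within $1$ of) an endpoint of $I_i^{(1)}$, hence be the $\pi_i$-coordinate of a genuine face of $R$ bounding $E_1$. Thus $q$ meets a face of $R$ in each of the $|L|$ perpendicular directions in $L$.

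Given this, the rectangle $Q \in \tilde{\sR}$ containing $q$ has type $\geq |L| \geq j+1$, so $Q \subseteq T_{j+1}$. The construction of $\tilde{\sR}$ guarantees that $Q$ extends at least $\tfrac{1}{4}\epsilon_{|L|} d_k'$ from $q$ on each side of every face of $R$ it straddles, and since $\epsilon < \tfrac{1}{4}\epsilon_{j+1} \leq \tfrac{1}{4}\epsilon_{|L|}$, the estimates $|\pi_i(x) - \pi_i(q)| \leq \epsilon d_k'$ for $i \in L$ (combined with $\pi_i(x) = \pi_i(q)$ for $i \notin L$) place $x$ inside $Q$. Hence $x \in T_{j+1}$, the desired contradiction.

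The main obstacle I anticipate is the degenerate subcase in which some $i \in K_1 \cap K_2$ has $c_{1,i} \neq c_{2,i}$, which leaves the definition of $q$ slightly ambiguous along direction $i$. The orthogonality dichotomy resolves this: if $|c_{1,i} - c_{2,i}| \geq \epsilon_2 d_{k+1}$ then the two slabs in direction $e_i$ are already disjoint and $E_1(\epsilon) \cap E_2(\epsilon) = \emptyset$; if instead $|c_{1,i} - c_{2,i}| \leq 1$, then the two candidate corners (for either choice of $c$-value) sit within distance $1$ of one another and so lie in a single type-$\geq j+1$ rectangle of $\tilde{\sR}$, whose side length $\geq \tfrac{2}{3}\epsilon_{j+1} d_k'$ vastly exceeds $1$, and that rectangle still absorbs $x$.
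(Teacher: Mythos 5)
Your proposal takes essentially the same approach as the paper's proof: both construct a point $q$ (the paper calls it $w$) by combining the coordinate constraints from $E_1$ and $E_2$, argue that $q$ lies on a $j'$-edge of $R$ with $j' \geq j+1$ and hence in a type-$\geq j+1$ rectangle $Q \in T_{j+1}$, and then use the $\tfrac14\epsilon_{j+1}d_k'$ separation between faces of $Q$ and parallel faces of $R$ to pull the common point $x$ into $Q$, contradicting $x \notin T_{j+1}$.

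Where you differ is in the level of care with degenerate cases. The paper's proof compresses the step "$w$ lies on a $j'$-edge of $R$" into a single sentence; it implicitly assumes that for $i \in D_1 \cap D_2$ the associated face coordinates agree, and it disposes of the $D_1 = D_2$ possibility with a brief appeal to separation of the two $j$-edges without saying what happens if the $c$-values differ. Your proof makes these points explicit: the $K_1 = K_2$, equal-$c$ case is isolated and handled via the separation of components; the endpoint observation supplies a genuine justification that $q$ actually sits on $\partial R$ (namely, that $c_{2,i}$ for $i \in K_2\setminus K_1$ must be a face coordinate bounding the extent of $E_1$, by the orthogonality dichotomy); and the mixed case $i \in K_1 \cap K_2$ with $c_{1,i} \neq c_{2,i}$ is handled via the same dichotomy. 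One small caveat: if $K_1 = K_2$ but some $c_{1,i} \neq c_{2,i}$, your final paragraph's construction gives $|L| = j$, so the "type $\geq j+1$" conclusion doesn't immediately follow; you should route this subcase through the orthogonality dichotomy directly (either the faces are $\geq \epsilon_2 d_{k+1}$ apart, whence $E_1(\epsilon)\cap E_2(\epsilon)=\varnothing$, or within $1$, whence they are effectively the same face and you are back to the equal-$c$ case). With that patch the argument is sound, and it is a more transparent version of the proof the paper gives.
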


\begin{proof}
Let $z \in E'_1(\epsilon)\cap E'_2(\epsilon)$.
Say $E_1$ is defined by faces with normal vectors
$\{ e_{k_1},\dots, e_{k_j}\}$ and values $c_i$ for $i \in \{ k_1,\dots,k_j\}$
as above. Let $D_1=\{ k_1,\dots,k_j\}$ (so for $x \in E_1$,
$\pi_i(x)=c_i$ for $i \in D_1$). Similarly, let $E_2$ be defined
by $\{ e_{\ell_1},\dots,e_{\ell_j}\}$ and values $c'_i$.
Let $D_2=\{ \ell_1,\dots,\ell_j\}$. Since $E_1\neq E_2$, and
$E_1$, $E_2$ are both within distance $\epsilon d_k'$ of $z$, and hence they are of distance $2\epsilon d_k'$ of each other, we must have that
$D_1\neq D_2$.

Let $z_1=\pi_{E_1}(z) \in E_1$, so $z_1$ is  obtained from $z$ by changing all of the values $\pi_i(z)$, for $i \in D_1$, to the value $c_i$. Similarly, let $z_2=\pi_{E_2}(z)$. 
Let $w$ be such that $\pi_i(w)=\pi_i(z)$ for $i\notin D_1\cup D_2$, $\pi_i(w)=c_i$ for $i\in D_1$ and $\pi_i(w)=c'_i$ for $i\in D_2-D_1$. That is, $w$ is the result of changing the values $\pi_i(z)$ for $i \in D_1 \cup D_2$.
Since $D_1\neq D_2$, $w$ lies on a $j'$-edge of $R$ for $j'\geq j+1$.
Also $w$ and $z$ are within $\epsilon d_k'$ of each other. This is a contradiction as $w$ lies in
a rectangle $Q\in T_{j+1}$, and all faces of $Q$ which are perpendicular to
a direction in $D_1 \cup D_2$ have distance at least $\frac{1}{4}\epsilon_{j+1}d_k'>\epsilon d_k'$ from $w$. 
\end{proof}

\section{Orthogonal Decompositions in $F(2^{\Z^n})$} \label{sec:odzn}

In \S\ref{sec:omzn} we recalled the construction of the orthogonal marker regions $\sR_k=\sR^k_1$  for $F(2^{\Z^n})$
corresponding to a set of marker distances $d_1 < d_2 <\cdots$ (which satisfy $d_{k+1}/d_k \geq K$
for some constant $K=K(n)$ depending only on the dimension $n$).
A slightly different point of view results in what
we call an {\em orthogonal decomposition} for $F(2^{\Z^n})$. In \S\ref{sec:genod} we will present a
more general form of the definition in the context of a Borel graphing of a countable Borel equivalence relation.
The basic idea is to consider the sets $X_k= \partial \sR_k= \bigcup\{\partial R: R\in \sR_k\}$, the set of boundary
points for the regions in the Borel partition $\sR_k$. These sets are not pairwise disjoint, but they intersect in
a controlled fashion which we make precise below. We record here the strongest forms of these intersection properties,
while in \S\ref{sec:genod} we abstract a weaker version on these properties which nonetheless suffices
for some applications (e.g., the existence of toast structures and the Borel proper $3$-coloring of the Schreier graph on $F(2^{\Z^n})$). 

For each $k$ we consider all the partitions $\sR_m$ for $m\geq k$ and let $\sA_k$ be the coarsest common refinement of all of them. We call the elements of $\sA_k$ the {\em $k$-atoms} of the orthogonal decomposition. In contrast, we write $X_k^\infty$ for $\bigcup_{m=k}^\infty X_m=\bigcup_{m=k}^\infty \partial \sR_m$ and let $\sR(k)$ denote the collection of connected components of $X-X_k^\infty$ (where $X=F(2^{\Z^n})$).
We call these connected components of  $X-X_k^\infty$ the {\em $k$-nuclei} of the orthogonal
decomposition. In \S\ref{sec:topreg} we will show that we may maintain certain topological regularity for the
$k$-atoms and the $k$-nuclei, as shown possible in Theorem~\ref{thm:om} for the marker regions $R\in \sR_k$ themselves. This will guarantee that
the $k$-atoms and $k$-nuclei are in one-one correspondence, and that each $k$-nucleus is exactly the interior of a $k$-atom.
In the rest of this section we describe the basic orthogonality properties of the $k$-nuclei.


The following theorem summarizes the properties of the orthogonal decomposition.

\begin{thm} \label{thm:so}
Let $\sR_m$ be a sequence of marker regions for $X=F(2^{\Z^n})$ satisfying the orthogonal properties
for distances $d_1<d_2<\cdots$ as contructed in the proof of Theorem~\ref{thm:om}. Let $X_k=\partial \sR_k$,
and $X_k^\infty=\bigcup_{m=k}^\infty X_m$. Then the following hold.

\begin{enumerate}
\item \label{so1}
$\bigcap_k X_k^\infty =\emptyset$, and for each $k$ the connected components of $X-X_k^\infty$
are finite.
\item (strong orthogonality) \label{so2}
There is a constant $\epsilon=\epsilon(n)>0$ and a function $\delta\colon X\times \N\times \N \to n$ such that $\delta(x,k,\ell)=0$ unless
$x \in X_k^\infty$ and $\ell \leq k$, and satisfying:
if $k< k'$, $x\in X_k$, $x' \in X_{k'}$, then for any $\ell \leq k$ if we have $\delta(x,k,\ell)=\delta(x',k',\ell)$,
then $\rho(x,x') > \epsilon d_\ell$. 
\end{enumerate}


\end{thm}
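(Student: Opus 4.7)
The approach is to derive both parts of the theorem from the orthogonality properties in Definition \ref{def:op}, using the fact that $\sR_m = \sR^m_1$ is obtained from $\sR^m_m$ by a chain of small-scale refinements.

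For part (\ref{so1}), the key observation is that each $x \in X$ lies in at most $n$ of the sets $X_m = \partial \sR_m$. Indeed, if $x \in X_{m_1} \cap X_{m_2}$ with $m_1 < m_2$, then $x$ sits on a face $F_1$ of a region of $\sR^{m_1}_1$ and on a face $F_2$ of a region of $\sR^{m_2}_1$. Property (\ref{op_b}) applied at level $k = 1$ with $j = m_1$, $m = m_2$ forces any parallel pair of such faces to be at distance $\geq \epsilon d_1 > 0$; since $F_1$ and $F_2$ share $x$, they cannot be parallel. Their normal directions among $\{e_1, \dots, e_n\}$ must therefore differ, and pigeonholing on the $n$ possible directions bounds $|\{m : x \in X_m\}|$ by $n$. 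This immediately yields $\bigcap_k X_k^\infty = \emptyset$. Each connected component of $X - X_k^\infty$ is disjoint from $X_k$ and so lies inside a single region of $\sR_k$, which is a finite set, so components are finite.

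For part (\ref{so2}), I plan to define $\delta(x, k, \ell)$ via the normal direction of a face of $\sR^k_\ell$ close to $x$. This is possible because $\sR_k = \sR^k_1$ is built from $\sR^k_\ell$ by modifications at successively smaller scales $d_{\ell-1}, \dots, d_1$, each of which displaces faces by a constant multiple of the corresponding $d_j$. Under the growth condition $d_{j+1} \geq K d_j$ (with $K$ enlarged as needed), the cumulative displacement from $\partial \sR^k_\ell$ to $\partial \sR_k$ is bounded by $C d_{\ell-1}$ for some $C = C(n)$. Set $\delta(x, k, \ell) = 0$ unless $x \in X_k$ and $\ell \leq k$; in the latter case, let $\delta(x, k, \ell)$ be the smallest $i \in \{1, \dots, n\}$ such that some face of $\sR^k_\ell$ with normal $\pm e_i$ lies within $C d_{\ell-1}$ of $x$, identifying $\{1, \dots, n\}$ with the elements of $n = \{0, \dots, n-1\}$. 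The displacement estimate guarantees such an $i$ always exists.

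The orthogonality estimate then follows directly from property (\ref{op_b}). Suppose $k < k'$, $x \in X_k$, $x' \in X_{k'}$, $\ell \leq k$, and $\delta(x, k, \ell) = \delta(x', k', \ell)$. Pick witnessing parallel faces $F \subseteq \partial \sR^k_\ell$ and $F' \subseteq \partial \sR^{k'}_\ell$ perpendicular to the same $\pm e_i$. If $F$ and $F'$ are ``nearby'' in the sense of (\ref{op_b}), that property (applied at level $\ell$ with $j = k$, $m = k'$) gives $\rho(F, F') \geq \epsilon d_\ell$; otherwise they are already far apart. Hence in either case
\[
\rho(x, x') \;\geq\; \rho(F, F') - \rho(x, F) - \rho(x', F') \;\geq\; \epsilon d_\ell - 2 C d_{\ell-1},
\]
which exceeds $\tfrac{\epsilon}{2} d_\ell$ once $d_\ell / d_{\ell-1}$ is sufficiently large, as guaranteed by the growth condition. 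Replacing $\epsilon$ by $\tfrac{\epsilon}{2}$ completes the argument. I expect the only delicate point to be careful bookkeeping of the cumulative face displacement between levels, but the growth condition on the $d_k$ is precisely designed to absorb this.
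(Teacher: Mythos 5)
Your proposal is correct and follows essentially the same route as the paper: for (\ref{so2}) the paper also defines $\delta(x,k,\ell)$ as the normal direction of a level-$\ell$ face near $x$ (it uses the auxiliary rectangles $\tilde{\sR}^k_\ell$ where you use faces of $\sR^k_\ell$ directly via property (\ref{op_b}), an immaterial difference), establishes the same telescoped displacement bound $N(d_1+\cdots+d_{\ell-1})=O(d_{\ell-1})$, and closes with the same estimate $\epsilon d_\ell - 2Cd_{\ell-1} > \tfrac{\epsilon}{2}d_\ell$ under the growth condition on the $d_k$. Your pigeonhole argument that each point lies in at most $n$ of the $X_m$ supplies the $\bigcap_k X_k^\infty=\emptyset$ clause, which the paper's proof treats as implicit, and the finiteness argument for components is identical.
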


\begin{proof}
Each connected component of $X-X_k^\infty$ is contained in a connected
component of $X-X_k$, which is contained in a set of the form $R-\partial R$
for some $R\in \sR_k=\sR^k_1$. Since each $R\in \sR^k_1$ is finite, (\ref{so1}) holds.

For (\ref{so2}), let $\delta$ be the function defined as follows. If $x \notin
X_k$, set $\delta(x,k,\ell)=0$ for all $\ell$. Suppose now $x \in X_k$ and $\ell \leq k$.
Let $B^k_\ell =\bigcup \{ \partial R': R'\in \sR^k_\ell\}$. 
So, $B^k_\ell$ is the union of the $\ell$th level boundaries of the regions used in the
construction of $\sR_k$. Let $y \in B^k_\ell$ be the point of $B^k_\ell$ closest
to $x$ in the $\rho$-metric on $[x]$ (with ties broken in an arbitrary Borel manner).
Let $\tilde{\sR}^k_\ell$ be the set of auxiliary rectangles used in defining $\sR^k_\ell$ from
$\sR^k_{\ell+1}$.
Then for some $\tilde R \in \tilde{\sR}^k_\ell$ we have $y \in \partial \tilde R$.
We set $\delta(x,k,\ell)=i$ if $y$ is on a face of $\tilde R$ which is perpendicular
to $e_i$ (if there is more than one such $i$, take the least one). 

We note that there is a constant $N=N(n)$ such that for the points $x$ and $y$ in the above definition, $\rho(x, y)\leq N(d_1+\cdots+d_{\ell-1})$. This is because, in the construction of $\sR^k_j$ from $\sR^k_{j+1}$, 
we started with auxiliary rectangles of side lengths as large as $\epsilon_nd_j'$, 
while $d_j=\epsilon_0d_j'$. Thus, any point of $B^k_j$ has distance at most $Nd_j$ to $B^k_{j+1}$,
where $N=\frac{\epsilon_n}{\epsilon_0}$.

Suppose now $k<k'$, $x \in X_k$, $x' \in X_{k}$, and for some $\ell<k$ we have that
$\delta(x,k,\ell)=\delta(x',k',\ell)$. Let $y,y'$ be the points defined as above corresponding to $x$ and $x'$, respectively.
Now we have $\rho(x,y), \rho(x',y') \leq N(d_1+\cdots + d_{\ell-1})$. Because
$\delta(x,k,\ell)=\delta(x',k',\ell)$, $y$ and $y'$ are on parallel faces of rectangles
$R$ and $R'$, where $R\in \tilde{\sR}^k_\ell$ and $R'\in \tilde{\sR}^{k'}_\ell$.
By construction, every face of a rectangle in $\tilde{\sR}^{k'}_\ell$ is at least
$\epsilon d_\ell$ from a parallel face of any rectangle in $\tilde{\sR}^k_\ell$,
where $\epsilon>0$ is a constant depending only on $n$. So,
$$\rho(x,x') > \epsilon d_\ell - 2N(d_1+\cdots +d_{\ell-1}) > \frac{\epsilon}{2} d_\ell$$
provided $\frac{2N}{K-1} <\frac{\epsilon}{2}$ (recall $K$ is a constant so that $d_{j+1}>Kd_j$
for all $j$). This establishes (\ref{so2}), with constant $\frac{\epsilon}{2}$. 
\end{proof}


We refer to sets $\{ X_k\}_{k \in \N}$ satisfying (\ref{so1}) and (\ref{so2})
of Theorem~\ref{thm:so} as a \emph{strong orthogonal decomposition}, and the constant $\epsilon>0$ in (\ref{so2})  as the {\em orthogonality constant} for the decomposition. 

The following corollary is a consequence of Theorem~\ref{thm:so}
which we refer to as \emph{bounded geometry weak orthogonality}
(we will generalize and formalize this definition in the next section).

\begin{cor} \label{cor:wo}
Let $\{ X_k \}_{k \in \N}$ be a strong orthogonal decomposition for $F(2^{\Z^n})$ with orthogonality constant $\epsilon>0$. Then the following hold.

\begin{enumerate}

\item \label{wo1}
For every $k \geq 1$ and every
connected component $R$ of $X - X_k^\infty$ we have for all $\ell \geq k$ that 
\[
\rho(R, X_\ell) \leq \frac{\epsilon}{2} d_1 \Longrightarrow \rho(R, X_\ell) \leq 1.
\]

\item \label{wo2}
For every $k \geq 1$ and every connected component $R$ of $X - X_k^\infty$ we have
\[
|\{\ell \geq k \colon \rho(R, X_\ell) = 1\}| \leq n+1.
\]
\end{enumerate}

\end{cor}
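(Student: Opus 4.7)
Both (\ref{wo1}) and (\ref{wo2}) are proved by contradiction using strong orthogonality (Theorem~\ref{thm:so}(\ref{so2})), applied at different levels $\ell_\delta$ of the $\delta$ function.

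For (\ref{wo1}), the plan is: suppose $1 < r := \rho(R, X_\ell) \leq \tfrac{\epsilon}{2} d_1$ and choose $p \in R$, $x \in X_\ell$ realizing this distance. Any neighbor $p_1$ of $p$ on a geodesic to $x$ lies outside $R$ by minimality of $r$, and since $R$ is a connected component of $X \setminus X_k^\infty$, it must lie in $X_m$ for some $m \geq k$. If $m = \ell$ we already contradict $r > 1$; otherwise $\rho(p_1, x) = r - 1 < \epsilon d_1$, and strong orthogonality at $\ell_\delta = 1$ forces $\delta(p_1, m, 1) \neq \delta(x, \ell, 1)$. Varying the geodesic's first step over every coordinate direction along which $p$ and $x$ differ, and using that the axis $\delta(p_1, m, 1)$ records this direction, one concludes $\pi_j(p) = \pi_j(x)$ for $j := \delta(x, \ell, 1)$. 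Thus $p$ lies on the hyperplane perpendicular to $e_j$ through $x$, and the face $F$ of $X_\ell$ through $x$, which by Theorem~\ref{thm:om} has extent at least $\epsilon d_\ell \gg r$ perpendicular to $e_j$, must contain $p$, contradicting $p \in R$.

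For (\ref{wo2}), set $B := \{\ell \geq k \colon \rho(R, X_\ell) = 1\}$, pick adjacent $p_\ell \in R$, $q_\ell \in X_\ell$ for each $\ell \in B$, and let $a_\ell := \delta(q_\ell, \ell, 1)$ be the axis of the face of $X_\ell$ at $q_\ell$. For distinct $\ell < \ell'$ in $B$ with $a_\ell = a_{\ell'}$, apply strong orthogonality at the higher level $\ell_\delta = \ell$: because the modifications in the orthogonal marker construction preserve face axes, $\delta(q_\ell, \ell, \ell) = a_\ell$ and $\delta(q_{\ell'}, \ell', \ell) = a_{\ell'}$ agree, so $\rho(q_\ell, q_{\ell'}) > \epsilon d_\ell$. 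Combined with $\operatorname{diam}(R) = O(d_k)$ (since $R \subseteq R_k \in \sR_k$) and $d_{m+1} > K d_m$ for $K = K(n)$ chosen sufficiently large, this forces $\ell = k$. Hence each axis contains at most one level $> k$ in $B$, together with at most the level $k$ itself, which gives $|B| \leq n + 1$.

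The main technical obstacle is the final geometric step in (\ref{wo1}): verifying that $F$ actually extends at least $r$ in the direction of $p$ from $x$. This uses the minimum face side length $\epsilon d_\ell$ from Theorem~\ref{thm:om}, with care required when $x$ is near a corner of $F$, a situation controlled by the no-four-corners condition from Lemma~\ref{lem:no4corners}. A secondary subtlety in (\ref{wo2}) is ensuring that the axis of the face at the closest $B^{\ell'}_\ell$-point to $q_{\ell'}$ really equals $a_{\ell'}$, which follows because each successive modification moves faces only perpendicular to their own normals.
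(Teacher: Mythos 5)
Your part (\ref{wo1}) has a genuine gap at its central step. You assert that ``the axis $\delta(p_1,m,1)$ records this direction,'' i.e.\ that $\delta(p_1^{(i)},m_i,1)=i$ when $p_1^{(i)}=p\pm e_i$. But by the definition of $\delta$ in the proof of Theorem~\ref{thm:so}, $\delta(p_1,m,1)$ is the normal direction of the face of the level-$1$ auxiliary rectangle on which $p_1$ lies --- i.e.\ the local orientation of $\partial\sR_m$ near $p_1$ --- and this is unrelated to the direction of the edge from $p$ to $p_1$: one can step onto a face from a direction parallel to it (e.g.\ onto the end of a vertical face by a vertical step). Without this identification, the conclusion $\pi_j(p)=\pi_j(x)$ does not follow. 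The final step is also wrong as stated: the faces of $X_\ell=\partial\sR^\ell_1$ are faces of rectangles at scale $d_1$ (side lengths between roughly $\epsilon d_1$ and $d_1$), not $\epsilon d_\ell$, and even granting $\pi_j(p)=\pi_j(x)$ one must check that the face at $x$ extends toward $p$ in the remaining $n-1$ coordinates, which fails if $x$ is near an end of the face. The paper avoids all of this with a coordinate-by-coordinate sliding argument: it moves the pair $(x',y)$ along each axis in turn, and whenever the sliding is blocked it exhibits two nearby parallel faces at different levels, contradicting strong orthogonality at level $1$.

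Part (\ref{wo2}) has the right skeleton (pigeonhole on $\delta$, strong orthogonality, and $\mathrm{diam}(R)\le d_k$), but your transfer $\delta(q,\ell',\ell)=\delta(q,\ell',1)$ is not justified and is false in general: the fractalization producing $\sR^{\ell'}_1$ from $\sR^{\ell'}_\ell$ introduces short jogs perpendicular to the coarse faces, so the level-$1$ normal at $q$ need not equal the level-$\ell$ normal at the nearest point of $B^{\ell'}_\ell$. The detour is also unnecessary: applying the pigeonhole directly to the values $\delta(x_j,\ell_j,k+1)$ for the (more than $n$) levels $\ell_j>k$ with $\rho(R,X_{\ell_j})=1$ yields two points with $\rho>\epsilon d_{k+1}>2d_k$, which already contradicts $\rho\le d_k+2$; this is exactly the paper's argument and needs no claim about how $\delta$ varies with its third argument.
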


\begin{proof}
Let $R$ be a connected component of $X - X_k^\infty$.
We first show (\ref{wo2}). Let $\ell_1<\ell_2<\cdots <\ell_m$
enumerate the $\ell > k$ such that $\rho(X_\ell, R)=1$. 
Suppose $m>n$. For each $1 \leq j \leq m$ let $x_j \in X_{\ell_j}$
be such that $\rho(x_j, R)=1$. There must be $j_1\neq j_2$ such that
$\delta(x_{j_1}, \ell_{j_1}, k+1)= \delta(x_{j_2},\ell_{j_2},k+1)$. However,
we then have that $\rho(x_{j_1}, x_{j_2}) >\epsilon d_{k+1}
> 2 d_k$ (we assume $K$ is large enough so that $\epsilon d_{i+1} >2d_i$,
that is, $K>\frac{2}{\epsilon}$). However, since
$\rho(x_{j_1},R)=1$, $\rho(x_{j_2},R)=1$, we have that $\rho(x_{j_1},x_{j_2})
\leq d_k+2$, a contradiction.

Next we show (\ref{wo1}). Let $R$ be a connected component of $X-X_k^\infty$,
let $x \in R$, and let $y \in X_\ell$ with $\ell \geq k$ and
$\rho(x,y)\leq \frac{\epsilon}{2} d_1$. Consider the path $p$ from
$x$ to $y$ such that $\rho(z,y) \leq \rho(x,y)$ for all $z \in p$.
Let $x'\in p$ be the first point on the path not in $R$. 
Say $x' \in X_{\ell'}$. If $\ell=\ell'$, then
let $x''$ be the point of $p$ immediately before $x'$, and we have that $x''\in R$ and
$\rho(x'',X_\ell)=1$.

So, assume $\ell'\neq \ell$.
We thus have points $x' \in X_{\ell'}$, $y \in X_\ell$ with $\rho(x', R)=1$ and $\rho(x',y)
\leq \frac{\epsilon}{2}d_1$.  
Recall $e_1,\dots, e_n$ are the coordinate vectors in $\Z^n$. 
Let $x_0=x'$, $y_0=y$, and assume $(\alpha_1e_1+\cdots+\alpha_ne_n)\cdot x_0=y_0$ for $\alpha_1, \dots, \alpha_n\in\Z$. We claim that either $\rho(R, X_\ell)=1$ (and thus the corollary is proved) or there are $\beta_1, \gamma_1\in\Z$ with $\alpha_1\beta_1\geq 0$, $\beta_1\gamma_1\leq  0$, and $\alpha_1=\beta_1-\gamma_1$ such that, letting $x_1=(\beta_1e_1)\cdot x_0$ and $y_1=(\gamma_1e_1)\cdot y_0$, we have $x_1\in X_{\ell'}$, $y_1\in X_{\ell}$, $\rho(x_1, R)=1$, and $\rho(x_1,y_1)\leq \frac{\epsilon}{2}d_1$.

If $\alpha_1=0$ then let $\beta_1=\gamma_1=0$ and we have $x_1=x_0$ and $y_1=y_0$. Suppose $\alpha_1\neq 0$. Let $\beta\geq 0$ be the largest such that $\beta\leq |\alpha_1|$ and $\mbox{sgn}(\alpha_1)\beta e_1\cdot x_0$ is an element of $X_{\ell'}$ and is of distance $1$ to $R$. Also let $\gamma\geq 0$ be the largest such that $\gamma\leq |\alpha_1|$ and $-\mbox{sgn}(\alpha_1)\gamma e_1\cdot y_0$ is an element of $X_{\ell}$. If $\beta+\gamma\geq |\alpha_1|$, then the claim holds by letting $\beta_1=\mbox{sgn}(\alpha_1)\beta$ and $\gamma_1=\beta_1-\alpha_1$. Suppose $\beta+\gamma< |\alpha_1|$. Then $-\mbox{sgn}(\alpha_1)\gamma e_1\cdot y_0$ is on a face $S$ of $X_{\ell}$ that is perpendicular to $e_1$. On the other hand, we have two possible cases for the reason $\beta$ is the largest with $\mbox{sgn}(\alpha_1)\beta e_1\cdot x_0$ being an element of $X_{\ell'}$ and of distance 1 to $R$. Case 1 is  $\mbox{sgn}(\alpha_1)\beta e_1\cdot x_0$ is on a face $T$ of $X_{\ell'}$ that is perpendicular to $e_1$. In this case, $S$ and $T$ are parallel, and $\rho(S, T)\leq \frac{\epsilon}{2}d_1$, contradicting the strong orthogonality condition of Theorem~\ref{thm:so}. Case 2 is $\mbox{sgn}(\alpha_1)\beta e_1\cdot x_0$ is of distance 1 to an element $x''\in R$, and $x''$ is of distance $1$ to a face $T$ of some $X_{\ell''}$ which is perpendicular to $e_1$. In this case, if $\ell''=\ell$, we have $\rho(x'', X_\ell)=1$ and the corollary is proved; otherwise $\ell''\neq \ell$ and again $S$ and $T$ are parallel faces of distance $\leq \frac{\epsilon}{2}d_1$, contradicting the strong orthogonality condition of Theorem~\ref{thm:so}.

Applying the claim, we either have verified the corollary, or we will get $x_1\in X_{\ell'}$, $y_1\in X_{\ell}$ with $\rho(x_1, R)=1$ and $\rho(x_1, y_1)\leq \frac{\epsilon}{2}{d_1}$. Now $(\alpha_2e_2+\dots+\alpha_ne_n)\cdot x_1=y_1$. By similar arguments, we obtain $x_2, \dots, x_n$ and $y_2, \dots, y_n$ (unless we finish the proof of the corollary early) where $x_n=y_n\in X_{\ell'}\cap X_{\ell}$. Since $\rho(x_n, R)=1$, we have $\rho(R, X_\ell)=1$ as promised.
\end{proof}



\section{Weakly Orthogonal Decompositions} \label{sec:genod}
In this section we present some applications of the weakly orthogonal decompositions. One of the applications is to the computation of Borel chromatic numbers. Our results will imply that for all $n\geq 2$, the Schreier graph of $F(2^{\Z^n})$ has Borel chromatic number $3$. This is in contrast with the result in \cite{GJKSinf} that their continuous chromatic numbers are $4$. These are the first known results where the Borel characteristics and the continuous characteristics of countabe group actions behave differently. Another application is to the existence of the so-called Borel unlayered toast structures on $F(2^{\mathbb{Z}^n})$. The significance of this notion is that it implies hyperfiniteness for the orbit equivalence relation. In contrast with other known results, we have shown in \cite{GJKSinf} that continuous toast structures do not exist, and in \cite{GJKS2022} that Borel layered toast structures do not exist.

Fix a standard Borel space $X$ and a Borel graph $\Gf \subseteq X \times X$ on $X$. Let $\pathd_\Gf$ denote the shortest-path-length metric on the connected components of $\Gf$.

\begin{defn} \label{defn:bndgeo}
A \emph{weakly orthogonal decomposition of $X$} is a collection $(X_k)_{k \geq 1}$ of Borel subsets of $X$ satisfying the following two conditions. Below we write $X_k^\infty$ for $\bigcup_{n = k}^\infty X_n$.
\begin{enumerate}
\item[\rm (i)] (decomposition) Every connected component of $\Gf \res (X \setminus X_k^\infty)$ is finite, and $\bigcap_{k \geq 1} X_k^\infty = \varnothing$.
\item[\rm (ii)] (weakly orthogonal) There exists a constant $Q$ such that for every $k \geq 1$ and every connected component $R$ of $\Gf \res (X \setminus X_k^\infty)$ we have for all $n \geq k$
$$\pathd_\Gf(R, X_n^\infty) \leq Q \Longrightarrow \pathd_\Gf(R, X_n^\infty) = 1.$$
\end{enumerate}
We furthermore say that $(X_k)_{k \geq 1}$ has \emph{bounded geometry} if in addition to the above properties it satisfies the following.
\begin{enumerate}
\item[\rm (iii)] (bounded geometry) There exists a constant $P$ such that for every $k \geq 1$ and every connected component $R$ of $\Gf \res (X \setminus X_k^\infty)$ we have
$$|\{n \geq k : \pathd_\Gf(R, X_n) = 1\}| \leq P.$$
\end{enumerate}
We call the smallest $P$ satisfying (iii) the \emph{polygonal bound} for $(X_k)_{k \geq 1}$, and we call the smallest $Q$ satisfying (ii) the \emph{orthogonality constant} for $(X_k)_{k \geq 1}$.
\end{defn}

As we have shown in Corollary~\ref{cor:wo}, in $F(2^{\mathbb{Z}^n})$ bounded geomtry weakly orthogonal decompositions exist with polygonal bound $P=n+1$ and orthogonality constant $Q=\frac{\epsilon}{2}d_1$. Here the constant $\epsilon$ depends on $n$ but $d_1$ does not. In particular, we can pick $d_1$ to be arbitrarily large.

We observe that the existence of a weakly orthogonal decomposition of $X$ necessitates hyperfiniteness.

\begin{lem}\label{lem:hyp}
Let $X$ be a standard Borel space and let $\Gf \subseteq X \times X$ be a Borel graph on $X$. Let $E_\Gf$ be the equivalence relation on $X$ given by the connected components of $\Gf$. If there exists a weakly orthogonal decomposition of $X$ then $E_\Gf$ is hyperfinite.
\end{lem}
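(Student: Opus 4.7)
The plan is to realize $E_\Gf$ as an increasing union of finite Borel subequivalence relations built directly from the decomposition. For each $k\geq 1$, I would define $F_k$ to be the equivalence relation on $X$ whose non-singleton classes are the connected components of $\Gf\res(X\sm X_k^\infty)$, with every $x\in X_k^\infty$ forming a singleton class. By condition (i) of Definition~\ref{defn:bndgeo}, every class of $F_k$ is finite.

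Next I would establish the set-theoretic nesting $F_k\subseteq F_{k+1}$ and exhaustion $\bigcup_k F_k=E_\Gf$, neither of which requires Borelness. For nesting, note $X_{k+1}^\infty\subseteq X_k^\infty$, so any $\Gf$-path avoiding $X_k^\infty$ also avoids $X_{k+1}^\infty$; hence each component of $\Gf\res(X\sm X_k^\infty)$ is contained in a component of $\Gf\res(X\sm X_{k+1}^\infty)$, and singleton classes of $F_k$ are absorbed automatically. For exhaustion, given $x,y$ in the same $\Gf$-orbit I would fix a finite $\Gf$-path $x=z_0,\dots,z_m=y$; since (i) asserts $\bigcap_k X_k^\infty=\varnothing$, for each $i$ there is some $k_i$ with $z_i\notin X_{k_i}^\infty$, and for $k=\max_i k_i$ the whole path lies in $X\sm X_k^\infty$, witnessing $(x,y)\in F_k$. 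In particular each $E_\Gf$-orbit is a countable union of finite $F_k$-classes, so $E_\Gf$ has countable classes and $\Gf$ is locally countable.

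It then remains to verify that each $F_k$ is Borel. Since $X_k^\infty=\bigcup_{n\geq k}X_n$ is Borel and $\Gf$ is locally countable, a Luzin--Novikov argument shows that for every fixed $n$ the set of pairs $(x,y)$ connected by a $\Gf$-path of length $\leq n$ lying entirely in $X\sm X_k^\infty$ is Borel; $F_k$ is then the countable union of these relations together with the diagonal, hence Borel. Combined with the previous step, the $(F_k)$ exhibit $E_\Gf$ as an increasing union of finite Borel equivalence relations, i.e., $E_\Gf$ is hyperfinite.

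Two comments on the shape of the argument: the weakly orthogonal clause (ii) is not used at all --- only the decomposition condition (i) enters. The only mildly delicate step is Borelness of $F_k$, and this is standard once local countability of $\Gf$ has been extracted from the exhaustion argument; so I expect no genuine obstacle and anticipate a short written proof.
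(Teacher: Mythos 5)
Your proposal is correct and is essentially the paper's own proof: the paper defines the same relations $E_k$ (singletons on $X_k^\infty$, components of $\Gf\res(X\setminus X_k^\infty)$ elsewhere) and simply asserts they witness hyperfiniteness, while you supply the routine verifications of nesting, exhaustion, and Borelness. Your observation that only the decomposition clause (i) is used also matches the paper.
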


\begin{proof}
Fix a weakly orthogonal decomposition $(X_k)_{k \geq 1}$ of $X$. For $k \geq 1$ let $E_k$ be the equivalence relation which is trivial (consists of singletons) when restricted to $X_k^\infty$ and which when restricted to $X \setminus X_k^\infty$ is given by the connected components of $\Gf \res (X \setminus X_k^\infty)$. The sequence $(E_k)_{k \geq 1}$ witnesses the hyperfiniteness of $E_\Gf$.
\end{proof}

We also observe that if a Borel graph $\Gamma$ admits a weakly orthogonal decomposition with bounded
geometry, then necessarily $\Gamma$ is locally finite. For if $x \in X$, then by bounded geometry, there are only finitely many 
$n$ such that $\rho(x,X_n)=1$. Let $m$ be large enough so that if $\rho(x,X_n)=1$, then $n <m$, and $x \notin X^\infty_m$. But then 
$X\sm X_m^\infty$ contains $x$ and all the neighbors of $x$, and so the connected component of $X\sm X_m^\infty$ 
which contains $x$ contains all of its neighbors. Since this component is finite, $x$ has only finitely many neighbors in $\Gamma$.

For the remainder of this section we fix a bounded geometry weakly orthogonal decomposition $(X_k)_{k \geq 1}$ of $X$ with polygonal bound $P$ and orthogonality constant $Q$. We set $X_k^\infty = \bigcup_{n = k}^\infty X_n$.

\begin{defn}
A set $R \subseteq X$ is a \emph{$k$-nucleus} if $R$ is a connected component of $\Gf \res (X \setminus X_k^\infty)$. We say that $R$ is an \emph{nucleus} if $R$ is a $k$-nucleus for some $k \geq 1$.
\end{defn}

\begin{defn}
Let $R$ be a nucleus.
\begin{itemize}
\item We define the \emph{stage} of $R$, denoted $s(R)$, to be the maximum $k$ such that $R$ is a $k$-nucleus.
\item For $m \geq 1$, we define the $m$-boundary index of $R$ to be
$$B_R(m) = \{n \geq m : \pathd_\Gf(R, X_n) = 1\}.$$
\item We define the \emph{amplitude} of $R$ to be $a(R) = \max B_R(s(R))$. 
\item For $d \geq 1$, the \emph{$d$-fundamental interior} of $R$ is
$$f_d(R) = \{r \in R : \forall n \geq s(R) \ \ \pathd_\Gf(r, X_n^\infty) \geq P \cdot d - |B_R(n + 1)| \cdot d\}.$$
\end{itemize}
\end{defn}

\begin{lem} \label{lem:atoms}
Let $R$ and $R'$ be nuclei.
\begin{enumerate}
\item[\rm (i)] If $R \cap R' \neq \varnothing$ and $s(R) \leq s(R')$ then $R \subseteq R'$.
\item[\rm (ii)] If $R \subseteq R'$ and $R \neq R'$ then $s(R) < s(R')$.
\item[\rm (iii)] If $R \subseteq R'$ then $B_R(n) \subseteq B_{R'}(n)$ for all $n \geq s(R')$.
\item[\rm (iv)] $|B_R(n + 1)| \leq P - 1$ for all $n \geq s(R)$.
\item[\rm (v)] If $R \subseteq R'$ then $a(R) \leq a(R')$.
\end{enumerate}
\end{lem}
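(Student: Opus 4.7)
The plan is to prove the five clauses in order, using throughout the basic monotonicity $X\setminus X_k^\infty\subseteq X\setminus X_{k'}^\infty$ for $k\leq k'$ (which follows from $X_{k'}^\infty\subseteq X_k^\infty$), together with the observation that the stage of any nucleus $R$ is witnessed by a neighbor in $X_{s(R)}$ itself; this observation is the heart of (iv) and will be reused in (v).

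For (i), the set $R$ is a connected component of $\Gf\restriction(X\setminus X_{s(R)}^\infty)$, and since $s(R)\leq s(R')$ it sits connectedly inside the larger subgraph $\Gf\restriction(X\setminus X_{s(R')}^\infty)$; thus $R$ lies in some component of the latter, and meeting $R'$ forces that component to be $R'$. Clause (ii) then drops out by contradiction: if $s(R)\geq s(R')$, clause (i) applied with the roles reversed yields $R'\subseteq R$, hence $R=R'$. For (iii), any $n\geq s(R')$ has $X_n\subseteq X_{s(R')}^\infty$, so $X_n$ is disjoint from $R'\supseteq R$, and any edge from $R$ to $X_n$ is also an edge from $R'$ to $X_n$.

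The key step is (iv), which reduces to showing $s(R)\in B_R(s(R))$. Because $R$ fails to be an $(s(R)+1)$-nucleus, the connected component of $X\setminus X_{s(R)+1}^\infty$ containing $R$ is strictly larger than $R$, so some vertex adjacent to $R$ lies in $(X\setminus X_{s(R)+1}^\infty)\setminus R$. Such a vertex cannot lie in $X\setminus X_{s(R)}^\infty$ (otherwise it would lie in $R$ by componenthood), so it lies in $X_{s(R)}^\infty\setminus X_{s(R)+1}^\infty\subseteq X_{s(R)}$, yielding $\pathd_\Gf(R,X_{s(R)})=1$. The bounded geometry bound $|B_R(s(R))|\leq P$ then gives $|B_R(n+1)|\leq P-1$ for every $n\geq s(R)$, since $B_R(n+1)$ omits $s(R)$.

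Finally, (v) splits on whether $n_0:=a(R)$ is at least $s(R')$. When $n_0\geq s(R')$, an edge witnessing $\pathd_\Gf(R,X_{n_0})=1$ ends in $X_{n_0}\subseteq X_{s(R')}^\infty$, a set disjoint from $R'$, so the same edge shows $n_0\in B_{R'}(s(R'))$ and hence $a(R')\geq n_0$. When $n_0<s(R')$, the (iv)-observation applied to $R'$ gives $s(R')\in B_{R'}(s(R'))$, whence $a(R')\geq s(R')>n_0$. I expect this last case-split to be the main obstacle: it is tempting (but insufficient) to try to compare $B_R$ and $B_{R'}$ directly via (iii) when $s(R)<s(R')$, and the salvage requires remembering that the stage itself is always a boundary contact.
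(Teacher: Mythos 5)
Your proof is correct and follows essentially the same route as the paper's: (i) and (iii) are identical in substance, (ii) is an equivalent reduction to (i) rather than the paper's direct observation that $R$ cannot be a component once it sits strictly inside the connected set $R'$, and (v) uses the same case split on $a(R)$ versus $s(R')$. Your explicit verification that $s(R)\in B_R(s(R))$ (via the edge leaving $R$ into $X_{s(R)}^\infty\setminus X_{s(R)+1}^\infty\subseteq X_{s(R)}$) is a detail the paper leaves implicit in (iv) and in the claim $s(R')\leq a(R')$, and it is exactly the right justification.
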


\begin{proof}
(i). By definition $R$ is connected in $\Gf \res (X \setminus X_{s(R)}^\infty)$ and therefore connected in the larger graph $\Gf \res (X \setminus X_{s(R')}^\infty)$. The set $R'$ is a connected component of the latter graph, so as $R \cap R' \neq \varnothing$ we must have $R \subseteq R'$.

(ii). For every $k \geq s(R')$, $R$ is strictly contained in $R'$ which is connected in the graph $\Gf \res (X \setminus X_k^\infty)$. So $R$ is not a connected component of $\Gf \res (X \setminus X_k^\infty)$. Therefore $s(R) < s(R')$.

(iii). Fix $n \geq s(R')$. Suppose that $m \in B_R(n)$. Then $\pathd_\Gf(R, X_m) = 1$. Since $R'$ is a connected component of $\Gf \res (X \setminus X_{s(R')}^\infty)$ and $m \geq n \geq s(R')$, we must have $R' \cap X_m = \varnothing$. Using $R \subseteq R'$ we obtain $0 < \pathd_\Gf(R', X_m) \leq \pathd_\Gf(R, X_m) = 1$. Therefore $m \in B_{R'}(n)$.

(iv). This follows immediately from the bounded geometry property in Definition \ref{defn:bndgeo} and the fact that by definition $s(R) \not\in B_R(n + 1)$ for $n \geq s(R)$.

(v). It is immediate from the definitions that $s(R) \leq a(R)$ and $s(R') \leq a(R')$. If $a(R) \leq s(R')$ then we are done. Otherwise $a(R) \in B_R(s(R')) \subseteq B_{R'}(s(R'))$ by (iii) and hence
\begin{equation*}
a(R') = \max B_{R'}(s(R')) \geq \max B_R(s(R')) = \max B_R(s(R)) = a(R). \qedhere
\end{equation*}
\end{proof}

\begin{lem} \label{lem:fint}
Let $R$ and $R'$ be nuclei and fix $d \geq 1$. Assume that $P \cdot d \leq Q$.
\begin{enumerate}
\item[\rm (i)] $\pathd_\Gf(f_d(R), X \setminus R) \geq d$.
\item[\rm (ii)] If $R \cap R' = \varnothing$ then $\pathd_\Gf(f_d(R), f_d(R')) \geq 2 d$.
\item[\rm (iii)] If $R \subseteq R'$ then $f_d(R) \subseteq f_d(R')$.
\item[\rm (iv)] If $f_d(R) \subseteq f_d(R')$ and $a(R) < a(R')$ then $\pathd_\Gf(f_d(R), X \setminus f_d(R')) \geq d$.
\end{enumerate}
\end{lem}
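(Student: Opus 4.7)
The plan is to handle (i)--(iii) quickly and concentrate the work on (iv).

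For (i), any path from $r \in f_d(R) \subseteq R$ to a point of $X \sm R$ must first enter $X_{s(R)}^\infty$ (distinct components of $\Gf \res (X \sm X_{s(R)}^\infty)$ are separated by $X_{s(R)}^\infty$), so $\pathd_\Gf(r, X \sm R) = \pathd_\Gf(r, X_{s(R)}^\infty) \geq Pd - (P-1)d = d$ by the $n = s(R)$ case of the definition together with $|B_R(s(R)+1)| \leq P-1$ from Lemma~\ref{lem:atoms}(iv). Part (ii) follows by taking $s(R) \leq s(R')$ without loss of generality and noting that any path from $r \in f_d(R)$ to $r' \in f_d(R')$ must both enter and exit $X_{s(R')}^\infty$ (otherwise $r$ would lie in the $R'$-component of $\Gf \res (X \sm X_{s(R')}^\infty)$), and the $n = s(R')$ constraint applied in turn to $R$ and $R'$, together with Lemma~\ref{lem:atoms}(iv), contributes $\geq d$ on each side. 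Part (iii) is immediate: for $r \in f_d(R)$ and $n \geq s(R')$ we have $n \geq s(R)$, and $B_R(n+1) \subseteq B_{R'}(n+1)$ by Lemma~\ref{lem:atoms}(iii), so the $f_d(R)$ bound implies the $f_d(R')$ bound.

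For (iv), the plan is to argue by contradiction. The hypotheses first force $R \subsetneq R'$ and $s(R) < s(R')$ via Lemma~\ref{lem:atoms}(i), (ii), (v). Suppose $r \in f_d(R)$ and $s \in X \sm f_d(R')$ satisfy $\pathd_\Gf(r, s) < d$. Part (i) applied to $R'$ (via $r \in f_d(R) \subseteq f_d(R')$) and then to $R$ force $s \in R$ and $s \in R' \sm f_d(R')$, so some witness $n_0 \geq s(R')$ satisfies $\pathd_\Gf(s, X_{n_0}^\infty) \leq Pd - |B_{R'}(n_0+1)|d - 1$. The argument will split on whether any such witness lies strictly below $a(R')$.

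If some witness has $n_0 < a(R')$, then $a(R') \in B_{R'}(n_0+1) \sm B_R(n_0+1)$ (because $a(R') > a(R) = \max B_R(s(R))$), so $|B_{R'}(n_0+1)| \geq |B_R(n_0+1)| + 1$; combining the $f_d(R)$ bound $\pathd_\Gf(r, X_{n_0}^\infty) \geq Pd - |B_R(n_0+1)|d$ with the witness bound via the triangle inequality at level $n_0$ yields $\pathd_\Gf(r, s) \geq (|B_{R'}(n_0+1)| - |B_R(n_0+1)|)d + 1 \geq d + 1$, contradicting $\pathd_\Gf(r, s) < d$. Otherwise every witness has $n_0 \geq a(R')$, in which case $|B_{R'}(n_0+1)| = 0$ and by monotonicity of $\pathd_\Gf(s, X_n^\infty)$ in $n$ we get $\pathd_\Gf(s, X_{a(R')}^\infty) < Pd$. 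But $a(R) < a(R')$ means no $j \geq a(R')$ lies in $B_R(s(R))$, so $\pathd_\Gf(R, X_{a(R')}^\infty) \neq 1$; the contrapositive of the weakly orthogonal condition (Definition~\ref{defn:bndgeo}) then forces $\pathd_\Gf(R, X_{a(R')}^\infty) > Q \geq Pd$, and since $s \in R$ this yields $\pathd_\Gf(s, X_{a(R')}^\infty) > Pd$, another contradiction. The main obstacle will be arranging this dichotomy in (iv): the generic case is settled by the triangle inequality at a witness below $a(R')$, exploiting the strict amplitude gap $a(R) < a(R')$, while the exceptional case in which every witness sits at or above $a(R')$ is exactly what the weakly orthogonal condition is designed to rule out.
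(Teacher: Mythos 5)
Your proof is correct and follows essentially the same approach as the paper's: parts (i)--(iii) are nearly verbatim, and for (iv) you rely on the same dichotomy (a witnessing index $n_0 < a(R')$, where the amplitude gap supplies $|B_{R'}(n_0+1)| \geq |B_R(n_0+1)|+1$ and hence an extra $d$ of slack, versus $n_0 \geq a(R')$, where weak orthogonality forces $\pathd_\Gf(R, X_{n_0}^\infty) > Q$). The only cosmetic differences are that you phrase (iv) as a proof by contradiction rather than directly verifying $y \in f_d(R')$, and you use monotonicity of $\pathd_\Gf(s, X_n^\infty)$ in $n$ to collapse the second case to the single index $a(R')$.
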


\begin{proof}
(i). $R$ is a connected component of $\Gf \res (X \setminus X_{s(R)}^\infty)$, so from Lemma \ref{lem:atoms}.(iv) we obtain
$$\pathd_\Gf(f_d(R), X \setminus R) = \pathd_\Gf(f_d(R), X_{s(R)}^\infty) \geq P \cdot d - |B_R(s(R) + 1)| \cdot d \geq d.$$

(ii). By swapping $R$ and $R'$ if necessary, we may assume that $s(R) \leq s(R')$. Since $R'$ is a finite connected component of $\Gf \res (X \setminus X_{s(R')}^\infty)$ and $R \cap X_{s(R')}^\infty = \varnothing$, we have that $\pathd_\Gf(R, R') \geq 2$. So from (i) we obtain
$$\pathd_\Gf(f_d(R), f_d(R')) \geq \pathd_\Gf(f_d(R), X \setminus R) + \pathd_\Gf(R, R') - 1 + \pathd_\Gf(X \setminus R', f_d(R')) - 1 \geq 2 d.$$ 

(iii). By Lemma \ref{lem:atoms}.(iii) we have for all $n \geq s(R')$
$$\pathd_\Gf(f_d(R), X_n^\infty) \geq P \cdot d - |B_R(n + 1)| \cdot d \geq P \cdot d - |B_{R'}(n + 1)| \cdot d.$$
It follows from the definitions that $f_d(R) \subseteq f_d(R')$.

(iv). Fix $y$ with $\pathd_\Gf(y, f_d(R)) < d$. We will argue that $y \in f_d(R')$. By (i) we have that $y \in R \subseteq R'$. Consider $s(R') \leq n < a(R')$. By Lemma \ref{lem:atoms}.(iii) we have $B_R(n + 1) \subseteq B_{R'}(n + 1)$. The assumption $a(R) < a(R')$ further gives $a(R') \in B_{R'}(n + 1) \setminus B_R(n + 1)$ so that $|B_R(n + 1)| + 1 \leq |B_{R'}(n + 1)|$. Therefore for $s(R') \leq n < a(R')$ we have
$$\pathd_\Gf(y, X_n^\infty) > \pathd_\Gf(f_d(R), X_n^\infty) - d \geq P \cdot d - |B_R(n + 1)| \cdot d - d \geq P \cdot d - |B_{R'}(n + 1)| \cdot d.$$
For $n \geq a(R')$ we have $n \not\in B_R(s(R))$ since $a(R) < a(R')$. So $\pathd_\Gf(R, X_n^\infty) > 1$ and thus by the weakly orthogonal property and (i) we have
\begin{align*}
\pathd_\Gf(y, X_n^\infty) & > \pathd_\Gf(f_d(R), X_n^\infty) - d \geq \pathd_\Gf(f_d(R), X \setminus R) + \pathd_\Gf(R, X_n^\infty) - 1 - d \\
 & \geq Q + 1 - 1 = Q \geq P \cdot d = P \cdot d - |B_{R'}(n + 1)| \cdot d.
\end{align*}
We conclude that $y \in f_d(R')$.
\end{proof}

\begin{rem}
The weakly orthogonal property has only been used thus far in proving clause (iv) of Lemma \ref{lem:fint}.
\end{rem}

For a graph $\Gf$ on $X$ and $n \geq 1$ define
$$\Gf^{(n)} = \{(x, y) : x \neq y \text{ and } \pathd_\Gf(x, y) \leq n\}.$$

\begin{thm} \label{thm:toast}
Let $X$ be a standard Borel space and let $\Gf \subseteq X \times X$ be a Borel graph on $X$. Let $(X_k)_{k \geq 1}$ be a bounded geometry weakly orthogonal decomposition of $X$ with polygonal bound $P$ and orthogonality constant $Q$. If $d \geq 1$ and $(2 d + 1) \cdot P \leq Q$, then there is a Borel set $Y \subseteq X$ such that both of the graphs $\Gf^{(d)} \res Y$ and $\Gf^{(d)} \res (X \setminus Y)$ have finite connected components.
\end{thm}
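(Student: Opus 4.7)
The plan is to split $X$ by the parity of the amplitude of a canonically chosen nucleus at each point, and then argue via an induction along $\Gf^{(d)}$-paths.

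First, I would check that every $x\in X$ lies in the fundamental interior of some nucleus. Bounded geometry forces $\Gf$ to be locally finite (as noted immediately after Lemma~\ref{lem:hyp}), so $B(x,Pd)$ is finite; combined with $\bigcap_k X_k^\infty=\varnothing$ and the connectedness of $\Gf$-balls, the $k$-nucleus $R_k(x)$ contains $B(x,Pd)$ for all sufficiently large $k$. Since $X_n^\infty\subseteq X\setminus R_k(x)$ for $n\geq s(R_k(x))$, this gives $\pathd_\Gf(x,X_n^\infty)\geq Pd$ and hence $x\in f_d(R_k(x))$. Let $N(x)$ be the nucleus of smallest stage in $x$'s chain with $x\in f_d(N(x))$; the map $x\mapsto N(x)$ is Borel, so
\[
Y=\{x\in X:a(N(x))\ \text{is even}\}
\]
is Borel.

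Next I would prove the local pairing lemma: if $x,y\in Y$ with $\pathd_\Gf(x,y)\leq d$, then $x$ and $y$ lie in a common $f_d(R)$ with $a(R)$ even. By Lemma~\ref{lem:fint}(i), $\pathd_\Gf(f_d(N(x)),X\setminus N(x))\geq d$, so $y\in N(x)$, and symmetrically $x\in N(y)$; hence $N(x),N(y)$ are two nuclei sharing a point, so comparable by Lemma~\ref{lem:atoms}(i). Taking $N(x)\subseteq N(y)$, Lemma~\ref{lem:fint}(iii) gives $x\in f_d(N(x))\subseteq f_d(N(y))$, while $y\in f_d(N(y))$ by definition. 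Iterating this along a path $x_0,x_1,\ldots,x_m$ in $Y$ with consecutive $\Gf$-distance at most $d$, an induction produces an even-amplitude nucleus $R_m$ in $x_0$'s chain with $\{x_0,\ldots,x_m\}\subseteq f_d(R_m)$: either $x_{m+1}\in f_d(R_m)$ and one keeps $R_{m+1}=R_m$, or $x_{m+1}\notin f_d(R_m)$, in which case minimality of $N(x_{m+1})$ and the chain structure force $N(x_{m+1})\supsetneq R_m$, so set $R_{m+1}=N(x_{m+1})$ and use Lemma~\ref{lem:fint}(iii) to see that $x_0,\ldots,x_m$ stay in $f_d(R_{m+1})$. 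The argument for $X\setminus Y$ is identical.

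The main obstacle is to rule out that along an infinite path in $Y$ the nuclei $R_m$ keep strictly growing in $x_0$'s chain, which would leave open an infinite $\Gf^{(d)}$-component. This is where the strengthened hypothesis $(2d+1)P\leq Q$ is essential (rather than only $Pd\leq Q$ which sufficed for Lemma~\ref{lem:fint}): it is exactly the condition needed to apply Lemma~\ref{lem:fint} at scale $2d+1$. In particular, whenever $a(R_m)<a(R_{m+1})$ one gets
\[
\pathd_\Gf\bigl(f_{2d+1}(R_m),\,X\setminus f_{2d+1}(R_{m+1})\bigr)\geq 2d+1,
\]
so an amplitude-jump growth step forces the path at least $d+1$ deep into $f_{2d+1}(R_{m+1})$, which in turn forces $x_{m+2}\in f_{2d+1}(R_{m+1})$ and blocks any immediate further jump. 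Since bounded geometry caps the number of boundary scales meeting a given nucleus (so that between successive amplitude jumps there can be only boundedly many same-amplitude growth steps), the sequence $R_m$ must stabilize at some even-amplitude $R^*$ in $x_0$'s chain. Consequently the component $C\subseteq f_d(R^*)$, which is finite. The same argument with ``odd'' in place of ``even'' handles the components of $\Gf^{(d)}\res(X\setminus Y)$.
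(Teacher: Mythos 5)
Your choice of $Y$ (the parity of the amplitude of the minimal nucleus $N(x)$ with $x \in f_d(N(x))$) is genuinely different from the paper's construction, which takes $Y$ to be the union of the annuli $\partial_d f_{2d+1}(R)=\{a\in f_{2d+1}(R): \pathd_\Gf(a, X\setminus f_{2d+1}(R))\leq d\}$ over \emph{maximal} nuclei $R$ (those with $a(R)<a(R')$ for every nucleus $R'\supsetneq R$). The first parts of your argument are essentially sound: every $x$ does lie in $f_d(R)$ for all sufficiently large nuclei $R$ containing it, and the pairing/induction along a $\Gf^{(d)}$-path in $Y$ does produce an increasing chain of even-amplitude nuclei $R_m$ with $\{x_0,\dots,x_m\}\subseteq f_d(R_m)$. (One small repair: Lemma~\ref{lem:fint}(i) only gives $\pathd_\Gf(f_d(N(x)),X\setminus N(x))\geq d$, so $\pathd_\Gf(x,y)=d$ does not by itself force $y\in N(x)$; argue instead that if $s(N(x))\leq s(N(y))$ then $x\in N(y)$, which still yields comparability.)

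The finiteness step, however, has a genuine gap. First, your induction places the path points only in $f_d(R_m)$, whereas Lemma~\ref{lem:fint}(iv) at scale $2d+1$ concerns $f_{2d+1}(R_m)\subseteq f_d(R_m)$; the containment goes the wrong way, so an amplitude jump is not known to occur from a point of $f_{2d+1}(R_m)$ and you cannot conclude the path is pushed $d+1$ deep into $f_{2d+1}(R_{m+1})$. More importantly, even after repairing this (say by running the whole induction with $f_{2d+1}$), ``blocks any immediate further jump'' only delays the next amplitude jump by a single step, and the parenthetical claim that bounded geometry bounds the number of same-amplitude growth steps between jumps is both unjustified (a chain $R_1\subsetneq\cdots\subsetneq R_j$ of nuclei of common amplitude $a$ can have length comparable to $a$, since only the stages $s(R_i)\leq a$ need be distinct) and, even if granted, insufficient: an infinite path could still exhibit infinitely many amplitude jumps with $a(R_m)\to\infty$ through even values, and nothing you say rules this out. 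The decisive symptom is that your stabilization argument never actually uses the parity of the amplitudes; if it were valid it would apply verbatim with $Y=X$ and show that every component of $\Gf^{(d)}$ is finite, which is false whenever $\Gf$ has an infinite component. The missing idea is exactly what the paper's choice of $Y$ supplies: for distinct maximal nuclei the annuli $\partial_d f_{2d+1}(R)$ are more than $d$ apart (so each $Y$-component sits inside a single annulus), while every $x\notin Y$ lies in $f_{2d+1}(R')\setminus\partial_d f_{2d+1}(R')$ for some maximal $R'$, trapping its component inside the finite set $f_{2d+1}(R')$.
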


\begin{proof}
Call a nucleus $R$ {\em maximal} if $a(R) < a(R')$ for every nucleus $R'$ with $R' \supsetneq R$. For a set $A \subseteq X$ set
$$\partial_d A = \{a \in A : \pathd_\Gf(a, X \setminus A) \leq d\}.$$
Define
$$Y = \bigcup \Big\{ \partial_d f_{2d+1}(R) : R \text{ is a maximal nucleus} \Big\}.$$

Let $R$ and $R'$ be maximal nuclei with
$$\pathd_\Gf(\partial_d f_{2d+1}(R), \partial_d f_{2d+1}(R')) \leq d.$$
By Lemma \ref{lem:fint}.(i) we must have that $R \cap R' \neq \varnothing$. By swapping $R$ and $R'$, we may suppose that $s(R) \leq s(R')$ so that $R \subseteq R'$. It follows from Lemma \ref{lem:fint}.(iii) that $f_{2d+1}(R) \subseteq f_{2d+1}(R')$. We have
$$\pathd_\Gf(f_{2d+1}(R), X \setminus f_{2d+1}(R')) \leq d + \pathd_\Gf(\partial_d f_{2d+1}(R), \partial_d f_{2d+1}(R')) \leq 2 d < 2 d + 1,$$
so from Lemma \ref{lem:fint}.(iv) we conclude that $a(R) = a(R')$. By maximality it follows that $R = R'$. We conclude that every connected component of $\Gf^{(d)} \res Y$ is contained in $\partial_d f_{2d+1}(R)$ for some maximal nucleus $R$ and hence is finite.

Now fix $x \in X \setminus Y$. By Definition \ref{defn:bndgeo}.(i) there is $k \geq 1$ with $\pathd_\Gf(x, X_k^\infty) > P \cdot (2 d + 1)$. Let $R$ be the unique $k$-nucleus containing $x$. Notice that $x \in f_{2d+1}(R)$. Any nucleus $R' \supseteq R$ with $a(R') = a(R)$ must be contained in the same connected component of $\Gf \res (X \setminus X_{a(R)}^\infty)$ as $R$. Since the connected components of $\Gf \res (X \setminus X_{a(R)}^\infty)$ are finite, there must exist a maximal $R' \supseteq R$ with $a(R') = a(R)$. By Lemma \ref{lem:fint}.(iii) we have $x \in f_{2d+1}(R) \subseteq f_{2d+1}(R')$. Since $\partial_d f_{2d+1}(R') \subseteq Y$ and $x \in f_{2d+1}(R') \setminus Y$, the connected component of $x$ in $\Gf^{(d)} \res (X \setminus Y)$ is contained in $f_{2d+1}(R')$ and is therefore finite.
\end{proof}

The following corollary is based off an argument due to Conley and Miller \cite{CM2016}.

\begin{cor}\label{cor:genchrom}
Let $X$ be a standard Borel space and let $\Gf \subseteq X \times X$ be a Borel graph on $X$ with $\chi(\Gf) < \aleph_0$. Suppose that there is a bounded geometry weakly orthogonal decomposition of $X$ with polygonal bound $P$ and orthogonality constant $Q$ satisfying $5 \cdot P \leq Q$. Then
$$\chi_B(\Gf) \leq 2 \cdot \chi(\Gf) - 1.$$
\end{cor}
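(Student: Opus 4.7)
The plan is to apply Theorem~\ref{thm:toast} with $d=2$; the hypothesis $5P\leq Q$ is exactly the required $(2d+1)P\leq Q$ for $d=2$. This produces a Borel set $Y\subseteq X$ such that, writing $Z:=X\sm Y$, both $\Gf^{(2)}\res Y$ and $\Gf^{(2)}\res Z$ have only finite connected components. Consequently $\Gf\res Y$ and $\Gf\res Z$ are locally finite Borel graphs whose connected components are all finite, and since their classical chromatic numbers are at most $k:=\chi(\Gf)$, a routine Luzin--Novikov uniformization argument provides Borel proper $k$-colorings of each using any prescribed palette of $k$ colors.

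The target is a pair of Borel proper $k$-colorings $c_1\colon Y\to\{1,\dots,k\}$ and $c_2\colon Z\to\{k,k+1,\dots,2k-1\}$ whose union $c=c_1\cup c_2$ is a proper $(2k-1)$-coloring of $\Gf$. Since the two palettes overlap only at color $k$, the map $c$ is automatically proper on $\Gf\res Y$ and on $\Gf\res Z$; the only possible defects are crossing edges $(y,z)\in\Gf$ with $y\in Y$, $z\in Z$, and $c_1(y)=c_2(z)=k$. Following the Conley--Miller argument \cite{CM2016}, I will construct $c_1$ and $c_2$ jointly so that no such crossing edges exist. Explicitly, after fixing a Borel $c_1$, assign each $z\in Z$ the list $L(z)=\{k+1,\dots,2k-1\}$ if some $\Gf$-neighbor $y\in Y$ has $c_1(y)=k$, and $L(z)=\{k,\dots,2k-1\}$ otherwise, so every list has size at least $k-1$. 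A crossing-free Borel $c_2$ exists iff every finite $\Gf^{(2)}\res Z$-component $\tilde D$ admits a proper $L$-list coloring of $\Gf\res\tilde D$; Luzin--Novikov then assembles a Borel selection of such list colorings into the desired $c_2$.

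The main technical step, and the main obstacle, is verifying list-colorability of $\Gf\res\tilde D$ in every finite $\tilde D$. The plan is to arrange, via a Borel Kempe-chain re-coloring of $c_1$ inside each finite $\Gf^{(2)}\res Y$-component, that the constrained subset $A\subseteq\tilde D$ (the vertices with reduced list $\{k+1,\dots,2k-1\}$) has induced chromatic number $\chi(\Gf\res A)\leq k-1$. Once this holds, a proper $(k-1)$-coloring of $A$ by the colors $\{k+1,\dots,2k-1\}$ extends to a proper $k$-coloring of $\Gf\res\tilde D$ by a Gallai-type greedy extension argument. The width-$2$ buffer provided by using $d=2$ in Theorem~\ref{thm:toast} (the full strength of the hypothesis $5P\leq Q$) is precisely what allows this chromatic bound on $A$ to be enforced: with only a $d=1$ toast the constrained subset $A$ could contain a $k$-clique and thereby defeat every list assignment of lists of size $k-1$, whereas the $d=2$ buffer provides enough separation between constrained vertices that, after the extremal choice of $c_1$ inside each $\Gf^{(2)}\res Y$-component, the induced subgraph $\Gf\res A$ becomes $(k-1)$-colorable. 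Executing this extremal choice Borel-measurably across all components simultaneously, again via Luzin--Novikov on the finite set of candidate colorings inside each component, completes the construction.
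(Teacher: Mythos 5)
Your first step (invoking Theorem~\ref{thm:toast} with $d=2$, for which $5P\leq Q$ is exactly $(2d+1)P\leq Q$) matches the paper, but from there you take a genuinely different and, as written, gappy route. Set $k=\chi(\Gf)$ and $Z=X\setminus Y$. The paper never works with overlapping palettes or list colorings. It Borel-colors $\Gf\res Y$ with $c_Y\colon Y\to\{1,\dots,k\}$, then throws away the top color class: $Y'=Y\setminus c_Y^{-1}(k)$. Because $c_Y^{-1}(k)$ is a single color class and hence $\Gf$-independent, any path in $\Gf\res(X\setminus Y')$ never passes through two consecutive vertices of $Y\setminus Y'$; therefore every connected component of $\Gf\res(X\setminus Y')$ is contained in $C\cup N(C)$ for a single finite component $C$ of $\Gf^{(2)}\res Z$ and so is finite. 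This is the one place $d=2$, i.e.\ $5P\leq Q$, is used. One then Borel-colors $X\setminus Y'$ with $\{k,\dots,2k-1\}$ and takes the union. The two palettes $\{1,\dots,k-1\}$ and $\{k,\dots,2k-1\}$ are disjoint, so properness across the boundary is automatic.

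Your proposal keeps the full palette $\{1,\dots,k\}$ on $Y$ and instead tries to kill crossing $k$--$k$ edges by a joint choice of $c_1$ and $c_2$. Two assertions carry the whole argument and neither is established. First, you claim that a per-component Kempe-chain recoloring of $c_1$ can force $\chi(\Gf\res A)\leq k-1$, where $A$ is the set of $z\in Z$ with some $\Gf$-neighbor $y\in Y$ colored $k$. This does not follow from the $d=2$ buffer: if $z_1,z_2\in Z$ are adjacent and forced into $A$ by neighbors $y_1\sim z_1$ and $y_2\sim z_2$, then $\pathd_\Gf(y_1,y_2)\leq 3$, which is compatible with $y_1$ and $y_2$ lying in \emph{different} components of $\Gf^{(2)}\res Y$, so no single per-component recoloring can address both constraints, and there is no obvious global scheme. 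The buffer separates $Y$ from $Z$ but does not give separation within $A$. Second, even granting $\chi(\Gf\res A)\leq k-1$, passing from a proper $(k-1)$-coloring of $A$ by $\{k+1,\dots,2k-1\}$ to a proper coloring of all of $\tilde D$ by $\{k,\dots,2k-1\}$ is a precoloring-extension problem, and ``Gallai-type greedy extension'' is not an actual argument here; such extensions fail in general. Both obstacles disappear under the paper's color-removal trick, which never places a $k$-colored vertex on the $Y$ side of the boundary at all.
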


\begin{proof}
By Theorem \ref{thm:toast}, there is a Borel set $Y \subseteq X$ such that both of the graphs $\Gf^{(2)} \res Y$ and $\Gf^{(2)} \res (X \setminus Y)$ have finite connected components. In particular, the connected components of $\Gf \res Y$ are finite and therefore we can find a Borel chromatic coloring
$$c_Y : Y \rightarrow \{1, 2, \ldots, \chi(\Gf)\}$$
of $\Gf \res Y$. Set $Y' = Y \setminus c_Y^{-1}(\chi(\Gf))$ and set $c = c_Y \res Y'$. Since we removed a single color from $Y$, we did not remove any pair of adjacent points in $Y$. It follows that for every connected component $C'$ of $\Gf \res (X \setminus Y')$,
there is a connected component $C$ of $\Gf^{(2)} \res (X \setminus Y)$ such that $C'$ is contained in the union of $C$ and the set of 
neighbors of $C$. 
Since $C$ is finite and $\Gamma$ is locally finite, it follows that $C'$ is finite.
Therefore there is a Borel chromatic coloring
$$d \colon  X \setminus Y' \rightarrow \{\chi(\Gf), \chi(\Gf) + 1, \ldots, 2 \cdot \chi(\Gf) - 1\}$$
of $\Gf \res (X \setminus Y')$. The functions $c$ and $d$ use disjoint colors, so $c \cup d$ is a Borel chromatic $(2 \cdot \chi(\Gf) - 1)$-coloring of $\Gf$.
\end{proof}

\begin{cor} For every $n\geq 1$, $\chi_B(F(2^{\mathbb{Z}^n})=3$.
\end{cor}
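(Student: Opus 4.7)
The plan is to deduce the upper bound $\chi_B(F(2^{\mathbb{Z}^n})) \leq 3$ as a direct application of Corollary~\ref{cor:genchrom}, and to obtain the matching lower bound from a classical ergodicity argument.

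Suppose first that $n \geq 2$. Corollary~\ref{cor:wo} already supplies a bounded geometry weakly orthogonal decomposition of $F(2^{\mathbb{Z}^n})$ with polygonal bound $P = n+1$ and orthogonality constant $Q = \tfrac{\epsilon}{2}\, d_1$, where $\epsilon = \epsilon(n) > 0$ is fixed by the construction but $d_1$ is a free parameter that may be chosen arbitrarily large. I would simply enlarge $d_1$ until $\tfrac{\epsilon}{2}\, d_1 \geq 5(n+1)$, so that the hypothesis $5 P \leq Q$ of Corollary~\ref{cor:genchrom} is met. The Schreier graph $\Gf$ on each orbit is isomorphic to the Cayley graph of $\mathbb{Z}^n$ with the standard generators, which is bipartite via the parity of the coordinate sum, so $\chi(\Gf) = 2$. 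Corollary~\ref{cor:genchrom} then yields $\chi_B(\Gf) \leq 2 \chi(\Gf) - 1 = 3$. For $n = 1$ the same scheme goes through with the elementary one-dimensional marker construction, which trivially produces a weakly orthogonal decomposition with bounded geometry; alternatively, one invokes the classical fact recalled in the introduction that $\chi_B(F(2^{\mathbb{Z}})) = 3$.

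For the matching lower bound $\chi_B \geq 3$, suppose toward a contradiction that a Borel proper $2$-coloring $c \colon F(2^{\mathbb{Z}^n}) \to \{0,1\}$ existed. Since adjacent vertices receive distinct colours, a double shift along any generator preserves colours: $c(2 e_i \cdot x) = c(x)$ for every $i$. Hence $A := c^{-1}(0)$ is invariant under the finite-index subgroup $H = (2\mathbb{Z})^n \leq \mathbb{Z}^n$. The Bernoulli action $\mathbb{Z}^n \actson 2^{\mathbb{Z}^n}$ with product measure $\mu$ is mixing, so its restriction to $H$ is still ergodic, which forces $\mu(A) \in \{0,1\}$. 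On the other hand, $A$ and $e_1 \cdot A$ are disjoint with union $F(2^{\mathbb{Z}^n})$, and shift-invariance of $\mu$ gives $\mu(A) = \mu(e_1 \cdot A) = \tfrac{1}{2}$, a contradiction.

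Given the amount of structural theory already absorbed into Theorem~\ref{thm:so}, Corollary~\ref{cor:wo}, and Corollary~\ref{cor:genchrom}, I do not anticipate a genuine obstacle here. The only substantive observation needed is that the polygonal bound $P = n+1$ produced by Corollary~\ref{cor:wo} is independent of $d_1$ whereas the orthogonality constant $Q = \tfrac{\epsilon}{2} d_1$ scales linearly with $d_1$, so the pinch condition $5 P \leq Q$ demanded by Corollary~\ref{cor:genchrom} can be arranged simply by choosing the initial marker distance large enough.
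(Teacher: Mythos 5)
Your proposal is correct and follows the paper's proof of this corollary essentially verbatim for the upper bound: invoke Corollary~\ref{cor:wo} with $P=n+1$ and $Q=\frac{\epsilon}{2}d_1$, enlarge $d_1$ so that $5P\leq Q$, note $\chi(\Gf)=2$, and apply Corollary~\ref{cor:genchrom}. The lower bound you spell out is the standard ergodicity argument that the paper treats as well known, and your version of it (invariance of $c^{-1}(0)$ under $(2\mathbb{Z})^n$, ergodicity of the restricted Bernoulli action, and the measure-$\frac12$ contradiction) is sound.
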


\begin{proof} We observe that $\chi(F(2^{\mathbb{Z}^n}))=2$ and that Corollary~\ref{cor:wo} holds with $P=n+1$ and $Q=\frac{\epsilon}{2}d_1$, where we can take $d_1$ to be arbitrarily large. Then the conclusion holds by Corollary~\ref{cor:genchrom}. 
\end{proof}

We recall the definition of toast structures (c.f. \cite{GJKS2022} and \cite{GJKSinf}).

\begin{defn}\label{defn:toas} Let $X$ be a standard Borel space and $\Gamma\subseteq X\times X$ be a Borel graph on $X$. Let $\rho_\Gamma$ be the shortest-path-length metric on the connected components of $\Gamma$. Let $E_\Gamma$ be the equivalence relation on $X$ given by the connected components of $\Gamma$. Let $\{T_n\}$ be a sequence of subequivalence relations of $E_\Gamma$, i.e., each $T_n\subseteq E_\Gamma$ is an equivalence relation on a subset of $X$, called the {\em domain} of $T_n$, denoted $\mbox{dom}(T_n)$. We call $\{T_n\}$ a {\em (unlayered) toast} if
\begin{enumerate}
\item[(1)] Each $T_n$-equivalence class is finite.
\item[(2)] $\bigcup_n \mbox{dom}(T_n)=X$.
\item[(3)] For each $T_n$-equivalence class $C$, and each $T_m$-equivalence class $C'$ where $m>n$, if $C\cap C'\neq\varnothing$ then $C\subseteq C'$.
\item[(4)] For each $T_n$-equivalence class $C$ there is $m>n$ and a $T_m$-equivalence class $C'$ such that $C\subseteq C'\setminus\partial C'$, where $\partial C'=\{x\in C'\colon \rho_\Gamma(x, X\setminus C')=1\}$.
\end{enumerate}
We call $\{T_n\}$ a {\em layered toast} if instead of (4) above, we have
\begin{enumerate}
\item[(4')] For each $T_n$-equivalence class $C$ there is a $T_{n+1}$-equivalence class $C'$ such that $C\subseteq C'\setminus \partial C'$.
\end{enumerate}
If $\{T_n\}$ is an unlayered or layered toast, we call $\{T_n\}$ {\em Borel} if for each $n$, $\mbox{dom}(T_n)\subseteq X$ is Borel and $T_n$ is a Borel equivalence relation on $\mbox{dom}(T_n)$.
If $X$ is a zero-dimensional Polish space and $\{T_n\}$ an unlayered or layered toast on $X$, we call $\{T_n\}$ {\em continuous} if for every $n$, $\mbox{dom}(T_n)\subseteq X$ is clopen in $X$ and $T_n$ is a clopen equivalence relation on $\dom(T_n)$.
\end{defn}

With an argument similar to Lemma~\ref{lem:hyp}, we can see that the existence of a Borel unlayered toast implies hyperfiniteness of $E_\Gamma$.

\begin{thm}\label{thm:gentoast} Let $X$ be a standard Borel space and let $\Gamma$ be a Borel graph on $X$. Let $d\geq 2$. Suppose that there is a bounded geometry weakly orthogonal decomposition of $X$ with polygonal bound $P$ and orthogonality constant $Q$ satisfying $d\cdot P\leq Q$. Then there is a Borel unlayered toast on $X$.
\end{thm}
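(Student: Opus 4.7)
The plan is to build the toast from maximal nuclei and their $d$-fundamental interiors, which were developed exactly for this kind of nested, boundary-avoiding structure. Recall from the proof of Theorem~\ref{thm:toast} that a nucleus $R$ is \emph{maximal} if $a(R) < a(R')$ for every nucleus $R' \supsetneq R$. For each $n \geq 1$, I would define $T_n$ to be the equivalence relation on $\dom(T_n) := \bigcup\{f_d(R) : R \text{ maximal}, a(R) = n\}$ whose classes are the individual sets $f_d(R)$. By Lemma~\ref{lem:atoms}.(i), two distinct maximal nuclei of equal amplitude must be disjoint (otherwise one would contain the other, violating maximality), so the classes within a single $T_n$ are pairwise disjoint, and $T_n$ is a well-defined finite Borel equivalence relation on a Borel domain.

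To finish I would verify the four toast conditions in turn. Condition~(1) is immediate since each maximal nucleus $R$ is finite. For condition~(2), the weakly orthogonal decomposition forces $\Gf$ to be locally finite, so $\bigcap_k X_k^\infty = \varnothing$ together with finiteness of balls implies $\pathd_\Gf(x, X_n^\infty) \to \infty$ as $n \to \infty$ for each $x \in X$; once this distance exceeds $P \cdot d$, a direct calculation from the definition of $f_d$ shows $x \in f_d(R_n(x))$ for the unique $n$-nucleus $R_n(x)$ containing $x$, and Lemma~\ref{lem:fint}.(iii) then pushes $x$ into $f_d(R) \subseteq \dom(T_{a(R)})$ for the maximal nucleus $R \supseteq R_n(x)$ with the same amplitude. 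Condition~(3) follows from Lemmas~\ref{lem:atoms}.(i),(v) and~\ref{lem:fint}.(iii): if $f_d(R) \cap f_d(R') \neq \varnothing$ with $a(R) < a(R')$, then $R \cap R' \neq \varnothing$ forces nesting, and the amplitude inequality forces $R \subseteq R'$, hence $f_d(R) \subseteq f_d(R')$. Condition~(4) is where the maximality pays off: given a $T_n$-class $C = f_d(R)$, any maximal nucleus $R' \supsetneq R$ satisfies $a(R') > a(R) = n$ by maximality of $R$, and Lemma~\ref{lem:fint}.(iv) together with the assumption $d \geq 2$ gives $\pathd_\Gf(C, X \setminus f_d(R')) \geq d \geq 2$, which is precisely $C \subseteq C' \setminus \partial C'$ for $C' = f_d(R')$.

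The remaining points are routine Borelness (the maps $x \mapsto R_n(x)$, $R \mapsto a(R)$, $R \mapsto f_d(R)$, and the predicate of maximality are all Borel) and the degenerate case where $x$ lies in a finite $\Gf$-component, so that the chain of nuclei containing $x$ eventually stabilizes at the entire component $C$. Such a terminal $C$ can be added as a $T_n$-class for every sufficiently large $n$, satisfying (1)--(4) vacuously since $\partial C = \varnothing$ in $X$. The only substantive conceptual step is the observation used in condition~(4) that passing from a maximal $R$ to a strictly larger maximal $R'$ genuinely increases the amplitude -- this is exactly what maximality encodes, and exactly what is needed to invoke Lemma~\ref{lem:fint}.(iv) to obtain the strict interior containment that distinguishes a toast from a mere nested sequence of partitions.
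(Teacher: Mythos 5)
Your proposal is correct and follows essentially the same route as the paper: both take the toast classes to be the $d$-fundamental interiors $f_d(R)$ of maximal nuclei and verify (1)--(4) via Lemmas~\ref{lem:atoms} and~\ref{lem:fint}, with (4) coming from maximality forcing $a(R')>a(R)$ so that Lemma~\ref{lem:fint}.(iv) applies. The only (immaterial) difference is that you index the layers by amplitude $a(R)=n$ where the paper uses the stage $s(R)=n$, and you are slightly more explicit about surjectivity of the domains and the degenerate finite-component case.
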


\begin{proof} As in the proof of Theorem~\ref{thm:toast} we consider maximal nuclei, i.e., nuclei $R$ for which $a(R)<a(R')$ 
for every nucleus $R'$ with $R'\supsetneq R$. As in the proof of
Theorem~\ref{thm:toast}, every nucleus is contained in some maximal nucleus.

For each $n$, define
$$ \mbox{dom}(T_n)=\bigcup\{ f_d(R)\colon s(R)=n \text{ and } R \text{ is maximal}\} $$
and $T_n$ by the partition of $\mbox{dom}(T_n)$ given by $\{f_d(R)\colon s(R)=n\}$.
Definition~\ref{defn:toas} (1) and (2) follow immediately from Definition~\ref{defn:bndgeo} (i) (decomposition). For Definition~\ref{defn:toas} (3), let $m>n$. Consider a $T_n$-equivalence class $C=f_d(R)$ and a $T_m$-equivalence class $C'=f_d(R')$. Assume $C\cap C'\neq\varnothing$. It follows that $R\cap R'\neq\varnothing$. Hence by Lemma~\ref{lem:atoms} (i), $R\subseteq R'$. By Lemma~\ref{lem:fint} (iii), $C=f_d(R)\subseteq f_d(R')=C'$.

Finally, for Definition~\ref{defn:toas} (4), let $C=f_d(R)$ be a $T_n$-equivalence class. 
Let $R'$ be any maximal nucleus with $R'\supsetneq R$. Then $R\subseteq R'$ and $a(R')>a(R)$. 
By Lemma~\ref{lem:fint} (iii) and (iv), we have $\rho_\Gamma(C, X\setminus C')\geq d\geq 2$. 
Hence $C\subseteq C'\setminus\partial C'$.
\end{proof}

\begin{cor} For each $n\geq 1$, there exists a Borel unlayered toast on $F(2^{\mathbb{Z}^n})$.
\end{cor}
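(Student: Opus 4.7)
The plan is to reduce the corollary directly to Theorem~\ref{thm:gentoast}, since that theorem already provides a mechanism for producing a Borel unlayered toast out of any suitably well-scaled bounded geometry weakly orthogonal decomposition. The main task is therefore to verify that $F(2^{\mathbb{Z}^n})$ admits such a decomposition whose parameters $P$ and $Q$ satisfy the inequality $d \cdot P \leq Q$ required by Theorem~\ref{thm:gentoast}, for some $d \geq 2$.

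First I would invoke Corollary~\ref{cor:wo} (together with Theorem~\ref{thm:so}, which produces the underlying strong orthogonal decomposition from the orthogonal marker regions $\sR_m$). This yields a bounded geometry weakly orthogonal decomposition $\{X_k\}_{k \geq 1}$ of $F(2^{\mathbb{Z}^n})$ with polygonal bound $P = n+1$ and orthogonality constant $Q = \frac{\epsilon}{2} d_1$, where $\epsilon = \epsilon(n) > 0$ depends only on the dimension and $d_1$ is the first marker distance in the sequence $d_1 < d_2 < \cdots$ used to build the orthogonal marker structure. Crucially, the marker distances can be chosen arbitrarily (subject only to the growth condition $d_{k+1} > K(n) d_k$), so in particular $d_1$ may be selected as large as we wish while preserving all the other properties of the decomposition.

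Next I would fix $d = 2$ and choose $d_1$ to satisfy $d_1 \geq \frac{4(n+1)}{\epsilon}$, which ensures $d \cdot P = 2(n+1) \leq \frac{\epsilon}{2} d_1 = Q$. With this choice in place, all hypotheses of Theorem~\ref{thm:gentoast} are met, so the theorem produces a Borel unlayered toast on $F(2^{\mathbb{Z}^n})$, completing the proof.

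There is essentially no obstacle here: the corollary is a packaged application of machinery already developed in the paper, and the only real content of the argument is observing the free parameter $d_1$ in the construction of the orthogonal marker structure lets us make $Q$ dominate any fixed multiple of $P = n+1$. For the special case $n = 1$ (not the main focus), one may either run the same construction or simply observe that $F(2^{\mathbb{Z}})$ trivially admits a Borel unlayered toast by using a sequence of nested Borel finite subequivalence relations coming from standard marker lemmas on hyperfinite equivalence relations.
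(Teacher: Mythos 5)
Your proof is correct and matches the paper's own argument: both invoke Corollary~\ref{cor:wo} to obtain $P = n+1$ and $Q = \tfrac{\epsilon}{2}d_1$, then note that $d_1$ is a free parameter that can be taken large enough to satisfy $d \cdot P \leq Q$, so Theorem~\ref{thm:gentoast} applies. The extra remark on the $n = 1$ case is harmless but unnecessary, since the orthogonal marker machinery (and hence Corollary~\ref{cor:wo}) already covers that case.
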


\begin{proof} This follows from Corollary~\ref{cor:wo} and Theorem~\ref{thm:gentoast}, 
again noting that the condition $d\cdot P\leq Q$ can be satisfied in Corollary~\ref{cor:wo} by choosing a large enough $d_1$.
\end{proof}

In contrast, in \cite{GJKS2022} we showed that there is no Borel layered toast on $F(2^{\mathbb{Z}^n})$ for any $n\geq 1$, and in \cite{GJKSinf} we showed that there is no continuous unlayered toast on $F(2^{\mathbb{Z}^n})$ for any $n\geq 1$.

\section{Topological regularity} \label{sec:topreg}

The orthogonality results of Theorem~\ref{thm:so} and Corollary~\ref{cor:wo}
establish a certain ``geometric regularity'' for the connected components
$R$ of $X-X_k^\infty$. We prove here a result which shows that we may strengthen the
conclusion to get a ``topological regularity'' for these components as well.
In fact, we can control (in a sense to be made precise) the possible
shapes and configurations for these components. Since we only need 
these arguments for dimension $2$, we will restrict to this case. The main point is that 
we can produce an orthogonal decomposition with components homeomorphic to the 
unit ball of $\R^2$.




We continue to let $e_1, e_2$ denote the standard generators of $\Z^2$, 
or equivalently the standard unit vectors of $\R^2$. 

We first give an informal description of our construction in this section, which will be a variation of the basic orthogonal marker construction that is a refinement of the construction used in \S\ref{comr}  to prove Theorem~\ref{thm:om}.
As in that proof we will use marker distances $d_1 \ll d_2 \ll \cdots \ll d_n \ll d_{n+1}\cdots$ along with different scale values on each level $n$. The scale values will be of the form $\delta d_k$, where $d_k$ is a marker distance the use of which depends on the stage of the construction, and $\delta$ is a multiplier which is a fixed parameter not depending on $k$. We will use several such fixed parameters $0<\alpha<\epsilon_2<\eta<\epsilon_1<1$, and they do not depend on the stage of the construction in which they are used. 
In fact we will have $$\alpha<\frac{1}{10}\epsilon_2<\epsilon_2<\eta^2<\eta<2\eta<\epsilon_1<\frac{1}{2}$$ to serve various purposes. Their exact values will be determined later in the construction. 

As before, we will construct orthogonal marker decompositions $\sR^m_k$ for all $m\geq 1$ and $1\leq k\leq m$. If $(m_1,k_1)$, $(m_2,k_2)$ are pairs of positive integers with $k_1\leq m_1$ and $k_2 \leq m_2$, 
we let $(m_1,k_1)\prec (m_2,k_2)$ if $m_1 <m_2$ or $(m_1=m_2 \mbox{ and } k_1>k_2)$. This ordering has order-type $\omega$
and corresponds to the order in which we construct the $\sR^m_k$ regions. Thus, we 
construct $\sR^1_1$, $\sR^2_2$, $\sR^2_1$, $\sR^3_3$, $\sR^3_2$, $\sR^3_1, \cdots$. In particular, for each $m$, we will start with a clopen 
rectangular decomposition $\sR^m_m$ and then successively fractilize 
each region in $\sR^m_{k+1}$ to obtain the clopen decomposition $\sR^m_k$. 
We thus eventually define the clopen decomposition $\sR^m_1$. 

The rectangles in $\sR^m_m$ (the ``diagonal'' steps of the construction) 
will have side lengths roughly $\alpha d_m$, whereas the rectangles used in modifying the 
regions in $\sR^m_{k+1}$ to obtain the regions in $\sR^m_k$ 
will have side lengths between $\eta^2 d_k$ and $d_k$. Since $\alpha d_k <\frac{1}{10}\eta^2 d_k$,
each rectangle $R$ in $\sR^k_k$ will be at least $10$ times smaller than any rectangle
used to get an $\sR^m_k$ region from an $\sR^m_{k+1}$ region. 
Similar to previous constructions, we will first define an auxiliary clopen rectangular partial decomposition $\tsQ^m_k$
which will cover the boundaries of the $\sR^m_{k+1}$ regions. We will then define 
a rectangular partial decomposition $\tsR^m_k$ which refines the $\tsQ^m_k$ in that
every $R\in \tsR^m_k$ is a subset of a $Q\in \tsQ^m_k$. The $\tsR^m_k$ regions will also cover the 
boundaries of the $\sR^m_{k+1}$ regions.
Finally, we use a continuous (but otherwise arbitrary) assignment of the $\tsR^m_k$ 
regions to one of the $\sR^m_{k+1}$ regions it intersects. This will define the $\sR^m_k$ 
regions. 

We next describe the construction in more, formal details. Asssume inductively that the $\sR^{m_1}_{k_1}$ have been constructed for all 
$(m_1,k_1)\prec (m,k)$ and that the following inductive hypotheses are satisfied. 
\medskip

{\bf Inductive Hypotheses} 


\begin{enumerate}[label={(H\arabic*)}]
\item \label{i1}
Each $R\in \sR^{m_1}_{m_1}$ is a rectangle with side lengths between 
$\frac{1}{2}\alpha d_{m_1}$ and $\alpha d_{m_1}$. 
\item \label{i2}
$\tsQ^{m_1}_{k_1}$ is defined if $k_1<m_1$, and is a clopen rectangular partial decomposition 
which covers the boundaries of the regions in $\sR^{m_1}_{k_1+1}$. Each rectangle 
$Q\in \tsQ^{m_1}_{k_1}$ has side lengths between $\frac{1}{2} d_{k_1}$ and $d_{k_1}$. 
\item (orthogonality of the $\tsQ$) \label{i3}
If $k_1<m_1$ and $F$ is a side of a rectangle in $\tsQ^{m_1}_{k_1}$, then $F$  
is at least $\epsilon_1 d_{k_1}$ from any parallel side $F'$ of a rectangle in one of the following decompositions:
\begin{itemize}
\item $\sR^{k_1+1}_{k_1+1}$,
\item $\tsR^{m_2}_{k_1+1}$ for $m_2\in (k_1+1, m_1]$,
\item $\tsQ^{\ell}_{k_1}$ for $\ell\in (k_1, m_1)$,
\item $\tsQ^{m_1}_{k_1}$ if $\rho(F, F')>1$;
\end{itemize}
Also, $F$ is at least $\alpha \epsilon_1 d_{k_1}$ from a parallel side of a rectangle in $\sR^{k_1}_{k_1}$. 
\item \label{i4}
$\tsR^{m_1}_{k_1}$ is defined if $k_1<m_1$ and $\tsR^{m_1}_{k_1}$ is a clopen 
rectangular refinement of $\tsQ^{m_1}_{k_1}$ which still covers the boundaries of the regions in $\sR^{m_1}_{k_1+1}$. In particular, each $R \in \tsR^{m_1}_{k_1}$
is a subrectangle of a $Q\in \tsQ^{m_1}_{k_1}$. 
Each rectangle $R\in \tsR^{m_1}_{k_1}$ has side lengths between $\eta^2 d_{k_1}$ and $d_{k_1}$. 
\item (short directions for the $\tsR$) \label{i5}
If $R\in \tsR^{m_1}_{k_1}$ and $R$ intersects $2$ non-parallel sides of regions in $\tsR^{m_1}_{k_1+1}$,
then $R$ has side lengths $\eta d_{k_1}$. In this case, we say that $R$ 
has {\em type $2$}. Otherwise, $R$ intersects exactly one side $F$ of a region in $\tsR^{m_1}_{k_1+1}$. 
Then the length of $R$ in the direction perpendicular to $F$ is $\eta^2 d_{k_1}$. 
In this case we say that $R$ has {\em type $1$}, and call the direction perpendicular to $F$ 
the {\em short direction} for $R$.
\item (coherence of $\tsQ$ and $\tsR$) \label{i6}
If $F$ is a side of a rectangle $R\in \tsR^{m_1}_{k_1}$, then either $F$ is part of a side
of a rectangle $Q\in \tsQ^{m_1}_{k_1}$ or $F$ is within $\eta d_{k_1}$ of a parallel boundary side 
of a region in $\sR^{m_1}_{k_1+1}$. In fact, if $R$ is of type-$1$ and not adjacent 
to a rectangle of type-$2$, then $F$ is within $\eta^2 d_{k_1}$ of a parallel boundary side 
of a region in $\sR^{m_1}_{k_1+1}$. 
\item (orthogonality of the $\tsR$) \label{i7a}
Any side $F$ of a rectangle $R\in \tsR^{m_1}_{k_1}$ is at least $\epsilon_2 d_{k_1}$ from any 
parallel side $F'$ of a rectangle $R'\in \tsR^{m_1}_{k_1}$ if $\rho(F, F')>1$.

\item (orthogonality of the $\tsR$) \label{i7b}
Any side $F$ of a rectangle $R\in \tsR^{m_1}_{k_1}$ is at least $\epsilon_2 d_{k_1}$ from any 
parallel side of a rectangle $R' \in \tsR^{m_2}_{k_1}$ for $m_2\in (k_1, m_1)$. 
Also, $F$ is at least $\alpha \epsilon_2 d_{k_1}$ from any parallel side of a rectangle in $\sR^{k_1}_{k_1}$.

\end{enumerate}

\begin{rem} \label{rem:oc}
Although the constants $\epsilon_1, \eta, \epsilon_2$ and $\alpha$ are pre-fixed, absolute constants not depending on the stages of the construction, we will determine their exact values through describing the construction in detail at the inductive step. 
The constant $\epsilon_1$ will be a preliminary orthogonality constant used in constructing the 
$\tsQ$ (to maintain the orthogonality in the inductive hypothesis \ref{i3}). We will then pick constant $\eta$ with $\eta^2<\eta< 2\eta<\epsilon_1$
which is used in construction the $\tsR$ rectangles as a control of the side lengths. The $\tsR$ rectangles 
will be adjusted to satisfy orthogonality (items \ref{i7a} and \ref{i7b} 
of the induction hypothesis), which determines the orthogonality constant $\epsilon_2<\eta^2$. Throughout this construction the diagonal decompositions $\sR^m_m$ will be specially defined to have a multiplier $\alpha$ for their side lengths compared to the non-diagonal decompositions. A key observation is that the orthogonality estimates involving the diagonal decompositions hold with the same multiplier $\alpha$ no matter what the value of $\alpha$ is. In other words, none of the constants $\epsilon_1, \eta$ and $\epsilon_2$ depend on the value of $\alpha$.  We finally pick $\alpha < \frac{1}{10}\epsilon_2$. In the end, the constant $\epsilon=\alpha \epsilon_2$
will be the final resulting orthogonality constant for the $\tsR$. 
\end{rem}

\begin{rem}
If $R\in \tsR^{m_1}_{k_1}$ is of type $2$, then $R$ has edge lengths $\eta d_{k_1}$ from 
\ref{i6}. If $R$ is of type $1$ and not adjacent to a type $2$ rectangle 
$R'\in \tsR^{m_1}_{k_1}$ then $R$ has short dimension $\eta^2 d_{k_1}$ and 
has other side length between $\frac{1}{2} d_{k_1}$ and $d_{k_1}$ from 
\ref{i3} and \ref{i6}. If $R$ is of type $1$ and is adjacent to 
an $R$ of type-$2$ then $R$ still has short dimension $\eta^2 d_{k_1}$
and has other side length between $(\epsilon_1-\eta) d_{k_1}$ 
and $d_{k_1}$. This follows from \ref{i4} and \ref{i5}. 
\end{rem}

We now turn to the construction of the $\sR^m_k$ regions. For $k=m$, 
let $\sR^m_m$ be a rectangular clopen decomposition of $F(2^{\Z^2})$ with each 
rectangle $R\in \sR^m_m$ having side lengths between 
$\frac{1}{2}\alpha d_m$ and $\alpha d_m$. Then the inductive hypothesis \ref{i1} is maintained. 
Assume now the clopen partition $\sR^m_{k+1}$ 
has been defined, and we define $\sR^m_k$. 

Let $\tsP^m_k$ be a clopen rectangular partition of $F(2^{\Z^2})$ satisfying the following:

\begin{enumerate}
\item \label{v1}
Each rectangle $Q\in \tsP^m_k$ has side lengths between $\frac{1}{2} d_k$ and $d_k$.
\item \label{v4}
Each side of a rectangle $Q\in \tsP^m_k$ is at least $\epsilon_1 d_k$ from a parallel side
of a rectangle $R\in \sR^{k+1}_{k+1}$.
\item \label{v4.5}
Each side of a rectangle $Q\in \tsP^m_k$ is at least $\epsilon_1 d_k$ from a parallel side
of a rectangle $R\in \tsR^{m_2}_{k+1}$ for any $m_2\in (k+1, m]$.
\item \label{v2}
Each side of a rectangle $Q\in \tsP^m_k$ is at least $\epsilon_1 d_k$ from a parallel side 
of a rectangle $Q'\in \tsQ^\ell_{k}$ for any $\ell\in(k,m)$.
\item \label{v2.5}
Each side $F$ of a rectangle $Q\in \tsP^m_k$ is at least $\epsilon_1 d_k$ from a parallel side $F'$
of a rectangle $Q' \in \tsP^m_k$ 
if $\rho(F,F')>1$. 
\item \label{v3}
Each side $F$ of a rectangle $Q\in \tsP^m_k$ is at least $\alpha \epsilon_1 d_k$
from a parallel side $F'$ of a rectangle $Q'\in \sR^k_{k}$.
\end{enumerate}

The construction of the $\tsP^m_k$ is by the standard orthogonal marker region construction using the inductive hypotheses. 
The orthogonality condition in (\ref{v4}) is guaranteed by \ref{i1} and the fact that $\alpha d_{k+1}>d_k$ since $d_{k+1}\gg d_k$. The condition in (\ref{v4.5}) is guaranteed by \ref{i7a} and \ref{i7b}, together with the fact that $\alpha\epsilon_2 d_{k+1}>10d_k$ because $d_{k+1}\gg d_k$. For the condition in (\ref{v2}), we note that by \ref{i2} each rectangle in $\tsQ^\ell_k$ for $\ell\in (k,m)$ is within distance $d_k$ to the boundaries of a rectangle in $\tsR^\ell_{k+1}$. Again by \ref{i7a} and \ref{i7b}, together with the fact that $\alpha\epsilon_2 d_{k+1}>10d_k$, we conclude that there is an absolute upper bound (independent from $m, k$) for the number of such $\tsQ^\ell_k$ rectangles $Q'$ to consider. The condition in (\ref{v2.5}) can be guaranteed by the standard construction. Finally, 
 the factor of $\alpha$ in (\ref{v3}) comes from the fact that 
the rectangles in $\sR^k_k$ have smaller side lengths of size roughly $\alpha d_k$, as opposed to $d_k$.
Again, this orthogonality condition can be guranteed for any value of $\alpha$ as long as the $\sR^k_k$
rectangles have side lengths between $\frac{1}{2} \alpha d_k$ and $\alpha d_k$, which is given by \ref{i1}. 

The consideration of (\ref{v1})--(\ref{v2.5}) gives an absolute upper bound for the number of rectangle sides to avoid. This upper bound, which does not depend on $m$ and $k$, determines the constant $\epsilon_1$. Then the consideration of (\ref{v3}) follows for any constant value $\alpha$ which is independent from $\epsilon_1$. The exact value of $\alpha$ will be determined later in the construction.
 
Now we define $\tsQ^m_k$ to consist of those rectangles in $\tsP^m_k$ which intersect the boundary of a $\sR^m_{k+1}$
region. Then it is straightforward to check that the inductive hypotheses \ref{i2} and \ref{i3} are maintained.

We now define a clopen rectangular partial decomposition $\tsR^m_k$ which still covers the boundaries of $\sR^m_{k+1}$. Each rectangle $R\in \tsR^m_k$ is a subrectangle of some $Q \in \tsQ^m_k$. The construction of $R$ from $Q$ will be done in a continuous manner. We will then let $\sR^m_k$ be obtained from $\sR^m_{k+1}$
by using the auxiliary partial decomposition $\tsR^m_k$ in the usual way: we assign each region $R\in \tsR^m_k$
 to one of the regions in $\sR^m_{k+1}$ which $R$ intersects. For each region $D\in \sR^m_{k+1}$, let $U$ be the union of all the rectangles $R \in \stR^m_k$ which are assigned to $D$, and let $V$ be the union of all the rectangles $R'\in\stR^m_k$ which intersect the boundary of $D$ but are not assigned to $D$ (they are assigned to some neighboring regions of $D$). Let $D'=(D\cup U)-V$.  Then $\sR^m_k$ is the collection of all such $D'$ for $D\in \sR^m_{k+1}$. 

Observe that from this construction we will have maintained inductively that any boundary side of a region in $\sR^m_k$ is part of a side of some rectangle in $\tsR^m_k$. We will use this observation tacitly in our proof below.

We will be using a constant $\eta$ that satisfies $\eta^2 <\eta <2\eta<\epsilon_1$. As we proceed to describe the rectangles in $\tsR^m_k$, we use Figure~\ref{fig:topregcon} to illustrate the construction.

Let $Q\in \tsQ^m_k$. As in \ref{i5}, 
we say $Q$ has type $1$ if $Q$ intersects exactly one side of a rectangle in 
$\sR^m_{k+1}$, and say $Q$ is of type $2$ if it intersects $2$ such sides.

First suppose that $Q$ has type $2$. Let $S_0 \subseteq Q$ be a rectangle centered at a corner point 
of an $\sR^m_{k+1}$ region, and such that $S_0$ has side lengths $\eta d_k$. 
This is possible by (\ref{v4.5}) above and the fact that $2\eta<\epsilon_1$. 
The basic orthogonality arguments give that we may translate 
$S_0$ in each direction by a small amount (say by an amount $<\frac{1}{10} \eta^2 d_k$) 
such that any side of the translated $S_0$ is at least $\epsilon_2 d_k$ from a 
parallel side of a rectangle in $\tsR^{\ell}_{k}$ for any $\ell\in (k,m)$, and also at least $\alpha\epsilon_2 d_k$
from a parallel side of a rectangle in $\sR^k_k$. We may also assume the translation is small enough so that 
any side of the translated $S_0$ is at least $\frac{\eta}{2} d_k$ from a parallel side of a region in $\sR^m_{k+1}$. In doing this construction we use the inductive hypotheses \ref{i2}, \ref{i4} and \ref{i7b}. 
This orthogonality argument is the first of several that will determine the value of $\epsilon_2$. One may think of a current value of $\epsilon_2$, noting that we may have to shrink the value later. In any case, $\epsilon_2$ will depend on $\eta$ (but not on $m, k$), and we assume that $\epsilon_2<\eta^2$. Similar to the situation of (\ref{v3}) above, this construction works with any value $\alpha$.

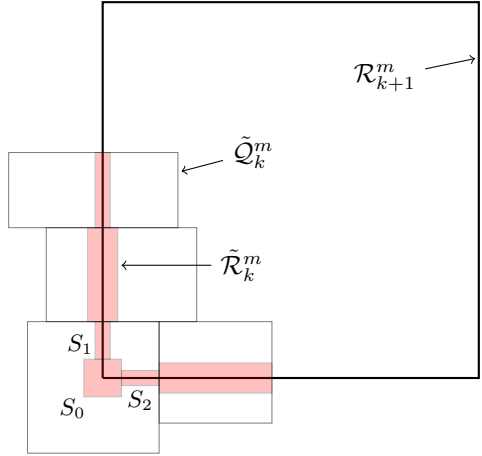
\begin{figure}[h] 
\begin{tikzpicture}[scale=0.05]

\pgfmathsetmacro{\a}{0.5}
\pgfmathsetmacro{\b}{0.25}

\draw[thick] (0,0) to (100,0) to (100,100) to (0,100) to (0,0);

\draw[opacity=\a] (-20,-20) rectangle (15,15); 
\draw[opacity=\a] (-15,15) rectangle (25,40);
\draw[opacity=\a] (-25,40) rectangle (20,60);

\draw[opacity=\a] (15,-12) rectangle (45,15);

\draw[fill=red, opacity=\b] (-5,-5) rectangle (5,5); \node at (-8,-8){\small $S_0$};
\draw[fill=red, opacity=\b] (-2,5) rectangle (2, 15); \node at (-6,9){\small $S_1$};
\draw[fill=red, opacity=\b] (-4,15) rectangle (4, 40);
\draw[fill=red, opacity=\b] (-2,40) rectangle (2, 60);
\draw[fill=red, opacity=\b] (5,-2) rectangle (15, 2); \node at (10,-6){\small $S_2$};
\draw[fill=red, opacity=\b] (15,-4) rectangle (45, 4);

\node (A) at (75,80) {$\sR^m_{k+1}$};
\draw[->] (A) to (99, 85);

\node (B) at (40,60) {$\tilde \sQ^m_k$};
\draw[->] (B) to (21,55);

\node (C) at (37, 30) {$\tilde \sR^m_k$};
\draw[->] (C) to (5, 30);

\end{tikzpicture}
\caption{Construction of the $\tilde \sR^m_k$} \label{fig:topregcon}
\end{figure}

We continue to construct other subrectangles of $Q$ which will be in $\tsR^m_k$. Let $S_0$ denote the translated rectangle constructed above. For each of the $2$, $3$, or $4$ sides $s$ of an $\sR^m_{k+1}$ region which are incident to the corner point,
we let $S_1$, $S_2$ (and possibly $S_3$, $S_4$) be rectangles contained in $Q\setminus S_0$ 
which contain the the portion of the boundary sides of an $\sR^m_{k+1}$ region intersecting $Q$ which
lie outside of $S_0$, and each of these rectangles has width $\eta^2 d_k$. (In Figure~\ref{fig:topregcon} only $S_1$ and $S_2$ are shown.) Note that we only need to specify two sides of the $S_i$ since the other two sides are parts of the sides of $Q$ and $S_0$, respectively. 
We then translate the $S_i$ rectangles by no more than $\frac{1}{10}\eta^2 d_k$  
so that the two sides of $S_i$ which are 
perpendicular to their adjacent side of $Q$ are at least $\epsilon_2 d_k$ from a parallel side
of a rectangle in $\tsR^{\ell}_{k}$ for $\ell\in (k,m)$, and at least $\alpha \epsilon_2 d_k$ 
from a parallel side
of a rectangle in $\sR^{k}_{k}$.  In doing this construction we use the inductive hypotheses \ref{i2}, \ref{i4}, \ref{i7a} and \ref{i7b}.
Also, we may have to shrink the value of $\epsilon_2$ obtained from the first step where we constructed $S_0$. But the construction works with any value of $\alpha$.

Next we consider a rectangle $Q$ of type $1$. In this case, there is only one boundary side $F$ of an $\sR^m_{k+1}$
region which intersects $Q$. We let $S_5 \subseteq Q$ be a rectangle containing $F \cap Q$, with the two sides parallel to $F$ having distance $\eta^2 d_k$ and the other two sides being parts of sides of $Q$. 
We then translate $S_5$  perpendicular to $F$ so that the long sides of $S_5$ (the sides parallel to $F$)
are:
\begin{itemize}
\item at least $\epsilon_2 d_k$ away from parallel sides of a rectangle in $\tsR^{\ell}_{k}$ for $\ell\in (k, m)$, 
\item at least $\epsilon_2 d_k$ away from parallel sides of rectangles of the form $S_1, S_2, S_3$ and $S_4$, and
\item at least $ \alpha \epsilon_2d_k$ away from parallel sides of a rectangle in $\sR^{k}_{k}$.
\end{itemize}
As above, this uses the inductive 
hypotheses \ref{i2}, \ref{i4}, \ref{i7a} and \ref{i7b} and may require shrinking the constant $\epsilon_2$ again, but the construction works with any value of $\alpha$.  

This finishes the construction of $\tsR^m_k$, and the value of $\epsilon_2$ is determined from the above orthogonality arguments. At this point, we fix the consatnt $\alpha<\frac{1}{10}\epsilon_2$.


We now verify the inductive hypotheses for the $\tsR^m_k$ rectangles. 
Assume $k<m$. 
The inductive hypotheses \ref{i4}, \ref{i5} and \ref{i6}) are immediate from the construction
of the rectangles in $\tsR^m_k$. The inductive hypothesis \ref{i7a} is implicit from the above construction; just note that the sides of rectangles of the form $S_0$--$S_5$ are designed to be sufficiently far apart.

The next lemma verifes induction hypothesis \ref{i7b}.

\begin{lem} (orthogonality of the $\tsR$)
The $\tsQ$, $\tsR$ rectangles satisfy inductive hypothesis \ref{i7b}.
\end{lem}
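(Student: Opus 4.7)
The plan is to verify the two orthogonality assertions in \ref{i7b} by a case analysis on the ``type'' of each side under consideration. The first observation to establish is that every side $F$ of a rectangle $R \in \tsR^{m_1}_{k_1}$ falls into one of two types: either $F$ is a \emph{boundary side}, i.e.\ a portion of a side of some $Q \in \tsQ^{m_1}_{k_1}$, or $F$ is a \emph{translated side}, i.e.\ a portion of one of the sides of the rectangles $S_0, \ldots, S_5$ that were constructed within some $Q \in \tsQ^{m_1}_{k_1}$ during the translation step. The same dichotomy applies to sides of rectangles in $\tsR^{m_2}_{k_1}$.

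The easy cases follow immediately from the explicit construction. If $F$ is a translated side, then the construction directly ensured that $F$ lies at distance at least $\epsilon_2 d_{k_1}$ from every parallel side of a rectangle in $\tsR^\ell_{k_1}$ for $\ell \in (k_1, m_1)$, and at distance at least $\alpha \epsilon_2 d_{k_1}$ from every parallel side of a rectangle in $\sR^{k_1}_{k_1}$, so there is nothing to prove. Assume henceforth that $F$ is a boundary side of $\tsQ^{m_1}_{k_1}$. If $F'$ is a boundary side of some $\tsQ^{m_2}_{k_1}$ with $m_2 \in (k_1, m_1)$, or a side of a rectangle in $\sR^{k_1}_{k_1}$, then \ref{i3} applied to $F$ directly yields $\rho(F, F') \geq \epsilon_1 d_{k_1} > \epsilon_2 d_{k_1}$, respectively $\rho(F, F') \geq \alpha \epsilon_1 d_{k_1} > \alpha \epsilon_2 d_{k_1}$.

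The main case, which I expect to be the main obstacle, is when $F$ is a boundary side of $\tsQ^{m_1}_{k_1}$ and $F'$ is a translated side of some $S_i$ built within a $Q' \in \tsQ^{m_2}_{k_1}$, for $m_2 \in (k_1, m_1)$. The key geometric observation is that each of the rectangles $S_0, \ldots, S_5$ was positioned by translating by at most $\tfrac{1}{10}\eta^2 d_{k_1}$ from an initial placement whose sides lay within $\tfrac{\eta}{2} d_{k_1}$ of a boundary side of a region in $\sR^{m_2}_{k_1+1}$; call such a boundary side a \emph{reference side} for $F'$. By the inductively maintained observation that boundary sides of $\sR^{m_2}_{k_1+1}$ regions lie on sides of rectangles in $\tsR^{m_2}_{k_1+1}$ (when $m_2 > k_1+1$) or of rectangles in $\sR^{k_1+1}_{k_1+1}$ (when $m_2 = k_1+1$), conditions (\ref{v4}) and (\ref{v4.5}) in the construction of $\tsP^{m_1}_{k_1}$ imply that every side of a rectangle in $\tsQ^{m_1}_{k_1} \subseteq \tsP^{m_1}_{k_1}$ lies at distance at least $\epsilon_1 d_{k_1}$ from every parallel such reference side. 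The triangle inequality then gives
\[
\rho(F, F') \;\geq\; \epsilon_1 d_{k_1} - \left(\tfrac{\eta}{2} + \tfrac{1}{10}\eta^2\right) d_{k_1} \;>\; (\epsilon_1 - \eta) d_{k_1} \;>\; \epsilon_2 d_{k_1},
\]
using the parameter choices $2\eta < \epsilon_1$ and $\epsilon_2 < \eta^2 < \eta$ from Remark~\ref{rem:oc}. What remains for a complete proof is the routine verification of the geometric claim about the proximity of the sides of each of $S_0, S_1, \ldots, S_5$ to a reference side, handled case-by-case from the construction description (noting in particular that the short sides of $S_1,\ldots,S_4$ either belong to a side of $Q$ and thus fall under the boundary case, or lie on a side of $S_0$ and thus fall under the already-analyzed case).
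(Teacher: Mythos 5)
Your proposal is correct and follows essentially the same route as the paper: the same dichotomy for each side (part of a $\tsQ$-side versus a translated side of one of the $S_0$--$S_5$ near an $\sR^{m}_{k+1}$ boundary), the translated case discharged by the explicit orthogonality built into the placement of the $S_i$, the $\tsQ$-versus-$\tsQ$ and $\tsQ$-versus-$\sR^{k}_{k}$ cases by \ref{i3} (i.e.\ conditions (\ref{v2}), (\ref{v3})), and the mixed case by (\ref{v4}), (\ref{v4.5}) together with the estimate $(\epsilon_1-\eta)d_k>\epsilon_2 d_k$. The "routine verification" you defer is exactly what the paper packages as the coherence hypothesis \ref{i6}, so nothing is missing.
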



\begin{proof}
Let $F$ be a side of a rectangle $R\in\tsR^{m}_{k}$. Then by our construction, $F$ is either part of a side of a rectangle $Q\in\tsQ^m_k$ or is within $\eta d_k$ of a parallel side on the boundary of a region in $\sR^m_{k+1}$. In the latter case, $F$ is within $\eta d_k$ of a parallel side of a rectangle in $K\in \tsR^m_{k+1}$.

Suppose $F'$ be a side of a rectangle $R'\in \tsR^{\ell}_{k}$ where $\ell\in (k,m)$.
Since $(\ell, k)\prec(m,k)$, by \ref{i4}, \ref{i5} and \ref{i6} for $\tsR^{\ell}_k$, $F'$ is either part of a side of a rectangle $Q'\in \tsQ^{\ell}_{k}$
or is within $\eta d_{k}$ of a parallel side of a rectangle $K'\in \tsR^{m_2}_{k+1}$ (or $K'\in \sR^{m_2}_{k+1}$ if $m_2=k+1$). 

If $F$ is in the $K$ case, then the result follows from the above constructions of the $S_0$--$S_5$ regions. If both $F$, $F'$ are in the $Q$, $Q'$ case respectively, then the result follows from (\ref{v2}) above.  If $F$ is in the $Q$ case and $F'$ is in the $K'$ case, then by (\ref{v4}) and (\ref{v4.5}) above, $F$ is $\epsilon_1 d_k$ away from any parallel side of $K'$. Since $F'$ is within $\eta d_k$ of such a parallel side, we conclude that $F$ and $F'$ are at least $(\epsilon_1-\eta)d_{k}> \eta d_{k}>\epsilon_2 d_k$ apart.

Now suppose $F'$ is a side of a rectangle $R'\in \sR^k_k$. If $F$ is in the $Q$ case the result follows from (\ref{v3}). If $F$ is in the $K$ case the result follows from the above constructions of the $S_0$--$S_5$.
\end{proof}

For convenience, we summarize the sizes of the $\tsR^m_k$ rectangles 
in the following. 

\begin{fact}
The rectangles in $\tsR^m_k$ form a clopen assignment and the rectangles in $\tsR^m_k$ 
cover the boundaries of the regions in $\sR^m_{k+1}$.  We furthermore have the following.
\begin{enumerate}
\item
Each rectangle in $\tsR^m_k$ has side lengths between $\eta^2 d_k$ and $d_k$, and is of 
type-$1$ or type-$2$ with respect to the regions in $\sR^m_{k+1}$. 
\item
A type-$2$ rectangle in $\tsR^m_k$ has side lengths $\eta d_k$. 
\item
A type-$1$ rectangle in $\tsR^m_k$ has smaller side length $\eta^2 d_k$ in the short direction.
\item
A type-$1$ rectangle $R$ in $\tsR^m_k$ has smaller sides which are part of the boundaries of 
a region in $\tsQ^m_k$ if $R$ is not adjacent to a type-$2$ rectangle in $\tsR^m_k$. In particular,
such an $R$ has longer side lengths between $\frac{1}{2} d_k$ and $d_k$. 
\end{enumerate}
\end{fact}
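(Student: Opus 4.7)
The plan is to verify the four clauses of the Fact by reading off the properties directly from the construction of $\tsR^m_k$ as the union of the subrectangles $S_0, S_1, \ldots, S_5$ carved out of each $Q \in \tsQ^m_k$. This statement is essentially a convenient summary of what was already ensured by the construction (and captured in induction hypotheses \ref{i4}, \ref{i5}, \ref{i6}), so the work is traceback rather than new argument.

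First, I would verify that the $\tsR^m_k$ rectangles cover $\partial \sR^m_{k+1}$. By definition, $\tsQ^m_k$ consists of precisely those $Q \in \tsP^m_k$ that meet the boundary of some $\sR^m_{k+1}$ region, so it suffices to check coverage within each $Q$. If $Q$ is of type $2$, the rectangle $S_0$ covers a neighborhood of the corner point of the $\sR^m_{k+1}$ region inside $Q$, and the rectangles $S_1, S_2$ (and $S_3, S_4$ if present) cover the remainders of the $2$, $3$, or $4$ boundary sides incident to that corner, inside $Q$. If $Q$ is of type $1$, the rectangle $S_5$ contains $F \cap Q$ where $F$ is the unique boundary side of an $\sR^m_{k+1}$ region intersecting $Q$. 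Together these cover $\partial \sR^m_{k+1} \cap Q$.

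For clauses (1)--(3), each $R \in \tsR^m_k$ is one of the $S_i$, and the type classification is transferred from the parent $Q$: an $S_0$ inherits type $2$, while the $S_1,\ldots, S_5$ inherit type $1$. For clause (2), $S_0$ was built as a square of side $\eta d_k$ (centered near a corner and then translated by at most $\tfrac{1}{10}\eta^2 d_k$, which does not change its dimensions). For clause (3), each of $S_1, S_2, S_3, S_4$ was assigned width $\eta^2 d_k$ in the direction perpendicular to the adjacent boundary side of an $\sR^m_{k+1}$ region, which is precisely its short direction; the same holds for $S_5$ perpendicular to $F$. This also establishes the size bounds in clause (1): the short side is $\eta^2 d_k$ and the long side of any $S_i$ is at most $d_k$ since it is contained in $Q$, whose side lengths are between $\tfrac12 d_k$ and $d_k$ by \ref{i2}.

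For clause (4), let $R = S_5$ be a type-$1$ rectangle inside a type-$1$ $Q \in \tsQ^m_k$ that is not adjacent to any type-$2$ rectangle of $\tsR^m_k$. Then no $S_0, S_1, \ldots, S_4$ rectangle intrudes into the part of $Q$ flanking $S_5$, so by construction the two short sides of $S_5$ were taken to be the two opposing sides of $Q$ that are perpendicular to $F$; these are parts of $\partial Q$, i.e., of the boundary of a $\tsQ^m_k$ region. Consequently the long side length of $S_5$ equals the corresponding side length of $Q$, which lies in $[\tfrac12 d_k, d_k]$ by \ref{i2}, completing the Fact. There is no genuine obstacle here; the only point that requires attention is the adjacency hypothesis in (4), which guarantees that no nearby $S_0$-rectangle forces $S_5$ to be shortened away from a full edge of $Q$.
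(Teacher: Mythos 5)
Your proposal is correct and follows the expected approach: the Fact is stated without explicit proof in the paper precisely because, as you observe, it is a read-off of the construction of $\tsR^m_k$ as the union of the $S_0,\ldots,S_5$ rectangles, and the inductive hypotheses \ref{i2}, \ref{i4}, \ref{i5}, \ref{i6} already encode the needed bounds. One small remark on your treatment of clause (4): an $S_5$ rectangle always has its short sides on $\partial Q$ by construction, so the adjacency hypothesis is not really what prevents $S_5$ from "being shortened." Rather, the hypothesis serves to exclude the rectangles $S_1,\ldots,S_4$ (each of which has one short side on $\partial S_0$ rather than $\partial Q$, and each of which is adjacent to the type-2 rectangle $S_0$ by construction). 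Since every type-1 rectangle is either one of $S_1,\ldots,S_4$ or an $S_5$, the non-adjacency hypothesis forces $R$ to be an $S_5$, and then the conclusion, including the $[\tfrac12 d_k, d_k]$ bound on the long side inherited from $Q$, follows as you state.
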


We let $\epsilon=\alpha \epsilon_2$, which is the smallest of the constants appearing in the 
induction hypotheses. In most of the arguments we can use the single constant $\epsilon$ 
in our estimates. An exception occurs in the following Claim~\ref{claim:sii}.

\begin{thm} \label{hmt}
For each $\ell$, the connected components of $F(2^{\Z^2})\sm \bigcup_{m=\ell}^\infty \partial \sR^m_1$ are 
homeomorphic to disks. 
\end{thm}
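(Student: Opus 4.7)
The plan is to express each connected component $A$ of $F(2^{\Z^2})\sm \bigcup_{m=\ell}^\infty \partial\sR^m_1$ as (the interior of) a finite intersection of closed disks coming from Theorem~\ref{thm:om}, and to argue by induction on the number of levels involved that the intersection remains homeomorphic to a disk.

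Fix such a component $A$. By (\ref{so1}) of Theorem~\ref{thm:so}, $A$ is finite, being contained in a single region of $\sR^\ell_1$. For each $m\geq\ell$ let $R_m\in\sR^m_1$ be the unique region that contains $A$. By Theorem~\ref{thm:om}, each $R_m$, viewed as a union of unit squares in $\R^2$, is a closed topological disk with connected boundary. Corollary~\ref{cor:wo} shows that only finitely many $m\geq\ell$ satisfy $\pathd(A,X_m)=1$; for every other $m$, the boundary $\partial R_m$ is far from $A$ in the path metric, hence $\partial R_m$ does not meet the closure $\overline A$ in $\R^2$. Enumerate the levels that do contribute as $\ell=m_0<m_1<\cdots<m_j$. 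Then $\overline A = R_{m_0}\cap R_{m_1}\cap\cdots\cap R_{m_j}$, and $A$ is the relative interior of this intersection.

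The induction is on $j$. The base case $j=0$ gives $\overline A=R_\ell$, a disk. For the inductive step, set $D_{j-1}=R_{m_0}\cap\cdots\cap R_{m_{j-1}}$, which is a disk by hypothesis, and consider $\overline A=D_{j-1}\cap R_{m_j}$. The key claim is that $\partial R_{m_j}\cap D_{j-1}$ consists of a single simple arc whose endpoints lie on $\partial D_{j-1}$. Once this is established, cutting the disk $D_{j-1}$ along that simple arc produces two disks, and $\overline A$ is the one lying in $R_{m_j}$, completing the induction.

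The main obstacle is proving the claim. Here we use the topological regularity of the Section~\ref{sec:topreg} construction in an essential way. The boundary $\partial R_{m_j}$ is a concatenation of sides of type-$1$ and type-$2$ rectangles from $\tsR^{m_j}_1$. Because $d_{m_j}\gg d_{m_{j-1}}\gg\cdots\gg d_{m_0}$ and by the inductive hypotheses \ref{i3}, \ref{i7a}, \ref{i7b}, any face of such a rectangle is at distance $\geq\epsilon d_{m_j}$ from any parallel face of a rectangle used in the constructions at the lower levels $m_0,\ldots,m_{j-1}$. Consequently the boundary $\partial R_{m_j}$ enters the domain $D_{j-1}$, traverses it, and exits, without forming extra closed loops inside or touching itself. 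The only delicate spot is at a type-$2$ rectangle of $\tsR^{m_j}_1$, where three or four $\sR^{m_j}_1$-regions meet. But the explicit construction of the rectangles $S_0,S_1,S_2,\ldots$ in Section~\ref{sec:topreg} is exactly what forces $\partial R_{m_j}$ to round the corner as a single monotone arc rather than a figure-eight. So the boundary intersects $D_{j-1}$ in a simple arc, as required. Morally, we are redoing the $n=2$ case of Theorem~\ref{thm:om} for the common refinement $\sA_\ell$ instead of $\sR_m$; the new feature is that several scales $d_{m_0}\ll\cdots\ll d_{m_j}$ must be reconciled, and the size hierarchy makes the reconciliation semi-local, with each level contributing features at its own scale.
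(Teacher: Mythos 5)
Your overall plan is a genuinely different route from the paper's: you induct \emph{forward} on the number $j$ of contributing levels, cutting a disk $D_{j-1}$ by one more boundary $\partial R_{m_j}$ at a time, whereas the paper inducts \emph{in reverse} on the scale parameter $k$ (from $k=\ell$ down to $k=1$), establishing the picture first at the coarse scale and then tracking the simultaneous fractalization of all the boundaries. Your setup is correct (finiteness of $A$, the fact that $\overline A$ is a finite intersection via Corollary~\ref{cor:wo}, etc.), and you correctly isolate the key claim — that $\partial R_{m_j}\cap D_{j-1}$ is a single simple arc meeting $\partial D_{j-1}$ at its endpoints. Unfortunately this claim is the entire content of the theorem, and you assert rather than prove it.

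Three concrete problems. First, the inductive invariant ``$D_{j-1}$ is a disk'' is too weak: two topological disks in $\R^2$ can easily intersect in a set that is not a disk (or is disconnected). To make the induction close, you would have to carry additional data about how $\partial D_{j-1}$ sits relative to the next level's boundary — exactly the sort of thing the paper tracks via its hypotheses \ref{as1}, \ref{as2} (that the two boundaries meet in exactly two points, transversally). Second, the separation estimate you cite is incorrect: from \ref{i7a} and \ref{i7b}, faces of rectangles in $\tsR^{m_j}_1$ are separated by $\epsilon d_1$ (or $\alpha\epsilon_2 d_1$) from parallel faces at other levels — the construction is occurring at the \emph{finest} scale $d_1$, and the stated $\epsilon d_{m_j}$ bound does not hold. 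The coarse-scale control one actually has is at scale $d_\ell=d_{m_0}$, via Claim~\ref{claim:si}: within the small rectangle $R_\ell\in\sR^\ell_\ell$, at most one level $m>\ell$ can contribute a face in each coordinate direction, and this bounds the number of contributing levels by $j\le 2$ — a structural fact your proposal never uses. Third, the remark about type-$2$ rectangles is not where the real danger lies; the genuine issue is controlling the crossing structure of $\partial R_{m_j}(k)$ with $\partial R_{m_{j-1}}(k)$ as $k$ decreases, which is precisely what the paper does by showing (using \ref{i3}, \ref{i5}, \ref{i6}) that inside the $\epsilon d_{k+1}$-ball around each crossing point, the short sides of the $\tsR^{m_j}_k$ rectangle cover the deviation of $\partial R_{m_{j-1}}(k)$ from $\partial R_{m_{j-1}}(k+1)$, so exactly one new crossing replaces the old one. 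Fleshing out your key claim would, I believe, require reproducing that argument inside each of your inductive steps, which eliminates the apparent simplification.
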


\begin{proof}
We fix $\ell$ and show by reverse induction on $k$, starting from $k=\ell$, that 
the connected components of $\sN^\ell_k$ are homeomorphic to disks,
where $\sN^\ell_k= F(2^{\Z^2})\sm \bigcup_{m=\ell}^\infty \partial \sR^m_k$. 
The result for $k=1$ then finishes the proof.

Consider first the case $k=\ell$. The connected components of $\sN^\ell_\ell$
are contained within a single $\sR^\ell_\ell$ rectangle $R$.

\begin{claim} \label{claim:si}
For each of the two directions $e_1$, $e_2$, there is 
at most one value of $m>\ell$ such that  a boundary side of an $\sR^m_\ell$ region that is parallel to $e_i$ intersects $R$. 
\end{claim}

\begin{proof}
Fix $e \in \{ e_1,e_2\}$, and suppose there is a boundary side $F_1$ of a region in $\sR^{m_1}_{\ell}$ and a parallel
coundary side $F_2$ of a region in $\sR^{m_2}_{\ell}$ such that $F_1$ and $F_2$ both intersect the rectangle 
$R\in \sR^\ell_\ell$. Since $\ell$ is not equal to $m_1$ or $m_2$, from \ref{i7a} and \ref{i7b} of the induction hypotheses
we have that $\rho(F_1,F_2)>\epsilon_2 d_\ell>10\alpha d_\ell$. Since $R$ has side lengths $\alpha d_\ell$, 
both $F_1$ and $F_2$ cannot intersect $R$. 
\end{proof}

Claim~\ref{claim:si} says that each rectangle $R$ in $\sR^\ell_\ell$ is divided into connected 
components of $\sN^\ell_\ell$ in one of a small number of possible ways; these are illustrated in 
Figure~\ref{fig:possible_shapes}. Thus the theorem holds in the case $k=\ell$.

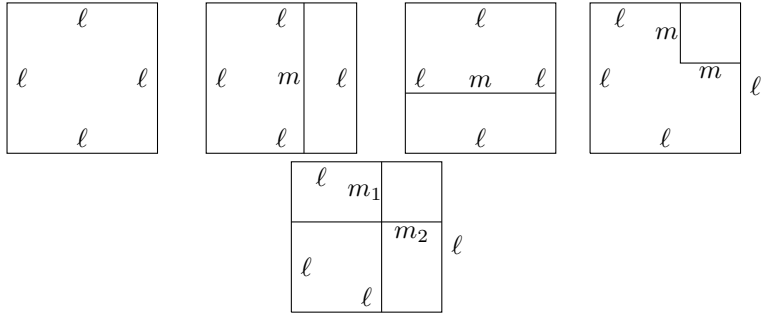
\begin{figure}[h] 

\begin{subfigure} {0.2\textwidth}
\centering

\begin{tikzpicture}[scale=0.02]

\pgfmathsetmacro{\a}{0.5}

\draw (0,0) rectangle (100,100);
\node (A) at (10,50) {$\ell$};
\node (B) at (50,90) {$\ell$};
\node (A) at (90,50) {$\ell$};
\node (A) at (50,10) {$\ell$};

\end{tikzpicture}

\end{subfigure}
\begin{subfigure} {0.2\textwidth}
\centering

\begin{tikzpicture}[scale=0.02]

\pgfmathsetmacro{\a}{0.5}

\draw (0,0) rectangle (100,100);
\node (A) at (10,50) {$\ell$};
\node (B) at (50,90) {$\ell$};
\node (A) at (90,50) {$\ell$};
\node (A) at (50,10) {$\ell$};
\draw (65,100) to (65,0);
\node(E) at (55,50) {$m$};
\end{tikzpicture}

\end{subfigure}
\begin{subfigure} {0.2\textwidth}
\centering
\begin{tikzpicture}[scale=0.02]

\pgfmathsetmacro{\a}{0.5}

\draw (0,0) rectangle (100,100);
\node (A) at (10,50) {$\ell$};
\node (B) at (50,90) {$\ell$};
\node (A) at (90,50) {$\ell$};
\node (A) at (50,10) {$\ell$};
\draw (0,40) to (100,40);
\node(E) at (50,48) {$m$};

\end{tikzpicture}
\end{subfigure}
\begin{subfigure} {0.2\textwidth}
\centering
\begin{tikzpicture}[scale=0.02]

\pgfmathsetmacro{\a}{0.5}

\draw (0,0) to (100,0) to (100,100) to (0,100) to (0,0);
\draw (60,100) to (60,60) to (100,60);
\node (A) at (10,50) {$\ell$};
\node (B) at (50,10) {$\ell$};
\node (C) at (20,90) {$\ell$};
\node (D) at (51,80) {$m$};
\node (E) at (110,45) {$\ell$};
\node (F) at (80,54) {$m$};
\end{tikzpicture}
\end{subfigure}

\begin{subfigure} {0.2\textwidth}
\centering
\begin{tikzpicture}[scale=0.02]

\pgfmathsetmacro{\a}{0.5}

\draw (0,0) to (100,0) to (100,100) to (0,100) to (0,0);
\draw (0,60) to (100,60);
\draw (60,0) to (60,100);
\node (A) at (10,30) {$\ell$};
\node (B) at (50,10) {$\ell$};
\node (C) at (20,90) {$\ell$};
\node (D) at (49,80) {$m_1$};
\node (E) at (110,45) {$\ell$};
\node (F) at (80,52) {$m_2$};
\end{tikzpicture}
\end{subfigure}

\caption{Possible ways in which an $\sR^\ell_\ell$ rectangle is divided 
by higher level $\sR^m_\ell$ regions.}
\label{fig:possible_shapes}
\end{figure}

We now assume the connected components of $\sN^\ell_{k+1}$ are homeomorphic to disks, and 
show the result for $\sN^\ell_k$.

\begin{claim}\label{claim:sii}
Let $R_0\in R^\ell_\ell$, $R_1 \in \sR^{m_1}_{\ell}$, and possibly $R_2\in \sR^{m_2}_\ell$
be such that $\partial R_i\cap \partial R_0\neq \emptyset$ for $i=1,2$. 
Let $R_0(k)\in\sR^\ell_k$, $R_1(k)\in\sR^{m_1}_k$, $R_2(k)\in \sR^{m_2}_k$ be the regions corresponding to $R_0$, $R_1$ and $R_2$, respectively. 
Then any member of $\sN^\ell_k$ contained in $R_0(k)$ has boundary contained in $\partial R_0(k) \cup \partial R_1(k)\cup\partial R_2(k)$. 
\end{claim}

\begin{proof}

Figure~\ref{fig:possible_shapes} illustrates the possible ways in which this situation can happen. To be specific, let us
consider the case where $R_1 \in \sR^{m_1}_\ell$ exists but $R_2$ does not and we are in the 
second case of Figure~\ref{fig:possible_shapes} (the other cases are similar).  
From \ref{i7b} we have that the boundary of $R_1$ is at least $\epsilon_2 \alpha d_\ell$ 
from a parallel boundary of $R_0$. Since $d_k\ll \cdots \ll d_{\ell-1}\ll d_\ell$, we have $\sum_{i=k}^{\ell-1} d_i< \frac{1}{2} \epsilon_2\alpha d_\ell$, and it follows that 
$\partial R_1(k) \cap R_0(k)\neq \emptyset$. Similarly, if $R' \neq R_1(k)$ is another regon in 
$\bigcup_{m=\ell}^\infty \sR^m_k$, then $\partial R'\cap R_0(k)=\emptyset$ and so $\partial R_1(k)$
is the only $k$-level boundary which intersects $R_0(k)$. Thus, any element of $\sN^\ell_k$ contained in $R_0(k)$ is formed
from $\partial R_1(k)$, and its boundary is contained in $\partial R_0(k)\cup\partial R_1(k)$. 

\end{proof}

In view of Claim~\ref{claim:sii} we only need to consider at most three regions $R_0\in\sR^{\ell}_{\ell}$, $R_1\in\sR^{m_1}_{\ell}$, $R_2\in \sR^{m_2}_{\ell}$ in the situation illustrated in Figure~\ref{fig:possible_shapes} and show that the members of $\sN^{\ell}_k$ contained in $R_0(k)$ are homeomorphic to disks. In the following argument we continue to use the notation established in Claim~\ref{claim:sii}. 

Clearly $R_1=R_1(\ell)$ divides $R_0=R_0(\ell)$ into two connected components, each of which (a rectangle)
is homeomorphic to a disk. Suppose inductively that $R_1(k+1)$ divides $R_0(k+1)$
into two components, each of which is homeomorphic to a disk. Assume inductively that the following hold. 

\begin{enumerate}[label={(I\arabic*)}]
\item \label{as1}
$\partial R_1(k+1)$ intersects $\partial R_0(k+1)$ in exactly two points $x=x(k+1)$, $y=y(k+1)$. 
\item \label{as2}
At the points $x$, $y$, a vertical (horizontal) edge of $\partial R_1(k+1)$ intersects a horizontal (vertical) edge of 
$\partial R_0(k+1)$. 
\end{enumerate}

Note that by \ref{i7b}, any boundary side of $\partial R_1(k+1)$ is at least $\epsilon d_{k+1}$ from a parallel coundary side 
of $\partial R_0(k+1)$. Let us call $L_1$ the boundary side of  $\partial R_1(k+1)$ containing $x$, and 
$L_0$ the boundary side of $\partial R_0(k+1)$ containing $x$ (the argument at the point $y$ is similar). We use Figure~\ref{fig:hdfig} to illustrate the situation as we proceed with the argument.

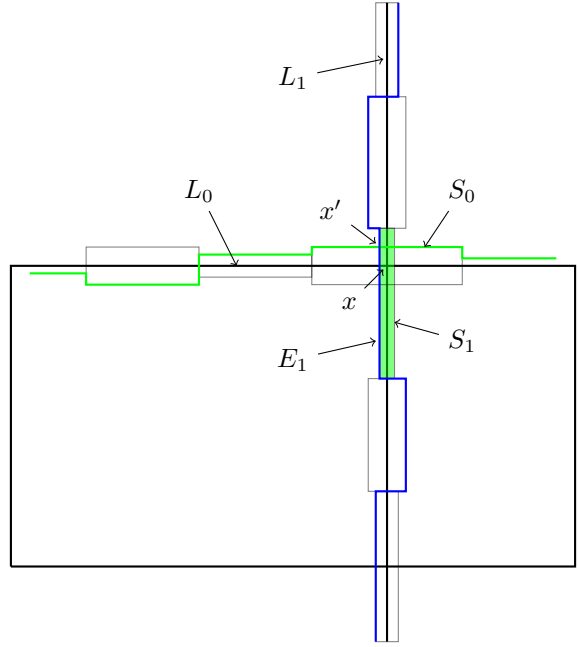
\begin{figure}[h] 
\begin{tikzpicture}[scale=0.05]

\pgfmathsetmacro{\a}{0.5}
  
\draw[thick] (0,0) to (150,0) to (150,80) to (0,80) to (0,0);
\draw[thick] (100,-20)to (100, 150);


\draw[opacity=\a] (97,-20) rectangle (103,20);
\draw[opacity=\a] (95,20) rectangle (105, 50);

\draw[opacity=\a, fill=green] (98,50) rectangle (102, 90);

\draw[opacity=\a] (95,90) rectangle (105, 125);
\draw[opacity=\a] (97,125) rectangle (103, 150);

\draw[opacity=\a] (20,75) rectangle (50,85);
\draw[opacity=\a] (50,77) rectangle (80,83);
\draw[opacity=\a] (80,75) rectangle (120,85);


\draw[color=green, thick] (5,78) to (20,78) to (20,75) to (50,75) to (50,83)
to (80,83) to (80,85) to (120,85) to (120,82) to (145,82);

\draw[color=blue, thick] (103,150) to (103,125) to (95,125) to (95,90) to (98,90)
to (98,50) to (105,50) to (105,20) to (97,20) to (97,-20);





\node (E) at (75,55) {$E_1$};
\draw[->] (E) to (97, 60);

\node (F) at (75,130) {$L_1$};
\draw[->] (F) to (99,135);

\node (G) at (50,100) {$L_0$};
\draw [->] (G) to (60,80);

\node (J) at (90,70) {$x$};
\draw [->] (J) to (99, 79);

\node (K) at (85, 95) {$x'$};
\draw [->] (K) to (97,86);

\node (H) at (120,100) {$S_0$};
\draw [->] (H) to (110,85);

\node (I) at (120,60) {$S_1$};
\draw [->] (I) to (102,65);

\end{tikzpicture}
\caption{Proof of Theorem~\ref{hmt}. The green shaded rectangle is $S_1$. The boundary of $R_1(k)$
is shown in blue, and the boundary of $R_0(k)$ in green.} \label{fig:hdfig}
\end{figure}

The segment $L_1$ must extend vertically above and below $L_0$ at least $\epsilon d_{k+1} \gg d_k$, which follows from 
\ref{i7b}. Similarly, $L_0$ extends horizontally past $x$ at least $\epsilon d_{k+1} \gg d_k$. Also, outside of the balls of 
radius $\epsilon d_{k+1}$ around $x$ and $y$, every point of $\partial R_1(k+1)$ between $x$ and $y$ is at least 
$\epsilon d_{k+1}$ from any point of  $\partial R_0(k+1)$. This follows from \ref{i7b} and assumption \ref{as1} above. 
It is therefore immediate that outside of these balls we also have that $\partial R_1(k)$ and $\partial R_0(k)$
are disjoint. Let us consider the ball $B$ of radius $\epsilon d_{k+1}$ aound $x$. 
Let $S_1 \in \tsR^m_k$ containing $x$. This is one of the rectangles used in forming $R_1(k)$ from $R_1(k+1)$ (by 
either adding or removing it from $R_1(k+1)$). 
Let $E_1$ be the vertical edge of $S_1$ which forms part of the boundary of $R_1(k)$. 
Note that $E_1$ crosses $L_0$.
From \ref{i6} we have that the short sides of $S_1$ 
(those parallel to $L_0$) are part of the sides of a rectangle $Q_1 \in \tsQ^m_k$. 
From \ref{i3} we have that these short sides of $S_1$ 
are at least $\epsilon_1 d_k$ away from $L_0$. Thus, $E_1$ extends vertically at least $\epsilon_1 d_k$ above and below $L_0$.

On the other hand, let $S_0 \in \tsR^\ell_k$ be the rectangle used in forming $R_0(k)$ from $R_0(k+1)$ such that 
$E_1$ intersects $S_0$ (or equivalently $S_0$ contains $x$). From \ref{i5} we have that the sides of $S_0$ parallel to $L_0$ 
are within $\eta^2 d_k < \epsilon_1 d_k$ of $L_0$. Thus, the part of $\partial R_1(k)$ formed by the 
short sides of $S_1$ and $E_1$ only intersects $\partial R_0(k)$ in a single point $x'$ on a long side of $S_0$. 
It likewise follows from \ref{i5} that inside the ball $B$, $x'$ is the only point of intersection of 
$\partial R_1(k)$ and $\partial R_0(k)$. This establishes the above assumptions \ref{as1} and \ref{as2} for 
$R_1(k)$ and $R_0(k)$.

The above argument applies verbatim to pairs $R_0, R_1$ and $R_1, R_2$, giving that any element of $\sN^\ell_k$ is homeomorphic to a disk.

Finally, the completed induction for $k=1$ finishes the proof of Theorem~\ref{hmt}. 
\end{proof}

From Theorem~\ref{hmt} it is clear that in a orthogonal decomposition with topological regularity, a $k$-nucleus, namely a connected component of $F(2^{\mathbb{Z}^2})\setminus \bigcup_{m\geq k}\partial\sR^m_1$, is exactly $A-\partial A$ for an $k$-atom. It thus follows that all $k$-atoms are also homeomorphic to disks.

\begin{cor}\label{cor:atomdisk} There is an orthogonal decomposition of $F(2^{\mathbb{Z}^2})$ in which, for any $k$, all $k$-atoms are homeomorphic to disks.
\end{cor}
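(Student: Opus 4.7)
The plan is to deduce the corollary directly from Theorem~\ref{hmt} by identifying each $k$-atom with the closure of a corresponding $k$-nucleus. Almost all the work has already been done; what remains is the bookkeeping to pass from the open connected components in the statement of Theorem~\ref{hmt} to the closed ``atom'' pieces of the common refinement $\sA_k$.

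First I would fix the orthogonal decomposition $\{\sR^m_1\}_{m\geq 1}$ of $F(2^{\Z^2})$ produced in this section, which satisfies the conclusion of Theorem~\ref{hmt}. By that theorem, for each $k\geq 1$ every connected component of
\[
F(2^{\Z^2})\setminus \bigcup_{m\geq k}\partial \sR^m_1
\]
is homeomorphic to an open disk in $\R^2$. These connected components are exactly the $k$-nuclei in the terminology of Section~\ref{sec:odzn}.

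Next I would verify that the map $N\mapsto \overline{N}$ is a bijection from the set of $k$-nuclei onto $\sA_k$. Recall that $\sA_k$ is defined as the coarsest common refinement of the partitions $\sR_m=\sR^m_1$ for $m\geq k$, so each $k$-atom $A$ is a maximal subset of $F(2^{\Z^2})$ whose interior meets none of the boundaries $\partial \sR^m_1$ for $m\geq k$. Thus the interior of any $k$-atom is a single $k$-nucleus, and conversely the closure of each $k$-nucleus is a single $k$-atom. The proof of Theorem~\ref{hmt} moreover produces boundary-preserving homeomorphisms, and shows that the boundary of each $k$-nucleus is a simple closed polygonal curve built from arcs of $\partial \sR^m_1$ for finitely many $m\geq k$ (by the bounded-geometry property and the explicit control of the $S_0,\ldots,S_5$ rectangles at each stage).

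The conclusion then follows: since each $k$-nucleus $N$ is homeomorphic to an open disk and $\partial N$ is a simple closed curve, by applying the boundary-preserving homeomorphisms already constructed in the inductive proof of Theorem~\ref{hmt} (or, abstractly, by Schoenflies) we obtain that $\overline{N}$ is homeomorphic to a closed disk. Since every $k$-atom has this form, all $k$-atoms are homeomorphic to disks. The only nontrivial point is the identification between $k$-nuclei and $k$-atoms, but this is routine once the topological regularity from Theorem~\ref{hmt} is available, so I expect no serious obstacle beyond fixing the notation.
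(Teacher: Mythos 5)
Your proposal is correct and follows essentially the same route the paper uses, which is a brief remark immediately after Theorem~\ref{hmt}: identify each $k$-nucleus with $A - \partial A$ for a unique $k$-atom $A$, and then conclude that $A$ itself is a disk because adding the bordering ring of boundary points to a disk yields a disk. The only small imprecision is your phrasing of the $k$-atom as a maximal set whose ``interior meets none of the boundaries''; the cleaner statement, implicit in the paper, is that $\partial A = A \cap X_k^\infty$, so that $N = A \setminus \partial A$ is exactly the connected component of $X \setminus X_k^\infty$ inside $A$, and with topological regularity this component is unique, giving the claimed bijection.
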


We do not yet know how to generalize Corollary~\ref{cor:atomdisk} to higher dimensions. One possible approach is to prove a version of Lemma~\ref{lem:no4corners} in higher dimensions so that, from a local point of view, an arbitrary assignment of the $\tR^m_k$ regions to one of the $\sR^m_{k+1}$ regions will result in the marker regions obtained homeomorphic to disks, as in the 2-dimensional construction in this section. This boils down to a finitary problem of whether such preliminary decompositions always exist in dimension $3$ and above (e.g. whether one can decompose an arbitrary finite $n$-dimensional rectangular polygon so that no point is close to $n+2$ different regions). It seems that if this preliminary decomposition is possible, then the rest of the method used in this section can be applied to obtain a generalization for higher dimensions. Of course, Corollary~\ref{cor:atomdisk} suffices for our results in the rest of this paper.

\section{Borel Perfect Matchings in $F(2^{\Z^n})$} \label{sec:matching}
In this section we use the method of orthogonal decompositions to
show that there is a Borel perfect matching for $F(2^{\Z^n})$ for $n \geq 2$. By a {\em Borel perfect
matching} we mean a Borel complete section $B\subseteq F(2^{\Z^n})$ and a Borel function
$f \colon B\to \{ 0,\dots, n-1\}$ such that the pairs $(x,e_{f(x)}\cdot x)$ for $x \in B$
partition the space $F(2^{\Z^n})$. 

More generally we consider an free Borel action of $\Z^n$ on a Polish space $X$ and its (undirected) Schreier graph associated
with the standard generating set $\{e_0, \dots, e_{n-1}\}$ of $\Z^n$. A Borel perfect matching is similarly defined. We show the following theorem.

\begin{thm} \label{thm:mt}
For any $n \geq 2$ and any free Borel action of $\Z^n$ on a Polish space $X$,
there is a Borel perfect matching for the Schreier graph on $X$.
\end{thm}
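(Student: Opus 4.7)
The plan has two parts: first reduce the general $n\ge 2$ case to $n=2$, and then use Corollary~\ref{cor:atomdisk} to build the matching along the atom hierarchy.

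\emph{Reduction to $n=2$.} For $n\ge 3$, restrict the given free Borel $\mathbb{Z}^n$-action to the subgroup $\langle e_1,e_2\rangle\cong \mathbb{Z}^2$. This restriction is still free and Borel, and its Schreier graph (with generators $\{e_1,e_2\}$) is a spanning subgraph of the original $\mathbb{Z}^n$-Schreier graph, so any Borel perfect matching of the former is also a Borel perfect matching of the latter. It thus suffices to treat $n=2$.

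\emph{Hierarchical matching.} By (the natural action-relative extension of) Corollary~\ref{cor:atomdisk}, fix a Borel orthogonal decomposition of $X$ in which every $k$-atom $A$ is homeomorphic to a closed disk, hence a finite simply connected polyomino inside its orbit. The Schreier graph is bipartite with color classes $W,B$ determined by parity of coordinate sum; write $\delta(S)=|W\cap S|-|B\cap S|$ for a finite orbit-subset $S$. The key combinatorial input is: every finite simply connected polyomino $P$ admits a matching whose unmatched set consists of exactly $|\delta(P)|$ majority-color vertices, placed on $\partial P$ (so $\delta(P)=0$ implies $P$ has a perfect matching). One proves this by induction on $|P|$, removing a majority-color boundary cell whose deletion preserves simple connectedness.

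For each $k$-atom $A$, Borel-select such a matching $M_A$, and do so compatibly across the atom hierarchy: whenever a $(k{-}1)$-atom $A'\subseteq A$ is contained in a $k$-atom $A$, require $M_A$ to agree with $M_{A'}$ on the \emph{deep interior} of $A'$, namely on vertices whose $\pathd_\Gf$-distance to $\partial A'$ exceeds a fixed constant $c$. The reconciliation inside $A$ flips along augmenting paths in $M_A\triangle M_{A'}$ that are confined to a $c$-neighborhood of $\partial A'$; by orthogonality these neighborhoods are pairwise disjoint for distinct $(k{-}1)$-atoms inside $A$, so the corrections do not interfere. For each $x\in X$, $\bigcap_k X_k^\infty=\emptyset$ (Theorem~\ref{thm:so}(\ref{so1})) yields a least $k$ such that $x$ is in the deep interior of the $k$-atom containing $x$; set $m(x)$ to be its $M_A$-partner. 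Compatibility makes $m$ well-defined and symmetric, and Borelness is immediate.

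\emph{Main obstacle.} The principal technical difficulty is the compatibility lemma: constructing $M_A$ so that it coincides with each contained $M_{A'}$ on its deep interior, while still leaving only $|\delta(A)|$ unmatched vertices on $\partial A$. This reduces to a finite augmenting-path analysis inside the simply connected polyomino $A$; the delicate step is showing that the combined residuals from the $(k{-}1)$-atoms, together with the thin strip $A\setminus \bigcup A'$, can always be re-paired within $A$, leveraging both the disk topology of $A$ and the fact that the residuals lie in $\mathrm{int}(A)\cup\partial A$.
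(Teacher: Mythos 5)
Your reduction to $n=2$ and your overall architecture (disk-shaped atoms from Corollary~\ref{cor:atomdisk}, hierarchical matchings with defects pushed to atom boundaries) do match the paper's strategy in spirit, but the two load-bearing steps both fail as stated. First, your ``key combinatorial input'' is false: a balanced, simply connected polyomino need not admit a perfect matching, and the deficiency version fails for the same reason. A concrete counterexample is
$P=\{(-1,0),(0,0),(1,0),(0,1),(1,1),(2,1),(2,2),(3,1)\}$: this set is connected and simply connected, has four vertices of each color, yet the three white vertices $(-1,0),(1,0),(0,1)$ have only the two neighbors $(0,0),(1,1)$ inside $P$, so Hall's condition fails and there is no perfect matching. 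This is precisely why the paper never tries to match an arbitrary simply connected region: it only ever matches rectangles (Lemma~\ref{lem:rectangle}, which carries hypotheses relating $|S|$ to the side lengths) and thin rectangular buffer strips, where partial matchings are rigid enough to be controlled by the ``current'' formalism, and it proves a finite Stokes theorem (Theorem~\ref{thm:stokes}) so that the charge of a region can be read off and manipulated along its boundary.

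Second, the reconciliation step cannot be ``confined to a $c$-neighborhood of $\partial A'$'': an unmatched vertex of $M_{A'}$ lying on $\partial A'$ must, in $M_A$, be joined by an alternating path of $M_A\triangle M_{A'}$ to a defect of $M_A$ on $\partial A$ (or to a defect contributed by another sub-atom), so these components necessarily travel from $\partial A'$ across $A$ to $\partial A$ rather than staying near $\partial A'$. Making that transport work inside a corridor of fixed width $w$ requires (i) a bound on the number of defects $|\delta(A)|$ of a $k$-atom that is uniform in $k$ --- this is not automatic (the boundary of a fractalized atom can accumulate charge proportional to its perimeter), and the paper has to modify the marker construction itself to achieve it, by inserting small parity ``bumps'' along boundary segments so that $|\sum_{x\in p}q(x)c(x)|\le K$ for every boundary path segment (Lemmas~\ref{lem:bounded_charge} and~\ref{lem:bounded_charge_atoms}); and (ii) an explicit routing of the defects through the buffers of the sub-atoms, which the paper carries out via a spanning tree of the adjacency graph of the $(k-1)$-atoms, designated exit segments, and the current-standardization and charge-absorption lemmas. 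You correctly flag the compatibility lemma as the main obstacle, but it is essentially the entire content of the proof, and the combinatorial lemma you propose to feed into it is not true.
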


In order to prove Theorem~\ref{thm:mt} it is clearly enough to consider the case $n=2$.
The proof for a general free action of $\Z^2$ is identical to the proof for the shift action
of $\Z^2$ on $X=F(2^{\Z^2})$, so we consider this case.

\subsection{Charges, corner functions, and finite Stokes theorems}

We first prove a combinatorial result for $\Z^2$ itself. In fact, this result can be extended to
$\Z^n$, which we give, though we only need the case $n=2$. The result is a sort of ``finite
version'' of Stokes theorem. We first present the $n=2$ case.

For $x=(i,j) \in \Z^2$, let $q(x)=(-1)^{i+j}$ and call it the {\em charge} of $x$. 
For $x=(i,j), y=(i',j') \in \Z^2$, let $\rho(x,y)=\max\{ |i-i'|,|j-j'|\}$
denote the $\ell^\infty$ distance between $x$ and $y$. If $F\subseteq \Z^2$,
let $\partial F= \{ x \in F\colon \rho(x, \Z^2-F)=1\}$.

For $F\subseteq \Z^2$ we define a {\em corner function} $c \colon F\to \Z$
as follows (the name derives from the fact that for a rectangular polygon in $\Z^2$
the function $c(x)$ is $0$ except when $x$ is a corner point of the boundary). 
For $x \in F$ we set
\[
c(x)=4- 2e(x) +s(x),
\]
where $e(x)$ is the number of horizontal and vertical edges that connect $x$ to a point of
$F$, that is, $e(x)=| \{ p \in \{ (\pm 1,0), (0,\pm 1)\} \colon x+p \in F\}|$.
Also $s(x)$ is the number of points $y$ of the form $x+ (\pm 1, \pm 1)$ which are in $F$
and are such that the two points of $\Z^2$ which are adjacent to both $x$ and $y$
are also in $F$.

The following is our ``finite Stokes theorem'' for finite $F\subseteq \Z^2$.

\begin{thm} [Finite Stokes] \label{thm:stokes}
Let $F \subseteq \Z^2$ be finite. Then we have
\[
\sum_{x\in F} q(x)= \frac{1}{4} \sum_{x \in \partial F} q(x)c(x).
\]
\end{thm}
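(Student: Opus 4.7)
My plan is to start by observing that $c(x)$ vanishes on the $\ell^\infty$-interior of $F$: if $x \in F \setminus \partial F$ then all eight $\ell^\infty$-neighbors of $x$ lie in $F$, so $e(x) = 4$ and $s(x) = 4$, giving $c(x) = 4 - 8 + 4 = 0$. Hence the right-hand side is unchanged when the sum over $\partial F$ is extended to a sum over all of $F$. Using the algebraic identity $4 - c(x) = 2e(x) - s(x)$, the theorem reduces to the single cancellation statement
$$\sum_{x \in F} q(x)\bigl(2e(x) - s(x)\bigr) \;=\; 0,$$
which I will prove by showing separately that $\sum_{x \in F} q(x)\, e(x) = 0$ and $\sum_{x \in F} q(x)\, s(x) = 0$.

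For the edge sum, I will expand $e(x)$ as the count of axis-aligned neighbors of $x$ lying in $F$, so that $\sum_{x \in F} q(x)\, e(x)$ becomes a sum of $q(x)$ over ordered pairs $(x,y) \in F \times F$ with $y - x \in \{\pm e_1, \pm e_2\}$. Each unordered adjacent pair $\{x, y\}$ appears in two orderings and contributes $q(x) + q(y) = 0$, since $q$ flips sign under any unit axis shift. For the diagonal sum, the key observation is to reinterpret $s(x)$ as the number of \emph{completed unit squares} $S = \{a, a+e_1, a+e_2, a+e_1+e_2\} \subseteq F$ of which $x$ is a corner: each configuration counted by $s(x)$ records a diagonal neighbor $y \in F$ of $x$ together with both axis-intermediates in $F$, and these four points together fill out such a square. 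Exchanging the order of summation then gives
$$\sum_{x \in F} q(x)\, s(x) \;=\; \sum_{\substack{\text{completed}\\ S \subseteq F}}\; \sum_{x \in S} q(x),$$
and the inner sum equals $q(a) + q(a+e_1) + q(a+e_2) + q(a+e_1+e_2) = q(a)(1 - 1 - 1 + 1) = 0$, since the four corners of any unit square carry charges $+,-,-,+$.

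The main conceptual step, and in my view the only nontrivial move, is the interior-vanishing of $c$, which converts the boundary integral on the right-hand side into a global sum of a discrete ``curvature-like'' quantity over all of $F$; once this is in hand, the proof reduces to two clean pairing arguments, over unit edges and over completed unit squares, each powered only by the antisymmetry $q(x + e_i) = -q(x)$. I do not anticipate any serious obstacle, though a minor subtlety worth flagging is that the ``both intermediates in $F$'' requirement in the definition of $s(x)$ forces each configuration counted by $s$ to complete an entire unit square inside $F$, which is precisely what unlocks the four-corner cancellation.
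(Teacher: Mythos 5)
Your proposal is correct and follows essentially the same route as the paper: observe that $c$ vanishes off $\partial F$ to pass from a boundary sum to a sum over all of $F$, rewrite $4 - c(x) = 2e(x) - s(x)$, and kill the two resulting sums by pairing over axis-aligned edges (using $q(x)+q(y)=0$) and over completed unit squares (using the $+,-,-,+$ cancellation at the four corners). The only cosmetic difference is that you verify interior vanishing by computing $e(x)=s(x)=4$ there, whereas the paper simply asserts $c(x)=0$; the content is identical.
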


\begin{proof}
For $x \in F-\partial F$, we have that $c(x)=0$, and so
\[
\sum_{x \in \partial F} q(x)c(x)=\sum_{x \in F} q(x)c(x).
\]
So, it suffices to show that $\sum_{x \in F} (4q(x)- q(x)c(x))=0$.
From the definition of $c(x)$ we have that
$\sum_{x \in F} (4q(x)- q(x)c(x))= \sum_{x \in F} (2q(x)e(x)-q(x)s(x))$.
It suffices to show that each of the sums $\sum_{x \in F} q(x)e(x)$
and $\sum_{x \in F} q(x)s(x)$ is equal to $0$.

Let $E$ be the set of pairs of points (``edges")  $(x,y)$ with
$x,y \in F$ and $x,y$ differing by $(\pm 1,0)$ or $(0, \pm 1)$.
Then $\sum_{x \in F} q(x)e(x)= \sum_{(x,y)\in E} (q(x)+q(y))=0$
as $q(x)+q(y)=0$ for all $(x,y)\in E$.

Let $S$ be the set of ``squares'' in $F$, that is, $S$ is the set of all
$4$-element subsets of $F$ of the form $R=\{ (i,j), (i+1,j), (i,j+1), (i+1,j+1)\}$
for some $i, j$. Then we have
$\sum_{x \in F} q(x)s(x)= \sum_{R\in S} \sum_{(a,b)\in R} (-1)^{a+b}=0$
as for each square $R$ in $F$ we have that $\sum_{(a,b)\in R} (-1)^{a+b}=0$.

This completes the proof of Theorem~\ref{thm:stokes}.
\end{proof}

Although Theorem~\ref{thm:stokes} is all we need for the results of this paper, 
we state the natural generalization to $\Z^n$ for any $n\geq 2$. If $k\leq n$, by a {\em $k$-dimensional $1$-cube} in $\Z^n$ we mean
a set of the form
$$ C=\{x=(x_1,\dots, x_n)\in \Z^n\,:\, x_i=\pm 1 \mbox{ for $i\in I$ and } x_i=0 \mbox{ for $i\not\in I$}\} $$
for some $I\subseteq\{1,\dots, n\}$ with $|I|=k$, i.e., 
$C$ is a set with $2^k$ many elements $x=(x_1,\dots, x_n) \in \Z^n$ such that any non-zero coordinate of $x$ 
must occur in one of the coordinates $i_1,\dots, i_k$ where $1\leq i_1 <\cdots <i_k \leq n$, 
and any non-zero coordinate of $x$ must be  $\pm 1$. There are $2^k \binom{n}{k}$ many distinct
$k$-dimensional $1$-cubes in $\Z^n$.

\begin{thm} \label{thm:genstokes}
Let $F\subseteq \Z^n$ be finite. Then 
\[
\sum_{x \in F} q(x)= \frac{1}{2^n} \sum_{x \in \partial F} q(x)c(x)
\]
where $q(x)=\sum_{i=1}^n (-1)^{x_i}$ and 
$c(x)=\sum_{i=0}^n (-1)^i 2^{n-i} c_i(x)$ where $c_0(x)=1$, and 
for $i>0$, $c_i(x)$ is the number of $i$-dimensional $1$-cubes $C$ 
such that $C+x\subseteq F$. 
\end{thm}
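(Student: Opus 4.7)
The plan is to imitate the proof of Theorem~\ref{thm:stokes}, using the same three ingredients in sequence: (i) show that $c(x) = 0$ away from $\partial F$ by a binomial identity, (ii) expand the sum linearly and reduce to terms of the form $\sum_x q(x) c_i(x)$, and (iii) show that each such term with $i \geq 1$ vanishes by a double-counting argument combined with a parity identity on individual $k$-cubes.

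First I would show $c(x) = 0$ for every $x \in F \setminus \partial F$. Such an $x$ has all points within $\ell^\infty$-distance $1$ lying in $F$, so every $i$-dimensional $1$-cube $C$ satisfies $C + x \subseteq F$, giving $c_i(x) = 2^i \binom{n}{i}$. Substituting into the definition of $c$,
$$c(x) \;=\; \sum_{i=0}^n (-1)^i 2^{n-i} \cdot 2^i \binom{n}{i} \;=\; 2^n \sum_{i=0}^n (-1)^i \binom{n}{i} \;=\; 2^n (1-1)^n \;=\; 0.$$
Therefore $\sum_{x \in \partial F} q(x) c(x) = \sum_{x \in F} q(x) c(x)$, and the identity reduces to $\sum_{x \in F} q(x) c(x) = 2^n \sum_{x \in F} q(x)$.

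Expanding linearly gives $\sum_{x \in F} q(x) c(x) = \sum_{i=0}^n (-1)^i 2^{n-i} S_i$, with $S_i := \sum_{x \in F} q(x) c_i(x)$. Because $c_0 \equiv 1$, the $i = 0$ term contributes exactly $2^n \sum_{x \in F} q(x)$, so it remains to show $S_i = 0$ for each $i \geq 1$. I would swap the order of summation and interpret the resulting double sum geometrically: each pair $(x,C)$ with $C + x \subseteq F$ corresponds to an embedded $k$-cube $K = x + (\{0\} \cup C) \subseteq F$ with a distinguished vertex $x$, and each such embedded $k$-cube $K$ arises from exactly $2^k$ pairs, one per vertex. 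Hence
$$S_i \;=\; \sum_{\substack{K \subseteq F \\ k\text{-cube}}} \ \sum_{v \in K} q(v).$$

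The final step is the parity identity $\sum_{v \in K} q(v) = 0$ for every $k$-cube $K$ with $k \geq 1$, a direct generalization of the observation $\sum_{(a,b)\in R}(-1)^{a+b} = 0$ from the $n=2$ case. This follows by grouping the $2^k$ vertices of $K$ into $2^{k-1}$ antipodal pairs along any one of the $k$ free coordinate directions: paired vertices differ in that single coordinate by $\pm 1$ and therefore carry opposite charges, so the pairwise contributions cancel. Putting everything together yields $\frac{1}{2^n} \sum_{x \in \partial F} q(x) c(x) = \sum_{x \in F} q(x)$ as required. The main (minor) obstacle is keeping the signs and coefficients aligned — ensuring that the factors of $2^n$ and $(-1)^i$ propagate correctly through the expansion — but given the three-part structure this is routine.
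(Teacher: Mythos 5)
Your proof is correct and is exactly the generalization the paper intends: the paper's proof of Theorem~\ref{thm:genstokes} is a one-line remark that it is the immediate generalization of the $n=2$ argument, and your three steps (interior vanishing of $c$ via $\sum_{i=0}^n(-1)^i2^{n-i}\cdot 2^i\binom{n}{i}=2^n(1-1)^n=0$, double-counting the pairs $(x,C)$ by embedded $k$-cubes, and antipodal cancellation of the charge on each $k$-cube with $k\geq 1$) are precisely the $n$-dimensional versions of the three computations in the proof of Theorem~\ref{thm:stokes}. The only caveat is that your antipodal cancellation tacitly reads $q(x)$ as $(-1)^{x_1+\cdots+x_n}$, consistent with the $n=2$ definition $q(i,j)=(-1)^{i+j}$; this is clearly what the displayed formula $q(x)=\sum_{i=1}^n(-1)^{x_i}$ is meant to say (the literal sum would make the statement false already for $F=\{(0,0),(1,0)\}\subseteq\Z^2$), so your reading is the right one.
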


The proof of Theorem~\ref{thm:genstokes} is an immediate generalization of 
that of Theorem~\ref{thm:stokes}.

Returning to $\Z^2$, 
we will need the following simple lemma about the $\sum q(x)c(x)$ expression
of Theorem~\ref{thm:stokes}.

\begin{figure}[!h] 
\begin{tikzpicture}[scale=0.04]

\pgfmathsetmacro{\a}{5}
  
\draw[thick] (0,0) to (80,0) to (80,60) to (120,60) to (120,100);
\draw[thick] (0,\a)to (80-\a, \a) to (80-\a, 60+\a) to (120-\a, 60+\a) to (120-\a, 100)   ;

\node(p) at (40,-5) {$p$};
\node(p) at (40,\a+5) {$p'$};

\end{tikzpicture}
\caption{Adjacent paths $p, p'$ as in Lemma~\ref{lem:pp}} \label{fig:pp}
\end{figure}
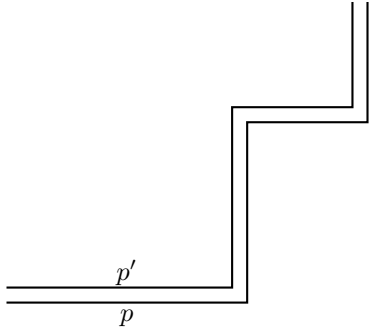

\begin{lem} \label{lem:pp}
Let $p$ be a simple (i.e. non-self-intersecting) path consisting of a concatenation of horizontal and vertical
segments of points in $\Z^2$. Assume each segment has length at least $5$.
Let $p'$ be a path disjoint from $p$ which is contained in $B_{\rho_\infty}(p,1)$
(the set of points of distance $1$ from $p$ in the $\rho_\infty$ metric), and which is
adjacent to $p$ as shown in Figure~\ref{fig:pp}. Fix an orientation for $p$ so that
$p'$ lies to the left of $p$. 
Let $s=\sum_{x \in p} q(x)c(x)$,
where the values of $c(x)$ are computed for the region $R=\Z^2\sm p'$ and let 
$s'=\sum_{x \in p'} q(x)c'(x)$, where $c'(x)$ is computed for the region $R'=\Z^2\sm p$.
Then $s=-s'$. 

\end{lem}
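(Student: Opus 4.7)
The plan is to localize both sums and show that contributions cancel corner-by-corner. First I would observe that $c(x) = 0$ for every point $x$ on a straight segment of $p$ away from its corners and endpoints, and similarly $c'(y) = 0$ for such $y \in p'$. Indeed, for such a straight-segment point of $p$, exactly one edge-neighbor of $x$ (the one toward $p'$) and exactly two diagonal neighbors fail to lie in $R$, so $e(x) = 3$, $s(x) = 2$, and $c(x) = 4 - 6 + 2 = 0$. The length-at-least-$5$ hypothesis ensures this ``generic straight'' computation applies to every non-corner non-endpoint point on either path, and that distinct corners of $p$ have disjoint radius-$2$ local neighborhoods. Hence only corner points and endpoints contribute to $s$ or $s'$.

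Next I would pair each corner $x$ of $p$ with the unique corresponding corner $y$ of $p'$. Since $p'$ runs parallel to $p$ at distance $1$ on the left, this pairing is a bijection and $y - x = \pm e_1 \pm e_2$, so $q(x) = q(y)$. Left turns of $p$ produce \emph{inside} corner pairs (where $y$ sits diagonally on the concave side of $x$), and right turns produce \emph{outside} pairs (where $y$ sits diagonally on the convex side and two edge-neighbors of $x$ lie on $p'$). I would then verify the cancellation directly in each case. For an inside corner, all four edge-neighbors of $x$ lie in $R$ and three of its four diagonal neighbors are in $R$ with both common edge-neighbors in $R$, giving $c(x) = 4 - 8 + 3 = -1$; at the matching $y$, only two edge-neighbors and one diagonal neighbor lie in $R'$ with both common edge-neighbors in $R'$, giving $c'(y) = 4 - 4 + 1 = 1$. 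For an outside corner the configuration is dual, yielding $c(x) = 1$ and $c'(y) = -1$. In both cases
\[
q(x)c(x) + q(y)c'(y) \;=\; q(x)\bigl(c(x) + c'(y)\bigr) \;=\; 0.
\]

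Finally, I would handle the two endpoint pairs by the same kind of direct local computation and obtain $c = 0$ and $c' = 0$ at each endpoint, since the length-$5$ hypothesis forces each endpoint of $p$ (and the nearby endpoint of $p'$) to sit in a locally straight configuration. Summing the local cancellations over all corners and endpoints then gives $s + s' = 0$, i.e.\ $s = -s'$. The main obstacle is the careful bookkeeping of which of the $8$ neighbors of each corner (and endpoint) lie in $p$, in $p'$, or in neither region; the length-$5$ hypothesis guarantees this bookkeeping is purely local, and only the four configurations enumerated above can arise.
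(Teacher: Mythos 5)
Your proof is correct and follows essentially the same route as the paper's: pair each corner of $p$ with the corresponding corner of $p'$, note $q(x)=q(x')$, and check $c(x)=-c'(x')$ locally (the paper leaves this check as "easy to verify," which your inside/outside corner computations $-1$ vs.\ $+1$ and $+1$ vs.\ $-1$ carry out explicitly, along with the vanishing of $c$ on straight stretches). The only looseness is the endpoint discussion, but the paper's own proof glosses over this as well and your convention gives the stated cancellation.
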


\begin{proof}
For each of the pairs $(x,x')$ of corresponding corner points for $p$, $p'$
we have that $q(x)=q(x')$, and it is easy to check that $c(x)=-c'(x')$,
and the result follows.
\end{proof}

\subsection{Currents and extensions of partial matchings}

We now discuss some terminolgy and simple results concerning matchings of finite regions in $\mathbb{Z}^2$. 

The regions we consider will be rectangular polygons that are in between two ``parallel" paths. To begin, we consider the simplest kind of such region, which is a region strictly between two parallel (horizontal or vertical) lines in $\Z^2$. For definiteness, let us consider two parallel horizontal lines $y=\alpha$ and $y=\beta$ (where $\alpha<\beta$ are in $\Z$). The region will be defined as
$$ R=\{(x,y)\in \Z^2\,:\, a\leq x\leq b, \ \alpha<y<\beta\} $$
for some $a<b$ in $\Z$. In general $b-a$ is much larger than $\beta-\alpha$, and we thus refer to $\beta-\alpha-1$ (the number of integers strickly between $\alpha$ and $\beta$) as the {\em thickness} of $R$. For reasons that will become clear, we assume the thickness of $R$ is even.

We will be considering matchings of the induced subgraph of the Cayley graph on $R$. To emphasize that these are not necessarily perfect mathchings, we refer to them as {\em partial matchings}. If $M$ is a partial matching of $R$, then $M$ is a subgraph of the Cayley graph on $R$ such that each vertex has degree 0 or 1, i.e., $\mbox{deg}_M(x)=0 \mbox{ or } 1$ for $x\in R$. We call the set $\{x\in R\,:\, \mbox{deg}_M(x)=1\}$ the {\em domain} of $M$, and denote it by $\dom(M)$.

A partial matching of $R$ is said to be of the
{\em canonical form} if it is as shown in Figure~\ref{fig:cm}, i.e., all edges of the matching are horizontal, and the domain of the matching is either the entire $R$ (if $b-a+1$ is even) or all of $R$ except those points with $x$-coordinate $b$ (as shown, if $b-a+1$ is odd).

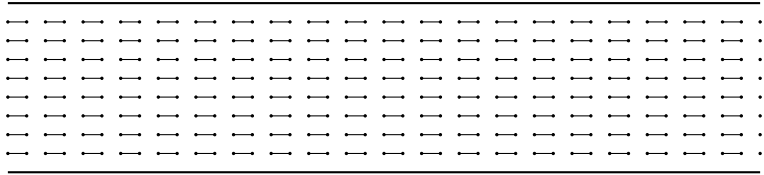
\begin{figure}[h] 
\begin{tikzpicture}[scale=0.05]

\pgfmathsetmacro{\a}{5}
  
\draw[thick] (-100,0) to (100,0);
\draw[thick] (-100,45)to (100, 45);

\foreach \i in { 0,..., 40}
\foreach \j in {1,..., 8}
         {\draw[fill=black] (-100+ 5*\i, 5*\j) circle (0.3);
           }

\foreach \i in { 0,..., 19}
\foreach \j in {1,..., 8}
         {\draw (-100+10 *\i, 5*\j) to (-100+ 10*\i+5,5*\j);
           }

\end{tikzpicture}
\caption{The canonical matching.} \label{fig:cm}
\end{figure}

Consider a more general partial matching $M$ of $R$. Let us fix an orientation for this region, say left to right. Consider
$$ S=\{(x, y)\in R\,:\, a<x<b, \ \alpha<y<\beta\}. $$
Suppose the domain of $M$ includes all points of $S$. For all $a<x<b$, we define the {\em current} of $M$ at $x$,
which we denote $C_M(x)$ or just $C(x)$ if $M$ is understood. 

For $a<x<b$, let $S_x=\{ (x, y)\,:\, \alpha<y<\beta\}$. For each $z\in S_x$,
if $M$ matches it with a point to the right of $z$, or with a point below or above $z$, then $z$ contributes $0$ to the current. If $M$ matches $z$ with a point to the
left of $z$, then $z$ contributes $q(z)$ to the current. Summing these values for the points in
$S_x$ gives the current $C_M(x)$. Note that since $R$ has even thickness, the canonical matching has current $0$ at all $x$.

The current is well-defined by the following fact.

\begin{lem} \label{curcon}
Suppose $M$ is a partial 
matching of $R$ whose domain includes all points of $S$. Then for any $a < x_0 \leq x_1 <b$ we have that
$C_M(x_0)=C_M(x_1)$.
\end{lem}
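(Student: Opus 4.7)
The plan is to reduce the statement to the consecutive case $C_M(x) = C_M(x+1)$ for every $x$ with $a < x$ and $x + 1 < b$, and then to compute this difference by a direct local cancellation between the two adjacent columns $S_x$ and $S_{x+1}$.

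First I would rewrite $C_M(x+1)$ as a sum indexed over $S_x$. A point $(x+1,y) \in S_{x+1}$ is matched to its left exactly when it is matched to $(x,y)$, i.e., exactly when $(x,y)$ is matched to $(x+1,y)$. Using $q((x+1,y)) = -q((x,y))$, this gives
\[
C_M(x+1) \;=\; -\sum_{\alpha < y < \beta} q((x,y)) \cdot \mathbf{1}[(x,y) \text{ is matched to } (x+1,y)].
\]
Combining with the defining sum for $C_M(x)$ (which picks up the contributions of points matched to $(x-1,y)$), one obtains
\[
C_M(x) - C_M(x+1) \;=\; \sum_{\alpha < y < \beta} q((x,y)) \cdot \mathbf{1}[(x,y) \text{ is matched horizontally}].
\]

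Next, since $S_x \subseteq S \subseteq \dom(M)$, every point of $S_x$ is matched to exactly one of its four neighbors, so the horizontal indicator equals $1 - \mathbf{1}[(x,y) \text{ is matched vertically}]$. Splitting the sum accordingly, the first piece $\sum_{\alpha < y < \beta} q((x,y))$ is a sum of $\beta - \alpha - 1$ alternating $\pm 1$'s; since the thickness $\beta - \alpha - 1$ is even, this vanishes. For the second piece, the key observation is that the strict inequality in the definition $R = \{(x,y) : a \leq x \leq b,\ \alpha < y < \beta\}$ places $(x,\alpha)$ and $(x,\beta)$ outside of $R$. Because $M$ is a matching of the induced subgraph on $R$, no vertical edge of $M$ can leave $S_x$ at the top or bottom, so every vertical edge of $M$ incident to $S_x$ has both endpoints inside $S_x$. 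Each such edge $\{(x,y),(x,y+1)\}$ contributes $q((x,y)) + q((x,y+1)) = 0$, and hence this piece also vanishes.

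The only substantive point in this plan is the last one --- the elimination of vertical flux out of the top and bottom of $S_x$ --- and it is precisely what the strict-inequality convention on the vertical extent of $R$ and the even-thickness assumption are designed to provide. No appeal to the Finite Stokes Theorem (Theorem~\ref{thm:stokes}) is needed here; the balancing is purely local between two adjacent columns, and a trivial induction on $x_1 - x_0$ promotes the consecutive equality to the full claim.
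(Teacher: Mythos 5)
Your proof is correct. The local computation is sound: the identity $C_M(x)-C_M(x+1)=\sum_{\alpha<y<\beta}q((x,y))\cdot\mathbf{1}[(x,y)\text{ matched horizontally}]$ follows exactly as you say from $q((x+1,y))=-q((x,y))$, the splitting into $1-\mathbf{1}[\text{vertical}]$ is legitimate because $S_x\subseteq S\subseteq\dom(M)$ forces every point of $S_x$ to have degree exactly $1$, the first piece vanishes by even thickness, and the second vanishes because $(x,\alpha),(x,\beta)\notin R$ confines every vertical edge of $M$ meeting $S_x$ to lie entirely inside $S_x$, where its two endpoints have opposite charge. The telescoping then gives the full claim for arbitrary $a<x_0\leq x_1<b$. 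Your route differs from the paper's in an instructive way: the paper argues globally, noting that $\sum_{z\in R}q(z)=0$ and $\sum_{z\in\dom(M)}q(z)=0$, so the unmatched points (which all lie on the two boundary columns $x=a$ and $x=b$) carry total charge $0$, and then identifying the unmatched charge on each boundary column with $\pm C_M$ at the adjacent interior column; it first reduces to the case $x_0=a+1$, $x_1=b-1$. Your version is a discrete ``divergence-free'' statement proved column by column; it is more elementary, avoids any bookkeeping about the unmatched boundary points, and handles arbitrary $x_0\leq x_1$ directly without the (unstated in the paper) restriction-to-a-subregion step needed to justify that reduction. The paper's global form has the mild advantage of directly exhibiting the current as the total unmatched charge on the left edge, a reformulation it uses implicitly in later constructions, but for the lemma as stated your argument is complete and, if anything, tighter.
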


\begin{proof} It suffices to prove the lemma for $x_0=a+1$ and $x_1=b-1$.
Since $R$ has even thickness, we have $\sum_{z\in R} q(z)=0$. Also
$\sum_{z\in \dom(M)} q(z)=0$, and so, $\sum_{z\in R-\dom(M)}q(z)=0$. Since $S\subseteq \dom(M)$, we get 
$$\sum_{(a,y)\in R-\dom(M)} q(a,y)
+ \sum_{(b,y)\in R-\dom(M)} q(b,y)=0.$$ 
We claim that the first sum equals the current $C_M(x_0)$. This is because, for each $(a, y)\in R-\dom(M)$, $M$ matches $(a+1,y)=(x_0,y)$ with a point either to the right, or above and below, and thus $(x_0,y)$ contributes $0$ to $C_M(x_0)$; on the other hand, for each $(a, y)\in\dom(M)$, $M$ matches $(x_0,y)$ to $(a, y)$, which is to the left of $(x_0,y)$, and thus $(x_0,y)$ contributes $q(x_0,y)=-q(a,y)$ to $C_M(x_0)$. Therefore 
$$ C_M(x_0)=\sum_{(a,y)\in \dom(M)} -q(a,y). $$
Note again $R$ has even thickness, and thus $\sum_{(a,y)\in R}q(a,y)=0$. It follows that
$$ C_M(x_0)=\sum_{(a,y)\in R-\dom(M)} q(a, y). $$
Now we claim that the second sum equals $-C_M(x_1)$. To see this, again note $\sum_{(x_1,y)\in R}q(x_1,y)=0$ and write the sum as
$$\sum_{(x_1,y)\in A} q(x_1,y)+ \sum_{(x_1,y) \in B} q(x_1,y) + \sum_{(x_1,y)\in C} q(x_1,y)=0$$ 
where $A$ is the set of points which are matched by $M$ vertically to a point (again in $A$),
$B$ is the set of points which are matched by $M$ to a point to the left, and $C$ is the set of points which are matched by $M$ to a point to the right. The first sum equals 0, and the second sum equals $C_M(x_1)$.  For each $(x_1,y)\in C$, $(x_1+1,y)=(b,y)$ is matched by $M$ to $(x_1, y)$, which is to the left of $(b,y)$, and $q(x_1,y)=-q(b,y)$. Let $D$ be the set of all $(b, y)\in R$ which are matched by $M$ to the point on the left. Thus it suffices to show that
$$ \sum_{(b,y)\in D} q(b,y)+\sum_{(b,y)\in R-\dom(M)}q(b,y)=0. $$
Finally, let $E$ be the set of all points $(b,y)\in R$ which are matched by $M$ vertically to a point (again in $E$). Since $\sum_{(b,y)\in R} q(b,y)=0$, we get that
$$ \sum_{(b,y)\in R-\dom(M)}q(b,y)+\sum_{(b,y)\in D}q(b,y)+\sum_{(b,y)\in E}q(b,y)=0. $$
Note that the third term is $0$, thus we get the desired equality. In conclusion, we get
$C_M(x_0)=C_M(x_1)$. 
\end{proof}

In Figure~\ref{fig:curr} we give an example of a partial matching of $R=\{(x, y)\in \Z^2\,:\, 0\leq x\leq 40,\ 0<y<9\}$ (with the left-to-right orientation) whose current is $1$.

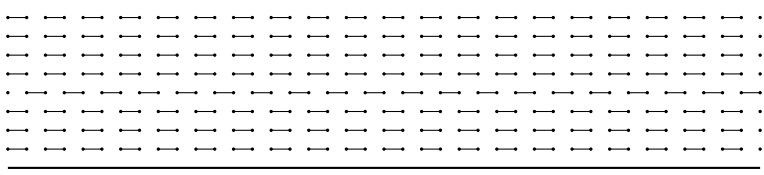
\begin{figure}[h] 
\begin{tikzpicture}[scale=0.05]

\pgfmathsetmacro{\a}{5}
  
\draw[thick] (-100,0) to (100,0);
\draw[thick] (-100,45)to (100, 45);

\foreach \i in { 0,..., 40}
\foreach \j in {1,..., 8}
         {\draw[fill=black] (-100+ 5*\i, 5*\j) circle (0.3);
           }

\foreach \i in { 0,..., 19}
\foreach \j in {1,2,3,5,6,7,8}
         {\draw (-100+10 *\i, 5*\j) to (-100+ 10*\i+5,5*\j);
           }

         \foreach \i in { 1,..., 20}
{\draw (-100+10 *\i-5, 5*4) to (-100+ 10*\i,5*4);
           }

\end{tikzpicture}
\caption{A partial matching of current $1$. } \label{fig:curr}
\end{figure}

 In general, we call a partial matching $M$ of a region 
$$ R=\{(x,y)\in \Z^2\,:\, a\leq x\leq b,\ \alpha<y<\beta\} $$
{\em full} if the domain of $M$ includes all points of the set $$\{(x, y)\in\Z^2\,:\, a<x<b, \ \alpha<y<\beta\}.$$ Lemma~\ref{curcon} guarantees that the current is well-defined for full partial matchings.

We next present some lemmas about partial matchings with a given current. We first prove a lemma that guarantees a ``standard form" for a partial matching with a given current. In the lemma we tacitly assume that the orientations of the regions are all left-to-right.

\begin{lem} \label{lem:standard}
Suppose $M$ is a full partial matching of the region 
$$R=\{(x,y)\in\Z^2\,:\, a\leq x\leq b,\ \alpha<y<\beta\}, $$
where the thickness $w=\beta-\alpha-1$ is even. Suppose $M$ has current $k$. There is an absolute constant $C_0$
such that if $b'>b+C_0w^2$, then we can extend $M$ to a full partial matching $M'$ of 
$$R'=\{(x,y)\in\Z^2\,:\, a\leq x\leq b',\ \alpha<y<\beta\} $$ 
such that $(b',y)\in \dom(M')$ iff $y\in L\cup A$, where $L\in\{L_0, L_1\}$ and $A\in\{A_0,A_1\}$, and
$$\begin{array}{rcl}
L_0 &=&\varnothing, \\
L_1 &=&\{ y\,:\, \alpha+2|k|+1\leq y<\beta\}, \\
A_0  &=&\{ \alpha+2i-1\,:\, 1\leq i\leq |k|\}, \\
A_1 &=&\{\alpha+2i\,:\, 1\leq i\leq |k|\}. 
\end{array}
$$

\end{lem}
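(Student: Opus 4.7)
My plan is to use the buffer of width $\geq C_0 w^2$ to reconfigure the matching column by column, transforming any current-$k$ matching into one whose right boundary has the prescribed form.

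The first step is a reformulation. For each column $x$ with $a\leq x<b'$, let $V_x=\{y\colon (x,y)\text{ is matched to }(x+1,y)\}$ denote the set of rightward edges leaving column $x$. These transition sets encode the matching: at each interior column, the points of $\{y\colon \alpha<y<\beta\}\setminus (V_{x-1}\cup V_x)$ must partition into adjacent vertical dominos $\{y,y+1\}$. Call a pair $(V_{x-1},V_x)$ \emph{admissible} if this partition exists. By Lemma \ref{curcon}, the current constraint translates into the parity-weighted identity
$$\sum_{y\in V_x}(-1)^y \;=\; k\cdot (-1)^{x+1}$$
for every $a\leq x<b'$; this is an invariant that every admissible sequence must preserve. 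Setting $V_{b'-1}=L\cup A$, a direct check shows that the four candidate target sets all satisfy this identity for the correct choice of $A\in\{A_0,A_1\}$, namely the one whose parity matches $k\cdot(-1)^{b'}$, while $L_1$ contributes $0$ to the weighted sum since it consists of $w-2|k|$ consecutive integers and $w-2|k|$ is even.

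The second and main step is the \emph{transport} construction: starting from the state $V_b$ determined by the given matching $M$, I would explicitly build an admissible sequence $V_b,V_{b+1},\ldots,V_{b'-1}=L\cup A$. I would organize this as a pipeline of local moves applied to the sequence. The elementary moves are:
\begin{enumerate}
\item[\rm (a)] (vertical flip) over a pair of adjacent columns, convert two vertical dominos $\{(x,y),(x,y{+}1)\}$ and $\{(x{+}1,y),(x{+}1,y{+}1)\}$ into two horizontal dominos, or vice versa; this adds or removes a pair $\{y,y+1\}$ from $V_x$, preserving the weighted sum;
\item[\rm (b)] (slide) over $O(1)$ columns, shift a single element of $V_x$ from $y$ to $y\pm 2$ using a local reconfiguration that absorbs and re-emits the two affected positions via vertical pairings; this preserves the weighted sum.
\end{enumerate}
Since each element carrying current must travel at most $O(w)$ rows to reach its canonical slot in $A_0$ or $A_1$, and there are at most $|k|\leq w/2$ such elements, bubble-sorting them into position costs $O(w^2)$ columns, giving the absolute constant $C_0$. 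A further $O(w)$ columns suffice to fill in or empty the upper range $L_1$ or $L_0$ by applying moves of type (a) in sequence.

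The main obstacle will be verifying admissibility throughout the transport. The subtlety is that the vertical pairing partition at each column couples $V_{x-1}$ with $V_x$, so naive elementary moves may leave an \emph{odd-sized} gap that cannot be tiled by adjacent pairs. I would handle this by enforcing an explicit invariant during the transport: after each block of $O(1)$ columns, the state lies in a finite family of "canonical" configurations (each the disjoint union of a prefix of $A_0$-like odd slots, possibly a block of $L_1$-type filling at the top, and at most one "traveling" bit in transit). I would then catalogue the admissible transitions between these canonical configurations within the elementary-move framework and verify closure under composition. Once this invariant-preservation is established, the counting argument above completes the proof.
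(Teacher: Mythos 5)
Your reformulation in terms of the transition sets $V_x$ is clean, and your parity-weighted invariant is a correct restatement of Lemma~\ref{curcon}: the signed count $\sum_{y\in V_x}(-1)^y$ is constant up to sign, and the four candidate target boundaries satisfy it for the right choice of $A\in\{A_0,A_1\}$. Up to that point the translation to the paper's setting (where this is tracked as an $l/r$ pattern along the right edge) is faithful and arguably tidier.

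However, the argument has a genuine gap, and it is precisely at the step you flag yourself. The entire content of this lemma is the verification that the ``transport'' can be carried out while keeping every intermediate pair $(V_{x-1},V_x)$ admissible, i.e., that the complement $\{y:\alpha<y<\beta\}\setminus(V_{x-1}\cup V_x)$ tiles by vertical dominos at every column. You acknowledge this as the main obstacle and then defer it: ``I would catalogue the admissible transitions between these canonical configurations \ldots\ and verify closure under composition.'' That is a plan, not a proof. The paper's proof carries out exactly this program explicitly, by identifying the canonical configurations as patterns of the form $l\cdots l\,A$ (a run of leftward matchings followed by a single alternating block) and exhibiting the specific column-by-column moves (Figure~\ref{fig:oa}) that shift an alternating block downward, merge adjacent alternating blocks, and terminate in the standard form. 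Without that catalogue, your proposal does not establish the lemma.

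Separately, your elementary move~(b), the ``slide'' that shifts one element of $V_x$ by $\pm 2$ over $O(1)$ columns, is not available in general. A single element cannot be moved in isolation while preserving admissibility when it sits inside an alternating block, because removing it from $V_{x-1}$ and inserting it shifted into $V_x$ leaves an odd gap at column $x$ unless the shift propagates along the whole alternating region. In the paper's construction the analogous operation moves an alternating block of size $t$ and costs a horizontal distance $O(t)$, which is $O(w)$ in the worst case, not $O(1)$. Your total bound $C_0 w^2$ survives (since there are $O(w)$ such moves), but the constant-cost claim for the individual move is incorrect, and since the admissibility of that move is exactly what would need to be proved, this is not a detail that can be waved off.

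To close the gap you would need to (i) specify the canonical configurations precisely (the paper uses $l^j$ followed by one alternating block), (ii) exhibit the concrete local reconfigurations and check they are admissible, and (iii) show that starting from an arbitrary right boundary of $M$ you can reach one of these canonical configurations in $O(w^2)$ columns. Steps (ii) and (iii) are missing from the proposal.
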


\begin{proof}
We describe an algorithm to extend $M$ to a full partial matching $M'$ of $R’$ by following the left-to-right orientation. First consider the right-edge of $R$, that is, the points with $x$-cooordinate $x=b$.
By extending $M$ by adding horizontal edges, we 
may assume without loss of generality that
all of the points on the right-edge of $M$ are eventually matched to points to their
left or right (that is, $M$ contains no vertical edges between points on the right-edge of $R$, and for those points on the right-edge of $R$ for which $M$ is undefined, $M'$ will match them to the points on the right). There is also no loss of generality (by extending $M$ further by adding horizontal edges)
in assuming that the top point of the right-edge of $R$ is included in the domain of $M$, that is, it is matched by $M$ to the point to its left. 

In the first step of the algorithm, define $M'$ for all points on the right-edge of $R$ in an obvious way, that is, if $z$ is a point of the right-edge of $R$ so that $z\not\in\dom(M)$, then $M'$ matches $z$ to the point on its right. Along the right-edge of $R$, from the top to the bottom, we use the letter $l$ to represent the point if $M'$ matches it to the point on the left, and use the letter $r$ to represent the point if $M'$ matches it to the point on the right. Thus we obtain a sequence of $l$'s and $r$'s to represent the pattern of matching $M'$ on the right-edge. From our assumption above, the sequence starts with an $l$. 

Consider first the case where the left/right matching pattern from top to bottom
along the right-edge of $R$ is of the form 
$$\underbracket{ll \cdots l}_{\mbox{\small $i$}}\ \underbracket{lrl\cdots\cdots rlr}_{\mbox{\small $t$ alternations}}\ \underbracket{r\cdots rr}_{\mbox{\small $j$}}$$ or
$$\underbracket{ll \cdots l}_{\mbox{\small $i$}}\ \underbracket{lrl\cdots\cdots lrl}_{\mbox{\small $t$ alternations}}\ \underbracket{l\cdots ll}_{\mbox{\small $j$}}.$$ That is, the pattern begins and ends with a block
of $l$'s or $r$'s, and in-between is a single block of alternations of $l$'s and
$r$'s with a total of, say, $t$ alternations. In Figure~\ref{fig:oa}, if we take 
$b=1$ (that is, we read the pattern
from the second column of points) then we see the pattern $llrlrlll$ with $t=4$ alternations. 
Note that the current in this case is equal to $2$. Figure~\ref{fig:oa} illustrates a procedure to extend $M$ so that in the extended partial matching, the left/right matching pattern of the right edge becomes $llllrlrl$.

\begin{figure}[h] 
\begin{tikzpicture}[scale=0.05]

\pgfmathsetmacro{\a}{5}
  
\draw[thick] (-100,0) to (-45,0);
\draw[thick] (-100,45)to (-45, 45);

\foreach \i in { 0,..., 11}
\foreach \j in {1,..., 8}
         {\draw[fill=black] (-100+ 5*\i, 5*\j) circle (0.3);
           }

\foreach \i in { 0}
\foreach \j in {1,2,3,5,7,8}
         {\draw (-100+5 *\i, 5*\j) to (-100+ 5*\i+5,5*\j);
           }

         \foreach \i in { 1}
         \foreach \j in {4,6}
{\draw (-100+5 *\i, 5*\j) to (-100+ 5*\i+5,5*\j);
           }

\foreach \i in { 2}
\foreach \j in {1,2,3,5,7,8}
         {\draw (-100+5 *\i, 5*\j) to (-100+ 5*\i+5,5*\j);
           }

\draw (-100+3*\a, 4*\a) to (-100+3*\a+\a, 4*\a);
\draw (-100+3*\a, 6*\a) to (-100+3*\a+\a, 6*\a);
\draw (-100+4*\a, 1*\a) to (-100+4*\a+\a, 1*\a);
\draw (-100+4*\a, 5*\a) to (-100+4*\a+\a, 5*\a);
\draw (-100+4*\a, 7*\a) to (-100+4*\a+\a, 7*\a);
\draw (-100+4*\a, 8*\a) to (-100+4*\a+\a, 8*\a);  
         
\draw (-100+5*\a, 2*\a) to (-100+5*\a+\a, 2*\a);  
\draw (-100+5*\a, 6*\a) to (-100+5*\a+\a, 6*\a);

\draw (-100+6*\a, 1*\a) to (-100+6*\a+\a, 1*\a);
\draw (-100+6*\a, 3*\a) to (-100+6*\a+\a, 3*\a);
\draw (-100+6*\a, 7*\a) to (-100+6*\a+\a, 7*\a);
\draw (-100+6*\a, 8*\a) to (-100+6*\a+\a, 8*\a);

\draw (-100+7*\a, 2*\a) to (-100+7*\a+\a, 2*\a);
\draw (-100+7*\a, 4*\a) to (-100+7*\a+\a, 4*\a);

\draw (-100+8*\a, 1*\a) to (-100+8*\a+\a, 1*\a);
\draw (-100+8*\a, 3*\a) to (-100+8*\a+\a, 3*\a);
\draw (-100+8*\a, 5*\a) to (-100+8*\a+\a, 5*\a);
\draw (-100+8*\a, 6*\a) to (-100+8*\a+\a, 6*\a);
\draw (-100+8*\a, 7*\a) to (-100+8*\a+\a, 7*\a);
\draw (-100+8*\a, 8*\a) to (-100+8*\a+\a, 8*\a);

\foreach \i in {0,...,3}
         { \draw[color=green] (-100+4*\a +\i*\a, 2*\a+\i*\a) to (-100+4*\a +\i*\a, 2*\a+\i*\a+\a);
           }

\draw (-100+ 9*\a,2*\a) to (-100+ 9*\a+\a,2*\a);
\draw (-100+ 9*\a,4*\a) to (-100+ 9*\a+\a,4*\a);

\end{tikzpicture}
\caption{Moving the alternations down. } \label{fig:oa}
\end{figure}
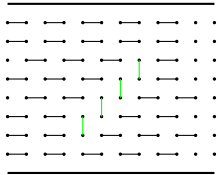

Consider the set $A$ of $t+1$
vertically consecutive points on the right-edge of $R$ which correspond to the block of $t$ alternations.
If $A$ is followed by $j\geq 2$ terms in the sequence (i.e., there are at least two
points vertically below the last point), then an extension of $M$ as shown in
Figure~\ref{fig:oa} will result in a similar pattern along its right edge except the block $A$
has been shifted two to the right in the pattern (that is, two steps down along the right edge).
In this figure, the pattern $llrlrlll$ (taking $b=1$)  has been shifted to $llllrlrl$ 
(in this case the alternating block
$A$ has been shifted to the bottom of the pattern).  Repeating this process, we may move the alternating block
down until it either terminates the pattern (e.g., $llllrlrl$) or ends at the
next to last position (e.g., $lllrlrll$). The above argument shows that in the second case
we may also move the alternating blocks to the right by two, so for example
$lllrlrll$ becomes $lllllrlr$). So, a single aternating block may always be moved to
a tail of the pattern (in the last case, the size of the final alternating block has decreased by one). The horizontal distance needed to implement a single step of the procedure illustrated in Figure~\ref{fig:oa} is linear in $t$. The number of iterations needed to move the block of alternations to the tail is approximately $\frac{j}{2}$. Thus the horizontal distance needed to perform the entire procedure is linear in $w^2$.

In the general case, the pattern will consist of an intial segment of $l$'s (without
loss of generality) and then a number of alternating blocks $A_1,\dots, A_p$
which are separated by constant blocks of length at least one. We then proceed as above,
successively moving the blocks down to either form a larger alternating tail segment,
or else resulting in the concatenation of two alternating blocks.
For example, we could change $lrllllrl$ into $lllrllrl$ which ends with
two alternating blocks of the form $lrl$. However, a variation of the previous argument
shows that a concatenation of alternating blocks may be changed to end in a single
alternating block (perhaps of length $0$). In this example,  it is easy to see that $lllrllrl$ may
be changed to $llllllll$. Continuing this process, we change the initial pattern
to one which is a constant block of $l$'s followed by a single alternating block.
Note the length of the final terminating alternating block is uniquely given by the
conservation of current, Lemma~\ref{curcon}. Again, the horizontal distance needed to 
implement this procedure is linear in $w^2$.

So, we may extend $M$ to the right to some partial matching $N$ so that the pattern of the right edge
of $N$ is $ll\cdots l A$, where $A$ is an alternating block starting with $r$.
Extending $N$ to the entire region $R‘$ by only adding horizontal edges, we obtain a full partial matching $M'$ of $R'$. The pattern $ll\cdots lA$ shows up either on  the right-edge of $R'$ or on the column immediately to the left of the right-edge. In the first case, we have $(b',y)\in \dom(M')$ exactly when $y\in L_0\cup A_0=A_0$ or exactly when $y\in L_0\cup A_1=A_1$ (which is uniquely determined by the conservation of current). In the second case, we have $(b',y)\in \dom(M')$ exactly when $y\in L_1\cup A_0$ or exactly when $y\in L_1\cup A_1$. 
\end{proof}

The next lemma says that for a given current $k$, we may extend a partial matching
with current $k$ to ``join" any other partial matching of current $k$.

\begin{lem} \label{lem:ua}
Suppose $M$ is a full partial matching of the region 
$$R=\{(x,y)\in\Z^2\,:\, a\leq x\leq b,\ \alpha<y<\beta\}, $$
where the thickness $w=\beta-\alpha-1$ is even. Suppose $M$ has current $k$. Let $C_0$ be the absolute constant given by Lemma~\ref{lem:standard}. If $b'> a'>b+2C_0 w^2$, and $M'$ is a full partial matching of 
$$R'=\{(x,y)\in\Z^2\,:\, a'\leq x\leq b',\ \alpha<y<\beta\} $$ 
with current $k$, then we can extend $M \cup M'$
to a full partial matching of the region 
$$ \hat{R}=\{(x,y)\in \Z^2\,:\, a\leq x\leq b',\ \alpha<y<\beta\}. $$
\end{lem}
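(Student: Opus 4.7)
The plan is to invoke Lemma~\ref{lem:standard} from both ends to bring $M$ and $M'$ into a common ``standard form'' along a pair of parallel columns, and then bridge the remaining gap by a canonical pattern of horizontal edges. First, apply Lemma~\ref{lem:standard} to extend $M$ to the right, obtaining a full partial matching $\tilde M$ of $\{(x,y):a\leq x\leq b_1,\ \alpha<y<\beta\}$ for some $b_1$ with $b<b_1\leq b+C_0w^2$, whose right-edge domain-pattern at $x=b_1$ is the canonical set $L\cup A$ (with $L\in\{L_0,L_1\}$ and $A\in\{A_0,A_1\}$). Second, apply the mirror-image of Lemma~\ref{lem:standard} (with left-right orientation reversed) to $M'$, extending it to the left across at most $C_0w^2$ columns, yielding a full partial matching $\tilde M'$ of $\{(x,y):a_1\leq x\leq b',\ \alpha<y<\beta\}$ with $a'-C_0w^2\leq a_1<a'$ whose left-edge domain-pattern at $x=a_1$ is an analogous canonical set $L'\cup A'$. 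The slack hypothesis $b'>a'>b+2C_0w^2$ gives $a_1>b_1$, so the strip of columns $b_1<x<a_1$ is a non-empty ``gap'' to be filled.

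Next I would observe that, because $\tilde M$ and $\tilde M'$ are both full partial matchings of their respective rectangular strips, Lemma~\ref{curcon} applies, and each has a well-defined current equal to the current $k$ of $M$ and $M'$ (extending by horizontal edges in extra columns does not change the current). Consequently the ``out'' pattern at $x=b_1$ (the $y$-positions where $(b_1,y)\notin\dom(\tilde M)$, which must be matched to the right into the gap) and the ``out'' pattern at $x=a_1$ (matched to the left into the gap) are both constrained to be the complements of canonical sets of total size $|k|+$ (a fixed increment depending on $L$ or $L'$). So up to the four-fold choice of $(L,A)$ on each side the cardinalities of these out-patterns agree.

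The key step is then a finite parity bookkeeping: by possibly extending $\tilde M$ or $\tilde M'$ by one or two additional columns (which stays within the allotted slack, since $b'-b>2C_0w^2$), one can arrange that the out-patterns at $x=b_1$ and $x=a_1$ \emph{coincide as subsets of $\{\alpha+1,\dots,\beta-1\}$}, and simultaneously that the number of columns in the gap is congruent to the required parity so that a canonical horizontal pairing of consecutive gap-columns fills the remainder. Concretely, every $y$ in the common out-set is joined to its partner across the gap by a sequence of horizontal edges threading across the interior of the gap, while the other $y$-positions in the interior columns are matched off in horizontal pairs between adjacent columns. This produces a full partial matching of $\hat R$ extending $M\cup M'$.

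The main obstacle is the parity/alignment step: showing that the freedom afforded by the four cases $(L,A)\in\{L_0,L_1\}\times\{A_0,A_1\}$ in Lemma~\ref{lem:standard}, together with the ability to adjust $b_1$ and $a_1$ by a few columns, always suffices to make the two canonical boundary patterns exactly match and to obtain the correct parity of the gap-width. This is a finite case analysis governed entirely by the sign and parity of $k$ and the parity of $w$; once it is checked, the actual gluing is just the canonical horizontal-pairing matching.
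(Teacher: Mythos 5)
Your proposal takes essentially the same route as the paper's proof: standardize both boundary columns via Lemma~\ref{lem:standard} (applied to $M$ on the right and in mirror form to $M'$ on the left), invoke current conservation (Lemma~\ref{curcon}) to force the two standard-form patterns to be compatible, and fill the remaining gap with horizontal edges. The parity bookkeeping you flag as the main obstacle is exactly what the paper dispatches by observing that the current, together with the column's parity, uniquely determines the standard-form pattern, so that $A$ and $A'$ have equal length and the column coordinates $e,f$ satisfy $f-e$ even.
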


\begin{proof} 
By Lemma~\ref{lem:standard} we may extend $M$ to the right to some partial matching $N$ so that the pattern on the right edge of $N$ is of the form $ll\cdots lA$, where $A$ is a single alternating block of $l$'s and $r$'s. Symmetrically, we may apply Lemma~\ref{lem:standard} to $M'$ and extend it to the left to some partial matching $N'$ so that the pattern on the left edge of $N'$ is of the form
$ll\cdots l A'$, where again $A'$ is an alternating block.
From Lemma~\ref{curcon} it follows that $A$ and $A'$ have the same length and that
if the right edge of $N$ has $x$-coordinate $e$ and the left edge of $N'$
has $x$-coordinate $f$, then $f-e$ is even. Note here that if the right edge pattern
is $l^{2i}  A$ where $A=rl\cdots rl$ is of even length $w-2i$,
the current at this vertical edge is $\frac{1}{2} (w-2i)$ (asssume 
the overall parity is such that $q(z)=-1$ for the top point of the right edge), and 
if the right edge has pattern $l^{2i+1} A$ where $A=rl\cdots r$ has odd length $w-2i-1$,
then the right edge has current $-\frac{1}{2}(w-2i)$.

We may fill in the remaining
region of points with $x$-coordinates between $e$ and $f$ to a full partial matching
extending both $M$ and $M'$. 
\end{proof}

So far in this subsection we have carefully considered the region 
$$ R=\{(x, y)\in\Z^2\,:\, a\leq x\leq b, \ \alpha<y<\beta\} $$ 
where the thickness $w=\beta-\alpha-1$ is even and the orientation is left-to-right. All of our results can be generalized to the case where the orientation is right-to-left, and to the case where
$$ R=\{(x,y)\in\Z^2\,:\, \alpha<x<\beta,\ a\leq y\leq b\}, $$
the thickness $w=\beta-\alpha-1$ is even and the orientation is either top-to-bottom or bottom-to-top. In the next lemma, we consider a slightly more complex region, where we have to ``turn a corner." We show that there is no difficulty in generalizing our results to this case.
The statement and proof are illustrated in Figure~\ref{fig:ob}.

\begin{figure}[h] 
\begin{tikzpicture}[scale=0.05]

\pgfmathsetmacro{\a}{5}
  
\draw[thick] (-30,0) to (30,0);
\draw[thick] (30,0) to (30,50);

\draw[thick] (-30,35)to (-5, 35);
\draw[thick] (-5,35) to (-5, 50);

\foreach \i in { 0,..., 8}
\foreach \j in {1,..., 6}
         {\draw[fill=black] (-15+ 5*\i, 5*\j) circle (0.3);
           }

\foreach \i in {0,...,5}
\foreach \j in {7,...,9}
{\draw[fill=black] (\a*\i, \a*\j) circle (0.3);
           }

\draw (-2*\a, \a) to ( -\a,\a);
\draw (-2*\a, 2*\a) to ( -\a,2*\a);
\draw (-2*\a-\a, 3*\a) to ( -\a-\a,3*\a);
\draw (-2*\a-\a, 4*\a) to ( -\a-\a,4*\a);
\draw (-2*\a, 5*\a) to ( -\a,5*\a);
\draw (-2*\a-\a, 6*\a) to ( -\a-\a,6*\a);

\draw[color=green] (0*\a,\a) to (1*\a,\a);
\draw[color=green] (2*\a,\a) to (3*\a,\a);
\draw[color=green] (4*\a,\a) to (5*\a,\a);
\draw[color=green] (5*\a,2*\a) to (5*\a,3*\a);
\draw[color=green] (5*\a,4*\a) to (5*\a,5*\a);
\draw (5*\a,6*\a) to (5*\a,7*\a);

\draw[color=green] (0*\a,2*\a) to (1*\a,2*\a);
\draw[color=green] (2*\a,2*\a) to (3*\a,2*\a);
\draw[color=green] (4*\a,2*\a) to (4*\a,3*\a);
\draw[color=green] (4*\a,4*\a) to (4*\a,5*\a);
\draw (4*\a,6*\a) to (4*\a,7*\a);

\draw[color=green] (-1*\a,3*\a) to (0*\a,3*\a);
\draw[color=green] (1*\a,3*\a) to (2*\a,3*\a);
\draw[color=green] (3*\a,3*\a) to (3*\a,4*\a);
\draw[color=green] (3*\a,5*\a) to (3*\a,6*\a);

\draw[color=green] (-1*\a,4*\a) to (0*\a,4*\a);
\draw[color=green] (1*\a,4*\a) to (2*\a,4*\a);
\draw[color=green] (2*\a,5*\a) to (2*\a,6*\a);

\draw[color=green] (0*\a,5*\a) to (1*\a,5*\a);
\draw  (1*\a,6*\a) to (1*\a,7*\a);
\draw[color=green] (-1*\a,6*\a) to (0*\a,6*\a);

\draw  (0*\a,7*\a) to (0*\a,8*\a);
\draw  (2*\a,7*\a) to (2*\a,8*\a);
\draw  (3*\a,7*\a) to (3*\a,8*\a);
\draw  (1*\a,8*\a) to (1*\a,9*\a);
\draw  (4*\a,8*\a) to (4*\a,9*\a);
\draw  (5*\a,8*\a) to (5*\a,9*\a);

\end{tikzpicture}
\caption{Turning a corner. } \label{fig:ob}
\end{figure}
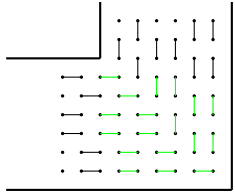

\begin{lem} \label{lem:ub}
Suppose $w>0$ is even, and $a, b>1$. Consider the region 
$$\begin{array}{rl} R'=\{(x,y)\in\Z^2\,:\,& (0 \leq x \leq a+w,\ 0 < y <w+1) \mbox{ or } \\
& (a < x <a+w+1,\ w+1\leq y\leq w+b+1)\} \end{array}$$ as shown in Figure~\ref{fig:ob}.
Let 
$$S=\{(x,y)\in\Z^2\,:\, 0<x <a,\ 0< y<w+1\},$$ 
$$S'=\{(x,y)\in\Z^2\,:\, 0\leq x\leq a, \ 0<y<w+1\}, $$ and 
$$\begin{array}{rl} R=\{(x,y)\in\Z^2\,:\,& (0 < x \leq a+w,\ 0 < y <w+1) \mbox{ or } \\
& (a < x <a+w+1,\ w+1\leq y< w+b+1)\}. \end{array}$$ 
Suppose $M$ is a
partial matching with $S\subseteq \dom(M)\subseteq S'$ which consists of only horizontal edges. Then $M$ can be extended to a partial matching $\hat{M}$
with $R\subseteq \dom(\hat{M})\subseteq R'$. Moreover, assuming the orientation of $R'$ is left-to-right followed by bottom-to-top, the current of $\hat{M}$ at $y=w+b$ is equal to the current of $M$ at $x=1$.
\end{lem}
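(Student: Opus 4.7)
The plan is to construct $\hat{M}$ explicitly and then derive the current equality from a global charge conservation identity, avoiding the need to track the current through the corner region step-by-step.

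\textbf{Construction of the extension.} Starting from $M$, which uses only horizontal edges on $S'$, I first extend into the corner square $\{(x,y) : a+1 \leq x \leq a+w,\ 1 \leq y \leq w\}$ by a staircase pattern exactly as illustrated in Figure~\ref{fig:ob}: horizontal dominoes entering from column $x=a$ are progressively converted into vertical dominoes that exit upward through row $y = w$. I then fill the vertical arm $\{(x,y) : a+1 \leq x \leq a+w,\ w+1 \leq y \leq w+b+1\}$ using only vertical dominoes, which is possible since $w$ is even. Along the top boundary $T_U := \{(x, w+b+1) : a+1 \leq x \leq a+w\}$ the construction is arranged so that every $(x, w+b+1) \in \dom(\hat{M})$ is matched downward to $(x, w+b)$. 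The existence of such a corner construction for the pattern of $M$ along column $x = a$ is a finite combinatorial exercise analogous to Lemmas~\ref{lem:standard} and~\ref{lem:ua}, and it makes essential use of the evenness of $w$.

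\textbf{Global charge conservation.} Let $T_L := \{(0, y) : 1 \leq y \leq w\}$ denote the left boundary of the horizontal arm. Since $\hat{M}$ is a matching, each matched pair contributes total charge $0$, so $\sum_{z \in \dom(\hat{M})} q(z) = 0$. Using $R \subseteq \dom(\hat{M}) \subseteq R' = R \cup T_L \cup T_U$ this yields
\[
\sum_R q(z) \;+\; \sum_{T_L \cap \dom(\hat{M})} q(z) \;+\; \sum_{T_U \cap \dom(\hat{M})} q(z) \;=\; 0.
\]
The region $R$ decomposes as a disjoint union of a horizontal rectangle of dimensions $(a+w) \times w$ and a vertical rectangle of dimensions $w \times b$; since $w$ is even and the charges alternate, each rectangle has zero total charge, so $\sum_R q = 0$ and hence the two boundary sums are negatives of each other.

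\textbf{Relating boundary sums to currents.} Because $M$ uses only horizontal edges, every $(0, y) \in T_L \cap \dom(\hat{M}) = T_L \cap \dom(M)$ is matched to $(1, y)$; using $q(0, y) = -q(1, y)$ we obtain $\sum_{T_L \cap \dom(\hat{M})} q = -C_M(1)$. By construction, each $(x, w+b+1) \in T_U \cap \dom(\hat{M})$ is matched to $(x, w+b)$, so the sum over $T_U$ equals $-\sum_{x \in D_a} q(x, w+b)$, where $D_a$ is the set of columns in which $(x, w+b)$ is matched upward. The row $y = w+b$ contains $w$ points of alternating charge summing to zero, and horizontal matches within this row contribute zero; therefore $\sum_{x \in D_b} q(x, w+b) = -\sum_{x \in D_a} q(x, w+b)$, where $D_b$ is the set of columns in which $(x, w+b)$ is matched downward. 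By the bottom-to-top definition of current, $C_{\hat{M}}(w+b) = \sum_{x \in D_b} q(x, w+b)$, so $\sum_{T_U \cap \dom(\hat{M})} q = C_{\hat{M}}(w+b)$. Combining with the previous identity gives $C_{\hat{M}}(w+b) = C_M(1)$.

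\textbf{Main obstacle.} The main obstacle is the concrete corner construction in Phase~1, which must accommodate the given pattern of $M$ along $x = a$ inside the $w \times w$ corner square while transitioning horizontal dominoes into vertical ones and leaving the top boundary $T_U$ matched only through vertical edges. Once this existence is granted, the current equality drops out cleanly from the global charge-conservation identity without any direct current computation through the corner itself.
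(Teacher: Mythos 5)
Your proposal is essentially correct, and it establishes the current equality by a genuinely different route than the paper. The construction phase is the same as the paper's: each row $y$ of $S'$ is continued along an L-shaped path that turns upward at the diagonal column $a+w-y+1$, as in Figure~\ref{fig:ob}. However, the paper specifies this explicitly (row $y$ is extended horizontally from $(a,y)$ to $(a+w-y+1,y)$ and then vertically to $(a+w-y+1,w+b+1)$) and its correctness follows from a simple parity check; your appeal to Lemma~\ref{lem:standard} and Lemma~\ref{lem:ua} is unnecessary and misplaced, as those are more elaborate than what is needed for the staircase.

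Where the proofs diverge is the current computation. The paper tracks the current locally through the region: Lemma~\ref{curcon} gives $C_{\hat{M}}(a)=C_M(1)$ along the horizontal arm, the observation that $q(a,y)=q(a+w-y+1,w+1)$ transfers this to $C_{\hat{M}}(w+1)$ as the staircase carries each row across the corner, and Lemma~\ref{curcon} again gives $C_{\hat{M}}(w+1)=C_{\hat{M}}(w+b)$ along the vertical arm. Your proof replaces this three-step local tracking with a single global charge-conservation identity: since $\hat{M}$ is a matching, $\sum_{\dom(\hat{M})} q=0$, and since $R$ decomposes into an $(a+w)\times w$ rectangle and a $w\times b$ rectangle, each with the even side $w$ and hence total charge $0$, the two boundary sums over $T_L$ and $T_U$ must cancel; translating each into a current via the facts that $T_L$ is untouched by the extension and that $T_U$ points are covered only by downward vertical edges then gives $C_{\hat{M}}(w+b)=C_M(1)$ directly. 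Both arguments rest on the same structural properties of the staircase, but yours sidesteps the corner-crossing parity argument entirely, which is a clean alternative; the cost is that you must carefully account for the two boundary sets and verify the decomposition $R'=R\sqcup T_L\sqcup T_U$, which you do correctly.
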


\begin{proof}
For each point $(a,y)$ on the right edge of $S'$, we extend the partial matching $M$ by following
the horizontal line through $(a,y)$ to the point $(a+w-y+1,y)$ on the diagonal line between
the two corner points, and then vertically
to the point $(a+w-y+1, w+b+1)$. Since the point $(a,y)$ on the right edge of $S'$ have the same parity
as the corresponding point $(a+w-y+1,w+1)$, this results in the desired matching. In particular, the current of $\hat{M}$ at $x=a$
is equal to the current of $M$ at $x=1$ by Lemma~\ref{curcon}, and by the above observation about the parities of points, it is equal to the current of $\hat{M}$ at $y=w+1$, which is again equal to the current of $\hat{M}$ at $y=w+b$ by Lemma~\ref{curcon}.
\end{proof}

The next lemma says that a partial matching with a certain current can absorb a given charge,
with a resulting change in the current.

\begin{figure}[h] 
\begin{tikzpicture}[scale=0.05]

\pgfmathsetmacro{\a}{5}
  
\draw[thick] (-5,0) to (60,0);
\draw[thick] (-5,35) to (60,35);

\foreach \i in { 0,..., 11}
\foreach \j in {1,..., 6}
         {\draw[fill=black] (5*\i, 5*\j) circle (0.3);
           }

\foreach \j in {2,4}
{ \draw (0*\a, \j*\a) to (0*\a+\a,\j*\a);}

\foreach \j in {1,3,5,6}
{ \draw (1*\a, \j*\a) to (1*\a+\a,\j*\a);}

\foreach \j in {2,4}
{ \draw (2*\a, \j*\a) to (2*\a+\a,\j*\a);}

\foreach \j in {1,3}
{ \draw (3*\a, \j*\a) to (3*\a+\a,\j*\a);}

\foreach \j in {2}
{ \draw (4*\a, \j*\a) to (4*\a+\a,\j*\a);}

\foreach \j in {1}
{ \draw (5*\a, \j*\a) to (5*\a+\a,\j*\a);}

\foreach \i in {3,4,5,6,7}
{ \draw[color=green]  (\i*\a, 8*\a-\i*\a) to (\i*\a,9*\a-\i*\a);}

\draw[fill=black] (8*\a,\a) circle (0.6);
\draw[thick] (8*\a,0) to (8*\a,\a);

\foreach \i in {4,6,8,10}
{\draw (\i*\a,6*\a) to (\i*\a+\a,6*\a);}

\foreach \i in {5,7,9}
{\draw (\i*\a,5*\a) to (\i*\a+\a,5*\a);}

\foreach \i in {6,8,10}
{\draw (\i*\a,4*\a) to (\i*\a+\a,4*\a);}

\foreach \i in {7,9}
{\draw (\i*\a,3*\a) to (\i*\a+\a,3*\a);}

\foreach \i in {8,10}
{\draw (\i*\a,2*\a) to (\i*\a+\a,2*\a);}

\foreach \i in {9}
{\draw (\i*\a,1*\a) to (\i*\a+\a,1*\a);}

\end{tikzpicture}
\caption{Absorbing a charge.} \label{fig:oc}
\end{figure}
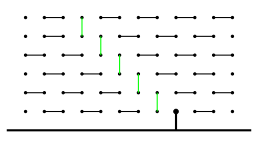

\begin{lem} \label{lem:uc}
Let 
$$ R=\{(x,y)\in\Z^2\,:\, a\leq x\leq b,\ \alpha<y<\beta\} $$
with $w=\beta-\alpha-1$ even. 
Suppose $M$ is a full partial matching of the region $R$ and suppose on the right-edge of $R$ the partial matching $M$ is of the standard form as defined in Lemma~\ref{lem:standard}. Let 
$z=(c,\alpha+1)$ be a point with $c > b+w$. Then for any $b'>c+w$, we can extend $M$ to a full partial matching $M'$
of the region 
$$ R'=\{(x,y)\in\Z^2\,:\, a\leq x\leq b',\ \alpha<y<\beta\}-\{z\} $$
so that $M'$ is of the standard form on the right-edge of $R'$.
\end{lem}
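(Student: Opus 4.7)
The plan is to carry out the three-stage extension suggested by Figure~\ref{fig:oc}. First, I will extend $M$ rightward from column $b$ using only horizontal edges up to some column $c_L$ with $c - c_L$ on the order of $w$. This preserves the standard form of Lemma~\ref{lem:standard} and the current $k$ at each slice up to $c_L$, and the hypothesis $c > b+w$ provides the needed room.

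Second, within the strip of columns from $c_L$ to $c$, I will insert a diagonal staircase of $w-1$ vertical edges ascending from row $\alpha+2$ (just above $z$) up to row $\beta-1$ (the top row of the interior), each shifted one column to the right of the previous. This staircase routes the matching around the missing point $z = (c,\alpha+1)$, which is left unmatched (as required since $z \notin R'$), while every other point of the strip is placed in $\dom(M')$: to the left of the staircase each row is completed by horizontal edges, and to the right of the staircase every row other than the bottom row is also completed by horizontal edges down to column $c$. The net effect is that the column-parity of the natural horizontal matching on the right of the staircase is shifted by one relative to that on the left, which is precisely what compensates for the removal of the single point $z$.

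Third, I fill columns $c+1$ through $b'$ with horizontal edges. By Lemma~\ref{curcon} applied to the sub-rectangle $\{c+1 \leq x \leq b'\}$ (a region of even thickness $w$), the current is well-defined on every slice there; it equals $k \pm 1$, with the sign determined by $q(z)$ together with the orientation of the staircase. The hypothesis $b' > c+w$ then provides enough horizontal room to invoke Lemma~\ref{lem:standard} and bring this portion into standard form on the right edge of $R'$.

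The main technical point to verify is that the staircase meshes correctly with the horizontal portions on both of its sides, i.e., that the parities at the junction columns are consistent so that $M'$ is a well-defined full partial matching extending $M$. This reduces to a routine parity computation along the diagonal edges of the staircase, made tractable by the fact that the staircase is essentially uniquely determined by the position of $z$ and by the rigidity of the standard form provided by Lemma~\ref{lem:standard} on each side.
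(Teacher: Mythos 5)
Your construction is the paper's Figure~\ref{fig:oc} algorithm, but the paper's proof is a two-case argument and your single staircase only realizes one of the two cases. The ``routine parity computation'' you defer is exactly where the proof splits. Write the right edge of $M$ as $l\cdots lA$ with $A$ an alternating block reaching down to the bottom row. When you extend horizontally, the phase of the dominoes in the bottom row at column $c$ is forced by the parity of $c-b$ together with the last letter of $A$. In one case ($c-b$ even and $A$ ending in $l$, or $c-b$ odd and $A$ ending in $r$) the point $z$ falls where it would be the \emph{right} endpoint of a domino, the staircase meshes, it lengthens the alternating block by one, and the result is already standard with $|k|$ increased by $1$. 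In the complementary case $z$ falls where it would be the \emph{left} endpoint of a domino: the bottom vertical edge of your staircase then collides with the forced domino ending at $(c-1,\alpha+1)$, and no staircase placed as you describe can mesh --- it is not that the verification is routine, but that the construction fails. The paper handles this case with a different (simpler) move: extend purely horizontally, omit the domino that would contain $z$, which cancels two alternations at the bottom of $A$ and decreases $|k|$ by $1$, and then apply one step of the moving-alternations-down algorithm to restore standard form. Note also that the sign of the current change is forced by conservation (Lemma~\ref{curcon} applied across the column of $z$), not by a choice of ``orientation of the staircase.''

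There is a second, quantitative gap in your step 3. You propose to finish by invoking Lemma~\ref{lem:standard}, but that lemma requires horizontal room $C_0w^2$, while the present lemma only guarantees $b'>c+w$, i.e., room of order $w$. The paper's case analysis is designed precisely to avoid a full re-standardization: in the first case the staircase output is already standard, and in the second case only a single shift of the alternating block (horizontal cost linear in $w$) is needed. As written, your plan does not fit in the available space.
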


\begin{proof}
Assume the right edge of 
$M$ has the pattern $ll\cdots l A$ where $A$ is a
block of $t$ alternations of $l$'s and $r$'s. We consider two cases. The first case is
$c-b$ is even and $A$ ends with an $l$, or $c-b$ is odd and $A$ ends with an $r$. In this case Figure~\ref{fig:oc}
shows the algorithm for extending $M$ (in Figure~\ref{fig:oc} the point $z$ is shown as being ``grounded", that is, matched to a point on the boundary). The resulting partial matching is already of the standard form, with the absolute value of the current
increasing by $1$ as we pass the absorbed charge. In the example
of Figure~\ref{fig:oc}, which is the non-trivial case, if the overall parity 
is chosen so that $q(z)=1$, then the current changes from $+2$ to $+3$ 
as we move past the new charge $z$. The second case is $c-b$ is even and $A$ ends with an $r$, or $c-b$ is odd and $A$ ends with an $l$. In this case, we only need to extend $M$ by adding horizontal edges up to $x=c$ to absorb the charge, and the absolute value of the current decreases by $1$. Now this partial matching has the pattern $ll\cdots lA'll$ or $ll\cdots lA'rr$, where $A'$ is a block of $t-2$ alternations. We apply the moving-the-alternations-down algorithm in the proof of Lemma~\ref{lem:standard} to obtain the standard form $ll\cdots lA'$. Since this algorithm takes at most $t$ steps, we obtain the conclusion of the lemma with any $b'>c+w$.
\end{proof}

The next lemma says that a partial matching can absorb a number of given charges of the same sign, with no restrictions on the distances between them, provided that the number of charges is small compared to the thickness of the region.

\begin{lem} \label{lem:ucc}
Let 
$$ R=\{(x,y)\in\Z^2\,:\, a\leq x\leq b,\ \alpha<y<\beta\} $$
with $w=\beta-\alpha-1$ even. 
Suppose $M$ is a full partial matching of the region $R$ and suppose on the right-edge of $R$ the partial matching $M$ is of the standard form as defined in Lemma~\ref{lem:standard}. Suppose $M$ has current $k$. Let $l<\frac{1}{2}(w-2k)$ and $b+w<c_1<\dots<c_l$. Suppose for $1\leq i<l$, $c_{i+1}-c_i$ is even. For $i=1,\dots, l$, let $z_i=(c_i,\alpha+1)$. Let $C_0$ be the absolute constant given by Lemma~\ref{lem:standard}. Then for any $b'>c_l+C_0w^2$, we can extend $M$ to a full partial matching $M'$
of the region 
$$ R'=\{(x,y)\in\Z^2\,:\, a\leq x\leq b',\ \alpha<y<\beta\}-\{z_1, \dots, z_l\} $$
so that $M'$ is of the standard form on the right-edge of $R'$.
\end{lem}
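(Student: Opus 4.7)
My plan is to prove Lemma~\ref{lem:ucc} by iteratively absorbing the charges $z_1,\ldots,z_l$ from left to right, using a repeated version of the diagonal grounding construction from Case~1 of Lemma~\ref{lem:uc}. The starting observation is that since $c_{i+1}-c_i$ is even for each $i$, all the $c_i$ share a common parity, so the points $z_i=(c_i,\alpha+1)$ all carry the same charge $q(z_i)=\sigma\in\{\pm 1\}$. This sign uniformity is what permits a coherent repeated absorption pattern, and it is reflected in the one-sided form of the width constraint (an upper bound of $(w-2k)/2$ rather than of $(w-2|k|)/2$).

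The naive approach is induction on $l$: absorb $z_1$ via Lemma~\ref{lem:uc}, then apply the inductive hypothesis to $z_2,\ldots,z_l$. The obstruction is that Lemma~\ref{lem:uc} requires $c>b+w$, while the right edge after a single diagonal absorption already sits near $c_1+w$; if $c_2-c_1\leq 2w$, the hypothesis of Lemma~\ref{lem:uc} fails for the next application. The principal technical difficulty is therefore handling clusters of closely spaced charges.

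I would resolve this by generalizing the construction in Figure~\ref{fig:oc}: instead of a single diagonal grounding one charge, use a family of parallel diagonals, one per charge in a cluster of nearby $z_i$'s. Each added diagonal consumes $O(w)$ horizontal space and increases the absolute current by $1$; because all charges in a cluster share the sign $\sigma$, the parallel diagonals nest together, and the analogue of Lemma~\ref{curcon} (with the missing charges contributing current jumps of $\pm\sigma$) accounts exactly for the cumulative change. The hypothesis $l<(w-2k)/2$ is precisely what guarantees that the current magnitude stays strictly below $w/2$ throughout, so every diagonal fits inside the region of thickness $w$. Between clusters (where $c_{i+1}-c_i$ is sufficiently large), one fills in horizontal edges in the obvious way.

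Once all $l$ charges have been absorbed, the right edge will not be in the standard form of Lemma~\ref{lem:standard}. A single final invocation of that lemma extends the matching rightward by a further $O(w^2)$ horizontal space and places the right edge in standard form at $x=b'$. The total horizontal space used past $c_l$ is $l\cdot O(w)+O(w^2)=O(w^2)$ (using $l<w/2$), which is covered by the hypothesis $b'>c_l+C_0w^2$ after enlarging $C_0$ if necessary. The bulk of the technical work is parity bookkeeping through the cluster absorptions: verifying that successive parallel diagonals within a cluster can be inserted without collision, that the pattern emerging from a cluster is still in a form to which the multi-diagonal variant applies to the next cluster, and that the transition from the last cluster into the standardization step of Lemma~\ref{lem:standard} is compatible. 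I expect these compatibility checks—in particular, that the right-edge pattern after a multi-diagonal cluster is always of the form required to invoke Lemma~\ref{lem:standard} on a region of width $O(w^2)$—to be the main obstacle, but once the sign uniformity of the $z_i$ is built in, they reduce to a mechanical if somewhat tedious parity argument.
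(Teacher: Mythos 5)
Your proposal is correct and follows essentially the same route as the paper: the evenness of $c_{i+1}-c_i$ forces all the charges to be absorbed by the same mechanism, the diagonal groundings of Case~1 of Lemma~\ref{lem:uc} can be drawn simultaneously as nested parallel diagonals without conflict (exactly the paper's Figure~\ref{fig:occ}), and a single application of Lemma~\ref{lem:standard} at the end restores the standard form within $C_0w^2$ columns. The only point the paper makes that you omit is the complementary (and easier) possibility that all charges fall into Case~2 of Lemma~\ref{lem:uc}, where the greedy horizontal extension absorbs them all and the absolute value of the current decreases.
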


\begin{proof} We use the same algorithms as in the proof of Lemma~\ref{lem:uc} to absorb the charges one-by-one. By our assumption, only one of the cases in the proof of Lemma~\ref{lem:uc} occurs for all given charges. Now suppose all the charges are of the form of the first case in that proof. Then note that the algorithm we use for this case can be applied simultaneously for all charges at the same time without creating a conflict. Moreover, the pattern of the resulting partial matching is still in the standard form. 
\begin{figure}[h] 

\begin{subfigure}{0.475\textwidth}
\centering
\begin{tikzpicture}[scale=0.05]

\pgfmathsetmacro{\a}{5}
  
\draw[thick] (-5,0) to (60,0);
\draw[thick] (-5,45) to (60,45);

\foreach \i in { 0,..., 11}
\foreach \j in {1,..., 8}
         {\draw[fill=black] (5*\i, 5*\j) circle (0.3);
           }

\foreach \j in {2,4}
{ \draw (0*\a, \j*\a) to (0*\a+\a,\j*\a);}

\foreach \j in {1,3,5,6,7,8}
{ \draw (1*\a, \j*\a) to (1*\a+\a,\j*\a);}

\foreach \j in {2,4}
{ \draw (2*\a, \j*\a) to (2*\a+\a,\j*\a);}

\foreach \j in {1,3}
{ \draw (3*\a, \j*\a) to (3*\a+\a,\j*\a);}

\foreach \j in {2}
{ \draw (4*\a, \j*\a) to (4*\a+\a,\j*\a);}

\foreach \j in {1}
{ \draw (5*\a, \j*\a) to (5*\a+\a,\j*\a);}

\foreach \i in {3,4,5,6,7}
{ \draw[color=green]  (\i*\a, 8*\a-\i*\a) to (\i*\a,9*\a-\i*\a);}

\foreach \i in {3,4, 5, 6, 7,8,9}
{\draw[color=purple] (\i*\a, 10*\a-\i*\a) to (\i*\a,11*\a-\i*\a);}

\draw[fill=black] (8*\a,\a) circle (0.6);
\draw[thick] (8*\a,0) to (8*\a,\a);

\draw[fill=black] (10*\a,\a) circle (0.6);
\draw[thick] (10*\a, 0) to (10*\a, \a);

\foreach \i in {6,8,10}
{\draw (\i*\a,6*\a) to (\i*\a+\a,6*\a);}

\foreach \i in {7,9}
{\draw (\i*\a,5*\a) to (\i*\a+\a,5*\a);}

\foreach \i in {8,10}
{\draw (\i*\a,4*\a) to (\i*\a+\a,4*\a);}

\foreach \i in {9}
{\draw (\i*\a,3*\a) to (\i*\a+\a,3*\a);}

\foreach \i in {10}
{\draw (\i*\a,2*\a) to (\i*\a+\a,2*\a);}

\foreach \i in {5, 7, 9}
{\draw (\i*\a,7*\a) to (\i*\a+\a,7*\a);}

\foreach \i in {4,6,8,10}
{\draw (\i*\a, 8*\a) to (\i*\a+\a, 8*\a);}

\end{tikzpicture}
\caption{The first case}
\end{subfigure}
\hfill
\begin{subfigure}{0.475\textwidth}
\centering
\begin{tikzpicture}[scale=0.05]

\pgfmathsetmacro{\a}{5}
  
\draw[thick] (-5,0) to (60,0);
\draw[thick] (-5,45) to (60,45);

\foreach \i in { 0,..., 11}
\foreach \j in {1,..., 8}
         {\draw[fill=black] (5*\i, 5*\j) circle (0.3);
           }

\foreach \j in {2,4}
{ \draw (0*\a, \j*\a) to (0*\a+\a,\j*\a);}

\foreach \j in {1,3,5,6,7,8}
{ \draw (1*\a, \j*\a) to (1*\a+\a,\j*\a);}

\foreach \j in {2,4}
{ \draw (2*\a, \j*\a) to (2*\a+\a,\j*\a);}

\foreach \j in {1,3}
{ \draw (3*\a, \j*\a) to (3*\a+\a,\j*\a);}

\foreach \j in {2}
{ \draw (4*\a, \j*\a) to (4*\a+\a,\j*\a);}

\foreach \j in {1}
{ \draw (5*\a, \j*\a) to (5*\a+\a,\j*\a);}

\foreach \i in {8}
{\draw[color=purple] (\i*\a, 9*\a-\i*\a) to (\i*\a,10*\a-\i*\a);}

\draw[fill=black] (7*\a,\a) circle (0.6);
\draw[thick] (7*\a,0) to (7*\a,\a);

\draw[fill=black] (9*\a,\a) circle (0.6);
\draw[thick] (9*\a, 0) to (9*\a, \a);

\foreach \i in {3,5,7, 9}
{\draw (\i*\a,6*\a) to (\i*\a+\a,6*\a);}

\foreach \i in {3,5, 7,9}
{\draw (\i*\a,5*\a) to (\i*\a+\a,5*\a);}

\foreach \i in {4, 6, 8,10}
{\draw (\i*\a,4*\a) to (\i*\a+\a,4*\a);}

\foreach \i in {5, 7, 9}
{\draw (\i*\a,3*\a) to (\i*\a+\a,3*\a);}

\foreach \i in {10}
{\draw (\i*\a, \a) to (\i*\a+\a, \a);}

\foreach \i in {6,9}
{\draw (\i*\a,2*\a) to (\i*\a+\a,2*\a);}

\foreach \i in {3, 5, 7, 9}
{\draw (\i*\a,7*\a) to (\i*\a+\a,7*\a);}

\foreach \i in {3,5,7,9}
{\draw (\i*\a, 8*\a) to (\i*\a+\a, 8*\a);}

\end{tikzpicture}
\caption{The second case}
\end{subfigure}

\caption{Absorbing multiple charges of the same sign.} \label{fig:occ}
\end{figure}
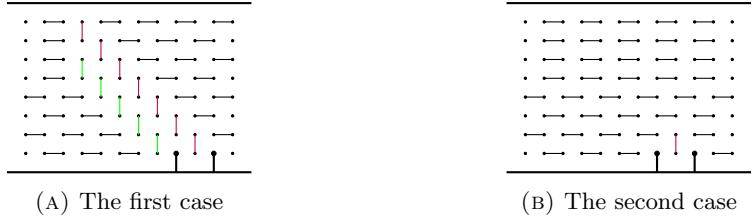

Similarly, if all the charges are of the form of the second case, then the greedy algorithm to use horizontal edges in the matching works also for all the charges. Figure~\ref{fig:occ} illustrates these situations. 
After absorbing all the charges, it takes at most $C_0w^2$ many steps to put the partial matching back to the standard form. 
\end{proof}

We will also use the following lemma on partial matchings of a rectangle.

\begin{lem}\label{lem:rectangle} Let $R$ be a rectangle in $\Z^2$ and $k=\sum_{x\in R} q(x)$ be its total charge. Then $k\in\{0,1,-1\}$. Suppose $S\subseteq \partial R$ with $\sum_{x\in S}q(x)=k$. Suppose also that any side length of $R$ is greater than the size of $S$, $|S|$. Then there is a perfect matching $M$ of $R-S$.
\end{lem}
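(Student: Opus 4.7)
Writing $R = [a_1,b_1]\times[a_2,b_2]$, the identity
\[
k = \sum_{(i,j)\in R}(-1)^{i+j} = \Bigl(\sum_{i=a_1}^{b_1}(-1)^i\Bigr)\Bigl(\sum_{j=a_2}^{b_2}(-1)^j\Bigr)
\]
shows $k\in\{-1,0,1\}$, since each factor equals $0$ if the corresponding side length is even and $\pm 1$ otherwise. Consequently $|R\sm S|$ is even and the two charge classes in $R\sm S$ have equal cardinality, which is at least a necessary condition for a perfect matching to exist.

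I plan to construct the matching by induction on $|S|$. The base case is $|S|=0$: then $k=0$ forces at least one side length of $R$ to be even, and one matches $R$ by dominoes parallel to that even direction.

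For the inductive step with $|S|\geq 1$, the idea is to peel a thin strip off one edge of $R$, to absorb within that strip all of the points of $S$ lying on the peeled edge, and to recurse on the smaller sub-rectangle. In detail: pick any $s_0\in S$ and, after a rotation, assume $s_0$ is on the top edge of $R$. Because $|S|$ is strictly less than every side length of $R$, one can choose a horizontal band $B$ of even thickness $w$ adjacent to the top edge, thin enough that $B\cap S$ lies entirely on the top edge. Within $B$ the points of $S\cap B$ appear as grounded charges to be absorbed; the partial-matching machinery of Lemmas~\ref{lem:standard}, \ref{lem:ua}, \ref{lem:uc}, and \ref{lem:ucc} produces a partial matching of $B\sm(S\cap B)$ that covers every interior point of $B$ and whose restriction to the bottom row of $B$ is in the standard form of Lemma~\ref{lem:standard}. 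Let $R' = R\sm B$ and let $S'$ be the union of $S\sm B$ with the standard-form ``deficiency'' (the unmatched points on the new top edge of $R'$). Then $\sum_{x\in S'}q(x)$ equals the total charge of $R'$ by construction, and with a judicious choice of $w$ one arranges $|S'|<|S|$ and $|S'|$ strictly less than the minimum side length of $R'$. The induction hypothesis then yields a matching of $R'\sm S'$ which glues with the matching of $B\sm(S\cap B)$ to give the required matching of $R\sm S$.

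The principal obstacle I expect is the case in which $S$ has points on multiple sides of $\partial R$, forcing the absorbing strip to turn a corner; Lemma~\ref{lem:ub} handles such corners, but keeping the current carried across the corner bounded by half the strip thickness requires that the number of charges routed through any transverse cross-section be strictly less than that thickness---and it is precisely the hypothesis $|S|<L$ that underwrites this current bound. A second subtlety is controlling the size of the ``deficiency'' set contributed to $S'$ so that the inductive hypothesis remains applicable, which will require choosing $w$ roughly comparable to (but exceeding) $|S|$.
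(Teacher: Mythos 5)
Your derivation of $k\in\{0,\pm1\}$ and your base case $|S|=0$ are fine, but the inductive step does not work as proposed. The central problem is quantitative: the current machinery you invoke (Lemmas~\ref{lem:standard}, \ref{lem:ua}, \ref{lem:uc}, \ref{lem:ucc}) needs horizontal running room of order $C_0w^2$ to put the matching into standard form, Lemma~\ref{lem:uc} needs the charge being absorbed to lie more than $w$ past the point where standard form was last achieved, and Lemma~\ref{lem:ucc} needs all charges to have the same sign with pairwise even gaps. None of this is available here: the hypothesis of Lemma~\ref{lem:rectangle} only gives side lengths exceeding $|S|$ (in the proof of Lemma~\ref{lem:H1} it is applied to rectangles of side length as small as $7$ with $|S|$ up to $6$), and $S$ is an arbitrary subset of $\partial R$ with the correct charge sum, so its points may be mutually adjacent, of mixed sign, near corners, or spread over all four sides. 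Two further difficulties: you cannot in general choose the band $B$ so that $B\cap S$ lies on the top edge (a point of $S$ on the left edge two steps below the top corner defeats every thickness $w\geq 3$); and your induction need not make progress, since when all charges of $S\cap B$ have the same sign the deficiency set has exactly $|S\cap B|$ points, so $|S'|=|S|$ while the side lengths of $R'$ have shrunk by $w$, eventually violating the requirement that $|S'|$ be smaller than the minimum side length.

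The paper's argument is elementary and avoids currents entirely. Since $|\sum_{x\in S}q(x)|\leq 1$, whenever $|S|\geq 2$ there are two points $x_0,x_1\in S$ of opposite charge that are consecutive along $\partial R$, so the boundary arc $E$ between them has an even number of interior points. One removes the width-two strip $\tilde F$ around the side (or pair of consecutive sides) $F$ containing $E$, matches $E\setminus\{x_0,x_1\}$ and the parallel inner path $\tilde E$ internally, matches the remaining points of $F$ perpendicularly inward, and transfers each surviving point of $(F\setminus E)\cap S$ two steps inward onto $\partial R'$; the recursion is then applied to the sub-rectangle $R'$ with $|S'|\leq|S|-2$, while the relevant side lengths decrease by exactly $2$, so the hypothesis is preserved and the induction on $|S|$ closes (the cases $|S|\leq 1$ are handled directly). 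If you wanted to salvage the band-peeling idea you would at minimum have to strengthen the hypothesis to side lengths on the order of $C_0|S|^2$, which would not suffice for the applications in Section~\ref{sec:matching}.
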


\begin{proof} That $k\in\{0,1,-1\}$ is obvious. We prove the second part of the lemma by induction on $|S|$. When $|S|=0$ we have $k=0$ and $R$ must be a rectangle with dimensions $l\times w$ where at least one of $l$ and $w$ is even. In this case it is obvious that $R$ has a perfect matching. When $|S|=1$ we have $k=\pm 1$ and the only element $x_0\in S$ satisfies $q(x_0)=k$. Now $R$ is a rectangle with dimensions $l\times w$ where both $l$ and $w$ are odd, and all four corner points of $R$ must have charge $k$. Let $E$ be a side of $R$ containing $x_0$.  Then $E-\{x_0\}$ consists of at most two segments of even size, and therefore each admits a perfect matching. $R-E$ is a rectangle with total charge $0$ and therefore admits a perfect matching by the above observation. In summary, we have that $R-\{x_0\}$ has a perfect matching.

Now consider $|S|>1$. Then at least two elements of $S$ have opposite charge. Fixing an orientation of $\partial R$, we also conclude that at least two elements of $S$, say $x_0$ and $x_1$, have opposite charge and are adjacent (that is, there are no other
points of $S$ in between $x_0$ and $x_1$ along $\partial R$ from $x_0$ to $x_1$). Let $E$ be the path segment of $\partial S$ with $x_0$ and $x_1$ as endpoints. $E$ is either a part of a side of $R$ or a part of two consecutive sides of $R$. In either case let $F\subseteq \partial R$ be either a side or two consecutive sides containing $E$. Let $\tilde{F}$ be the set of all points of $R$ with at most distance $1$ to $F$. Consider $R'=R-\tilde{F}$. Then $R'$ is still a rectangle. The total charge of $R'$ is still $k$. Let $S_0'=S-\tilde{F}$ and $S_1'\subseteq \partial R'$ be the set of points $x'\in\partial R'$ that have distance 2 to some $x\in (F-E)\cap S$ in the Cayley graph of $\Z^2$. Then $S'=S_0'\cup S_1'$ has size at most $|S|-2$, which is less than any side length of $R'$. By the inductive hypothesis, there is a perfect matching $M'$ of $R'-S'$. We obtain a perfect matching $M$ of $R-S$ by extending $M'$ and making the following definitions:
\begin{enumerate}
\item[(i)] $M$ is a perfect matching of elements of $E-\{x_0,x_1\}$ (this is possible since there are even number of points from $x_0$ to $x_1$ along $E$);
\item[(ii)] Let $\tilde{E}$ be the set of points $x\not\in \partial R$ but are adjacent to some points of $E$ in the Cayley graph of $R$. Then $M$ is a perfect matching of $\tilde{E}$ (this is because either $\tilde{E}$ is a path of the same length as $E$ or their length differ by $4$);
\item[(iii)] For each $x\in F-E-S$, $M$ matches $x$ to the unique $x^*\not\in \partial R$ adjacent to $x$ in the Cayley graph of $R$;
\item[(iv)] For each $x\in (F-E)\cap S$, $M$ leaves $x$ unmatched, but matches its corresponding point $x'$ in $S_1'$ to the unique $x^*\not\in\partial R$ adjacent to $x$ in the Cayley graph of $R$.
\end{enumerate}

\begin{figure}[h]
\begin{tikzpicture}[scale=0.05]

\draw[thin, lightgray] (0,30) to (0,0) to (150,0) to (150,30);
\node at (5,-5) {$F$};
\draw[thin, lightgray] (0,6) to (150,6);
\draw[thin, lightgray] (0,12) to (150,12);
\node at (156,3) {$\big\}\tilde{F}$};

\draw[fill] (0,0) circle (0.8);
\draw[fill] (6,0) circle (0.8);
\draw[fill] (6,6) circle (0.8);
\draw[fill] (0,6) circle (0.8);
\draw[thick,black] (0,0) to (0,6);
\draw[thick,black] (6,0) to (6,6);

\node at (14,3) {$\cdots$};

\draw[fill] (26,0) circle (0.8);
\draw[fill] (26,6) circle (0.8);
\draw[fill] (26,12) circle (0.8);
\draw[thick, black] (26,6) to (26,12);
\node at (26, -5) {$S$};
\node at (26, 17) {$S'$};

\draw[fill] (20,0) circle (0.8);
\draw[fill] (20,6) circle (0.8);
\draw[thick, black] (20,0) to (20,6);
\draw[fill] (32,0) circle (0.8);
\draw[fill] (32,6) circle (0.8);
\draw[thick, black] (32,0) to (32,6);
\node at (40, 3) {$\cdots$};

\draw[fill] (55,0) circle (0.8);
\draw[fill] (61,0) circle (0.8);
\draw[fill] (67,0) circle (0.8);
\draw[fill] (81,0) circle (0.8);
\draw[fill] (87,0) circle (0.8);
\draw[fill] (93,0) circle (0.8);
\node at (55, -5) {$x_0$};
\node at (93, -5) {$x_1$};
\node at (72, -5) {$E$};
\draw[thick, black] (61,0) to (67,0);
\draw[thick,black] (81,0) to (87,0);
\node at (75,1) {$\cdots$};

\node at (72,11) {$\tilde{E}$};

\draw[fill] (55,6) circle (0.8);
\draw[fill] (61,6) circle (0.8);
\draw[fill] (67,6) circle (0.8);
\draw[fill] (81,6) circle (0.8);
\draw[fill] (87,6) circle (0.8);
\draw[fill] (93,6) circle (0.8);

\draw[thick, black] (55,6) to (61,6);
\draw[thick, black] (67,6) to (70,6);
\draw[thick, black] (78,6) to (81,6);
\draw[thick, black] (87,6) to (93,6);

\node at (104,3) {$\cdots$};

\draw[fill] (116,0) circle (0.8);
\draw[fill] (116,6) circle (0.8);
\draw[fill] (116,12) circle (0.8);
\draw[thick, black] (116,6) to (116,12);
\node at (116, -5) {$S$};
\node at (116, 17) {$S'$};

\draw[fill] (110,0) circle (0.8);
\draw[fill] (110,6) circle (0.8);
\draw[thick, black] (110,0) to (110,6);
\draw[fill] (122,0) circle (0.8);
\draw[fill] (122,6) circle (0.8);
\draw[thick, black] (122,0) to (122,6);
\node at (130, 3) {$\cdots$};
\end{tikzpicture}
\caption{Constructing a partial matching of a rectangle.}\label{fig:rectlem}
\end{figure}
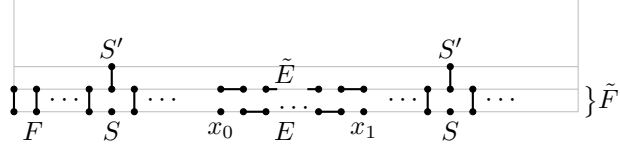

The construction when $F$ is a single edge is illustrated in Figure~\ref{fig:rectlem}. The construction when $F$ is the union of two edges is similar.
\end{proof}

\subsection{An orthogonal decomposition with bounded charge}


An important step in the construction of a Borel perfect matching for $F(2^{\Z^2})$ is
the construction of an orthogonal decomposition such that all of the $k$-atoms  
have a charge bounded by a fixed constant $K$, which is independent of $k$.
The key to doing this is the use of the finite Stokes theorem, Theorem~\ref{thm:stokes},
which says that it is enough to control the function $q(x)c(x)$ along the boundaries
of these regions. This we do by a modification of the orthogonal marker construction given in Section~\ref{sec:topreg}.

Recall that for each $\ell\geq 1$, the $\sR^\ell_\ell$ regions form a rectangular decomposition of $F(2^{\Z^2})$. Then, as we inductively defined $\sR^\ell_k$ for $k<\ell$, each $\sR^\ell_k$ region is a ``fractalized" version of a corresponding $\sR^\ell_{k+1}$ region. Nevertheless, each $\sR^\ell_k$ region is still a rectangular polygon with side lengths between $\epsilon d_k$ and $d_k$. At the end, we obtain $\sR_\ell=\sR^\ell_1$, where each region is a rectangular polygon with side lengths between $\epsilon d_1$ and $d_1$. We will need $d_1$ to be sufficiently large in order for various arguments below to work. Here we do not specify its exact value, but only note that $d_1$ can obviously be made to be larger than any given constant.

Next we analyze the boundaries of the $\sR_\ell$ regions, denoted $X_\ell=\partial \sR_\ell$. By a {\em path segment}, we mean a 
simple (i.e. non-self-intersecting) path consisting of a concatenation of horizontal and vertical segment of points in the Schreier graph of 
$F(2^{\Z^2})$ (cf. Lemma~\ref{lem:pp}). The boundary of any $\sR_\ell$ region itself (picking a starting point) is a path segment. We say that two path segments $p$ and $p^*$ are {\em adjacent}, if for any $x\in p$ there is $x^*\in p^*$ with $\rho(x,x^*)=1$ and vice versa. 

We remark that these objects might look somewhat different at different scales. Figure~\ref{fig:pb} illustrates the construction of $\sR^\ell_\ell$, where each region is a rectangle. From a large scale, for example, the scale $d_\ell\gg d_1$, the boundaries of the regions can be roughly viewed as lines on $\R^2$ that divide the plane into rectangular regions. However, from a detailed scale, that is, the scale $d_1$, the boundaries of $\sR^\ell_\ell$ regions can be seen to actually consist of adjacent path segments that are either horizontal or vertical. For results in this subsection we need the perspective from the $d_1$ scale. 

\begin{figure}[h] 
\centering

\begin{subfigure} {0.475\textwidth}
\centering
\begin{tikzpicture}[scale=0.03]

\pgfmathsetmacro{\a}{5}
  
\draw (0,0) rectangle (50,50);
\draw (0,-60) rectangle (50,0);
\draw (50,-20) rectangle (90,-75);
\draw (50,-20) rectangle (90,50);
\draw (0,50) rectangle (70,75);
\draw (-30,20) rectangle (0,65);
\draw (-50,-40) rectangle (0,20);

\draw[thick, red] (0,50) to (0,20);
\draw[thick, red] (0,20) to (0,0);
\draw[thick, red] (0,0) to (50,0);
\draw[thick, red] (50,0) to (50,50);
\draw[thick, red] (50,50) to (0,50);

\draw[fill] (0,0) circle (1.5);
\draw[fill] (50,50) circle (1.5);
\draw[fill] (50,0) circle (1.5);
\draw[fill] (0,-60) circle (1.5);
\draw[fill] (50,-20) circle (1.5);
\draw[fill] (90,50) circle (1.5);
\draw[fill] (0,50) circle (1.5);
\draw[fill] (0,75) circle (1.5);
\draw[fill] (-30,20) circle (1.5);
\draw[fill] (0,65) circle (1.5);
\draw[fill] (-50,-40) circle (1.5);
\draw[fill] (0,20) circle (1.5);
\draw[fill] (70,50) circle (1.5);
\draw[fill] (0,-40) circle (1.5);
\draw[fill] (50,-60) circle (1.5);

\node(A) at (25,25) {$R$} ;
\end{tikzpicture}
\caption{At the scale $d_\ell$}
\end{subfigure} 
\hfill
\begin{subfigure} {0.475\textwidth}
\centering
\begin{tikzpicture}[scale=0.03]

\draw (0,30) to (0,-30) to (30,-30);
\draw (30,-35) to (0,-35) to (0,-50);
\draw (-5,30) to (-5,5) to (-35,5);
\draw (-35,0) to (-5,0) to (-5,-50);

\end{tikzpicture}
\caption{At the scale $d_1$}
\end{subfigure}

\caption{Perspectives of the boundaries of the $R^\ell_\ell$ regions.} \label{fig:pb}
\end{figure}
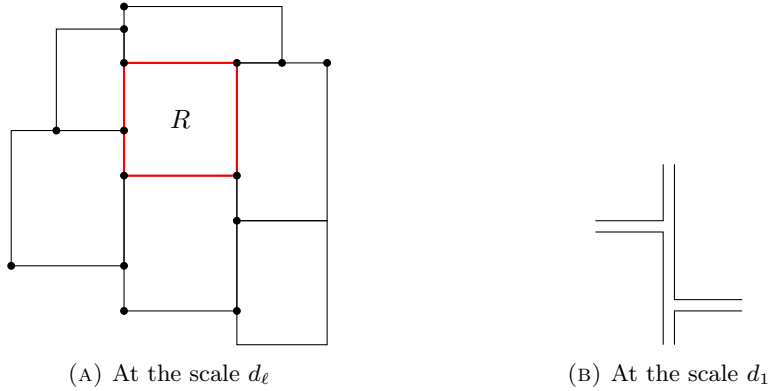 

\begin{lem} \label{lem:adjpath} There is an absolute constant $C_1$ such that for any $\ell\geq 1$, there is a Borel partition $P$ of $X_\ell$ into pairs of adjacent path segments so that for each $R\in \sR_\ell$, $\partial R$ consists of at most $C_1$ many path segments in $P$.
\end{lem}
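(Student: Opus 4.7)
The plan is to define $P$ by exploiting the topological regularity from Theorem~\ref{thm:om}: each $R \in \sR_\ell$ is homeomorphic to a closed disk, so $\partial R$ is a simple closed cyclic path consisting of horizontal and vertical grid segments. For each boundary edge $(x, x+e)$ with $x \in R$, $x+e \notin R$, the point $x+e$ belongs to a uniquely determined neighboring region $R' \in \sR_\ell$. I would walk around the cyclic path $\partial R$ and break it into maximal arcs on which this outward-neighbor region is a constant $R'$. Each such arc $p \subseteq \partial R$ has a canonical companion arc $p' \subseteq \partial R'$, namely the outward neighbors of points of $p$, and $\{p, p'\}$ is an adjacent pair in the sense of Lemma~\ref{lem:pp}. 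At boundary vertices where $\partial R$ turns a corner and the outward-neighbor region changes, the corner vertex would be assigned to exactly one arc via a canonical Borel tie-breaker (e.g.\ the arc corresponding to the lexicographically first outward direction), making the resulting collection a genuine partition of $X_\ell$.

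The Borel (indeed clopen) character of $P$ is then immediate, since $\sR_\ell$ is a clopen partition of $F(2^{\Z^2})$ and the segmentation and tie-breaker are purely local rules.

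The main obstacle is establishing the uniform bound $C_1$ independent of $\ell$. For this I would prove two estimates: (a) each $R \in \sR_\ell$ has at most $N_0$ distinct neighbor regions, and (b) for any adjacent pair $R, R' \in \sR_\ell$, the shared boundary has at most $N_1$ connected components. For (a), at the top level $\sR^\ell_\ell$ each rectangle has only $O(1/(\alpha\epsilon_1))$ neighbors, since by the orthogonality properties parallel boundaries of distinct top-level rectangles are at least $\alpha\epsilon_1 d_\ell$ apart while $R$'s perimeter is $O(\alpha d_\ell)$. The subsequent fractalization steps $\sR^\ell_{k+1} \to \sR^\ell_k$ cannot introduce long-range adjacencies, because each auxiliary rectangle $\tilde{R} \in \tsR^\ell_k$ is assigned to one of the (at most three) $\sR^\ell_{k+1}$ regions it intersects; thus any adjacency in $\sR^\ell_k$ descends from an already-existing close adjacency in $\sR^\ell_{k+1}$, keeping the total number of neighbors bounded by a fixed $N_0$.

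For (b), the most delicate point, I would induct on the fractalization level. At the top level, the shared boundary between two adjacent $\sR^\ell_\ell$ rectangles is a single straight segment. As one passes from $\sR^\ell_{k+1}$ to $\sR^\ell_k$, the auxiliary rectangles of $\tsR^\ell_k$ straddling the shared boundary are assigned partly to $R$ and partly to $R'$, perturbing the boundary by at most $O(d_k)$; the coherence and orthogonality properties \ref{i6}, \ref{i7a}, and \ref{i7b} from Section~\ref{sec:topreg} ensure that the perturbed shared boundary remains a bounded union of arcs, with any new components arising only near the bounded-in-count corner configurations where up to three regions meet. Combining (a) and (b) yields $C_1 = N_0 N_1$, completing the proof.
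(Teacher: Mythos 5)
Your approach is genuinely different from the paper's. The paper works \emph{top down}: it first defines a partition $P_\ell$ of $\partial\sR^\ell_\ell$ at the rectangular level (where each $R\in\sR^\ell_\ell$ is a rectangle and $\partial R$ visibly splits into at most a constant number of line segments, paired with the corresponding line segments of neighboring rectangles), and then tracks this partition through the successive fractalization steps $\sR^\ell_{k+1}\to\sR^\ell_k$, arguing that each paired line segment deforms into a paired path segment without the count increasing. You instead work \emph{bottom up}, taking the final regions $\sR_\ell=\sR^\ell_1$ as given, partitioning each $\partial R$ into maximal arcs of constant outward neighbor, and then proving two auxiliary bounds (bounded number of neighbors, bounded number of boundary components per neighbor). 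Your route would establish the bound $C_1=N_0N_1$ without explicitly running through the fractalization, at the cost of re-deriving structural facts that the paper gets automatically by induction.

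There is, however, a genuine gap in the tie-breaking step. A fixed lexicographic rule for assigning corner vertices does \emph{not} make the resulting collection a partition into pairs of \emph{adjacent} path segments in the paper's sense (every $x\in p$ must have a neighbor in $p^*$ and vice versa). Here is a concrete failure: let $R_3=\{y\le 0\}$, $R_1=\{y\ge 1,\,x\le 0\}$, $R_2=\{y\ge 1,\,x\ge 1\}$ near a triple point. The vertex $(0,1)\in\partial R_1$ has two outward directions, $e_1$ (into $R_2$) and $-e_2$ (into $R_3$). If the lex-first rule sends it to the arc facing $R_2$, then the arc $q_1\subset\partial R_1$ facing $R_3$ is $\{(x,1):x\le -1\}$, and the companion arc $p_1=\{(x,0):x\le 0\}\subset\partial R_3$ contains $(0,0)$, which has \emph{no} neighbor in $q_1$ — the pair $\{p_1,q_1\}$ is not adjacent. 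By rotating the configuration one sees that no single fixed ordering of the four directions resolves all triple points. The correct assignment is geometry-dependent: at each triple point (which exists thanks to the no-four-corners property), exactly one of the three regions has a straight boundary through the triple point while the other two have $90^\circ$ corners; the two corner vertices must be assigned to the arcs facing the straight-boundary region. You would need to state and verify this rule (or an equivalent one), and check that it is consistent from both sides of each pair. The paper sidesteps this entirely: at level $\ell$ the pairing is set up directly on rectangles where it is obvious, and the fractalization argument shows it is preserved. If you patch the tie-breaker and flesh out estimates (a) and (b) — which are stated plausibly but not proved — your route should go through, but as written the adjacency claim is unsupported at precisely the corner vertices where it is delicate.
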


\begin{proof} We obtain $P$ by following the inductive construction of $\sR^\ell_k$ for $k\leq \ell$. For each $k\leq \ell$, we obtain a Borel partion $P_k$ of $\partial \sR^\ell_k$ into pairs of adjacent path segments so that the boundary of each $R\in \sR^\ell_k$ consists of at most $C_1$ many path segments in $P_k$. Eventually we let $P=P_1$. 

For $k=\ell$ the statement obviously holds with $C_1=12$. This is because, each $R\in \sR^\ell_\ell$ is a rectangle with side lengths between $\frac{1}{2}\alpha_\ell d_\ell$ and $\alpha_\ell d_\ell$ by \ref{i1}, and the usual othogonality of the sides of $\sR^\ell_\ell$ guarantees that each horizontal or vertical segment of points in $P_\ell$ has length at least $\epsilon d_\ell$. 

In general, as we consider $k<\ell$ and construct $\sR^\ell_k$ regions from $\sR^\ell_{k+1}$ regions, each of the path segments in $P_{k+1}$ gives rise to a ``fractalized" path segment in $P_k$. For example, each horizontal or vertical segment from $P_\ell$ becomes a path segment (no longer necessarily horizontal or vertical) in $P_{\ell-1}$. Note that we have $\epsilon d_\ell>\epsilon d_1\gg 5$, and it follows that each pair of adjacent path segments in $P_{k+1}$ corresponds still to a pair of adjacent path segments in $P_k$.

Thus, at the end of the process, we obtain $P_1$ which is a partition of $X_\ell$ into pairs of adjacent path segments, where the boundary of each $\sR^\ell_1$ region still consists of no more than $C_1$ many path segments in $P_1$. By the inductive hypotheses on the orthgonality of $\sR^\ell_k$ ($1\leq k\leq\ell$) regions, we get that, for each path segment in $P_1$ consisting of horizontal and vertial segments of points, each horizontal and vertical segment has lengths at least $\epsilon d_1$.
\end{proof}

The next lemma modifies the construction of $\sR_\ell$ in Section~\ref{sec:topreg} so that the total charge of any $\sR_\ell$ region is bounded.

\begin{lem} \label{lem:bounded_charge}

The orthogonal marker decompositions $\{\sR^\ell_k\}_{1\leq k\leq \ell}$ may be constructed so as to satisfy the following additional property: There is an absolute constant $4\leq K<d_1$ such that for any $\ell\geq 1$ and any path segment $p$ contained in the boundary of some $\sR_\ell$ region, we have that
$$ | \sum_{x\in p} q(x)c(x)| \leq K. $$

\end{lem}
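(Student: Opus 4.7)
The plan is to refine the construction of Section~\ref{sec:topreg} by using the residual freedom to translate the auxiliary rectangles in $\tsR^m_k$ by small amounts during the inductive step. All translations that will be needed are by at most one lattice unit per rectangle, which sits well inside the $\frac{1}{10}\eta^2 d_k$ tolerance built into \ref{i3}, \ref{i7a}, and \ref{i7b}, and in particular does not disturb the topological regularity of Theorem~\ref{hmt}. I will induct on the pairs $(m,k)$ ordered by $\prec$, maintaining the invariant that every path segment in the partition of Lemma~\ref{lem:adjpath} of $\partial\sR^m_k$ satisfies the stated bound for a fixed absolute constant $K$. The diagonal base case $\sR^m_m$ is trivial: each path segment is a straight horizontal or vertical arc, only its two endpoints have nonzero $c(x)$, and these contribute a total of at most a small constant.

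For the inductive step $\sR^m_{k+1}\to\sR^m_k$, each path segment $p$ of $\partial\sR^m_{k+1}$ is transformed into a new path segment $p'$ by adding or removing those $\tsR^m_k$-rectangles that cross $p$. Each such modification is a localized \emph{bump}: a sub-arc of $p$ is replaced by three sides of a $\tsR^m_k$-rectangle. A direct application of Theorem~\ref{thm:stokes} to the small quadrilateral between the old and new arcs shows that each bump alters $\sum q(x)c(x)$ by a uniformly bounded quantity $b=O(1)$, consisting of roughly four times the charge of the enclosed subrectangle plus bounded interface corrections at the two attachment points. The crucial observation is that translating a $\tsR^m_k$-rectangle by a single lattice unit in either coordinate direction flips $q(x)$ at every one of its corners while leaving $c(x)$ and the topological configuration unchanged, and hence flips the sign of the bump's contribution.

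Processing the bumps along $p'$ in the order they appear from one endpoint to the other, I choose at each step the shift whose induced sign keeps the running partial sum of $\sum q(x)c(x)$ closest to $0$. Since each step changes the running sum by at most $b$ in absolute value and the sign of that change is ours to choose, this greedy scheme maintains the running sum within $\pm b$ throughout, giving $|\sum_{x\in p'}q(x)c(x)|\leq K$ for an absolute constant $K=O(b)$ independent of $(m,k)$. Crucially, because each $\tsR^m_k$-rectangle is assigned to a single $\sR^m_{k+1}$-region, its shift affects the charge sums on exactly one path segment, so the greedy choices for different path segments do not interfere. When two adjacent path segments share an interface, Lemma~\ref{lem:pp} guarantees that their bump contributions are equal and opposite, so the balancing that succeeds on one side automatically succeeds on the other. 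Borelness of the shift assignment is immediate since each choice is determined locally by the already-Borel construction.

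The principal obstacle is ensuring that the bound remains absolute across all levels, rather than accumulating additively in $\ell$. This is exactly what the greedy scheme accomplishes: since the running sum is reset to $O(b)$ at each fractalization step, the bound $K$ does not grow with $k$ or $\ell$. A secondary worry is that a $\tsR^m_k$-rectangle which has type $2$ in the sense of \ref{i5} sits at a meeting of several path segments from $\sR^m_{k+1}$, so its shift must be analyzed separately for each of the (boundedly many) incident path segments; however, by \ref{i5} each such rectangle intersects at most two such segments and an elementary case analysis using Lemma~\ref{lem:pp} shows that the contributions can still be balanced consistently. Taking $d_1$ sufficiently large in the construction guarantees the additional requirement $K<d_1$.
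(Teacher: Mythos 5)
Your proposal is genuinely different from the paper's proof, and it has real gaps. The paper does \emph{not} modify the $\tsR^m_k$ construction of Section~\ref{sec:topreg} at all; it first builds $\sR^\ell_1$ exactly as in Section~\ref{sec:topreg}, and then post-processes the final boundary by attaching \emph{fresh} auxiliary rectangular bumps $S_j$ of fixed size $(40w+1)\times 40w$ (with $40w\ll \epsilon d_1$) near the centers of the straight line segments of $\partial\sR^\ell_1$. Each $S_j$ changes $\sum q(x)c(x)$ by exactly $\pm 4$, the sign is freely choosable because the $S_j$ are new regions whose positions are otherwise unconstrained, and a greedy running-sum argument over the path segment (together with Lemma~\ref{lem:pp} for the adjacent segment) gives the bound $K=5C_1$. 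Crucially, the paper places the $S_j$'s at least $\frac13\epsilon d_1$ away from the endpoints of each segment and from $\bigcup_{m\neq\ell} X'_m$; this placement property is not part of the statement of the lemma but is used explicitly in Lemma~\ref{lem:H1} and throughout Sections~\ref{sec:matching} and~\ref{sec:cls}.

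Your greedy balancing idea is the right one, but the degrees of freedom you rely on do not exist in the form you describe. The $\tsR^m_k$ rectangles partition (tile) the space near $\partial\sR^m_{k+1}$, so you cannot translate one rectangle by a lattice unit without reorganizing its neighbors; you treat the shifts as independent. Worse, in the Section~\ref{sec:topreg} construction the type-$1$ rectangles $S_5$ not adjacent to a type-$2$ rectangle have their short sides pinned to be portions of sides of $Q\in\tsQ^m_k$ (inductive hypothesis~\ref{i6}), so they have no parallel-to-$F$ slack at all; only the perpendicular position is translated. A perpendicular shift does flip the parity $q$ of the corners, but it also changes which side of $F$ the rectangle protrudes into and hence changes the geometry of the bump — it is not the clean ``flip $q$, fix $c$, fix topology'' operation you assert. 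In addition, type-$2$ rectangles sit at T-junctions where three $\sR^m_{k+1}$ regions meet, so their position affects three path segments at once; Lemma~\ref{lem:pp} only equates the contributions on a \emph{pair} of adjacent paths, and you do not show the greedy choices can be made consistently across all three. These are not cosmetic issues: the paper sidesteps all of them precisely by introducing new small bumps that are disjoint from and independent of the $\tsR^m_k$ structure, rather than perturbing it. Finally, even if your inductive version went through, it would not deliver the placement condition (distance from $X'_m$ for $m\neq\ell$) that the later sections need, so the proof of Theorem~\ref{thm:mt} and Theorem~\ref{thm:cls} would have to be reworked.
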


\begin{proof}

For any $\ell\geq 1$, we leave all the constructions of $\sR^\ell_k$ for $1<k\leq\ell$ as they were done in Section~\ref{sec:topreg} and only modify the last step of the construction. To do this, first let $\sR_\ell'$ be the $\sR^\ell_1$ constructed in Section~\ref{sec:topreg}. We indicate below how to modify it to obtain $\sR_\ell$ as we desire.

Let $C_1$ be the absolute constant in Lemma~\ref{lem:adjpath} and let $P$ be a Borel partition of $X_\ell'=\partial \sR'_\ell$ into pairs of adjacent path segments given by Lemma~\ref{lem:adjpath}. Let $C_0$ be the absolute constant in Lemma~\ref{lem:standard}. Let $w\geq 10$ be an even number with $\epsilon d_1\gg 40C_0w^2$.

Consider a pair $\{p,p^*\}$ of adjacent path segements in $P$. Both $p$ and $p^*$
are simple (i.e. non-self-intersecting) paths and consist of a concatenation of vertical and
horizontal segments of lengths at least $\epsilon d_1$. Let $R$, $R^*$ be the corresponding
regions in $R_\ell'$ with $p \subseteq \partial R$, $p^*\subseteq \partial R^*$. Write $p$ as a concatenation $s_0 s_1\dots s_h$ where the $s_j$ are horizontal
or vertical line segments of points of lengths between $\epsilon d_1$ and $d_1$,
except perhaps for the first and last segment. By our construction (specifically, by Claim~\ref{claim:si} and Figure~\ref{fig:possible_shapes}), there are at most two of the $s_j$
which intersect an $X'_m$ for $m>\ell$. For all of the remaining $\geq h-4$
segments $s_j$, we can position a $(40w+1)\times 40w$ rectangle $S_j$ of points near the
center of the segment $s_j$ (we will say more about its specific location later in this proof) as shown in Figure~\ref{fig:ca} so as to
change the value of $\sum_{x \in s_j} q(x)c(x)$ by either $+4$ or $-4$, where here $c(x)$
is computed relative to the region $R$. Since $\epsilon d_1\gg 40w+1$ and these $S_j$ are
near the centers of the segments $s_j$, they are contractible to $s_j$ without
intersecting any of the other $S_{j'}$ or $s_{j'}$. Thus, adding these $S_j$
does not alter the topology of the region $R$ (it is still homeomorphic to a disk),
nor the connectedness of $\partial R$. 

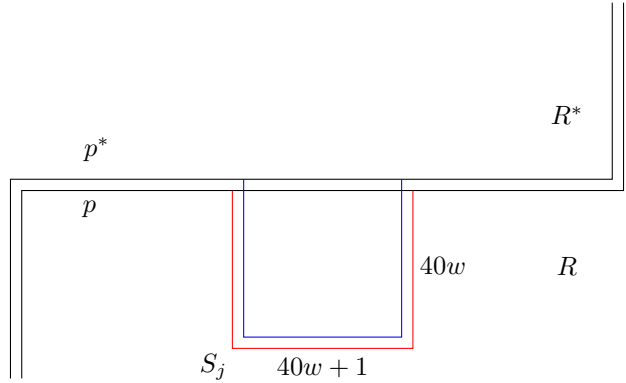
\begin{figure}[h]

\begin{tikzpicture}[scale=0.05]

\pgfmathsetmacro{\a}{3};
\pgfmathsetmacro{\b}{24};
\pgfmathsetmacro{\c}{42}

\draw(-20,0) to (-20,50) to (140,50) to (140,100);
\draw (-\a-20,0) to (-\a-20,50+\a) to (140-\a,50+\a) to (140-\a,100);

\draw[red] (60-\b, 50) to (60-\b, 50-\c) to (60+\b, 50-\c) to (60+\b, 50);
\draw[blue] (60-\b+\a, 50+\a) to (60-\b+\a, 50-\c+\a) to (60+\b-\a, 50-\c+\a) to (60+\b-\a, 50+\a);

\node(a) at (60-\b-5, 50-\c-5) {$S_j$};
\node(b) at (-2,45) {$p$};
\node(c) at (0,61) {$p^*$};
\node(d) at (125, 30) {$R$};
\node(e) at (125, 70) {$R^*$};

\node(f) at (60, 3) {$40w+1$};
\node(g) at (92, 30) {$40w$};

\end{tikzpicture}
\caption{Adjusting the boundaries.} \label{fig:ca}
\end{figure}

Thus it is possible to add the regions $S_j$ so that for any initial segment
$u$ of $p$ we have that $|\sum_{x \in u}q(x)c(x)| \leq 5C_1$. Furthermore,
the $S_j$ are not placed in the first or last segments of $p$, nor in
a segment which intersects an $X_m$ for $m>\ell$. A simple orthognality
argument shows that 
we will have that any $S_j$ is at least $\frac{1}{3}\epsilon d_1$ from $X_m$. 
If $s_j$ is not the initial or final line segment of $p$, nor does it intersect $X'_m$ for any $m>\ell$, then we
will take into account one more consideration as we determine the exact location of $S_j$ on the line segment $s_j$. Consider the set $s_j\cap \bigcup_{k<\ell} X'_k$. 
By the orthogonality from our construction, each point in $s_j\cap \bigcup_{k<\ell} X'_{k}$ is at least $\epsilon d_1$ from the endpoints of $s_j$, and if $x, y$ are two distinct points of $s_j\cap \bigcup_{k<\ell} X'_k$, then either $\rho(x,y)=1$ or $\rho(x,y)>\epsilon d_1$. Thus $s_j\setminus \bigcup_{k<\ell}X'_{k}$ consists of a number of line segments, each of which has length at least $\epsilon d_1$. We will actually put $S_j$ near the center of one of these line segments, so that it is at least $\frac{1}{3}\epsilon d_1\gg C_0w^2$ from $s_j\cap \bigcup_{k<\ell}X'_k$ and from the endpoints of $s_j$. This property will be useful in our construction in the next subsection.

From Lemma~\ref{lem:pp} we also have that $|\sum_{x \in u^*}q(x)c(x)| \leq 5C_1$
for any initial segment $u^*$ of $p^*$. Since the number of path segments
in the boundary of any $\sR_\ell'$ region $R$ is bounded by $C_1$,
this gives a bound $K$ for the sum $\sum_{x \in p} q(x)c(x)$ for any path
$p \in \partial R$ for an $\sR_\ell$ region $R$, as required. 
\end{proof}

Similarly, in view of the analysis of
Figure~\ref{fig:possible_shapes}, we get a similar bound for paths $p$ in $\partial A$
for any $k$-atom $A$.

\begin{lem} \label{lem:bounded_charge_atoms}

There is an absolute constant $4\leq K<d_1$ such that for any $k\geq 1$ and any path segment $p$ contained in the boundary of some
$k$-atom $A$, we have that
$|\sum_{x \in p} q(x)c(x)| \leq K$. 
\end{lem}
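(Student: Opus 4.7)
The plan is to reduce the lemma to the single-region bound already established in Lemma~\ref{lem:bounded_charge}, exploiting both the bounded multiplicity of levels contributing to a $k$-atom (from Corollary~\ref{cor:wo}) and the topological analysis of Section~\ref{sec:topreg} (Claims~\ref{claim:si}, \ref{claim:sii} and Figure~\ref{fig:possible_shapes}). First, I would observe that by Theorem~\ref{hmt}, each $k$-atom $A$ is homeomorphic to a disk, so $\partial A$ is a single simple closed path. By Claim~\ref{claim:sii} together with Corollary~\ref{cor:wo}.(\ref{wo2}), the number of distinct levels $\ell$ such that $\partial A$ meets $\partial \sR_\ell$ is bounded by an absolute constant (in fact at most $n+1 = 3$), and locally within a single $\sR^\ell_\ell$ rectangle the $k$-atom structure is one of the finitely many configurations in Figure~\ref{fig:possible_shapes}. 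Consequently, the closed curve $\partial A$ decomposes into at most an absolute number $C_2$ of maximal arcs, each of which lies on $\partial R$ for a single $R \in \sR_\ell$ for some $\ell \geq k$.

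Given a path segment $p \subseteq \partial A$, I would split $p$ into at most $C_2$ sub-segments $s_1, \dots, s_t$, where each $s_i \subseteq \partial R_i$ for some $R_i \in \sR_{\ell_i}$. For each $s_i$, Lemma~\ref{lem:bounded_charge} already gives $|\sum_{x \in s_i} q(x) c_{R_i}(x)| \leq K$, where $c_{R_i}$ is the corner function computed relative to $R_i$. The remaining issue is that we need the sum with the corner function $c_A$ computed relative to $A$ rather than relative to $R_i$. The key observation is that if $x \in \partial A \cap \partial R_i$, and no boundary from any other level $X_m$ (with $m \neq \ell_i$) passes within distance $2$ of $x$, then $c_A(x) = c_{R_i}(x)$: the $3 \times 3$ neighborhood of $x$ determining $c_A(x)$ agrees with that determining $c_{R_i}(x)$, since locally $A = R_i$ near $x$.

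By the strong orthogonality of Theorem~\ref{thm:so} and the fact that boundaries of distinct levels are separated by at least $\epsilon d_k \gg 2$, the set of $x$ on any $s_i$ where $c_A(x) \neq c_{R_i}(x)$ is contained in an $O(1)$-neighborhood of the (at most $C_2$) junction points between consecutive sub-segments, together with the finitely many locations inside $s_i$ where a boundary from another level comes within distance $1$ of $\partial R_i$ (which by the analysis of Figure~\ref{fig:possible_shapes} are bounded in number for each arc). At each such exceptional point the quantity $|c_A(x) - c_{R_i}(x)|$ is bounded by an absolute constant, so the correction term $|\sum_{x \in s_i} q(x)[c_A(x) - c_{R_i}(x)]|$ is also bounded by an absolute constant $K'$. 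Summing over the at most $C_2$ sub-segments yields
\[
\Bigl|\sum_{x \in p} q(x) c_A(x)\Bigr| \;\leq\; C_2 \cdot K + C_2 \cdot K' \;=:\; K'',
\]
which is the desired absolute bound (and, by choosing $d_1$ large enough, we can ensure $K'' < d_1$).

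The main obstacle will be a careful bookkeeping at the junction points: one must verify that the local configurations where $\partial A$ transitions from one $\partial R_i$ to another (i.e., the corners in Figure~\ref{fig:possible_shapes}) contribute only $O(1)$ to the discrepancy between $c_A$ and $c_{R_i}$. The orthogonality estimates from inductive hypotheses \ref{i7a}--\ref{i7b} and the $d_1$-scale separation ensured in the proof of Lemma~\ref{lem:bounded_charge} are precisely what make this local discrepancy controllable; in particular, the placement of the correction rectangles $S_j$ in that proof away from $\bigcup_{k < \ell} X'_k$ was already chosen with this application in mind, so no further modification of the construction is required.
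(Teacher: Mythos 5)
Your proposal is correct and follows essentially the same route as the paper: the paper's own proof is a one-line deduction from Lemma~\ref{lem:bounded_charge} together with the fact that a $k$-atom meets $X_\ell$ for at most two distinct levels $\ell>k$ (Figure~\ref{fig:possible_shapes}), so the boundary splits into boundedly many arcs each covered by that lemma. The additional bookkeeping you supply — controlling the discrepancy between the corner function computed relative to $A$ and relative to the individual regions $R_i$ near the $O(1)$ many junction and crossing points — is a detail the paper treats as immediate, and your handling of it via the orthogonality separations is sound.
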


\begin{proof}
This follows immediately from Lemma~\ref{lem:bounded_charge} and the fact that each 
$k$-atom $A$ intersects ${X}_\ell$ for at most $2$ distinct $\ell >k$ (see Figure~\ref{fig:possible_shapes}). 
\end{proof}

\subsection{The construction of a Borel perfect matching}

We are now ready to give the construction of a Borel perfect matching $M$ of $F(2^{\Z^2})$.
In the arguments constructing orthogonal decompositions up to this point,
we assumed the set $d_k$ of distance scales satisfied $d_1 < d_2 \cdots $
and $\frac{d_{k+1}}{d_k}>C$ for some fixed constant $C$ independent of $k$.
We now wish to impose an {\em intermediate growth condition} on the $d_k$:
we assume hence forth that $\frac{d_{k+1}}{d_k}\leq C'$ for some fixed constant $C'$.
It follows that there is an absolute bound $C''$ (independent of $k$) on the number of
$(k-1)$-atoms that can be contained within a given $k$-atom.

Let us fix an even number  $w>10 C'' K$, where $K$  is as in Lemma~\ref{lem:bounded_charge_atoms}. Intuitively,
$C''K$ represents the possible ``accumulation of charge'' within a $k$-atom in the construction, $w$ will be the thickness of
a ``buffer" region in which the inductive construction takes place. 

Let $C_0$ be the absolute constant given by Lemma~\ref{lem:standard}. We assume henceforth that in our construction
$\epsilon d_1\gg 40(K+3)C_0 w^2$.


For each $k$, if $A$ is a $k$-atom of the orthogonal decomposition, then we define the corresponding {\em $w$-buffered atom} $A(w)=\{
x \in A \colon \rho(x, \partial A) \geq w\}$. Also let $\partial_w(A)=A- A(w)$ denote the {\em buffer region}.  Note from the construction of Lemmas~\ref{lem:bounded_charge} and \ref{lem:bounded_charge_atoms}, all the line segments forming the boundary of $A$ have
lengths at least $40w$,  thus the buffer region takes the shape of a ``rectangular polygonal pipe" of thickness $w$, as illustrated in Figure~\ref{fig:buffer}. We refer to $\partial A(w)$ as the {\em buffered boundary}, and each line segment of $\partial A(w)$ still has length at least $30w$. Moreover, $\partial A$ consists of path segments each of which is contained entirely on $X_\ell$ for some $\ell\geq k$, and we refer to each of the path segments as being on level $\ell$; there are corresponding path segments of $\partial A(w)$ which are also referred to as being on level $\ell$.

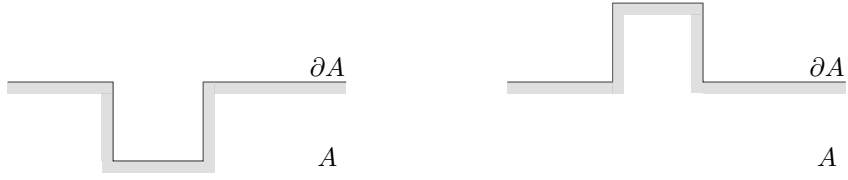
\begin{figure}[h]

\begin{subfigure}{0.475\textwidth}

\begin{tikzpicture}[scale=0.05]

\pgfmathsetmacro{\a}{3};
\pgfmathsetmacro{\b}{12};
\pgfmathsetmacro{\c}{21}

\draw (20,50) to (60-\b, 50) to (60-\b, 50-\c) to (60+\b, 50-\c) to (60+\b, 50) to (110,50);

\draw[draw=none,fill=lightgray, opacity=0.5] (23-\a,50-\a) rectangle ((60-\b,50);
\draw[draw=none,fill=lightgray, opacity=0.5] (57-\b,50-\a) rectangle ((60-\b,50-\c);
\draw[draw=none,fill=lightgray, opacity=0.5] (57-\b,50-\c-\a) rectangle ((60+\b+\a,50-\c);
\draw[draw=none,fill=lightgray, opacity=0.5] (60+\b,50-\c) rectangle ((60+\b+\a,50);
\draw[draw=none,fill=lightgray, opacity=0.5] (60+\b+\a,50-\a) rectangle ((110,50);

\node(d) at (105, 30) {$A$};
\node(e) at (105, 54) {$\partial A$};

\end{tikzpicture}
\end{subfigure}
\hfill 
\begin{subfigure}{0.475\textwidth}

\begin{tikzpicture}[scale=0.05]

\pgfmathsetmacro{\a}{3};
\pgfmathsetmacro{\b}{12};
\pgfmathsetmacro{\c}{21}

\draw (20,50) to (60-\b, 50) to (60-\b, 50+\c) to (60+\b, 50+\c) to (60+\b, 50) to (110,50);

\draw[draw=none,fill=lightgray, opacity=0.5] (23-\a,50-\a) rectangle ((60-\b,50);
\draw[draw=none,fill=lightgray, opacity=0.5] (60-\b,50-\a) rectangle ((60-\b+\a,50+\c);
\draw[draw=none,fill=lightgray, opacity=0.5] (60-\b+\a,50+\c-\a) rectangle ((60+\b,50+\c);
\draw[draw=none,fill=lightgray, opacity=0.5] (60+\b-\a,50+\c-\a) rectangle ((60+\b,50-\a);
\draw[draw=none,fill=lightgray, opacity=0.5] (60+\b,50-\a) rectangle ((110,50);

\node(d) at (105, 30) {$A$};
\node(e) at (105, 54) {$\partial A$};

\end{tikzpicture}
\end{subfigure}

\caption{The buffer region.} \label{fig:buffer}
\end{figure}

In particular, since the $k$-atoms are topologically of the form as in Figure~\ref{fig:possible_shapes}
and are homeomorphic to disks with boundaries being simple closed curves consisting
of paths of horizontal and vertical line segments of lengths $>30w$.
From this it easily follows that the buffered atoms $A(w)$ also have these properties.

We now proceed by induction on $k$ to define a partial matching on $\bigcup \{ A(w)
\colon A\in \sA_k\}$, where $\sA_k$ is the set of all $k$-atoms. We assume the following
inductive hypothesis at level $k-1$.
\medskip

{\bf Induction Hypothesis} ${\mathbf H}_{k-1}$:
We have defined a Borel partial matching $M_{k-1}$ of $F(2^{\Z^2})$
with domain $\dom(M_{k-1})=D_{k-1}\subseteq \bigcup \{ A(w) \colon A\in \sA_{k-1}\}$. For
each $(k-1)$-atom $A\in \sA_{k-1}$, denote $E_{k-1}(A)=A(w)-D_{k-1}$. Then $E_{k-1}(A)$ is a finite set
of size $\leq K$ and $E_{k-1}(A) \subseteq \partial A(w)$. Furthermore,
every point of $E_{k-1}(A)$ is at least $C_0w^2$ from the endpoint of a line segment
containing it which is part of $\partial A(w)$ and also every such point
is within $w$ of $X_{k-1}=\partial \sR_{k-1}$.
\medskip

The set $E_{k-1}(A)$ is the set of unmatched (by $M_{k-1}$) points of $A(w)$. In view of Figure~\ref{fig:possible_shapes}, the boundary of each atom $A\in \sR_{k-1}$ consists of path segments from some $X_\ell$ where $\ell\in\{k-1,m_0,m_1\}$ for some $m_0, m_1>k-1$. The last part of the induction hypothesis ${\mathbf H}_{k-1}$ says that these unmatched points of
$E_{k-1}(A)$ in the buffered $(k-1)$-atom $A(w)$, for $A \in \sA_{k-1}$, are part of the
buffered boundary for the lowest level $X_\ell$ (i.e., $\ell=k-1$) which forms
part of the boundary of $A$. 

\begin{lem} \label{lem:H1} ${\mathbf H}_1$ holds. 
\end{lem}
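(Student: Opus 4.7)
The plan is to construct, for each $1$-atom $A$, a Borel partial matching $M_A$ of $A(w)$ satisfying all the stated properties, and then take $M_1=\bigcup_{A\in\sA_1}M_A$. Borelness of $M_1$ follows from the Borelness of the $1$-atom decomposition, so the work is entirely local to each $1$-atom.

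First I would use Theorem~\ref{hmt} together with Claim~\ref{claim:si} and Figure~\ref{fig:possible_shapes} to note that every $1$-atom $A$ is contained in a unique region $R\in\sR_1$ and has one of a small number of shapes: $A$ is either a rectangle (namely $R$ itself, or $R$ cut by one or two axis-parallel segments from higher $X_m$) or an L-shape (when a partial cut creates a reflex corner). In each case the buffered atom $A(w)$ is a rectangular polygon of the same topological type. Moreover $\partial A\cap X_1$ contains at least one maximal line segment $\sigma$ of length at least $\epsilon d_1/2$, and the corresponding segment $L\subseteq\partial A(w)$ (within distance $w$ of $\sigma$) has length at least $\epsilon d_1/2-2w$, which by our choice $\epsilon d_1\gg 40(K+3)C_0w^2$ is much larger than $C_0w^2+K$. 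This segment $L$ will host the unmatched points.

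Next I would split into cases. If $A(w)$ is a rectangle, then its total charge $Q_A=\sum_{x\in A(w)}q(x)$ lies in $\{-1,0,1\}$. When $Q_A=0$ apply Lemma~\ref{lem:rectangle} with $S=\varnothing$ to obtain a perfect matching of $A(w)$ and set $E_1(A)=\varnothing$; when $Q_A=\pm 1$ pick a single point $z\in L$ with $q(z)=Q_A$ at distance at least $C_0w^2$ from each endpoint of $L$ (which is possible since $L$ is long enough) and apply Lemma~\ref{lem:rectangle} with $S=\{z\}$ to match $A(w)-\{z\}$; set $E_1(A)=\{z\}$. If $A(w)$ is L-shaped, introduce an axis-parallel internal cut through the reflex corner, splitting $A(w)$ into two rectangles $R_1,R_2$ each of which contains a sub-segment $L_i\subseteq L$ of length $\gg C_0w^2+K$ (the cut direction can be chosen to accomplish this from the orientation of the L). For each $R_i$ compute $Q_i=\sum_{x\in R_i}q(x)\in\{-1,0,1\}$ and choose $S_i\subseteq L_i$ with $|S_i|=|Q_i|$, $\sum_{x\in S_i}q(x)=Q_i$, and each point of $S_i$ at distance at least $C_0w^2$ from the endpoints of $L_i$. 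Apply Lemma~\ref{lem:rectangle} to each $R_i$ to get a perfect matching of $R_i-S_i$; since $R_1\cap R_2=\varnothing$ these matchings combine into a partial matching of $A(w)$ with $E_1(A)=S_1\cup S_2$ of size at most $2\leq K$.

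In both cases the resulting $M_A$ has unmatched set $E_1(A)\subseteq L\subseteq \partial A(w)$, consisting of at most $K$ points each within $w$ of $X_1$ and at distance at least $C_0w^2$ from the endpoints of its containing line segment in $\partial A(w)$. All hypotheses of $\mathbf{H}_1$ are therefore satisfied. I do not anticipate a genuine obstacle here since the base case reduces to finitely many shape types handled by Lemma~\ref{lem:rectangle}; the only mild subtlety is verifying that a convenient cut of the L-shape leaves each piece with enough of $L$ to place at most one unmatched point well-separated from corners, and this is immediate from the length estimate $|L|\gg C_0w^2$.
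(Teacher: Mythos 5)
Your overall strategy --- decompose the buffered $1$-atom into rectangles, apply Lemma~\ref{lem:rectangle} to each piece, and park the at most $K$ unmatched points on a long level-$1$ boundary segment far from corners --- is the same as the paper's. But your case analysis has a genuine gap: you classify the $1$-atoms as ``rectangle or L-shape,'' which ignores the modification made in Lemma~\ref{lem:bounded_charge}. There, the boundaries of the regions $\sR_\ell$ (for every $\ell$, including the higher-level boundaries $\partial R'$, $\partial R_1'$, $\partial R_2'$ that cut through a level-$1$ rectangle $A_0$) have small $(40w+1)\times 40w$ rectangles $S_j$ added or subtracted near the centers of their line segments in order to control the charge sums $\sum q(x)c(x)$. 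These adjustment rectangles can and do land inside $A_0$, so a $1$-atom is in general a rectangle or an L-shape \emph{decorated with small bumps and notches} along the cutting segments (this is exactly what Figure~\ref{fig:H1} depicts). Your two cases, and in particular the single internal cut through the reflex corner of an L-shape, do not cover these configurations; the paper instead decomposes $A(w)$ into up to six rectangles $R_0,\dots,R_5$ and chains the applications of Lemma~\ref{lem:rectangle} by transferring the residual charge of each piece to an adjacent piece via matched boundary pairs $E_0(R,R^*)$, so that all leftover charge accumulates on the designated segment $E$. Correspondingly, the bound on $|E_1(A)|$ is not $2$ but the number of pieces (at most $6$ in the paper's figure, still $\leq K$ since the total charge of $A(w)$ is controlled); your bound of $|Q_A|\leq 1$ per rectangle is fine, but the number of rectangles is larger than you allow.

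A secondary point: even for a genuine L-shape, your claim that the cut through the reflex corner can always be chosen so that \emph{both} resulting rectangles contain a long sub-segment of the single designated segment $L$ needs justification --- only one of the two possible cut directions splits a given edge, and that edge need not be the level-$1$ edge you chose to host $E_1(A)$. The paper avoids this by not requiring each piece to dump its charge directly onto $L$; instead each piece hands its charge to a neighboring piece, and only the root piece $R_0$ (the one actually containing $E$) leaves points unmatched. You would need to incorporate this charge-routing idea to make your argument close.
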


\begin{proof} Let $A\in \sA_1$ be a $1$-atom. By the construction of the orthogonal decomposition in Section~\ref{sec:topreg}, particularly by Figure~\ref{fig:possible_shapes}, there is an $\sR^1_1$ region $A_0$ such that one of the following holds:
\begin{enumerate}[label={(\arabic*)}]
\item\label{case1} $A=A_0$;
\item\label{case2} $A=A_0\cap R$, where $R$ is an $\sR^k_1$ region for some $k>1$;
\item\label{case3} $A=A_0\cap R_1\cap R_2$, where $R_1$ is an $\sR^k_1$ region and $R_2$ is an $\sR^\ell_1$ region for $\ell>k>1$. 
\end{enumerate}
Let $A_0', R', R_1', R_2'$ denote the corresponding regions before the adjustments performed in the proof of  Lemma~\ref{lem:bounded_charge} and \ref{lem:bounded_charge_atoms}. 
Since the constant $K\geq 4$ in Lemmas~\ref{lem:bounded_charge} and \ref{lem:bounded_charge_atoms}, no adjustments took place for the boundaries of $A_0'$, and and thus $A_0=A_0'$ remains a rectangle of side lengths between $\frac{1}{2}\alpha d_1$ and $\alpha d_1$ by \ref{i1}. Note that 
the side lengths of line segments consistituting $\partial R'$, $\partial R_1'$ or $\partial R_2'$ are at least $\epsilon_1 d_1$ by the orthogonality assumption \ref{i3}. Since $\alpha<\epsilon_1$, we conclude that $A_0\cap \partial R'$ is either a line segment or a path segment with two (perpendicular) line segments in Case~\ref{case2}, and $A_0\cap \partial R_1'$ and $A_0\cap \partial R_2'$ are line segments in Case~\ref{case3}. Therefore, after the adjustments from the proof of Lemmas~\ref{lem:bounded_charge} and \ref{lem:bounded_charge_atoms} are performed,  all possible geometric shapes of $A$ are illustrated in Figure~\ref{fig:H1}, where the small structures along the line segments represent the results after adding or removing various $S_j$'s of dimensions $(40w+1)\times 40w$. The line segments other than from these small structures all have lengths at least $\frac{1}{3}\epsilon d_1\gg 10C_0w^2$.

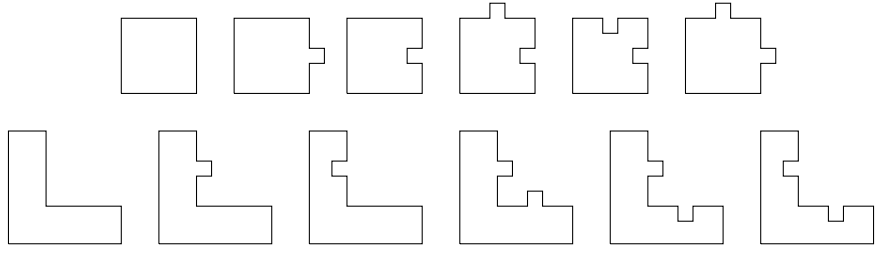
\begin{figure}[h]
\begin{tikzpicture}[scale=0.05]
\pgfmathsetmacro{\a}{5}
\pgfmathsetmacro{\b}{30}
\pgfmathsetmacro{\c}{40}

\draw (0,0) to (20,0) to (20,20) to (0,20) to (0,0);
\draw (\b,0) to (20+\b, 0) to (20+\b,8) to (24+\b,8) to (24+\b,12) to (20+\b,12) to (20+\b,20) to (\b,20) to (\b,0);
\draw (2*\b,0) to (20+2*\b, 0) to (20+2*\b,8) to (16+2*\b,8) to (16+2*\b,12) to (20+2*\b,12) to (20+2*\b,20) to (2*\b,20) to (2*\b,0);
\draw (3*\b,0) to (20+3*\b, 0) to (20+3*\b,8) to (16+3*\b,8) to (16+3*\b,12) to (20+3*\b,12) to (20+3*\b,20) to (12+3*\b, 20) 
to (12+3*\b, 24) to (8+3*\b, 24) to (8+3*\b, 20) to (3*\b,20) to (3*\b,0);
\draw (4*\b,0) to (20+4*\b, 0) to (20+4*\b,8) to (16+4*\b,8) to (16+4*\b,12) to (20+4*\b,12) to (20+4*\b,20) to (12+4*\b, 20) 
to (12+4*\b, 16) to (8+4*\b, 16) to (8+4*\b, 20) to (4*\b,20) to (4*\b,0);
\draw (5*\b,0) to (20+5*\b, 0) to (20+5*\b,8) to (24+5*\b,8) to (24+5*\b,12) to (20+5*\b,12) to (20+5*\b,20) to (12+5*\b, 20) 
to (12+5*\b, 24) to (8+5*\b, 24) to (8+5*\b, 20) to (5*\b,20) to (5*\b,0);

\draw (0-\b,0-\c) to (30-\b,0-\c) to (30-\b,10-\c) to (10-\b,10-\c) to (10-\b,30-\c) to (0-\b,30-\c) to (0-\b,0-\c);
\draw (0-\b+\c,0-\c) to (30-\b+\c,0-\c) to (30-\b+\c,10-\c) to (10-\b+\c,10-\c) to (10-\b+\c, 18-\c) to (10-\b+\c+4, 18-\c) to (10-\b+\c+4, 22-\c) to (10-\b+\c, 22-\c) to (10-\b+\c,30-\c) to (0-\b+\c,30-\c) to (0-\b+\c,0-\c);
\draw (0-\b+2*\c,0-\c) to (30-\b+2*\c,0-\c) to (30-\b+2*\c,10-\c) to (10-\b+2*\c,10-\c) to (10-\b+2*\c, 18-\c) to (10-\b+2*\c-4, 18-\c) to (10-\b+2*\c-4, 22-\c) to (10-\b+2*\c, 22-\c) to (10-\b+2*\c,30-\c) to (0-\b+2*\c,30-\c) to (0-\b+2*\c,0-\c);
\draw (0-\b+3*\c,0-\c) to (30-\b+3*\c,0-\c) to (30-\b+3*\c,10-\c) to (22-\b+3*\c, 10-\c) to (22-\b+3*\c, 14-\c) to (18-\b+3*\c, 14-\c) to (18-\b+3*\c, 10-\c) to (10-\b+3*\c,10-\c) to (10-\b+3*\c, 18-\c) to (10-\b+3*\c+4, 18-\c) to (10-\b+3*\c+4, 22-\c) to (10-\b+3*\c, 22-\c) to (10-\b+3*\c,30-\c) to (0-\b+3*\c,30-\c) to (0-\b+3*\c,0-\c);
\draw (0-\b+4*\c,0-\c) to (30-\b+4*\c,0-\c) to (30-\b+4*\c,10-\c) to (22-\b+4*\c, 10-\c) to (22-\b+4*\c, 6-\c) to (18-\b+4*\c, 6-\c) to (18-\b+4*\c, 10-\c) to (10-\b+4*\c,10-\c) to (10-\b+4*\c, 18-\c) to (10-\b+4*\c+4, 18-\c) to (10-\b+4*\c+4, 22-\c) to (10-\b+4*\c, 22-\c)to (10-\b+4*\c,30-\c) to (0-\b+4*\c,30-\c) to (0-\b+4*\c,0-\c);
\draw (0-\b+5*\c,0-\c) to (30-\b+5*\c,0-\c) to (30-\b+5*\c,10-\c) to (22-\b+5*\c, 10-\c) to (22-\b+5*\c, 6-\c) to (18-\b+5*\c, 6-\c) to (18-\b+5*\c, 10-\c) to (10-\b+5*\c,10-\c) to (10-\b+5*\c, 18-\c) to (10-\b+5*\c-4, 18-\c) to (10-\b+5*\c-4, 22-\c) to (10-\b+5*\c, 22-\c)to (10-\b+5*\c,30-\c) to (0-\b+5*\c,30-\c) to (0-\b+5*\c,0-\c);

\end{tikzpicture}
\caption{The possible geometric shapes of a $1$-atom.}\label{fig:H1}
\end{figure}

Consider the buffered $1$-atom $B=A(w)$. Then $B$ has similar properties as $A$, and in particular Figure~\ref{fig:H1} continues to represent all possible geometric shapes of $B$. Let $E$ be a line segment on the boundary of $B$ with length $\geq \frac{1}{3}\epsilon d_1$. We will define a partial matching $M_1$ of $B$ with $E_1(A)=B-\dom(M_1)\subseteq E$ and $E_1(A)$ at least $C_0w^2$ from the endpoints of $E$.

We give the remaining proof for one of the possible shapes (the last one in Figure~\ref{fig:H1}); the proof for the rest of them is similar. By direct observation we note that $B$ can be decomposed into no more than $6$ rectangles $R_0$--$R_5$. Let $R_0$ be the rectangle such that $E\subseteq \partial R_0$, and let $E_0$ be the line segment of $\partial R_0$ containing $E$. We describe an algorithm to define $E_1(A)\subseteq E$ and a partial matching $M_1$ of $B-E_1(A)$.

\begin{figure}[h]
\begin{tikzpicture}[scale=0.15]
\pgfmathsetmacro{\a}{5}
\pgfmathsetmacro{\b}{30}
\pgfmathsetmacro{\c}{40}


\draw (0,0) to (10,0) to (10,18) to (0, 18) to (0,0);
\draw (10.5,0) to (18,0) to (18,10) to (10.5,10) to (10.5,0);
\draw (18.5,0) to (21.5,0) to (21.5,6) to (18.5,6) to (18.5,0);
\draw (22,0) to (30, 0) to (30,10) to (22,10) to (22, 0);
\draw (0,18.5) to (6, 18.5) to (6, 21.5) to (0, 21.5) to (0,18.5);
\draw (0,22) to (10,22) to (10,30) to (0,30) to (0,22);

\node at (5,9) {$R_0$};
\node at (8, 4) {$E_0$};
\node at (11.5,15) {$E$};
\node at (14, 5) {$R_1$};
\node at (20, 3) {$R_2$};
\node at (26, 5) {$R_3$};
\node at (3,19.8) {$R_4$};
\node at (5, 26) {$R_5$};

\end{tikzpicture}
\caption{Defining a partial matching on a buffered $1$-atom.}\label{fig:H1c}
\end{figure}
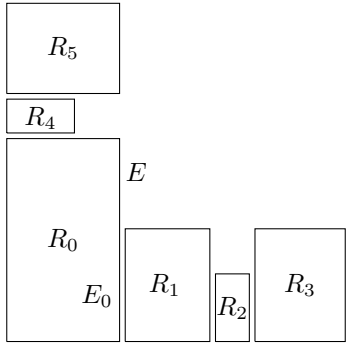

For example, if the rectangles are named as in Figure~\ref{fig:H1c}, we first compute the total charge $q_i$ of each of the rectangles $R_i$ for $i=0,\dots,5$. By Lemma~\ref{lem:rectangle}, $q_i\in\{0,1,-1\}$ for each $i$. Let $q$ be the sum of all $q_i$. Then $|q|\leq 6$. 

For two adjacent rectangles $R$ and $R^*$, we define a set $E_0(R, R^*)$ of unordered pairs $\{x, x^*\}$ where $x\in R$, $x^*\in R^*$, $x$ and $x^*$ are adjacent in the Schreier graph. For example, for $R_2$ and $R_3$, let $E_0(R_2,R_3)$ be a set of $|q_3|$ many unordered pairs $\{ x,x^*\}$ where $x\in R_2$, $x^*\in R_3$, $x$ and $x^*$ are adjacent in the Schreier graph, and $q(x^*)=q_3$ (which implies that $q(x)=-q_3$). Thus $E_0(R_2,R_3)$ is either empty (if $q_3=0$) or contains only one pair (if $q_3=\pm 1$). For $R_1$ and $R_2$, let $E_0(R_1,R_2)$ contain $|q_2+q_3|$ many pairs $\{x, x^*\}$ with $q(x^*)=\sgn (q_2+q_3)$. Similarly one can define $E_0(R_0,R_1)$, $E_0(R_4, R_5)$, $E_0(R_0,R_4)$, etc. By Lemma~\ref{lem:rectangle}, there are perfect matchings of $R_3-E_0(R_2,R_3)$, $R_2-E_0(R_2,R_3)-E_0(R_1,R_2)$, $R_5-E_0(R_4,R_5)$, $R_4-E_0(R_4,R_5)-E_0(R_1,R_4)$, etc. Finally, let $E_1(A)\subseteq E$ be a set of $|q|$ many elements $x$ with $q(x)=\sgn(q)$. Then by Lemma~\ref{lem:rectangle}, there is 
a perfect matching of $R_0-E_0(R_0,R_4)-E_0(R_0,R_1)-E_1(A)$. The desired partial matching $M_1$ is obtained by putting all these perfect matchings together, plus matching points of $E_0(R, R^*)$ according to their natural pairing. We note that Lemma~\ref{lem:rectangle} can be applied since the side lengths of all these rectangles are greater than $6$. Given that the length of $E$ is $\geq \frac{1}{3}\epsilon d_1\gg C_0w^2$, the set $E_1(A)$ can be chosen to be at least $C_0w^2$ from the endpoints of $E$.
\end{proof}

Assuming now that a Borel partial matching $M_{k-1}$ with domain $D_{k-1}$
has been defined which satisfies ${\mathbf H}_{k-1}$, we extend $M_{k-1}$
to a Borel partial matching $M_k$ with domain $D_k$ satisfying ${\mathbf H}_k$.

Consider a $k$-atom $A\in \sA_k$ and the corresponding $w$-buffered atom
$A(w)\subseteq A$. Let $\sS$ denote the set of $(k-1)$-atoms $S\in \sA_{k-1}$
which are contained within $A$. For each $S\in \sS$, we also have its
$w$-buffered counterpart $S(w)$. By ${\mathbf H}_{k-1}$, $M_{k-1}$ matches all
points of $S\in \sS$ except for a finite set $E_{k-1}(S)$ of size at
most $K$. Consider the following ``adjacency graph'' $G$ on $\sS$.
We set $(S_1,S_2)\in G$ iff there are $x_1 \in \partial S_1 \cap X_{k-1}$ and $x_2 \in \partial S_2 \cap X_{k-1}$ with $x_1$ (respectively $x_2$) at least $\frac{1}{6}\epsilon d_1\gg 3C_0w^2$
from the endpoints of the line segment containing it and forming part
of $\partial S_1$ (respectively $\partial S_2$), and with $\rho(x_1,x_2)=1$.
Note that the points $x_1,x_2$ are necessarily not in $\partial_w(A)$
by the above distance requirement. In fact, by our orthogonal marker region construction, two $(k-1)$-atoms are adjacent in $G$ if they contain adjacent line segments of lengths at least $\frac{1}{3}\epsilon d_1\gg 6C_0w^2$ on their $(k-1)$-level boundaries.

We note that $G$ is a connected graph on $\sS$. To see this, consider any two $S_1, S_2\in\sS$. Let $z_1\in S_1$ and $z_2\in S_2$ be arbitrary. Since $A$ is homeomorphic to a disk, there is a path $p$ from $z_1$ to $z_2$ that is contained within $A-\partial A$. Let $x_1, y_1$ be the first pair of consecutive points on $p$ such that $x_1\in S_1$ and $y_1\not\in S_1$. Then we have $\rho(x_1,y_1)=1$ and $x_1, y_1\in X_{k-1}$.  Suppose $y_1\in S_1'\in \sS$. Then we may modify the part of $p$ from $z_1$ to $y_1$ to become a path $p'$ from $S_1$ to $S_1'$ so that any pair of consecutive points on $p'$ crossing the boundaries of $(k-1)$-atoms are at least $\frac{1}{6}\epsilon d_1$ from the endpoints of the line segments containing them respectively and forming part of the boundaries of the respective $(k-1)$-atoms. If $S_1$ and $S_1'$ are adjacent in $G$, then this modification involves only adding points from $S_1\cup S_1'$; otherwise it involves adding points from a third $S_1''\in\sS$ which is adjacent to both $S_1$ and $S_1'$. The modified path gives rise to a path in the adjacency graph $G$ from $S_1$ to $S_1'$. Repeating this process for the rest of the points on $p$ will result in a path $p'$ that satisfies the $\frac{1}{6}\epsilon d_1$ distance requirement for all of its points, and therefore showing that $S_1$ and $S_2$ are connected in the graph $G$.

Choose a particular $S_0 \in \sS$ such that some line segment $E_0$
forming part of $\partial S_0$ is on $X_k$ (i.e. $E_0$ is part of a $k$-level boundary of $S_0$). Let $T\subseteq G$ be a maximal
spanning tree in $G$ with $S_0$ as its root. For any $S_0\neq S\in T$, if $S'$ is the parent node of $S$ in $T$, then there are adjacent line segments $E\in\partial S$ and $E'\in \partial S'$  of lengths at least $\frac{1}{3}\epsilon d_1$ on their $(k-1)$-boundaries. Fix such a
line segment $E_S$ for $\partial S$ and call it the {\em exit segment} for the region $S$.


Now that $T$ is a rooted tree, we may define the {\em rank} of $S\in \sS$ with respect to $T$ by induction as usual: if $S$ is a terminal node
of $T$, define $r(S)=0$; for non-terminal $S$, letting $P_1, \dots, P_m$ be all the child nodes of $S$, define
$$ r(S)=1+\max\{r(P_i)\,:\, i=1, \dots, m\}. $$
Also let $r(T)=r(S_0)$. In addition, for $S\in \sS$, let $t(S)$ be the number of
nodes in $T$ which are equal to $S$ or below $S$ (not just immediately below). Also let $t(T)=|T|$, the size of $T$.

By a subinduction on the rank of $S\in \sS$ with respect to $T$
we  successively extend the partial matching $M_{k-1}$.
We assume the following subinduction hypothesis. 
\medskip

{\bf Subinduction Hypothesis ${\mathbf H}_{k-1}(r-1)$:}
For each $S\in \sS$ with $r(S)\leq r-1$, the partial matching $M_{k-1}$
has been extended to $M_{k-1}(r-1)$, with its domain from 
$S(w)-E_{k-1}(S)$ to $(S\cap A(w))- E'_{k-1}(S)$ where $E_{k-1}'(S) \subseteq E_S$
is a set of size at most $K\cdot t(S)$ and consists of points
of distance at least $C_0w^2$
from the endpoints of $E_S$. If $r(S) <r-1$ then the points
of $E'_{k-1}(S)$ are matched in $M_{k-1}(r-1)$.
\medskip

We first verify the base case for the subinduction.

\begin{lem}\label{lem:Hk0} ${\mathbf H}_{k-1}(0)$ holds.
\end{lem}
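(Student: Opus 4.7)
The plan is to process each terminal node $S\in\sS$ (so $r(S)=0$ and $t(S)=1$) by extending $M_{k-1}$, restricted to $S$, to a Borel partial matching $M_{k-1}(0)$ whose new domain on $S$ is $(S\cap A(w))-E'_{k-1}(S)$, with $E'_{k-1}(S)\subseteq E_S$ of size at most $K$ and all of whose points have distance at least $C_0 w^2$ from the endpoints of $E_S$. Note that for the base case there is no condition to ``match the points of $E'_{k-1}(S)$'', since the clause ``if $r(S)<r-1$\dots'' in ${\mathbf H}_{k-1}(r-1)$ is vacuous for $r=0$.

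First I will describe the geometry of the \emph{buffer strip} $B_S := (S\cap A(w))-S(w)$. Since $S$ is homeomorphic to a disk with rectilinear boundary whose straight segments have length at least $30w$, and since those portions of $\partial S$ lying on $\partial A$ are cut off when one intersects with $A(w)$, $B_S$ is a rectangular polygonal pipe of thickness $w$ running along the $X_{k-1}$-portion of $\partial S$. The residual unmatched set $E_{k-1}(S)$ lies on the inner boundary $\partial S(w)$ of this pipe, and the exit segment $E_S\subseteq\partial S\cap X_{k-1}$ lies on its outer boundary; the pipe is either an open strip (when $\partial S$ meets $\partial A$) or a closed annulus.

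Next I will match $B_S\cup E_{k-1}(S)$ by a \emph{current sweep}. I orient the pipe so that it terminates at $E_S$, and begin at a convenient initial cross-section (an open end of the pipe, or the cut at $E_S$ if $B_S$ is an annulus) with the trivial matching (current $0$). I then propagate the matching along the pipe, applying Lemma~\ref{lem:ua} to chain straight pieces, Lemma~\ref{lem:ub} to turn the rectilinear corners, and Lemmas~\ref{lem:uc} and \ref{lem:ucc} to absorb the at most $K$ charges of $E_{k-1}(S)$ as they are encountered. Because $w>10C''K$ and each straight piece of the pipe has length $\gg C_0 w^2$, the hypotheses of these lemmas are always satisfied, and the running current stays bounded in absolute value by the cumulative absorbed charge, hence by $K$. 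When the sweep reaches the cross-section at $E_S$, Lemma~\ref{lem:standard} puts the matching into standard form, leaving a set $E'_{k-1}(S)$ of unmatched points on $E_S$ whose cardinality equals the absolute value of the final current, hence at most $K$. Since $E_S$ has length at least $\tfrac{1}{3}\epsilon d_1\gg C_0 w^2$, I can place $E'_{k-1}(S)$ near the centre of $E_S$, at distance at least $C_0 w^2$ from its endpoints. The whole construction is local to $S$ and depends only on finite combinatorial data, so it is carried out uniformly in a Borel manner.

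The main obstacle will be the annular case, where $\partial S\subseteq X_{k-1}$ and $B_S$ is a closed loop. There one must cut the loop at $E_S$ and propagate the current once around, so that any net circulation lands on $E'_{k-1}(S)$. The bounded-charge property (Lemma~\ref{lem:bounded_charge_atoms}), applied to $\partial S$, bounds this net circulation by an absolute constant, so by enlarging $K$ once and for all the bound $|E'_{k-1}(S)|\leq K$ is preserved. A secondary subtlety is matching parities through the corners of the pipe, which is handled uniformly by Lemma~\ref{lem:ub} (the current on a cross-section is preserved as the pipe turns).
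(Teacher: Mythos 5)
Your overall strategy is the same as the paper's: sweep the width-$w$ buffer pipe of $S$ with the current machinery (Lemmas~\ref{lem:standard}, \ref{lem:ua}, \ref{lem:ub}, \ref{lem:uc}, \ref{lem:ucc}), absorb the at most $K$ inherited charges of $E_{k-1}(S)$, and deposit the residual imbalance as at most $K$ unmatched points on $E_S$ far from its endpoints, with the size bound coming from the bounded-charge property. Two execution points are glossed over, though, and the first is where all the actual work of the paper's proof lives. In the annular case you cannot literally ``cut the loop at $E_S$ and propagate once around'': when the sweep returns to the cut, a current-$\kappa$ standard-form cross-section sits immediately adjacent to the canonical current-$0$ cross-section you started from, and neither Lemma~\ref{lem:ua} (which needs a joining segment of length at least $2C_0w^2$) nor Lemma~\ref{lem:ucc} (which needs room to deposit charges and re-standardize) applies across a zero-width interface. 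The paper resolves this by reserving a segment $R_0=R\cup R'$ of $E_S$ of length about $6C_0w^2$, sweeping the long way around ($R''$) in one orientation and the short piece $R$ in the opposite orientation, creating the $|\kappa|$ unmatched points inside $R$, and then invoking Lemma~\ref{lem:ua} across $R'$ where the two currents agree. You need some version of this three-region device; citing the lemmas alone does not supply it.

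Second, in the open-strip case (when $\partial S$ meets $\partial A$) the pipe does not ``terminate at $E_S$'': $E_S$ lies on the $(k-1)$-level portion of $\partial S$, generally strictly between the two free ends of the strip, and at each free end every point of the terminal cross-section must be matched inside the strip (nothing there may be left unmatched or matched into $\partial_w(A)$), which forces current $0$ at both ends. A single sweep starting at one free end and stopping at $E_S$ leaves the rest of the strip untreated. The fix is the same reconciliation as in the annular case — sweep inward from both free ends and meet at a reserved segment of $E_S$ — but your proposal as written does not do this. These are repairable gaps in an otherwise correct outline, and your size and placement estimates for $E'_{k-1}(S)$ (current bounded by $K$, all deposited charges of one sign, $E_S$ of length $\gg C_0w^2$) match the paper's.
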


\begin{proof} Let $S\in \sS$ be a terminal node of $T$. First we consider the case where all of the boundary of $S$ is on the $k-1$ level. In this case 
$S\cap A(w)=S$, and since $S$ is homeomorphic to a disk and its boundary consists of horizontal and vertical line segments of lengths at least $40w$, $S(w)$ has similar properties. We only need to extend the partial matching $M_{k-1}$ to $M_{k-1}(0)$ with its domain
from $S(w)-E_{k-1}(S)$ to $S-E'_{k-1}(S)$ for some $E'_{k-1}(S)\subseteq E_S$ of size at most $K$. Moreover, every element of $E_{k-1}'(S)$ will be at least $C_0w^2$ from the endpoints of $E_S$. By our inductive hypothesis ${\mathbf H}_{k-1}$, all points of $E_{k-1}(S)$ are at least $C_0w^2$ from the endpoints of their line segment. Note that in this case $\partial_w(S)$ forms a closed loop of rectangular polygonal pipes with thickness $w$ around the region $S(w)$. This is illustrated in Figure~\ref{fig:Hk0}.

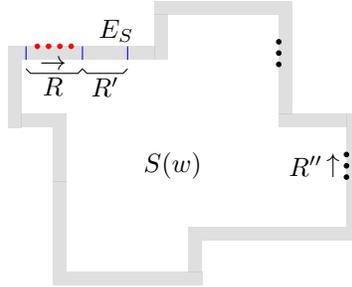
\begin{figure}[h]

\begin{tikzpicture}[scale=0.03]

\pgfmathsetmacro{\a}{3};


\draw[draw=none,fill=lightgray, opacity=0.5] (20-\a,0-\a) rectangle ((20+\a,40+\a);

\draw[draw=none,fill=lightgray, opacity=0.5] (20+\a,0-\a) rectangle ((80+\a,0+\a);

\draw[draw=none,fill=lightgray, opacity=0.5] (80-\a,0+\a) rectangle ((80+\a,20+\a);
\draw[draw=none,fill=lightgray, opacity=0.5] (80+\a,20-\a) rectangle ((150+\a,20+\a);
\draw[draw=none,fill=lightgray, opacity=0.5] (150-\a,20+\a) rectangle ((150+\a,70+\a);
\draw[draw=none,fill=lightgray, opacity=0.5] (120-\a,70-\a) rectangle ((150-\a,70+\a);
\draw[draw=none,fill=lightgray, opacity=0.5] (120-\a,70+\a) rectangle ((120+\a,120+\a);
\draw[draw=none,fill=lightgray, opacity=0.5] (65-\a,120-\a) rectangle ((120-\a,120+\a);
\draw[draw=none,fill=lightgray, opacity=0.5] (65-\a,100-\a) rectangle ((65+\a,120-\a);
\draw[draw=none,fill=lightgray, opacity=0.5] (0-\a,100-\a) rectangle ((65-\a,100+\a);
\draw[draw=none,fill=lightgray, opacity=0.5] (0-\a,70-\a) rectangle ((0+\a,100-\a);
\draw[draw=none,fill=lightgray, opacity=0.5] (0+\a,70-\a) rectangle ((20+\a,70+\a);
\draw[draw=none,fill=lightgray, opacity=0.5] (20-\a,40+\a) rectangle ((20+\a,70-\a);

\draw[fill] (120-\a,105) circle (1);
\draw[fill] (120-\a,100) circle (1);
\draw[fill] (120-\a,95) circle (1);

\draw[fill] (150-\a,55) circle (1);
\draw[fill] (150-\a,50) circle (1);
\draw[fill] (150-\a,45) circle (1);

\draw[fill,red] (10,100+\a) circle (1);
\draw[fill,red] (15,100+\a) circle (1);
\draw[fill,red] (20,100+\a) circle (1);
\draw[fill,red] (25,100+\a) circle (1);

\node at (70, 50) {$S(w)$};
\node at (45, 110) {$E_S$};
\node at (17, 95) {$\to$};
\draw (5,93) to (7,91) to (28,91) to (30,93) to (32,91) to (48,91) to (50,93);
\node at (17, 85) {$R$};
\node at (40, 85) {$R'$};
\node at (150-3*\a, 50) {$\uparrow$};
\node at (150-7*\a, 50) {$R''$};

\draw[blue] (5,100-\a) to (5, 100+\a);
\draw[blue] (30,100-\a) to (30, 100+\a);
\draw[blue] (50, 100-\a) to (50, 100+\a);

\end{tikzpicture}
\caption{The construction of $M_{k-1}(0)$ for a terminal node of $T$. The black dots are umatched points of $M_{k-1}$, the red dots are unmatched points of $M_{k-1}(0)$.}\label{fig:Hk0}
\end{figure}

In preparation for the construction of $M_{k-1}(0)$, we divide the buffer region $\partial_w(S)$ into three regions. First let $R_0$ (not shown in Figure~\ref{fig:H1}) be a rectangular region along the line segment $E_S$ that is at least $C_0w^2$ from the endpoints of $E_S$,  has length at least $6C_0w^2$, and does not contain any point adjacent to a point of $E_{k-1}(S)$. Such an $R_0$ exists since the size of $E_{k-1}(S)$ is at most $K$, but $E_S$ has length at least $\frac{1}{3}\epsilon d_1\gg 4(K+3)C_0w^2$. Next we divide $R_0$ into two adjacent regions $R$ and $R'$, with the length of $R$ at least $2C_0w^2$ and the length of $R'$ at least $2C_0w^2$. Assign an orientation for $R$ so that $R'$ follows $R$ in this orientation. Let $R''=\partial_w(B)-R-R'$ and assign an orientation for $R''$ that is opposite of the orientation of $R$. Note that, by our construction, all the unmatched points of $S(w)$, when matched by $M_{k-1}(0)$ to an adjcent point in $\partial_w(S)$, create ``charges" in $R''$. 

We now define a partial matching of $R''$ by following the orientation assigned to it, starting with the canonical matching, which has current 0. We use Lemma~\ref{lem:ub} as we turn corners, and use Lemma~\ref{lem:ucc} as we absorb new charges. After absorbing a number of charges we always apply Lemma~\ref{lem:standard} to turn the pattern back to the standard form. Since our charges are at least $C_0w_2$ from the endpoints, there is enough room to perform the standardization along a line segment. Because of Lemma~\ref{lem:bounded_charge_atoms}, and because $w>10K$, $R''$ has sufficiently large thickness to allow all charges to be absorbed. Thus we obtain a full partial matching of $R''$, and the end current has absolute value $\leq K$. Suppose the current is $\kappa$. 

Now we turn to the region $R$, and similarly define a partial matching of $R$ by following the orientation assigned to it, starting with the canonical matching, which has current 0. As we move along, we create exactly $|\kappa|$ many charges, with the total charge $-\kappa$, and use Lemma~\ref{lem:ucc} to absorb all these charges to create current $-\kappa$. Since $|\kappa|\leq K$ and the lengths of $R$ is at least $2C_0w^2$, we have enough room to absorb all the charges and to conclude this part of the construction with a partial mathcing of the standard form. 

Finally, we come to the region $R'$, which is of length at least $2C_0w^2$. We use Lemma~\ref{lem:ua} to join the partial matchings on $R$ and $R''$ to obtain a full partial matching of the entire $\partial_w(S)$. This gives the partial matching $M_{k-1}(0)$ as required.

We next consider the case where $\partial S$ contains a part on a higher level $X_\ell$ with $\ell>k-1$. In this case, the key observation is that, from Figure~\ref{fig:possible_shapes}, $S\cap A(w)$ is still a connected rectagular polygonal pipe. In this case we can easily modify the above construction to obtain the extension
$M_{k-1}(0)$ as required.
\end{proof}

For the general subinduction, we assume ${\mathbf H}_{k-1}(r-1)$ for $r<r(T)$, and extend $M_{k-1}(r-1)$
to $M_{k-1}(r)$ satisfying ${\mathbf H}_{k-1}(r)$. Fix an $S\in \sS$ with $r(S)=r$. Let $P_1, \dots, P_m$ be all the child nodes of $S$.
In particular, each of $P_1, \dots, P_m$ is adjacent to $S$ in the graph $G$, and therefore topologically adjacent to $S$ as a region.
By the subinduction hypothesis ${\mathbf H}_{k-1}(r-1)$, $E'_{k-1}(P_j)\subseteq E_{P_j}$ for each $j=1,\dots, m$, and each $E_{P_j}$ is adjacent to a line segment $F_j\subseteq \partial S$. Note that $F_1,\dots, F_m, E_S$ are all disjoint. 

In the first step of the extension from $M_{k-1}(r-1)$ to $M_{k-1}(r)$, we match the points of $E_{P_j}$ to their corresponding points on $F_j$ for all $j=1,\dots, m$. Now we are in a situation similar to Lemma~\ref{lem:Hk0} (see Figure~\ref{fig:qfig}), and with a similar construction as in the proof of Lemma~\ref{lem:Hk0} we obtain the extension $M_{k-1}(r)$ on $S$. 

\begin{figure}[h]

\begin{tikzpicture}[scale=0.03]

\pgfmathsetmacro{\a}{5};


\draw[draw=none,fill=lightgray, opacity=0.5] (0-\a,3) rectangle ((0+\a,40+\a);


\draw[draw=none,fill=lightgray, opacity=0.5] (80-\a,0+\a) rectangle ((80+\a,20+\a);
\draw[draw=none,fill=lightgray, opacity=0.5] (80+\a,20-\a) rectangle ((150+\a,20+\a);
\draw[draw=none,fill=lightgray, opacity=0.5] (150-\a,20+\a) rectangle ((150+\a,70+\a);
\draw[draw=none,fill=lightgray, opacity=0.5] (120-\a,70-\a) rectangle ((150-\a,70+\a);
\draw[draw=none,fill=lightgray, opacity=0.5] (120-\a,70+\a) rectangle ((120+\a,120+\a);
\draw[draw=none,fill=lightgray, opacity=0.5] (65-\a,120-\a) rectangle ((120-\a,120+\a);
\draw[draw=none,fill=lightgray, opacity=0.5] (65-\a,100-\a) rectangle ((65+\a,120-\a);
\draw[draw=none,fill=lightgray, opacity=0.5] (0-\a,100-\a) rectangle ((65-\a,100+\a);
\draw[draw=none,fill=lightgray, opacity=0.5] (0-\a,70-\a) rectangle ((0+\a,100-\a);
\draw[draw=none,fill=lightgray, opacity=0.5] (0+\a,70-\a) rectangle ((20+\a,70+\a);
\draw[draw=none,fill=lightgray, opacity=0.5] (20-\a,40-\a) rectangle ((20+\a,70-\a);
\draw[draw=none,fill=lightgray, opacity=0.5] (0+\a,40-\a) rectangle ((20-\a,40+\a);

\draw[fill] (-\a,25) circle (1);
\draw[fill] (-\a,20) circle (1);
\draw[fill] (-\a,15) circle (1);

\draw[fill] (150+\a,55) circle (1);
\draw[fill] (150+\a,50) circle (1);
\draw[fill] (150+\a,45) circle (1);

\draw[fill,red] (30,100+\a) circle (1);
\draw[fill,red] (35,100+\a) circle (1);
\draw[fill,red] (40,100+\a) circle (1);
\draw[fill,red] (45,100+\a) circle (1);

\node at (75, 60) {$S(w)$};
\node at (40,112) {$E'_{k-1}(S)$};
\node at (-30, 20) {$E_{k-1}'(P_j)$};
\node at (182, 48) {$E'_{k-1}(P_{j'})$};

\end{tikzpicture}
\caption{Constructing the partial matching $M_{k-1}(r)$ for $r<r(T)$. The black dots are initially umatched, and
we extend the matching to them and the shaded region except for the final unmatched red dots.}\label{fig:qfig}
\end{figure}
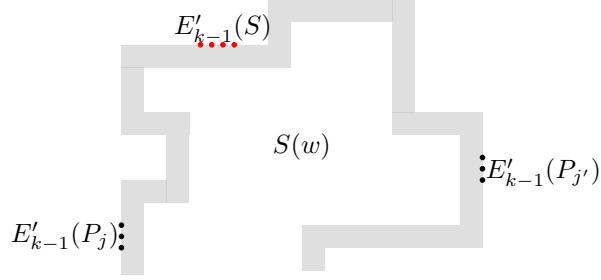 

We verify that the resulting partial matching satisfy the subinductive requirements. By the subinduction hypothesis ${\mathbf H}_{k-1}(r-1)$,
for each $j=1,\dots, m$, the size of each $E'_{k-1}(P_j)$ is at most $K\cdot t(P_j)$. Let $q$ be total charge of $\partial_w(S)$. By Lemma~\ref{lem:bounded_charge_atoms}, $q\leq K$. Thus the size of $E'_{k-1}(S)$ is at most 
$$ q+\sum_{j=1}^m K\cdot t(P_j)\leq K(1+\sum_{j=1}^m t(P_j)=K\cdot t(S). $$
Moreover, since $K\cdot t(S)\leq KC''<\frac{1}{2}w\ll \frac{1}{3}\epsilon d_1$, the thickness of $\partial_w(S)$ is sufficiently large for the
applications of Lemmas~\ref{lem:standard}, \ref{lem:ua}, \ref{lem:ub}, and \ref{lem:ucc}.


This completes the subinduction establishing ${\mathbf H}_{k-1}(r)$ for $r<r(T)$. 

\begin{figure}[h]

\begin{tikzpicture}[scale=0.03]

\pgfmathsetmacro{\a}{5};


\draw[draw=none,fill=lightgray, opacity=0.5] (0-\a,5) rectangle ((0+\a,40+\a);


\draw (-15,-8) to (95, -8);
\draw (-15,5) to (95,5);

\draw[draw=none,fill=lightgray, opacity=0.5] (80-\a,0+\a) rectangle ((80+\a,20+\a);
\draw[draw=none,fill=lightgray, opacity=0.5] (80+\a,20-\a) rectangle ((150+\a,20+\a);
\draw[draw=none,fill=lightgray, opacity=0.5] (150-\a,20+\a) rectangle ((150+\a,70+\a);
\draw[draw=none,fill=lightgray, opacity=0.5] (120-\a,70-\a) rectangle ((150-\a,70+\a);
\draw[draw=none,fill=lightgray, opacity=0.5] (120-\a,70+\a) rectangle ((120+\a,120+\a);
\draw[draw=none,fill=lightgray, opacity=0.5] (65-\a,120-\a) rectangle ((120-\a,120+\a);
\draw[draw=none,fill=lightgray, opacity=0.5] (65-\a,100-\a) rectangle ((65+\a,120-\a);
\draw[draw=none,fill=lightgray, opacity=0.5] (0-\a,100-\a) rectangle ((65-\a,100+\a);
\draw[draw=none,fill=lightgray, opacity=0.5] (0-\a,70-\a) rectangle ((0+\a,100-\a);
\draw[draw=none,fill=lightgray, opacity=0.5] (0+\a,70-\a) rectangle ((20+\a,70+\a);
\draw[draw=none,fill=lightgray, opacity=0.5] (20-\a,40-\a) rectangle ((20+\a,70-\a);
\draw[draw=none,fill=lightgray, opacity=0.5] (0+\a,40-\a) rectangle ((20-\a,40+\a);

\draw[fill] (-\a,85) circle (1);
\draw[fill] (-\a,80) circle (1);
\draw[fill] (-\a,75) circle (1);

\draw[fill] (150+\a,55) circle (1);
\draw[fill] (150+\a,50) circle (1);
\draw[fill] (150+\a,45) circle (1);

\draw[fill,red] (78,\a) circle (1);
\draw[fill,red] (82,\a) circle (1);

\node at (75, 60) {$S_0(w)$};
\node at (52,12) {$E'_{k-1}(S_0)$};
\node at (-30, 80) {$E_{k-1}'(P_j)$};
\node at (182, 48) {$E'_{k-1}(P_{j'})$};
\node at (20,-2) {$\partial_w(A)$};

\end{tikzpicture}
\caption{Constructing the partial matching $M_{k-1}(r)$ for $r=r(T)$. }\label{fig:qfigr}
\end{figure}
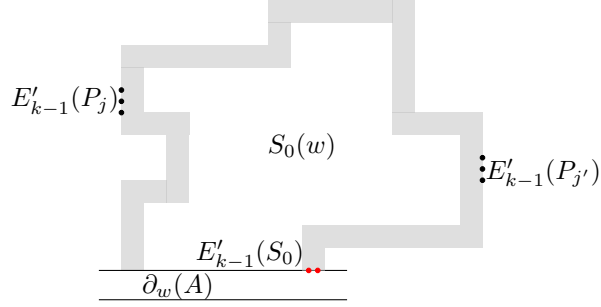 

For $r=r(T)$, there is no exit segment $E_{S_0}$; instead, the ``exit points" $E'_{k-1}(S_0)$ need to be a part of $\partial A(w)$ to maintain
the induction hyothesis ${\mathbf H}_k$ (see Figure~\ref{fig:qfigr}). Here the extension $M_{k-1}(r)$ is still constructed similar to the proof of Lemma~\ref{lem:Hk0}. In fact, noting that $\partial_w(S_0)\cap A(w)$ is a connected rectangular polygonal pipe, we arbitrarily fix an orientation for it. Then we define the partial matching $M_{k-1}(r)$ again by starting from the canonical matching, turning corners, and absorbing charges. Along the way we also put the partial matching into the standard form whenever we can. At the end of the construction the charges will show up at the end of the pipe as unmatched points. By a similar computation as above, we have $K\cdot t(T)=K\cdot |T|\leq KC''<\frac{1}{2}{w}\ll \frac{1}{3}\epsilon_1d_1$, and thus the thickness of $\partial_w(S_0)$, which is $w$, is large enough for the algorithms to be applied. 

To show that the inductive hypothesis ${\mathbf H}_k$ is maintained, we need to verify that $E'_{k-1}(S_0)=E_k(A)$ has size $\leq K$ and that the charges in $E_{k-1}'(S_0)$ are at least $C_0w^2$ from the endpoints of their line segment as a part of $\partial A(w)$. 
For the size of $E_{k-1}'(S_0)$, note that the total charge of $A$ is at most $K$ from Lemma~\ref{lem:bounded_charge_atoms}. Since $A(w)$ has the same shape, the total charge of $A(w)$ is at most $K$. Now $M_{k-1}(r)$ has matched all the points of $A(w)$ except the points of $E'_{k-1}(S_0)$, we have that $E_{k-1}' (S_0)$ has size $\leq K$.

We next verify that the charges in $E_{k-1}'(S_0)$ are at least $C_0w^2$ from the endpoints of their line segment as a part of $\partial A(w)$. For this pick any point $x_0\in E'_{k-1}(S)$, and note that any other point of $E'_{k-1}(S)$ is within distance $w$ of $x_0$. Let $F$ be a line segment of $\partial S$ that is within distance $w$ of $x_0$ and $E$ be a line segment of $\partial A$ that is within distance $w$ of $x_0$. Since $w\ll \epsilon d_1$, $F$ and $E$ meets perpendicularly at a point $y_0$ that is within distance $w$ of $x_0$. By the construction in the proof of Lemma~\ref{lem:bounded_charge}, since $E$ is on level $k$ and $F$ is on level $k-1$, the endpoints of $E$ is at least $\frac{1}{3}\epsilon d_1$ from $y_0$. Since $\frac{1}{3}\epsilon d_1\gg C_0w^2$, it follows that all points of $E_{k-1}'(S)$ are at least $C_0w^2$ from the endpoints of $E$.

This completes the subinduction and hence the induction establishing ${\mathbf H}_k$ for all $k$, giving the Borel
partial matchings $M_k$. Letting $M=\bigcup_k M_k$ gives the desired matching of
$F(2^{\Z^2})$, and completes the proof of Theorem~\ref{thm:mt}.

\section{Borel Linings} \label{sec:cls}

In this section we show that there exists a Borel lining of $F(2^{\Z^2})$. By a {\em lining} we mean a subgraph of the Schreier graph of $F(2^{\Z^2})$ which is a Hamiltonian path on each equivalence class. More generally, consider a free action of $\Z^n$ on a Polish space $X$ and the associated Schreier graph on $X$. Following \cite{GJKSinf}, a {\em line section} is a subgraph $S$ of the Schreier graph on $X$ where each vertex in $S$ has degree $2$. If $S$ is a line section, we call each connected component of $S$ an {\em $S$-line}. A line section $S$ is {\em complete} if $S\cap [x]\neq \varnothing$ for all $x\in X$. A line section $S$ is {\em single} if for each $x\in X$, $S\upharpoonright [x]$ is a nonempty single $S$-line. Thus a {\em lining} $S$ is a single line section where the vertex set of $S$ is the entire space, and in particular it is a complete line section. 

A line section $S$ on $X$ is {\em Borel} ({\em clopen}, resp.) if for each generator $e$ of $\Z^n$, the set $\{x\in X\,:\, (x, e\cdot x)\in S\}$ is Borel (clopen, resp.). In \cite{GJKSinf} the authors showed that there do not exist clopen single line sections of $F(2^{\Z^2})$; in particular, there are no clopen linings of $F(2^{\Z^2})$. Here, in contrast, we prove the following theorem.


\begin{thm} \label{thm:cls}
Every free action of the group $\Z^2$ on a Polish space has a Borel lining.
\end{thm}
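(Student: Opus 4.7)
My plan is to adapt the inductive orthogonal-decomposition construction of Section~6 (used for Borel perfect matchings) so that instead of pairing up vertices we thread a bi-infinite Hamiltonian path on each orbit. As in Section~6 it suffices to treat $X=F(2^{\Z^2})$. I will use the orthogonal decomposition of Corollary~\ref{cor:atomdisk}, in which every $k$-atom is homeomorphic to a disk, equipped with the bounded-charge property of Lemma~\ref{lem:bounded_charge_atoms} and the same buffer parameter $w>10C''K$ used in Section~\ref{sec:matching}. The inductive data will be a Borel sequence of partial line structures $L_k$, each defined on $\bigcup\{A(w):A\in\sA_k\}$, satisfying the following analogue of $\mathbf{H}_k$: for each $k$-atom $A$, $L_k\res A(w)$ is a Hamiltonian path on $A(w)$ whose two endpoints lie on a designated exit segment of $\partial A(w)$, at least $C_0w^2$ from its ends, with charge parities compatible with the total charge of $A(w)$; and $L_k\res S(w)=L_{k-1}\res S(w)$ for every $(k-1)$-atom $S\subseteq A$. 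The desired lining is then $L=\bigcup_k L_k$.

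For the base case I construct Hamiltonian paths on each buffered $1$-atom from the finitely many geometric shapes classified in the proof of Lemma~\ref{lem:H1}, placing both endpoints near the middle of the chosen exit segment with parities dictated by the total charge of the atom. For the inductive step, given a $k$-atom $B$ containing $(k-1)$-atoms $S_1,\dots,S_m$, I form the adjacency graph and a rooted spanning tree $T$ on the $S_i$ exactly as in Section~\ref{sec:matching}, with the root $S_0$ having its exit segment on the $k$-level boundary of $B$. Each $S_i$ already carries a Hamiltonian path $P_i$ with two endpoints on its exit segment. Using the $w$-thick rectangular polygonal pipe $U=(B(w)\setminus\bigcup_i S_i(w))\cup\partial_w(B)$ that threads between the $S_i(w)$ inside $B(w)$, I process the $S_i$ in order of $T$-rank; at each step I splice the child path $P_S$ onto the growing path by building a connector in the corresponding portion of the pipe which exhausts those pipe points and merges the four endpoints involved into interior degree-$2$ vertices of a single longer path. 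After processing the $m-1$ tree edges, the result is a single Hamiltonian path on $B(w)$ with two free endpoints, which I finally route along the remaining pipe to the exit segment of $B$.

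The core new technical ingredient, and the main obstacle, is a Hamiltonian-path analogue of the matching toolbox of Subsection~6.2: a pipe-splicing lemma asserting that in a rectangular polygonal pipe of even thickness $w$ whose straight segments have length $\gg C_0w^2$, there is a Hamiltonian path between any two prescribed boundary points lying on designated sides, even in the presence of a bounded set of obstacle charges that must be absorbed, provided the charges balance globally. I plan to prove this by replacing the canonical horizontal-edge matching of Lemma~\ref{lem:standard} by a canonical boustrophedon snake, and reinterpreting the current of Lemma~\ref{curcon} as the signed vertical offset of the snake's head from the central axis of the pipe. The analogues of Lemmas~\ref{lem:standard}--\ref{lem:ucc} (standard form, joining, corner turning, and single/multiple charge absorption) then have natural snake versions in which each local move must additionally preserve the global connectedness of the snake as a single path. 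The most delicate step will be the snake analogue of the standard-form reduction of Lemma~\ref{lem:standard}, where the alternating-block bookkeeping of the matching argument has to be replaced by bookkeeping on the boundary excursions of the snake, and each swap must be checked to leave the snake a single connected path rather than splitting it into shorter pieces. Once this pipe-Hamiltonian-path lemma is in place, the bound $|E_k(B)|\leq K\cdot t(T)<\tfrac{1}{2}w$ afforded by Lemma~\ref{lem:bounded_charge_atoms} and the spanning-tree verification of the inductive hypothesis transfer from Section~\ref{sec:matching} essentially verbatim, yielding the required Borel lining.
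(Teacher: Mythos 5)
Your overall architecture (buffered atoms, the adjacency graph and rooted spanning tree on the $(k-1)$-atoms inside a $k$-atom, subinduction on rank, routing the accumulated defect to an exit segment) is the same as the paper's, and you correctly identify that the matching toolbox of \S\ref{sec:matching} needs a Hamiltonian-path analogue. However, there are two genuine gaps.

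First, your inductive hypothesis is unsatisfiable as stated. You require $L_k\res A(w)$ to be a Hamiltonian path on all of $A(w)$. A path $v_1,\dots,v_n$ alternates charges, so the total charge $\sum_i q(v_i)$ of its vertex set lies in $\{-1,0,1\}$; but by Lemma~\ref{lem:bounded_charge_atoms} the total charge of a buffered atom is only bounded by the constant $K\geq 4$ and can have absolute value $\geq 2$, in which case no Hamiltonian path on $A(w)$ exists. Some charge must therefore be deferred to the next level, and --- unlike in the matching proof --- one cannot defer isolated vertices: a single omitted vertex can later be attached to a path only as a degree-one pendant, which destroys the path structure. This is exactly why the paper's proof leaves out up to $K$ entire $p_0\times q_0$ \emph{parity change regions} (odd-by-odd blocks notched into or out of the buffered boundary), which a later path can traverse as a local detour. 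Your hypothesis omits this device, and your closing appeal to the bound $|E_k(B)|\leq K\cdot t(T)$ is borrowed from the matching bookkeeping without specifying what $E_k(B)$ is in the lining setting or how a stage-$k$ path picks up its points.

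Second, the ``core new technical ingredient'' --- your pipe-splicing lemma --- is precisely the hard part of the theorem, and you leave it as a plan. The paper's corresponding statement (Theorem~\ref{thm:mtt}, together with the rearrangement and cancellation Lemmas~\ref{lem:parearr} and~\ref{lem:parc}) is proved by a separate induction on the number of parity change regions with explicit subdivision constructions; it is not obtained by transporting the matching ``current'' formalism. Your proposed invariant (the signed offset of the snake's head) and the associated local surgeries are not shown to preserve connectedness of the snake, and you yourself flag this as the delicate unresolved step. Until that lemma is actually proved --- in particular, until you show how to absorb a bounded charge defect into a single Hamiltonian path of a rectangular pipe without splitting it --- the argument is incomplete.
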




The rest of this section is devoted to a proof of Theorem~\ref{thm:cls}.
The proof is similar in spirit to that for Borel matchings, Theorem~\ref{thm:mt}, and
as with that proof we must establish some combinatorial facts about line sections that we 
will need for the proof.

The notions of line section and lining extend naturally to the Cayley graph of $\Z^2$, and more generally finite subsets of $\Z^2$. So for a finite subset $S$ of $\Z^2$, a {\em lining} of $S$ means a finite simple path in the Cayley graph of $\Z^2$ 
which passes through each element of $S$.

As is the case with matchings, we fix a parity on the points of $\Z^2$ in discussing the combinatorial facts below. In our construction of a Borel lining, all of our arguments 
will be local and not depend on which of the two possible parities on the class we consider. We fix odd positive integers $p_0$, $q_0>1$ and consider a rectangle of dimensions $p_0 \times q_0$. This will have 
a total charge of $+ 1$ or $-1$. Such a rectangle will be called a ``parity change'' region, and will be used frequently in the 
following arguments. So, by adding or subtracting such a region from a rectangle $R$, we can change the 
total charge of the region by either $1$ or $-1$, depending on the placement of the parity change region.

The following fact is the main techical tool we need.

\begin{thm} \label{thm:mtt}
There is a constant $a_0$ such that the following holds. Suppose $R\subseteq \Z^2$ is a finite rectangle 
and $R'$ is obtained from $R$ by adding or subtracting 
at most $k$ many $p_0\times q_0$ parity change regions along the boundary of $R$,
and with the distance between any two such parity change regions at least $a_0$,
and the distance between any paritty change region and a corner of $R$ at least $k a_0$.
Suppose $R'$ has total charge $0$. Then given two adjacent points $x$, $y$ on the 
boundary of $R'$ at least $a_0$ away from a corner, 
there is a lining of $R'$ which starts at $x$ and ends at $y$. 
\end{thm}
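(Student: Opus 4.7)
The plan is to proceed by induction on $k$, the number of parity change regions. For the base case $k=0$, the region $R'=R$ is a rectangle of charge zero, so at least one of its side lengths is even. In this case a standard snake-pattern traversal in the induced grid produces a Hamiltonian path between any two adjacent boundary points $x,y$ at distance at least $a_0$ from every corner: the distance condition gives enough room to begin and end the snake in the canonical left-to-right/right-to-left fashion, with a localized adjustment at the endpoints to ensure the path actually terminates at $x$ and $y$.

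For the inductive step, fix one parity change region $B$ attached to or removed from a side of $R$. The plan is to slice off a thin rectangular slab $S$ of $R'$ containing $B$ (and only $B$) by a straight cut $\ell$ perpendicular to the side of $R$ containing $B$. The separation hypotheses---distance at least $a_0$ between any two parity change regions, and at least $ka_0$ between any such region and a corner of $R$---guarantee that such a slab exists with $\ell$ avoiding all other parity change regions and all corners of $R$. The complement $R'\setminus S$ is then obtained from a smaller rectangle by adding or subtracting at most $k-1$ parity change regions, in the configuration to which the inductive hypothesis applies.

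The heart of the argument is the choice of an interface pair of adjacent lattice points $u\in S$ and $v\in R'\setminus S$ on opposite sides of $\ell$. One picks $(u,v)$ so that (a) there is a Hamiltonian path in $S$ from $x$ to $u$ constructed by a snake traversal that detours into (or around) $B$, and (b) the complementary region $R'\setminus S$ has total charge zero and, by the inductive hypothesis, admits a Hamiltonian path from $v$ to $y$. Since $B$ contributes a charge of $\pm 1$ to $S$, the parity of $(u,v)$ is dictated by the parities of $x$, $y$, and $B$; however, there is freedom along the full length of the interior cut $\ell$ to slide $u$ by an even number of steps without affecting the local detour near $B$, and this flexibility is what makes the parity adjustment possible. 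Concatenating the two Hamiltonian paths along the edge $\{u,v\}$ then yields the required Hamiltonian path in $R'$ from $x$ to $y$.

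The main obstacle---and the source of the constants $a_0$ and $ka_0$---is precisely this parity bookkeeping, coupled with the geometric room needed to realize a clean Hamiltonian detour through $B$. The $ka_0$ buffer from corners is what permits peeling off one slab at a time without crowding the remaining parity change regions against the shrinking rectangle, and the $a_0$ separation lets each snake traversal avoid interference from neighboring regions. Packaging the snake-plus-detour construction so that it accepts arbitrary adjacent endpoints of the correct parity, and verifying that the charge identity propagates correctly to the interface after subtracting $S$, is where the combinatorial care is concentrated; the charge framework already developed in Section \ref{sec:matching}, together with the rectangle Hamiltonian-path constructions of the base case, supplies most of the tools needed.
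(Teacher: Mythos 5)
Your base case ($k=0$, the snake traversal of an even rectangle) and the general spirit (induction on $k$, cut the region into pieces, find Hamiltonian paths in each, concatenate at an interface pair of adjacent lattice points, use parity freedom of the interface pair) match the paper. The inductive step, however, has a genuine gap.

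You propose to isolate a single parity change region $B$ by a cut $\ell$ perpendicular to the side of $R$ containing $B$, producing a slab $S$ containing only $B$ and a complement $R'\setminus S$ with $\le k-1$ regions, and then apply the inductive hypothesis to $R'\setminus S$. This fails for two reasons. First, the separation hypotheses do not guarantee such a slab exists: if $B$ lies on the bottom edge and another region lies on the top or left edge with nearby $x$-coordinate, the half-plane slab cut off by $\ell$ will contain that region as well (the distance-$a_0$ separation says nothing about one being isolable by a single perpendicular cut). Second, and more fundamentally, the inductive hypothesis on the corner distance is not preserved. After cutting, the rectangle underlying $R'\setminus S$ acquires two new corners on $\ell$. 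The nearest remaining region $B'$ can be at distance exactly $a_0$ from $B$ (that is all the hypothesis guarantees), and $\ell$ must lie between $B$ and $B'$, so $B'$ ends up within distance $<a_0$ of a new corner. The inductive hypothesis requires $\ge (k-1)a_0$, so for $k\ge 3$ the hypothesis is violated and the induction does not close. Sliding $\ell$ does not help: there is only about $a_0$ worth of room between $B$ and $B'$, not $(k-1)a_0$.

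The paper avoids both problems by cutting a full \emph{strip} parallel to one edge rather than a slab through the interior, and by \emph{placing fresh parity change regions on the subdividing line} at locations of its own choosing (Lemma~\ref{lem:parc}). Because those new regions are placed at essentially the same $x$-coordinates as the regions on the outer edge, they inherit the $\ge ka_0$ corner separation and the induction applies cleanly. This also forces a case analysis depending on whether some edge has two adjacent opposite-parity regions (handled by Lemma~\ref{lem:parc}), or all regions on each edge share a parity (handled by first rearranging a region from an adjacent edge using Lemma~\ref{lem:parearr}). Those two lemmas, together with the $k\le 2$ base cases, are the structural content your proposal is missing: the one-region-at-a-time peeling is not flexible enough to control where the new boundary corners land relative to the remaining parity change regions.
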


Figure~\ref{fig:mtt} illustrates the statement of Theorem~\ref{thm:mtt}.

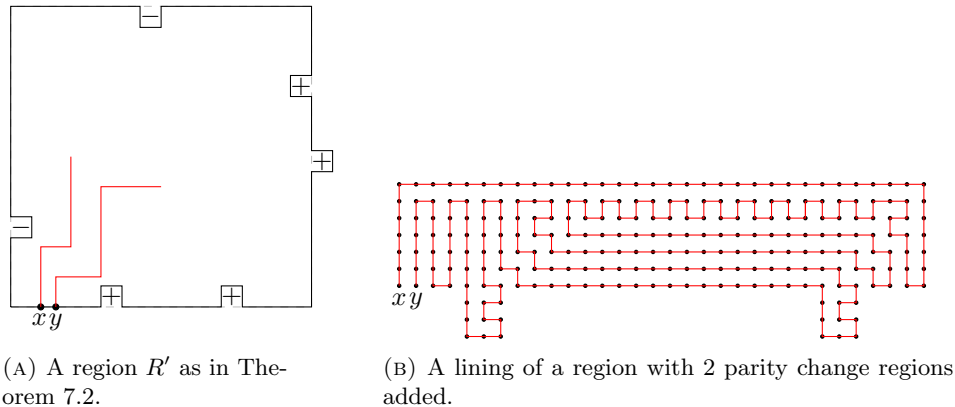
\begin{figure}[h] 
\centering

\begin{subfigure}{0.3\textwidth}

\begin{tikzpicture}[scale=0.04]
\pgfmathsetmacro{\w}{7};

\draw[lightgray, dashed] (0,0) to (100,0) to (100,100) to (0,100) to (0,0);

\draw (0,0) to (30,0) to (30,\w) to (30+\w,\w) to (30+\w,0) to (70,0) to (70,\w) to (70+\w,\w)
to (70+\w,0) to (100,0) to (100,45) to (100+\w,45) to (100+\w,45+\w) to (100,45+\w)
to (100,70) to (100-\w,70) to (100-\w,70+\w) to (100,70+\w) to (100,100)
to (50,100) to (50,100-\w) to (50-\w,100-\w) to (50-\w,100) 
to (0,100) to (0,30) to (\w,30) to (\w,30-\w) to (0,30-\w) to (0,0);

\node (a) at (30+\w/2,\w/2) {$+$};
\node (b) at (70+\w/2,\w/2) {$+$};
\node (c) at (100+\w/2,45+\w/2) {$+$};
\node (d) at (100-\w/2,70+\w/2) {$+$};
\node (e) at (50-\w/2,100-\w/2) {$-$};
\node (f) at (\w/2,30-\w/2) {$-$};

\draw[fill]  (10,0) circle  (1);
\draw[fill]  (15,0) circle  (1);

\draw[red] (10,0) to (10,20) to (20,20) to (20,50);
\draw[red] (15,0) to (15,10) to (30,10) to (30,40) to (50,40);

\node(g) at (9.5,-4) {$x$};
\node(h) at (15,-4.9) {$y$};

\end{tikzpicture} 

\caption{A region $R'$ as in Theorem~\ref{thm:mtt}.}
\label{fig:imtt}
\end{subfigure}
\hfill
\begin{subfigure}{0.6\textwidth}

\begin{tikzpicture}[scale=0.225]

\foreach \i in {0,...,30} 
\foreach \j in {0,...,5} 
{
\draw[fill] (\i,\j) circle (0.1);
}

\foreach \i in {3,...,5} 
\foreach \j in {-3,...,-1} 
{
\draw[fill] (\i,\j) circle (0.1);
}

\foreach \i in {24,...,26} 
\foreach \j in {-3,...,-1} 
{
\draw[fill] (\i,\j) circle (0.1);
}

\foreach \j in { 0,...,5}
\draw[fill] (-1,\j) circle(0.1);

\foreach \j in { -1,...,30}
\draw[fill] (\j,6) circle(0.1);

\draw[red] (0,0) to (0,5) to (1,5) to (1,0) to (2,0) to (2,5) to (3,5) to (3,-3)
to (5,-3) to (5,-2) to (4,-2) to (4,-1) to (5,-1) to (5,0) to (4,0)
to (4,5) to (5,5) to (5,1) to (6,1) to (6,0)
to (24,0) to (24,-3) to (26,-3) to (26,-2) to (25,-2) to (25,-1) to (26,-1) to (26,0) to (25,0)
to (25,1) to (7,1) to (7,2) to (6,2) to (6,5)
to (8,5) to (8,4) to (7,4) to (7,3) to (8,3) to (8,2)
to (26,2) to (26,1) to (27,1) to (27,0) to (28,0) to (28,2) to (27,2) to (27,3) to (9,3)
to (9,4) to (9,5);
\foreach \k in {9,11,...,21} 
{\draw[red] (\k,5) to (\k+1,5) to (\k+1,4) to (\k+2,4) to (\k+2,5);
}

\draw[red] (23,4) to (23,5) to (24,5) to (24,4) to (25,4) to (25,5) to (26,5) to (26,4)
to (27,4) to (27,5) to (29,5) to (29,4) to (28,4) to (28,3) to (29,3) to (29,0) to (30,0) to (30,6) to (-1,6) to (-1,0);

\node(g) at (-1,-.8) {$x$};
\node(h) at (0,-.9) {$y$};

\end{tikzpicture}

\caption{A lining of a region with $2$ parity change regions added.}
\label{fig:mttb}
\end{subfigure}

\caption{An illustration of Theorem~\ref{thm:mtt}.}
\label{fig:mtt}
\end{figure}

An easy consequence of Theorem~\ref{thm:mtt} is the following. 

\begin{cor} \label{cor:mtt}
There is a constant $a_0$ such that if $R$, $R'$ are as in 
Theorem~\ref{thm:mtt}, and if the total 
charge of $R'$ is $0$,  then for any two adjacent points $x,y$ on the boundary of $R'$
at least $k a_0$ away from a corner of $R$, 
and a point $z$ on the boundary at least $k a_0$ away from a corner or a point in $\{ x,y\}$,
there are two disjoint line sections $L_1$, $L_2$ of $R'$ such that $L_1$ starts at $x$, $L_2$ starts at $y$,
every vertex of $R'$ is on $L_1 \cup L_2$, and $L_1$, $L_2$ end at adjacent vertices on the boundary of $R'$
one of which is the point $z$.  
\end{cor}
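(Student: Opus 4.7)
My plan is to deduce Corollary~\ref{cor:mtt} directly from Theorem~\ref{thm:mtt} by constructing a single lining of $R'$ from $x$ to $y$ and then severing it at an appropriate edge incident to $z$. Applying Theorem~\ref{thm:mtt} to the adjacent boundary points $x$ and $y$, I obtain a Hamilton path $P=(x=v_0,v_1,\dots,v_n=y)$ that visits every vertex of $R'$ exactly once. Since the distance hypotheses in the corollary ensure $z\notin\{x,y\}$, the vertex $z$ sits internally on $P$, so $P$ uses exactly two of the edges of the Schreier graph at $z$.

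The key step is to argue that at least one of the two edges of $P$ incident to $z$ is a boundary edge of $R'$. I would choose the constant $a_0$ in the corollary slightly larger than the one coming from Theorem~\ref{thm:mtt}, so that in addition to being at distance $\geq ka_0$ from every corner of $R$, the point $z$ is guaranteed to lie on a long straight segment of the boundary of $R'$, well separated from every parity change region attached to $R$. Under this strengthening, $z$ has precisely three neighbors in $R'$: two along the boundary of $R'$ and one in the interior. A pigeonhole argument then shows that, since $P$ uses two of these three edges, at least one must be a boundary edge, say $\{z,z''\}$, where $z''$ is a boundary neighbor of $z$.

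To finish, I would cut $P$ at the edge $\{z,z''\}$. This partitions $P$ into two disjoint line sections: $L_1$ beginning at $x$ and ending at one endpoint of the cut edge, and $L_2$ beginning at $y$ and ending at the other endpoint. Their union covers every vertex of $R'$; their terminal vertices $z$ and $z''$ are adjacent boundary vertices of $R'$; and one of them is $z$, which is exactly the conclusion required by the corollary. The main (mild) subtlety of the argument is the corner-case analysis verifying that $z$ has the generic neighbor configuration in $R'$---otherwise, at a concave corner created by a parity change region, $z$ could a priori have four neighbors with the two $P$-edges at $z$ both being interior, obstructing the cut. This is handled cleanly by the above strengthening of the distance condition on $z$, and is consistent with the ``easy consequence'' nature of the statement.
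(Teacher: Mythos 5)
Your proposal is correct and follows essentially the same route as the paper: apply Theorem~\ref{thm:mtt} to produce a Hamiltonian path of $R'$ from $x$ to $y$, then delete an edge of that path joining $z$ to an adjacent boundary point, yielding the two line sections $L_1$, $L_2$. Your pigeonhole check that one of the two path-edges at $z$ really is a boundary edge (using that $z$, being far from corners and notches, has two boundary neighbors and one interior neighbor in $R'$) is a point the paper's one-line proof glosses over, so that extra care is welcome rather than a deviation.
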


\begin{proof}
Let $L$ be a line section for $R'$ which begins at $x$ and ends at $y$. One of the edges of $L$
contains the point $z$ and connects $z$ to an adjacent point on the boundary of $R'$. If we remove this 
edge from $L$, we obtain two disjoint line section of $R'$, say $L_1$ and $L_2$ which start at $x$ and $y$
respectively, and one of which ends at $z$. 
\end{proof}

We note that the statement of Corollary~\ref{cor:mtt} immediately implies the statement 
of Theorem~\ref{thm:mtt} as well, simply by adding an edge between the terminating vertices of 
$L_1$ and $L_2$. So, in the following constructions it suffices to show either version.

First we note that if $R'$ is a rectangle (equivalently $k=0$), then the result is easy to see directly. In this case, since $R'$ has total charge $0$, it must have an even number of nodes. A proof by picture is given in Figure~\ref{fig:rc}. Without loss of generality we assume $x, y$ are on the lower horizontal edge of $R'$. Note that $x, y$ can be any pair of adjacent points on that edge. In the case of an even-by-odd rectangle, also note that the method works equally well if the adjacent points $x, y$ are on either of the vertical edges.

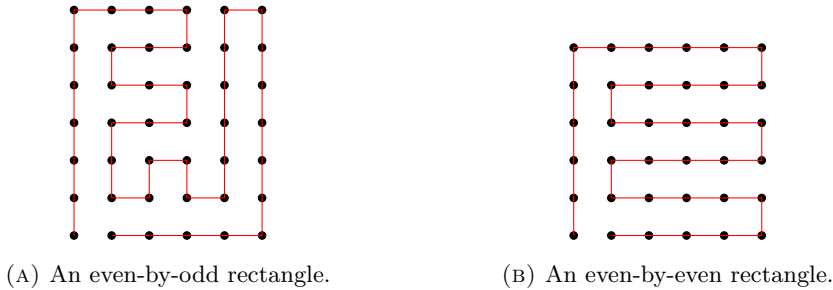
\begin{figure}[h]
\centering

\begin{subfigure}{0.5\textwidth}

\begin{center}
\begin{tikzpicture}[scale=0.5]

\foreach \i in {0,...,5} 
\foreach \j in {0,...,6} 
{
\draw[fill] (\i,\j) circle (0.1);
}

\draw[red] (0,0) to (0,6) to (3,6) to (3,5) to (1,5) to (1,4) to (3,4) to (3,3) to (1,3) to (1,1) to (2,1) to (2,2) to (3,2) to (3,1) to (4,1)
to (4,6) to (5,6) to (5,0) to (1,0);

\end{tikzpicture}
\end{center}
\caption{An even-by-odd rectangle.}
\end{subfigure}
\hfill
\begin{subfigure}{0.45\textwidth}
\begin{center}
\begin{tikzpicture}[scale=0.5]

\foreach \i in {0,...,5} 
\foreach \j in {0,...,5} 
{
\draw[fill] (\i,\j) circle (0.1);
}

\draw[red] (0,0) to (0,5) to (5,5) to (5,4) to (1,4) to (1,3) to (5,3) to (5,2)
to (1,2) to (1,1) to (5,1) to (5,0) to (1,0);

\end{tikzpicture}
\end{center}
\caption{An even-by-even rectangle.}
\end{subfigure}
\caption{Proof of Theorem~\ref{thm:mtt} for a rectangle $R'$.} \label{fig:rc}
\end{figure}

Although not needed for the proof of Theorem~\ref{thm:mtt}, for this $k=0$ case we would like to state a stronger result which we will need later.

\begin{lem} \label{lem:k=0} Let $R$ be a rectangle of total charge $0$. Given any two disjoint pairs of adjacent points $x, y$ and $x',y'$ on the boundary of $R$, there exist two disjoint line sections $L_1$ and $L_2$ of $R$ such that every vertex of $R$ is on $L_1\cup L_2$, $L_1$ starts with $x$ and ends with one of $x',y'$ and $L_2$ starts with $y$ and ends with the other one of $x',y'$.
\end{lem}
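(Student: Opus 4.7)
The plan is to reduce Lemma~\ref{lem:k=0} to a statement about Hamiltonian cycles in the grid graph on $R$, and then verify the reduced statement by an explicit construction with a short case analysis.

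\textbf{Reduction to a Hamiltonian cycle problem.} First I would observe that producing the two line sections $L_1$ and $L_2$ with the stated endpoints is equivalent to producing a single Hamiltonian cycle $C$ of $R$ that contains both of the boundary edges $\{x,y\}$ and $\{x',y'\}$. Given such a $C$, deleting those two edges leaves two vertex-disjoint paths covering all of $R$; since every interior vertex of these paths already has degree $2$ in $C$, the four path endpoints are exactly $\{x,y,x',y'\}$, and because we deleted the edges $\{x,y\}$ and $\{x',y'\}$, $x$ must be paired with one of $\{x',y'\}$ and $y$ with the other. Conversely, adjoining the edges $\{x,y\}$ and $\{x',y'\}$ to $L_1\cup L_2$ yields a $2$-regular connected spanning subgraph, i.e.\ a Hamiltonian cycle. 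So the task reduces to: \emph{for any rectangle $R$ of even cardinality and any two disjoint boundary edges $\{x,y\}$ and $\{x',y'\}$, the grid graph on $R$ admits a Hamiltonian cycle through both edges.}

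\textbf{Constructing the Hamiltonian cycle.} Since $R$ has total charge zero, $|R|$ is even, so at least one side length of $R$ is even. Up to the symmetries of the rectangle, I may assume the horizontal dimension of $R$ is even and that $\{x,y\}$ is a horizontal unit edge on the bottom side of $R$. Then $\{x',y'\}$ lies in one of the following configurations:
\begin{enumerate}[label={(\roman*)}]
\item another horizontal edge on the bottom side;
\item a horizontal edge on the top side;
\item a vertical edge on one of the two vertical sides;
\item a vertical ``corner'' unit edge sharing a vertex with the top or bottom side.
\end{enumerate}
In each case I would exhibit an explicit Hamiltonian cycle of $R$ through both prescribed edges, built by adapting the serpentine snake patterns shown in Figure~\ref{fig:rc} for the $k=0$ case of Theorem~\ref{thm:mtt}. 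The essential idea is to pair up the columns of $R$ into $2$-wide vertical strips; each strip is traversed by a tall rectangular loop, and the loops are linked along one long side to produce a single cycle. One then makes a local modification of the snake near $\{x,y\}$ and near $\{x',y'\}$ (``cutting'' the snake at one place and ``rerouting'' it through the desired boundary edge) to force the cycle to contain both. The paired-column structure and the parity assumption on $R$ provide enough flexibility to do this routing.

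\textbf{Main obstacle.} The main difficulty will be organizing the case analysis cleanly and handling the corner configurations (iii)--(iv), where $\{x',y'\}$ may be close to a corner of $R$ and the snake has less room to turn around. Very small rectangles (such as $2\times 2$, $2\times 3$, or $2\times n$ with $n$ small) will need to be checked by direct inspection, since the generic snake construction assumes a minimum width of at least two paired columns. For all larger rectangles, the snake patterns provide enough flexibility that the construction goes through uniformly, completing the proof.
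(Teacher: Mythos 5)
Your reduction to constructing a Hamiltonian cycle through the two prescribed boundary edges is exactly what underlies the paper's argument: the lining of Figure~\ref{fig:rc} is a Hamiltonian path from $x$ to $y$, so adjoining the edge $\{x,y\}$ gives a Hamiltonian cycle through $\{x,y\}$, and the paper then argues by symmetry that $\{x',y'\}$ may also be taken to be an edge of that cycle, after which both edges are deleted. Your snake construction with local rerouting is the same idea in substance, so the approach matches.

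One caveat worth making explicit. The reduced claim --- that for a rectangle of even cardinality and any two disjoint boundary pairs there is a Hamiltonian cycle containing both corresponding edges --- fails for every $2\times n$ rectangle, not just small ones. The $2\times n$ grid graph has a unique Hamiltonian cycle (the outer ring), which omits each interior vertical edge $\{(0,j),(1,j)\}$ with $0<j<n-1$; since every vertex of a $2\times n$ rectangle lies in $\partial R$, these are legitimate pairs for the lemma, and one can check directly that Lemma~\ref{lem:k=0} itself is false for them (e.g.\ the $2\times 3$ rectangle with $\{x,y\}$ the middle vertical pair and $\{x',y'\}$ the left side: no two disjoint covering paths with the prescribed endpoints exist). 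The paper's own sketch does not flag this either, but the overstatement is harmless in context, because the lemma is only invoked for the region $U_1$ in the proof of Theorem~\ref{thm:cls}, whose side lengths are at least $4$, so its boundary is a single outer ring and these configurations never arise. You should therefore build the hypothesis that both side lengths are at least $3$ into your case analysis rather than treating $2\times n$ as a base case to be verified by inspection, since that inspection would come up negative.
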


\begin{proof} For this lemma we need more symmetry than noted above. In Figure~\ref{fig:rc} (a), by symmetry, we may assume neither pair is the pair of points on the upper edge that are not adjacent in the lining. Then it is obvious that $L_1$ and $L_2$ can be obtained by omitting the edges in between $x$ and $y$ and in between $x'$ and $y'$. For Figure~\ref{fig:rc} (b), by symmetry we may assume neither pair is one of the pairs of adjacent points on the right edge that are not adjacent in the lining. For bigger values of side lengths the proof can be generalized. For smaller values of side lengths the statement can be checked directly.
\end{proof}

Next we consider the case $k=1$. In this case $R'$ is obtained from an odd-by-odd rectangle $R$ with one parity change region added or subtracted
(so the total charge of $R'$ is $0$). A proof by picture is presented in Figure~\ref{fig:orc}. 
The red lines indicate linings of $R'$ with end points $x, y$ on the lower horizontal edges. 
When $x, y$ are at a different position on the edge of $R'$, we use blue lines to illustrate alternative linings. 

\begin{figure}[h]

\begin{subfigure}{0.45\textwidth}
\begin{center}
\begin{tikzpicture}[scale=0.4]

\foreach \i in {0,...,3} 
\foreach \j in {0,...,10} 
{
\draw[fill] (\i,\j) circle (0.1);
}

\foreach \i in {7,...,10} 
\foreach \j in {0,...,10} 
{
\draw[fill] (\i,\j) circle (0.1);
}

\foreach \i in {4,...,6} 
\foreach \j in {3,...,10} 
{
\draw[fill] (\i,\j) circle (0.1);
}

\foreach \i in {0,...,10}
\foreach \j in {11,12}
{
\draw[fill] (\i,\j) circle (0.1);
}

\foreach \i in {4,...,6} 
\foreach \j in {0,...,2} 
{
\draw[fill, lightgray] (\i,\j) circle (0.1);
}

\draw[red] (0,0) to (0,3) to (2,3) to (2,4) to (0,4) to (0,5) to (2,5) to (2,6)
to (0,6) to (0,7) to (2,7) to (2,8) to ((0,8) to (0,9) to (2,9) to (2,10) to (0,10);

\draw[red] (1,0) to (3,0) to (3,1) to (1,1) to (1,2) to (3,2) to (3,3) to (7,3) to (7,0)
to (10,0) to (10,1) to (8,1) to (8,3) to (9,3) to (9,2) to (10,2) to (10,4) to (3,4) to (3,5) to (10,5)
to (10,6) to (3,6) to (3,7) to (10,7) to (10,8) to (3,8) to (3,9) to (10,9) to (10,10) to (3,10);

\draw[red] (0,10) to (0,12);

\draw[red] (3,10) to (3,11) to (10,11) to (10,12) to (2,12) to (2,11) to (1,11) to (1,12) to (0,12);

\draw[blue] (-0.2,9.8) to (1,9.8) to (1,9.2) to (1.8,9.2) to (1.8,10.8) to (0.2,10.8) to (0.2,12.2) to (2,12.2);

\draw[fill, blue] (-0.2,9.8) circle (0.1);
\draw[fill, blue] (-0.2,9.2) circle (0.1);
\draw[blue] (-0.2,9.2) to (-0.2,8);

\end{tikzpicture}
\end{center}
\end{subfigure}
\hfill
\begin{subfigure}{0.45\textwidth}
\begin{center}
\begin{tikzpicture}[scale=0.4]

\foreach \i in {0,...,10} 
\foreach \j in {3,...,11} 
{
\draw[fill] (\i,\j) circle (0.1);
}




\foreach \i in {4,...,6} 
\foreach \j in {0,...,2} 
{
\draw[fill] (\i,\j) circle (0.1);
}

\draw[red] (0,3) to (0,5) to (2,5) to (2,6)
to (0,6) to (0,7) to (2,7) to (2,8) to ((0,8) to (0,9) to (2,9) to (2,10) to (0,10);

\draw[red]  (10,5) to (3,5)
 to (3,6) to (10,6) to (10,7) to (3,7) to (3,8) to (10,8) to (10,9) to (3,9) to (3,10) to (10, 10);

\draw[red] (0,10) to (0,11) to (10,11) to (10,10);

\draw[red] (1,3) to (1,4) to (2,4) to (2,3) to (3,3) to (3,4) to (4,4) to (4,0) to (6,0) to (6,1) to (5,1) to (5,2) to (6,2) to (6,3) to (5,3) to (5,4) to (7,4) to (7,3) to (8,3) to (8,4) to (9,4) to (9,3) to (10,3) to (10,5);

\draw[blue] (1.2, 2.8) to (0.2,2.8) to (0.2,4.8) to (1,4.8) to (1,4.2) to (2,4.2) to (2,4.8);

\draw[fill, blue] (1.2,2.8) circle (0.1);
\draw[fill, blue] (1.8,2.8) circle (0.1);
\draw[blue] (1.8,2.8) to (3,2.8);
\end{tikzpicture}
\end{center}
\end{subfigure}

\caption{An odd-by-odd rectangle with one parity region subtracted or added.}

\label{fig:orc}
\end{figure}
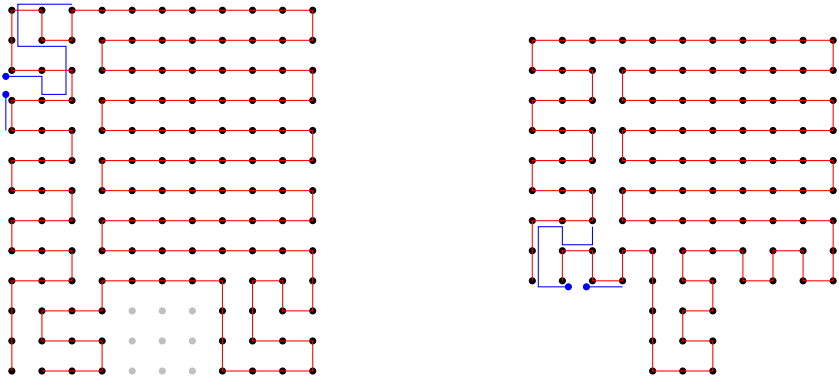

The case $k=2$, where $R'$ is an even rectangle with two parity change regions added or removed (these
parity change regions must have opposite parities so that $R'$ has total charge $0$), 
is also not difficult, and is illustrated in Figure~\ref{fig:mttb}. 
Finding alternative linings for different pairs of endpoints $x, y$ 
takes a moment of reflection but is not difficult. We note that this result holds 
provided the minimum side length of the rectangle $R$ 
exceeds a fixed small value $a_0$.

Next we treat the general case $k>2$. As a tool, we show that if $R'$ is a rectangle with a number of parity change regions along 
one side, then we may ``rearrange'' the locations of the parity change regions along that side. 
More precisely, consider a region $R'$ as shown in Figure~\ref{fig:prearra}. Along the bottom
edge of $R'$ we have parity change regions in arbitrary locations, but along the top edge we have 
placed the regions far to the left (but still consistent with the requirements on $R'$). In
Figure~\ref{fig:prearrb} we have just moved one of the parity change regions to the left. We show that in these cases we can prove Theorem~\ref{thm:mtt}.

\begin{figure}[h] 

\begin{subfigure}{0.4\textwidth}

\begin{tikzpicture}[scale=0.05]

\draw (0,0) to (50,0) to (50,-5) to (55,-5) to (55,0) to (70,0)
to (70,-5) to (75,-5) to (75,0) to (85,0) to (85,-5) to (90,-5) to (90,0) to (100,0);

\draw (0,0) to (0,30) to (5,30) to (5,25) to (10,25) to (10,30) to (15,30) 
to (15,25) to (20,25) to (20,30) to (25,30) to (25,25) to (30,25) to (30,30) to 
(100,30) to (100,0);

\end{tikzpicture}
\caption{Rearranging the parity change regions.}
\label{fig:prearra}
\end{subfigure}
\hfill
\begin{subfigure}{0.4\textwidth}
\begin{tikzpicture}[scale=0.05]

\draw (0,0) to (50,0) to (50,-5) to (55,-5) to (55,0) to (70,0)
to (70,-5) to (75,-5) to (75,0) to (85,0) to (85,-5) to (90,-5) to (90,0) to (100,0);

\draw (0,0) to (0,30) to (5,30) to (5,25) to (10,25) to (10,30) to (70,30) 
to (70,25) to (75,25) to (75,30) to (85,30) to (85,25) to (90,25) to (90,30) to 
(100,30) to (100,0);

\end{tikzpicture}

\caption{Moving just one parity change region.}
\label{fig:prearrb}
\end{subfigure}
\caption{}
\label{fig:prearr}
\end{figure}

Note the total charge of the regions $R'$ shown in Figure~\ref{fig:prearr}
is the same as the underlying rectangle $R$. We assume the total charge of $R$
is $0$, that is, $R$ has even size (this will be the case in our applications; if $R$
has odd size, then a similar argument can be given where we add one more parity change 
region to the top edge of $R'$).

First we consider the case of moving a single parity change region as shown in Figure~\ref{fig:prearrb}. 
A lining for such an $R'$ is easily obtained from the argument
for $\leq 2$ parity change regions as illustrated in Figure~\ref{fig:osr}. We start with the terminals 
$x,y$ and move left to right through the subregions using Corollary~\ref{cor:mtt} 
to end in a pair of adjacent terminals of the boundary of the next subregion. For the last subregion,
we use Theorem~\ref{thm:mtt} for the $\leq 2$ parity change region case to get a complete 
line section for it.

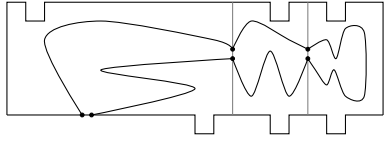
\begin{figure}[h]
\begin{tikzpicture}[scale=0.05]

\draw (0,0) to (50,0) to (50,-5) to (55,-5) to (55,0) to (70,0)
to (70,-5) to (75,-5) to (75,0) to (85,0) to (85,-5) to (90,-5) to (90,0) to (100,0);

\draw (0,0) to (0,30) to (5,30) to (5,25) to (10,25) to (10,30) to (70,30) 
to (70,25) to (75,25) to (75,30) to (85,30) to (85,25) to (90,25) to (90,30) to 
(100,30) to (100,0);

\draw[gray] (60,0) to (60,30);
\draw[gray] (80,0) to (80,30);

\draw[fill] (20,0) circle (0.5);
\draw[fill] (22.5,0) circle (0.5);
\draw[fill] (60,15) circle (0.5);
\draw[fill] (60,17.5) circle (0.5);
\draw[fill] (80,15) circle (0.5);
\draw[fill] (80,17.5) circle (0.5);

\draw plot[smooth] coordinates { (20,0) (10,20) (25,25) (55,20) (60,17.5)};
\draw plot[smooth] coordinates { (22.5,0) (50,7) (25,12) (60,15)};

\draw plot[smooth] coordinates { (60,17.5) (65, 25) (75,20) (80,17.5)};
\draw plot[smooth] coordinates { (60,15) (65, 5) (70,17) (75, 5) (80,15)};

\draw plot[smooth] coordinates { (80,17.5) (85,20) (87.5, 15) (90, 23) (95,22) (95, 5) (90,5) (87, 12)
(85,8) (82, 12) (80, 15)};

\end{tikzpicture}

\caption{Moving just one parity change region.}
\label{fig:osr}
\end{figure}

To accomplish the more general parity change rearrangement as in Figure~\ref{fig:prearra},
we vertically stack the appropriate number of rectangles (equal to the numer of parity change regions) 
and appply the algorithm of Figure~\ref{fig:prearrb} in succession. The first two steps of this procedure
for the region $R'$ of Figure~\ref{fig:prearra} are shown in Figure~\ref{fig:prearrc}.

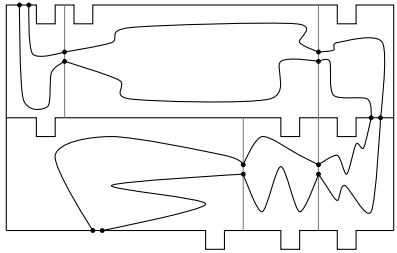
\begin{figure}[h]
\begin{tikzpicture}[scale=0.05]

\draw (-3,0) to (50,0) to (50,-5) to (55,-5) to (55,0) to (70,0)
to (70,-5) to (75,-5) to (75,0) to (85,0) to (85,-5) to (90,-5) to (90,0) to (100,0);

\draw (-3,0) to (-3,30) to (5,30) to (5,25) to (10,25) to (10,30) to (70,30) 
to (70,25) to (75,25) to (75,30) to (85,30) to (85,25) to (90,25) to (90,30) to 
(100,30) to (100,0);

\draw (-3,30) to (-3,60) to (5,60) to (5,55) to (10,55) to (10,60) to (15,60) 
to (15,55) to (20,55) to (20,60) to (85,60) to (85,55) to (90,55) to (90,60)
to (100,60) to (100,30);

\draw[gray] (60,0) to (60,30);
\draw[gray] (80,0) to (80,30);

\draw[gray] (12.5,30) to (12.5,60);
\draw[gray] (80,30) to (80,60);

\draw[fill] (20,0) circle (0.5);
\draw[fill] (22.5,0) circle (0.5);
\draw[fill] (60,15) circle (0.5);
\draw[fill] (60,17.5) circle (0.5);
\draw[fill] (80,15) circle (0.5);
\draw[fill] (80,17.5) circle (0.5);
\draw[fill] (94,30) circle (0.5);
\draw[fill] (96.5,30) circle (0.5);
\draw[fill] (80,45) circle (0.5);
\draw[fill] (80,47.5) circle (0.5);
\draw[fill] (12.5,47.5) circle (0.5);
\draw[fill] (12.5,45) circle (0.5);
\draw[fill] (3,60) circle (0.5);
\draw[fill] (0.5,60) circle (0.5);

\draw plot[smooth] coordinates { (20,0) (10,20) (25,25) (55,20) (60,17.5)};
\draw plot[smooth] coordinates { (22.5,0) (50,7) (25,12) (60,15)};

\draw plot[smooth] coordinates { (60,17.5) (65, 25) (75,20) (80,17.5)};
\draw plot[smooth] coordinates { (60,15) (65, 5) (70,17) (75, 5) (80,15)};

\draw plot[smooth] coordinates { (80,17.5) (85,20) (87.5, 15) (90, 23) (92,22) (94, 30)};
\draw plot[smooth] coordinates { (96.5,30) (94,5) (87, 12) (85,8) (82, 12) (80, 15)};

\draw plot[smooth] coordinates { (94,30) (93,35) (84,36) (83,45) (80,45)};
\draw plot[smooth] coordinates { (96.5,30) (97,50) (85,50) (84,48) (80, 47.5)};

\draw plot[smooth] coordinates { (80,47.5) (75,50) (74,55) (30,54) (25,50) (12.5, 47.5)};
\draw plot[smooth] coordinates { (80,45) (70,45) (67, 35) (30, 35) (27, 40) (12.5, 45)};

\draw plot[smooth] coordinates { (12.5,47.5) (4,47) (3,60)};
\draw plot[smooth] coordinates { (12.5,45) (9,42) (8,33) (1.5,35) (0.5, 60)};

\end{tikzpicture}

\caption{The general parity rearrangement algorithm.}
\label{fig:prearrc}
\end{figure}

We summarize this discussion into the following lemma.

\begin{lem} \label{lem:parearr}
There is a constant $a_0$ such that the following holds. Suppose 
$R$ is a rectangle with total charge $0$ and with minimum side length $\geq a_0$, 
and $R'$ is obtained from $R$ by adding or subtracting $\leq k$ parity change regions from opposite
edges of $R$, and that the parities of these corresponding regions (from left to right) 
are the same for each of these two edges. If all of the parity change regions are at least $ k a_0$ 
from a corner of $R$, then there is a lining of $R'$. 
\end{lem}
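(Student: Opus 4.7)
The plan is to reduce the general case to the single-move case sketched in Figure~\ref{fig:prearrb} (where all but one of the parity change regions have been ``bunched'' on one of the two opposite edges), and then to apply that single-move case $k$ times via a vertical stacking construction. I will first set up the single-move case carefully, and then invoke it iteratively.

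For the single-move case, I would divide the region $R'$ vertically by drawing interfaces (the gray lines in Figure~\ref{fig:osr}) midway between consecutive parity change regions on the long bottom edge. Each resulting subregion is a rectangle with at most two parity change regions (one on its top or bottom edge, and possibly the unique ``bunched'' parity change in the leftmost subregion), so it falls into the $k\leq 2$ case of Theorem~\ref{thm:mtt} that has already been verified. I then proceed left to right: starting from the prescribed pair of adjacent boundary terminals $x,y$ in the leftmost subregion, I apply Corollary~\ref{cor:mtt} to produce two disjoint line sections covering that subregion and terminating at a pair of adjacent points on the shared interface with the next subregion. Using these as the new input terminals, I repeat. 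In the final (rightmost) subregion I close off the pair of disjoint line sections into a single lining using Theorem~\ref{thm:mtt} itself (in the $k\leq 2$ case). Charge balance of each subregion is automatic because all parity regions have been assigned to a unique subregion and the underlying rectangle has charge $0$.

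For the general statement with $k$ parity change regions on each of the two opposite edges, I would stack $k$ strips vertically. More precisely, partition $R$ into $k$ horizontal substrips $R_1,\ldots,R_k$, each of height at least $a_0$. Into the top/bottom boundary between $R_i$ and $R_{i+1}$ I insert an auxiliary ``interface'' parity change region, placed so that $R_i$ (viewed as a substrip of $R'$) inherits some of the original bottom parity changes and some of the original top parity changes, together with the interface parity changes above and below, in precisely the single-move pattern of Figure~\ref{fig:prearrb}. Because corresponding top/bottom parity changes of $R'$ have the same parity, the interface parity changes can be chosen so that each substrip has total charge $0$; and since there are at most $k$ interfaces, a single constant $a_0$ (absorbing both the $k\leq 2$ Theorem~\ref{thm:mtt} constant and the spacing needed between interfaces) suffices to guarantee the required $\geq k a_0$ clearance from corners of each $R_i$. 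I then apply the single-move case to each $R_i$ in turn, passing the terminating pair of adjacent boundary points of one strip through the interface parity change to serve as the starting pair for the next strip, as illustrated in Figure~\ref{fig:prearrc}. The final strip is closed off by Theorem~\ref{thm:mtt}, producing a lining of all of $R'$.

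The main obstacle is bookkeeping: choosing the interface parity change regions so that all three constraints (minimum side length $\geq a_0$ for each substrip, clearance $\geq k a_0$ from the corners of each $R_i$, and the charge-zero condition on each $R_i$) hold simultaneously, and ensuring that the terminating terminals of one substrip can be matched through the interface to adjacent starting terminals of the next. Once $a_0$ is chosen large enough relative to $p_0,q_0$ and the single-move constant, this reduces to a finite, combinatorial check rather than a new geometric difficulty, so the heart of the argument is the single-move case together with the iterative application of Corollary~\ref{cor:mtt}.
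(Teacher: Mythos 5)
Your proposal is correct and follows essentially the same route as the paper: reduce to the single-move case (Figure~\ref{fig:prearrb}) by sweeping left-to-right through vertical subregions and applying Corollary~\ref{cor:mtt}, then handle the general case by stacking $\leq k$ horizontal strips with intermediate interface configurations and applying the single-move case in each strip as in Figure~\ref{fig:prearrc}. The paper's proof is just a two-sentence pointer to the preceding discussion, and your proposal reconstructs precisely that discussion, including the key observation that matching parities guarantee each strip can be arranged to have total charge $0$.
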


\begin{proof}
We use $\leq k$ applications of the algorithm described above as shown in Figure~\ref{fig:prearrc}.
If the width of the rectangle $R$ is at least $k a_0$, then the procedure will give a lining of $R'$.
\end{proof}

We next prove a parity cancellation lemma.

\begin{lem} \label{lem:parc}
There is a constant $a_0$ such that the following holds. Suppose 
$R$ is a rectangle with total charge $0$ and with minimum side length $\geq a_0$, 
and $R'$ is obtained from $R$ by adding or subtracting $\leq k$ parity change regions
from one side of $R$, and subtracting or adding parity change regions on the opposite edge 
of the same parities (left to right) except that we delete a pair of adjacent parity change regions of 
opposite parities. Then there is a lining of $R'$. 
\end{lem}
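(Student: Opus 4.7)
The plan is to reduce to Lemma~\ref{lem:parearr} together with the small case $k\le 2$ of Theorem~\ref{thm:mtt}. Enumerate the parity change regions along the bottom edge of $R$ as $B_1,\dots,B_m$ with parities $p_1,\dots,p_m$, and the regions along the top edge as $T_1,\dots,T_{i-1},T_{i+2},\dots,T_m$, where the matched regions $T_j$ have parity $p_j$ for $j\ne i,i+1$ and the ``deleted'' pair $(T_i,T_{i+1})$ would have had parities $p_i=-p_{i+1}$. Because $p_i+p_{i+1}=0$, the total charge of $R'$ is still $0$, as required.

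I will make two vertical cuts through $R$ that bracket the horizontal location of $B_i$ and $B_{i+1}$ (the bottom regions whose top counterparts are missing). Specifically, choose cuts halfway between $B_{i-1}$ and $B_i$ and halfway between $B_{i+1}$ and $B_{i+2}$, giving three vertical substrips $R'_L$, $R'_M$, $R'_R$ of $R'$. The strip $R'_L$ contains the matched bottom--top pairs $B_1,T_1,\dots,B_{i-1},T_{i-1}$; the strip $R'_R$ contains the matched pairs $B_{i+2},T_{i+2},\dots,B_m,T_m$; and the middle strip $R'_M$ contains only $B_i,B_{i+1}$ on its bottom edge and nothing on its top edge. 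Since the deleted pair has opposite parities, $R'_M$ still has total charge $0$.

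Now I build a lining of $R'$ by chaining along the strips from left to right, exactly as in the proof of Lemma~\ref{lem:parearr}. Fix any pair of adjacent starting terminals $x,y$ on the bottom edge of $R'_L$ at distance $\ge a_0$ from any corner. Apply Lemma~\ref{lem:parearr} to $R'_L$ to obtain a lining; by using Corollary~\ref{cor:mtt} in place of Theorem~\ref{thm:mtt} at the final step, I can instead produce two disjoint line sections starting from $x,y$ whose terminal pair is an arbitrary pair of adjacent points on the right edge of $R'_L$. These adjacent terminals then serve as the starting terminals for $R'_M$. Because $R'_M$ has total charge $0$ and contains only two parity change regions (on the same edge, of opposite parities), it falls under the $k=2$ case of Theorem~\ref{thm:mtt}, and again using Corollary~\ref{cor:mtt} I can route the two line sections through $R'_M$ to end at a chosen pair of adjacent points on its right edge. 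These become the starting terminals for $R'_R$, which by Lemma~\ref{lem:parearr} (and Theorem~\ref{thm:mtt} at the final step) admits a lining terminating in any prescribed adjacent pair. Splicing the three pieces produces a lining of $R'$.

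The main obstacle is bookkeeping the constants: one needs the minimum side length of $R$, the separation between consecutive parity change regions, and the separation from corners all to exceed a single absolute constant $a_0$ large enough to support the worst case of Corollary~\ref{cor:mtt} used at each transition, uniformly in $k$. By enlarging the constant $a_0$ from Lemma~\ref{lem:parearr} by a bounded factor (to absorb the at most three applications of Corollary~\ref{cor:mtt} across the cuts, and to guarantee that the middle strip $R'_M$ has width $\ge a_0$), this is achieved, completing the argument.
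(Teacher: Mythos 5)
Your proposal is correct and takes essentially the same approach as the paper: the paper's proof (accompanied by Figure~\ref{fig:parc}) also cuts $R'$ into vertical strips so that the two unmatched, opposite-parity regions are isolated into a middle strip, and then chains the pieces using Corollary~\ref{cor:mtt}. The one cosmetic difference is that you bundle all matched pairs to the left of the deleted pair into a single strip $R'_L$ (resp.\ $R'_R$) and invoke Lemma~\ref{lem:parearr} on it as a black box, whereas the paper's phrasing (``proceed from left to right using the result for regions with $\leq 2$ parity change regions'') suggests further subdividing those outer strips into $\leq 2$-region pieces as in the proof of Lemma~\ref{lem:parearr} itself; the two amount to the same chain. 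One point you could state more explicitly: the strips must be cut at positions making each substrip's underlying rectangle of total charge $0$ (so that Corollary~\ref{cor:mtt} applies strip by strip), which is possible by a unit adjustment of the cut location since $R$ has charge $0$; this subtlety is glossed over in the paper as well, and your appeal to enlarging $a_0$ absorbs it in spirit.
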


\begin{proof}
Let $R$ and $R'$ be as in the statement, see Figure~\ref{fig:parc} for an illustration. 
As in the proof of Lemma~\ref{lem:parearr}, we divide $R'$ into subregions and proceed 
from left to right using the result for regions with $\leq 2$ parity change regions. 

\end{proof}

\begin{figure}[h] 

\begin{subfigure}{0.4\textwidth}

\begin{tikzpicture}[scale=0.05]

\draw (0,0) to (20,0) to (20,-5) to (25,-5) to (25,0) to (50,0) to (50,-5) to (55,-5) to (55,0)
to (65,0) to (65,-5) to (70,-5) to (70,0) to (80,0) to (80,-5) to (85,-5) to (85,0) to (100,0)
to (100,30);

\draw (0,0) to (0,30) to (20,30) to (20,25) to (25,25) to (25,30) to  
(80,30) to (80,25) to (85,25) to (85,30) to (100,30);

\node (a) at (52.5,-2) {$+$};
\node (b) at (67.5,-2) {$-$};

\end{tikzpicture}
\caption{A region $R'$ as in Lemma~\ref{lem:parc}.}
\label{fig:parca}
\end{subfigure}
\hfill
\begin{subfigure}{0.4\textwidth}
\begin{tikzpicture}[scale=0.05]

\draw (0,0) to (20,0) to (20,-5) to (25,-5) to (25,0) to (50,0) to (50,-5) to (55,-5) to (55,0)
to (65,0) to (65,-5) to (70,-5) to (70,0) to (80,0) to (80,-5) to (85,-5) to (85,0) to (100,0)
to (100,30);

\draw (0,0) to (0,30) to (20,30) to (20,25) to (25,25) to (25,30) to  
(80,30) to (80,25) to (85,25) to (85,30) to (100,30);

\node (a) at (52.5,-2) {$+$};
\node (b) at (67.5,-2) {$-$};

\draw[gray] (37.5,0) to (37.5,30);
\draw[gray] (75,0) to (75,30);

\draw[fill] (10,0) circle (0.5);
\draw[fill] (12.5,0) circle (0.5);
\draw[fill] (37.5,15) circle (0.5);
\draw[fill] (37.5,17.5) circle (0.5);
\draw[fill] (75,15) circle (0.5);
\draw[fill] (75,17.5) circle (0.5);

\draw plot[smooth] coordinates { (10,0) (8,25) (15,20) (25,20) (37.5, 17.5)};
\draw plot[smooth] coordinates { (12.5,0) (20, 10) (37.5, 15)};

\draw plot[smooth] coordinates { (37.5,17.5) (45,25) (65,25) (75,17.5)};
\draw plot[smooth] coordinates { (37.5, 15) (45,5) ( 70,5) (75,15)};

\draw plot[smooth] coordinates { (75,17.5)   (80, 22.5) (95,22.5) (90,5) ( 85, 12) (75,15) };

\end{tikzpicture}

\caption{The proof of Lemma~\ref{lem:parc}.}
\label{fig:parcb}
\end{subfigure}
\caption{}
\label{fig:parc}
\end{figure}

As with Lemma~\ref{lem:parearr}, we also have the version of Lemma~\ref{lem:parc} where 
we exit the final subregion at adjacent points on the boundary of $R'$. In this way,
as with Lemma~\ref{lem:parearr}, we may vertically stack such regions to perform a 
succession of these parity cancellation operations.

We now give the proof of Theorem~\ref{thm:mtt}. We assume first that $R$ has total charge $0$ 
(which will be the case in our applications), and let $R'$ be obtained from $R$
by adding or subtracting at most $k$ parity change regions along the boundary of $R$
as in Figure~\ref{fig:imtt}. We are assuming that $R'$ has total charge $0$. 
We proceed by induction of the number of parity change regions that are used in forming $R'$ from $R$,
which we call $k$. The base cases when $k\leq 2$ have been verified. We consider the inductive case.

First assume that for some edge of $R$ there are two adjacent parity change regions of 
opposite parity. Partition $R'$ into two subregions as shown in Figure~\ref{fig:opc},
and add parity change regions so that the bottom subregion has total charge $0$,
and the top edge of the bottom subregion has fewer parity change regions than the bottom edge.

\begin{figure}[h]

\begin{tikzpicture}[scale=0.05]

\draw (0,0) to (20,0) to (20,-5) to (25,-5) to (25,0) to (50,0) to (50,-5) to (55,-5) to (55,0)
to (65,0) to (65,-5) to (70,-5) to (70,0) to (80,0) to (80,-5) to (85,-5) to (85,0) to (100,0);
to (100,30);

\draw (0,0) to (0,30) to (-5,30) to (-5,35) to (0,35) to (0,65) to (5,65) to (5,70) to 
(0,70) to (0,90) to (40,90) to (40,85) to (45,85) to (45,90) to (60,90) to (60,95) to (65,95)
to (65,90) to (100,90) to (100,80) to (95,80) to (95,75) to (100,75) to (100,50)
to (95,50) to (95,45) to (100,45) to (100,0);

\node (a) at (52.5,-2) {$+$};
\node (b) at (67.5,-2) {$-$};

\draw[gray] (0,20) to (20,20) to (20,15) to (25,15) to (25,20) to (80,20) to (80,15)
to (85,15) to (85,20) to (100,20);

\end{tikzpicture}
\caption{The case of adjacent regions of opposite parity.}
\label{fig:opc}
\end{figure}
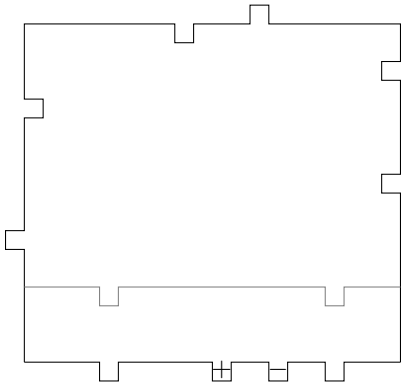

By Lemma~\ref{lem:parc} for the bottom region, and by induction for the top region, each
of these two subregions has a lining. As in Corollary~\ref{cor:mtt},
these can be combined into a lining for the region $R'$, and we may 
start and exit at any pair of adjacent points on the boundary of $R'$
(possibly adjusting the location of the subdividing line). 
Note that the top region still satisfies the hypotheses of Theorem~\ref{thm:mtt}
as the side lengths have decreased by at most $a_0$ and number of parity 
change regions for the top has strictly decreased.

Now suppose that all of the parity change regions along a given edge of $R$ have the same parity. 
There must be two adjacent edges of $R$ where these parity change regions along these two edges have 
opposite parities. Suppose, for example, the regions along the bottom and left edges have opposite 
parities, as illustrated in Figure~\ref{fig:opc2}. Using Lemma~\ref{lem:parearr} we may rearrange 
the bottom-most parity change region on the left edge 
to be within $a_0$ of the bottom edge. We subdivide $R'$ into subregions as shown in Figure~\ref{fig:opc2}. 
The bottom-left subregion is handled by the $\leq 2$ parity region case, the bottom-right subregion
by Lemma~\ref{lem:parearr}, and the top subregion by induction. Note that the number of parity change 
regions for the top subregion is strictly less than $k$. The hypotheses on the region of 
Theorem~\ref{thm:mtt} are easily satisfied for the top subregion since the the height of the 
bottom subregions is $a_0$.

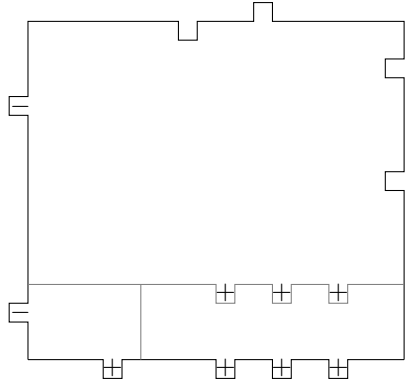
\begin{figure}[h]
\begin{tikzpicture}[scale=0.05]

\draw (0,0) to (20,0) to (20,-5) to (25,-5) to (25,0) to (50,0) to (50,-5) to (55,-5) to (55,0)
to (65,0) to (65,-5) to (70,-5) to (70,0) to (80,0) to (80,-5) to (85,-5) to (85,0) to (100,0);
to (100,30);

\draw (0,0) to (0,10) to (-5,10) to (-5,15) to (0,15) to (0,65) to (-5,65) to (-5,70) to 
(0,70) to (0,90) to (40,90) to (40,85) to (45,85) to (45,90) to (60,90) to (60,95) to (65,95)
to (65,90) to (100,90) to (100,80) to (95,80) to (95,75) to (100,75) to (100,50)
to (95,50) to (95,45) to (100,45) to (100,0);

\node (a) at (52.5,-2) {$+$};
\node (b) at (67.5,-2) {$+$};
\node (c) at (22.5, -2) {$+$};
\node (d) at (82.5, -2) {$+$};
\node (e) at (-2, 12.5) {$-$};
\node (f) at (-2, 67.5) {$-$};

\node (a) at (52.5,18) {$+$};
\node (b) at (67.5,18) {$+$};
\node (d) at (82.5, 18) {$+$};

\draw[gray] (0,20) to (50,20) to (50,15) to (55,15) to (55,20)  
to (65,20) to (65,15) to (70,15) to (70,20) to
(80,20) to (80,15)
to (85,15) to (85,20) to (100,20);

\draw[gray] (30,0) to (30,20);

\end{tikzpicture}
\caption{The proof of Theorem~\ref{thm:mtt}.}
\label{fig:opc2}
\end{figure}

This completes the proof of Theorem~\ref{thm:mtt}. 

We now turn to the proof of 
Theorem~\ref{thm:cls}. The proof is similar to that for matchings, Theorem~\ref{thm:mt}, using 
Theorem~\ref{thm:mtt} instead of the lemmas on matchings. As the argument is similar, we will 
just sketch the argument and highlight the differences.

We begin with the sequence of distances $d_1<d_2<\cdots$ of intermediate growth as before, that is, 
$C_1< \frac{d_{k+1}}{d_k}<C_2$ for some fixed constants $C_1$, $C_2$. We fix a sufficiently large even
integer $w$ which, as before, will be the width of the buffer regions in our construction. 
In particular, we take $w> 4 a_0 (p_0+q_0) (C'' K)^2$ where $C''$, as before, is an upper bound on the number of $k$-atoms that can be contained in a $(k+1)$-atom, and $K$ is as in Lemma~\ref{lem:bounded_charge_atoms}. As in the proof of 
Theorem~\ref{thm:mt}, $C''K$ represents the possible accumulation of charge as we extend the linings from the $k$-atoms
in a given $(k+1)$-atom to the buffered $(k+1)$-atom. Loosely speaking, now this extra charge will be carried
by a set of $p_0\times q_0$ parity change regions at certain locations. 
We again choose $d_1$ so that $\epsilon d_1 \gg w$, where $\epsilon=\alpha \epsilon_2$ is the 
orthogonality constant as explained in Remark~\ref{rem:oc}.
We define the buffered $k$-atoms as in Theorem~\ref{thm:mt}, that is, 
for every $k$-atom $A$ we let $A(w)=\{ x \in A\colon \rho(x, \partial A) \geq w\}$ be the 
$w$-buffered atom.

In our construction below, a {\em partial lining} is a Borel subgraph $L$ of the Schreier graph of $F(2^{\Z^2})$ 
such that each connected component of $S$ is a finite simple path of length at least $1$. Let $V(L)$ denote the vertex set of $L$. Then for each $x\in V(L)$, the degree of $x$ is either $1$ or $2$. Let $T(L)$ be the set of end points of $L$, that is, all $x\in V(L)$ where the degree of $x$ is $1$.

Our slightly modified induction hypothesis is as follows.

{\bf Induction Hypothesis ${\mathbf H}_{k-1}$}: 
We have defined a partial lining $L_{k-1}$ with each connected component of $L_{k-1}$ 
being a subset of a $(k-1)$-atom. For each $(k-1)$-atom $A\in \sA_{k-1}$, denote 
$E_{k-1}(A)=A(w) \sm V(L_{k-1})$. Then the following conditions hold for all $A\in\sA_{k-1}$:
\begin{enumerate}
\item[(i)] $L_{k-1}$ restricted to $A$ is a single connected component of $L_{k-1}$ with the two endpoints (forming the set $T(L_{k-1})\cap A$) being adjacent points on the boundary of $A(w)$; 
\item[(ii)] $E_{k-1}(A)$ is a disjoint union of at most $K$ many parity change regions 
along the boundary of $A(w)$;
\item[(iii)] Each of the parity change regions in $E_{k-1}(A)$ is at least 
$C'' K a_0$ from a corner of $A(w)$ and at least $C''K a_0$ from $T(L_{k-1})\cap A$, and the distance between them is at least $C'' K a_0$, where 
$a_0$ is as in Theorem~\ref{thm:mtt};
\item[(iv)] Each of the parity change regions in $E_{k-1}(A)$ is within distance $w$ of $X_{k-1}=\partial \sR_{k-1}$.
\end{enumerate}

\begin{lem}\label{lem:H1l} ${\mathbf H}_1$ holds.
\end{lem}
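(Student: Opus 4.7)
The plan is to mirror the structure of Lemma~\ref{lem:H1} from the matching case, with Theorem~\ref{thm:mtt} and Corollary~\ref{cor:mtt} playing the roles that Lemma~\ref{lem:rectangle} played there. For each $1$-atom $A$, the buffered region $A(w)$ is obtained by buffering one of the configurations in Figure~\ref{fig:H1}. In every case, $A(w)$ is a rectangular polygonal region whose boundary consists of horizontal and vertical line segments, each of length at least $\frac{1}{3}\epsilon d_1 - 2w$, which is much larger than $C''Ka_0$ by our choice $\epsilon d_1 \gg w > 4a_0(p_0+q_0)(C''K)^2$. Moreover, at least one maximal line segment of $\partial A(w)$---call it $E_0$---arises from the level-$1$ part of $\partial A$ (which exists by the analysis of Lemma~\ref{lem:bounded_charge}).

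First I would fix two adjacent points $x,y$ near the midpoint of $E_0$ to serve as the endpoints required by clause (i) of ${\mathbf H}_1$; their position ensures that every point in a $C''Ka_0$-neighborhood of $\{x,y\}$ on $E_0$ is safely away from the corners of $A(w)$. Next I would compute the total charge $q$ of $A(w)$. An argument parallel to Lemma~\ref{lem:bounded_charge_atoms}, applied to the buffered region (and noting that buffering by $w$ along a rectilinear boundary alters the total charge by a bounded amount absorbable into $K$), yields $|q|\leq K$. Along $E_0$, away from $\{x,y\}$ and the corners, I would then place $|q|$ many disjoint $p_0\times q_0$ parity change regions, with signs chosen so that removing them from $A(w)$ yields a region $A^*$ of total charge $0$. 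I would space them at distance at least $C''Ka_0$ from each other, from $\{x,y\}$, and from the corners of $A(w)$, which is possible since $E_0$ has length $\gg K\cdot C''Ka_0$. The resulting set of removed regions will be exactly $E_1(A)$, and clauses (ii)--(iv) of ${\mathbf H}_1$ follow directly from the placement.

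To produce the lining of $A^*$ required by clause (i), I would proceed by case analysis on the shape of $A(w)$. In the simplest case where $A(w)$ is a rectangle, $A^*$ is precisely of the form handled by Theorem~\ref{thm:mtt}, which yields a lining from $x$ to $y$. In the more complex cases from Figure~\ref{fig:H1}, I would decompose $A(w)$ into a bounded number of sub-rectangles whose shared edges have length at least $\frac{1}{3}\epsilon d_1$ and whose pairwise-adjacency graph is a tree rooted at the sub-rectangle $R_0$ containing $E_0$. Starting from $R_0$, I would use Corollary~\ref{cor:mtt} to produce two disjoint line sections leaving $x$ and $y$ and exiting $R_0$ at adjacent points on a shared edge, then use Lemma~\ref{lem:k=0} in each subsequent sub-rectangle (which has charge $0$ since all residual charges have been absorbed by the parity change regions placed along $E_0\subseteq R_0$) to extend these line sections through the rest of $A^*$, finally joining them at the last sub-rectangle to form a single path from $x$ to $y$.

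The main technical obstacle will be accounting for non-zero charges in the interior sub-rectangles, since parity change regions can only be placed along $E_0$ which lies in $R_0$. I expect this to be resolved by choosing $R_0$ slightly larger than its combinatorial role suggests, so that $R_0$ absorbs not only the total charge of $A(w)$ but also the individual charges $q_i$ of the interior sub-rectangles; equivalently, one subdivides the placement of parity change regions along $E_0$ into groups corresponding to each $R_i$, so that when the partial lining exits $R_0$ toward $R_i$, it carries exactly the current needed to leave $R_i$ with charge balanced. Since the total charge of $A(w)$ is at most $K$ and the decomposition has at most $C''$ sub-rectangles, the number of parity change regions needed is still at most $C''K$, and the length of $E_0$ accommodates them with the required spacing by our choice of $w$ and $d_1$.
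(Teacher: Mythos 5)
Your overall architecture (decompose $A(w)$ into boundedly many sub-rectangles, use Theorem~\ref{thm:mtt} and Corollary~\ref{cor:mtt} to chain partial linings through them, and leave at most $K$ parity change regions uncovered near $X_1$) matches the paper's proof, and your placement of $x,y$ and of $E_1(A)$, together with the verification of clauses (ii)--(iv), is essentially correct. However, there is a genuine gap in your treatment of the charges of the interior sub-rectangles, which you flag as "the main technical obstacle" but do not actually resolve. Your proposed fix --- placing all parity change regions along $E_0\subseteq\partial R_0$, "in groups corresponding to each $R_i$," so that the lining "carries exactly the current needed" into $R_i$ --- does not work. A parity change region removed along $E_0$ changes only the charge of $R_0$; it cannot alter the charge of a sub-rectangle $R_i$ not containing it. Moreover, the lining exits each sub-region at a pair of \emph{adjacent} points, which always has total charge $0$, so unlike in the matching construction there is no "current" that can be tuned to compensate for an odd-sized interior rectangle. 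Finally, any parity change regions you remove along $E_0$ beyond the $|q|\leq K$ needed for the global charge are genuinely uncovered points, so they inflate $E_1(A)$ (potentially past the bound $K$ in clause (ii)) and cannot later be matched.

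The paper's mechanism is different and is the key point you are missing: as the lining is extended from one sub-region to the next, the parity change regions needed to zero out the \emph{current} sub-region are added or subtracted along the \emph{common internal edge} with the \emph{next} sub-region. Such a block is merely reassigned from one sub-region's territory to its neighbor's; it is covered by the lining when the neighbor is processed, so it contributes nothing to $E_1(A)$. Consequently the intermediate sub-regions are handled by Corollary~\ref{cor:mtt} applied to rectangles-with-parity-change-regions (not by Lemma~\ref{lem:k=0}, which requires an honest charge-$0$ rectangle), and by conservation of total charge the final sub-region in the chain automatically has charge $0$, so Theorem~\ref{thm:mtt} closes the lining there. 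Only the residual total charge $|q|\leq K$ of $A(w)$ survives as uncovered parity change regions, and these are placed along an exterior edge of the starting sub-region within distance $w$ of $X_1$, exactly as clauses (ii)--(iv) of ${\mathbf H}_1$ require. With this correction your argument goes through; the spacing estimates you cite ($\epsilon d_1\gg w> 4a_0(p_0+q_0)(C''K)^2$) are what guarantee the common edges are long enough to host the transferred blocks.
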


\begin{proof} Similar to Lemma~\ref{lem:H1}, we consider a buffered $1$-atom $B$ and define a partial lining $L$ of $B$ to satisfy the above conditions (i)--(iv). Our strategy of proof is still to divide $B$ into rectangular subregions (as illustrated in the example in Figure~\ref{fig:H1c}), define partial linings on each of them, and then connect these linings together. By Lemma~\ref{lem:bounded_charge} the total charge of $B$ is bounded by $K$, thus we need to subtract no more than $K$ many parity change regions to make the resulting region $B'$ of total charge $0$. Let $E$ be an edge of a subregion at the end (say $R_5$ in Figure~\ref{fig:H1c}) so that $E$ is within distance $w$ to $X_1$. Pick an appropriate number of parity change regions and pick adjacent points $x, y$ along $E$ so that the parity change regions are at least $C''Ka_0$ from the end of $E$ and are at least $C''Ka_0$ from $x, y$, and the distance between any two of them are at least $C''Ka_0$. Now we are ready to apply Theorem~\ref{thm:mtt} and Corollary~\ref{cor:mtt} to obtain a lining of $B'$ with $x, y$ as endpoints. More specifically, we start with the rectangular subregion that contains $x, y$ and extend the partial linings through the rest of the subregions. In the example in Figure~\ref{fig:H1c} we follow the order $R_5, R_4, R_0, R_1, R_2, R_3$. As we extend the partial lining from one subregion to the next, we substract a number of parity change regions as appropriate to make the current subregion of total charge $0$. These parity change regions will be added or subtracted along the common edge between the current subregion and the next one. Then we use Corollary~\ref{cor:mtt} to extend the partial lining to the next one. Repeating this until we reach the rectangular region on the other end (in our case $R_3$ in Figure~\ref{fig:H1c}), in which case the region must have total charge $0$ already, we apply Theorem~\ref{thm:mtt} to complete the lining. The strategy works because the lengths of the common edges between any two neighboring subregions is at least $\epsilon d_1\gg w> 2(K+1)\cdot (2C''Ka_0)$.
\end{proof}

Consider the inductive step where we assume ${\mathbf H}_{k-1}$ and show ${\mathbf H}_k$. 
As in the proof of Theorem~\ref{thm:mt} we let $\sS$ denote the set of $(k-1)$-atoms in a given 
$k$-atom $A$ which we are considering. We again define 
the adjacency graph $G$ on $\sS$ and a rooted spanning tree $T$ for the graph $G$. For each $S\in \sS$ we also fix an exit segment $E_S$, which has distance $1$ to the atom $S'\in\sS$ which is the parent node of $S$ in $T$. Also, for each $S\in \sS$ let $T_S$ be the subtree of $T$ with the root $S$, that is, $T_S$ consists of $S$ together with all nodes of $T$ below $S$ (not just immediately below). We denote $t(S)=|T_S|$.
As before, we proceed by induction on the rank $r(S)$ of the atom $S\in \sS$ in $T$. 
We maintain the following subinduction hypothesis.

{\bf Subinduction Hypothesis ${\mathbf H}_{k-1}(r-1)$}: For each $S\in \sS$ with $r(S)\leq r-1$, the partial lining $L_{k-1}$ has been extended to $L_{k-1}(r-1)$ with $V(L_{k-1}(r-1))\cap S=(A(w)\cap S)-E_{k-1}'(S)$, where the following conditions hold:
\begin{enumerate}
\item[(i')] $L_{k-1}(r-1)$ restricted to 
$$ Q_S=\bigcup_{S'\in T_S}S'\cap A(w)  $$
is a single connected component of $L_{k-1}(r-1)$ with the two endpoints $x_S, y_S$ being adjacent points on $E_S$;
\item[(ii')] $E_{k-1}'(S)$ is a disjoint union of at most $K\cdot t(S)$ many parity change regions along $E_S$;
\item[(iii')] Each of the parity change regions in $E_{k-1}'(S)$ is at least $C''Ka_0$ from the end of $E_S$, at least $C''Ka_0$ from $x_S, y_S$, and the distance between them is at least $C''Ka_0$.
\end{enumerate}

\begin{lem}\label{lem:Hk0l} ${\mathbf H}_{k-1}(0)$ holds.
\end{lem}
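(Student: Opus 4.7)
The plan is to extend the partial lining $L_{k-1}$ across the buffer pipe of a terminal $(k-1)$-atom $S$, in direct analogy with the construction of Lemma~\ref{lem:Hk0} for matchings, but using the lining-specific tools Theorem~\ref{thm:mtt}, Corollary~\ref{cor:mtt}, and Lemma~\ref{lem:k=0} in place of the matching lemmas. By ${\mathbf H}_{k-1}$, the existing lining $L_{k-1}$ restricted to $S$ is a single connected path in $S(w)$ whose endpoints $a,b$ are adjacent points on $\partial S(w)$, with uncovered set $E_{k-1}(S)$ consisting of at most $K$ parity change regions along $\partial S(w)$. Let $a',b'$ denote the unique points of $\partial_w(S)$ respectively adjacent to $a,b$ (these are themselves adjacent since $a,b$ are). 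We need to construct the extension $L_{k-1}(0)$ covering $(A(w)\cap S)-E_{k-1}'(S)$ as a single path from $x_S$ to $y_S$ on the exit segment $E_S$.

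First I would fix $E_S$ as a line segment on $\partial A(w)$ within distance $w$ of $X_{k-1}$, pick adjacent exit points $x_S,y_S$ on $E_S$ at distance at least $C''Ka_0$ from its ends, and select at most $K$ parity change regions to form $E_{k-1}'(S)$ along $E_S$, placed at all required separations. Since the total charge of $A(w)\cap S$ is bounded by $K$ (Lemma~\ref{lem:bounded_charge_atoms}) and each parity region adjusts the charge by $\pm 1$, we may arrange that the total charge of $(A(w)\cap S)-E_{k-1}'(S)$ equals $0$.

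Next I would decompose the pipe $\partial_w(S)\cap A(w)$, together with the bumps $E_{k-1}(S)$ reattached to its inner boundary and with $E_{k-1}'(S)$ removed, into a sequence of rectangular subregions $R_1,\dots,R_m$ along its length, where $R_1$ contains $E_S$ together with $E_{k-1}'(S)$ and $R_m$ contains $a',b'$ as adjacent points on its boundary. Corners of the pipe are handled by L-shaped subregions as in Lemma~\ref{lem:ub}. Because $\epsilon d_1\gg w>4a_0(p_0+q_0)(C''K)^2$, every subregion has side lengths sufficient for Theorem~\ref{thm:mtt} and its consequences. I would then thread two disjoint line sections through the pipe by iterating from $R_m$ to $R_1$: in each intermediate rectangle $R_i$ of total charge $0$, apply Lemma~\ref{lem:k=0} with the two disjoint adjacent pairs being the entry points (on the edge shared with $R_{i+1}$) and the exit points (on the edge shared with $R_{i-1}$); along the way, adjust the exact division lines between subregions so that any charge imbalance propagates down to $R_1$. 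In the final subregion $R_1$, containing the parity change regions $E_{k-1}'(S)$, apply a direct variant of Corollary~\ref{cor:mtt} (via Theorem~\ref{thm:mtt} and edge-removal at $\{x_S,y_S\}$) to finish the two line sections at $x_S,y_S$.

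Combining these two disjoint line sections with $L_{k-1}|_S$ via the edges $(a,a')$ and $(b,b')$ produces a single connected path in $Q_S=S\cap A(w)$ from $x_S$ to $y_S$ covering exactly $(A(w)\cap S)-E_{k-1}'(S)$, which yields $L_{k-1}(0)$ and establishes clauses (i')--(iii') of ${\mathbf H}_{k-1}(0)$. The main obstacle will be uniformly handling the corners of the pipe and the bumps $E_{k-1}(S)$ absorbed from $S(w)$ into the pipe's coverage: each corner requires a turning argument in the style of Lemma~\ref{lem:ub}, and each absorbed bump amounts to a local parity adjustment that must be balanced by a matching parity change region on the opposite side of the corresponding $R_i$, analogous to the parity cancellation of Lemma~\ref{lem:parc}. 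The careful choice of constants $a_0,p_0,q_0,C'',K,w$ made at the start of this section ensures that all the required separations and side-length estimates are satisfied simultaneously.
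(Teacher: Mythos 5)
Your overall strategy coincides with the paper's: decompose the buffer pipe $\partial_w(S)\cap A(w)$ (with the bumps $E_{k-1}(S)$ attached and $E_{k-1}'(S)$ removed) into rectangular subregions, thread two disjoint line sections through them via Theorem~\ref{thm:mtt}, Corollary~\ref{cor:mtt} and Lemma~\ref{lem:k=0}, transport the accumulated charge by placing parity change regions on the internal edges between consecutive subregions, and splice onto the two endpoints of $L_{k-1}$ restricted to $S(w)$. The quantitative hypotheses you invoke ($w$ even, $w>4a_0(p_0+q_0)(C''K)^2$, $\epsilon d_1\gg w$) are exactly what the paper uses to make this work, and your count of at most $K$ leftover parity change regions is correct.

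There is, however, one concrete error: you ``fix $E_S$ as a line segment on $\partial A(w)$.'' The exit segment $E_S$ is not chosen at this stage --- it was fixed before the subinduction began, and for a terminal node $S$ of the spanning tree $T$ it is a segment of $\partial S$ on the $(k-1)$-level boundary adjacent to the \emph{parent} atom $S'$ of $S$ in $T$, not a segment of $\partial A(w)$. Clauses (i')--(iii') of ${\mathbf H}_{k-1}(0)$ require $x_S,y_S$ and the parity change regions $E_{k-1}'(S)$ to lie along this particular segment precisely so that, at the next subinduction stage, the construction inside $S'$ (which covers $\partial_w(S')\cup E_{k-1}(S')\cup\bigcup_j E_{k-1}'(P_j)$) can reach across the common $(k-1)$-level boundary to absorb those regions and splice onto the path at $x_S,y_S$. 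Depositing them on $\partial A(w)$ would leave them inaccessible to the parent and break the passage from ${\mathbf H}_{k-1}(r-1)$ to ${\mathbf H}_{k-1}(r)$; only the root $S_0$, at the final stage $r=r(T)$, exits to $\partial A(w)$. The fix is purely a relocation, and your threading argument then goes through. Two smaller points: corner pieces of the pipe should be handled by $w\times w$ rectangular subregions together with Lemma~\ref{lem:k=0} or Theorem~\ref{thm:mtt}, rather than by the matching lemma Lemma~\ref{lem:ub}; and the case where part of $\partial S$ lies on a level $\geq k$ (so the pipe is connected but not a closed loop) deserves the brief separate mention the paper gives it, although your linear threading in fact handles it unchanged.
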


\begin{proof} Similar to the proof of Lemma~\ref{lem:Hk0}, we consider two cases. 
The first case is when all of the boundary of $S$ are on the $k-1$ level. 
By the induction hypothesis ${\mathbf H}_{k-1}$, $E_{k-1}(S)$ is a disjoint union of 
at most $K$ many parity change regions along the boundary of $S(w)$. 
We need to extend the partial lining $L_{k-1}$ to $L_{k-1}(0)$ with $V(L_{k-1}(0))\cap S=S-E_{k-1}'(S)$ 
where (i')--(iii') hold. 
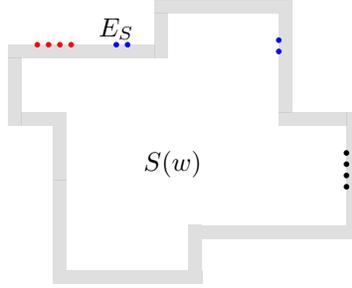
\begin{figure}[h]

\begin{tikzpicture}[scale=0.03]

\pgfmathsetmacro{\a}{3};


\draw[draw=none,fill=lightgray, opacity=0.5] (20-\a,0-\a) rectangle ((20+\a,40+\a);

\draw[draw=none,fill=lightgray, opacity=0.5] (20+\a,0-\a) rectangle ((80+\a,0+\a);

\draw[draw=none,fill=lightgray, opacity=0.5] (80-\a,0+\a) rectangle ((80+\a,20+\a);
\draw[draw=none,fill=lightgray, opacity=0.5] (80+\a,20-\a) rectangle ((150+\a,20+\a);
\draw[draw=none,fill=lightgray, opacity=0.5] (150-\a,20+\a) rectangle ((150+\a,70+\a);
\draw[draw=none,fill=lightgray, opacity=0.5] (120-\a,70-\a) rectangle ((150-\a,70+\a);
\draw[draw=none,fill=lightgray, opacity=0.5] (120-\a,70+\a) rectangle ((120+\a,120+\a);
\draw[draw=none,fill=lightgray, opacity=0.5] (65-\a,120-\a) rectangle ((120-\a,120+\a);
\draw[draw=none,fill=lightgray, opacity=0.5] (65-\a,100-\a) rectangle ((65+\a,120-\a);
\draw[draw=none,fill=lightgray, opacity=0.5] (0-\a,100-\a) rectangle ((65-\a,100+\a);
\draw[draw=none,fill=lightgray, opacity=0.5] (0-\a,70-\a) rectangle ((0+\a,100-\a);
\draw[draw=none,fill=lightgray, opacity=0.5] (0+\a,70-\a) rectangle ((20+\a,70+\a);
\draw[draw=none,fill=lightgray, opacity=0.5] (20-\a,40+\a) rectangle ((20+\a,70-\a);

\draw[fill, blue] (120-\a,105) circle (1);
\draw[fill, blue] (120-\a,100) circle (1);

\draw[fill] (150-\a,55) circle (1);
\draw[fill] (150-\a,50) circle (1);
\draw[fill] (150-\a,45) circle (1);
\draw[fill] (150-\a,40) circle (1);

\draw[fill,red] (10,100+\a) circle (1);
\draw[fill,red] (15,100+\a) circle (1);
\draw[fill,red] (20,100+\a) circle (1);
\draw[fill,red] (25,100+\a) circle (1);

\draw[fill,blue] (45,100+\a) circle (1);
\draw[fill,blue] (50,100+\a) circle (1);

\node at (70, 50) {$S(w)$};
\node at (45, 110) {$E_S$};


\end{tikzpicture}
\caption{The construction of $L_{k-1}(0)$ for a terminal node of $T$. The black dots represent parity change regions in $E_{k-1}(S)$, the red dots represent parity change regions $E_{k-1}'(S)$, and the blue dots represent endpoints of partial linings.}\label{fig:Hk0l}
\end{figure}

The strategy of our construction is to divide $\partial_w(S)$ into rectangular regions and apply Theorem~\ref{thm:mtt} and Corollary~\ref{cor:mtt} to ``transport" the necessary parity change regions to be along $E_S$. In order to keep each region of total charge $0$, we add or substract parity change regions as appropriate along the common edge between the current region and the next one. This strategy works because $w$ is even  and so the total charge of $\partial_w(S)$ is $0$. Thus at the end there are no more than $K$ many parity change regions needed to be placed along $E_S$ to form $E_{k-1}'(S)$. Also, because $w>4a_0(p_0+q_0)(C''K)^2$, up to $K$ many parity change regions can be placed on the common edge (of length $w$) between any two rectangular regions in the middle of our construction, which is sufficient to accommodate the acumulation of charges along any path segment of $\partial X_{k-1}$. 

The second case is when some part of the boundary of $S$ are on higher levels. In this case we work with the region $\partial_w(S)-\partial_w(A)$, which is not a loop but still connected. By the induction hypothesis ${\mathbf H}_{k-1}$ the parity change regions in $E_{k-1}(S)$ are connected with $\partial_w(S)-\partial_w(A)$. Hence our strategy of construction above still works.
\end{proof}

Next we assume the subinduction hypothesis ${\mathbf H}_{k-1}(r-1)$ and show ${\mathbf H}_{k-1}(r)$. Suppose $r(S)=r$. 
Let $S'$ denote the parent atom of $S$ in the tree $T$, and $P_1,\dots, P_m$
denote the children of $S$ in $T$. Let $L_1, \dots, L_m$ be the partial linings of $Q_{P_1}, \dots, Q_{P_m}$ given by ${\mathbf H}_{k-1}(r-1)$. Let $L_{m+1}$ be the partial lining $L_{k-1}$ restricted to $S(w)$. We need to extend 
$L_1,\dots, L_m, L_{m+1}$ to a partial lining of 
$Q_S$. 

We give the construction first for the case that all of the boundary of $S$ is on the $k-1$ level. Figure~\ref{fig:isl} illustrates this step. 
In this figure, the shaded area in gray is the region to which we must extend the partial 
linings $L_0\cup L_1 \cup \cdots \cup L_m$, where in the figure we use $m=2$. 
The central region in white is the buffered atom $S(w)$ except for some parity change regions
along the top edge of $S(w)$ shown in gray. The small gray squares along the left and bottom
represent the parity change regions from the child atoms $P_1$ and $P_2$. 
The three pairs of red dots represent the endpoints of the partial linings 
of $P_1$, $P_2$, and $S$. The pair of green dots represents the terminal points 
of the partial lining we will construct at this stage of the subinduction.

\begin{figure}[h]
\begin{tikzpicture}[scale=0.05]

\pgfmathsetmacro{\a}{5};
\pgfmathsetmacro{\b}{3};


\draw[draw=none,fill=lightgray, opacity=0.5] (20-\a,0-\a) rectangle (20+\a,40+\a);
\draw[draw=none, fill=lightgray, opacity=1.0] (20-\a-\b, 40) rectangle (20-\a, 40+\b);
\draw[draw=none, fill=lightgray, opacity=1.0] (20-\a-\b, 40+2*\b) rectangle (20-\a, 40+3*\b);
\draw[fill=red] (20-\a, 40-3*\b) circle(1.0);
\draw[fill=red]  (20-\a, 40-5*\b) circle(1.0);
\draw plot[smooth] coordinates { (20-\a, 40-3*\b) (-\a, 40-3*\b) (-10-\a, 40-3*\b+10)};
\draw plot[smooth] coordinates { (20-\a, 40-5*\b) (-\a, 40-5*\b) (-10-\a, 40-3*\b-10)};

\draw[draw=none,fill=lightgray, opacity=0.5] (20+\a,0-\a) rectangle (80+\a,0+\a);

\draw[draw=none,fill=lightgray, opacity=0.5] (80-\a,0+\a) rectangle (80+\a,20+\a);

\draw[draw=none,fill=lightgray, opacity=0.5] (80+\a,20-\a) rectangle (150+\a,20+\a);
\draw[draw=none, fill=lightgray, opacity=1.0] (100, 20-\a-\b) rectangle (100+\b, 20-\a);
\draw[draw=none, fill=lightgray, opacity=1.0] (100+2*\b, 20-\a-\b) rectangle (100+3*\b, 20-\a);
\draw[fill=red] (100+5*\b, 20-\a) circle(1.0);
\draw[fill=red]  (100+7*\b, 20-\a) circle(1.0);
\draw plot[smooth] coordinates { (100+5*\b, 20-\a) (100+5*\b+5, 20-\a-10) (140+5*\b-5, 20-\a-20)
(140+7*\b-5, 20-\a-20)};
\draw plot[smooth] coordinates { (100+7*\b, 20-\a) (100+7*\b+10, 20-\a-7) (140+7*\b-5, 20-\a-10)};

\draw[draw=none,fill=lightgray, opacity=0.5] (150-\a,20+\a) rectangle (150+\a,70+\a);
\draw[draw=none,fill=lightgray, opacity=0.5] (120-\a,70-\a) rectangle (150-\a,70+\a);
\draw[draw=none,fill=lightgray, opacity=0.5] (120-\a,70+\a) rectangle (120+\a,120+\a);

\draw[draw=none,fill=lightgray, opacity=0.5] (65-\a,120-\a) rectangle (120-\a,120+\a);
\draw[draw=none, fill=lightgray, opacity=1.0] (80, 120-\a-\b) rectangle (80+\b, 120-\a);
\draw[draw=none, fill=lightgray, opacity=1.0] (80+2*\b, 120-\a-\b) rectangle (80+3*\b, 120-\a);
\draw[fill=red] (80+5*\b, 120-\a) circle (1.0);
\draw[fill=red] (80+7*\b, 120-\a) circle (1.0);

\node at (100, 40) {$S(w)$};
\node at (0, 10) {$P_1$};
\node at (100, 0) {$P_2$};
\node at (100, 130) {$E_S$};

\draw[draw=none,fill=lightgray, opacity=0.5] (65-\a,100-\a) rectangle (65+\a,120-\a);
\draw[draw=none,fill=lightgray, opacity=0.5] (0-\a,100-\a) rectangle (65-\a,100+\a);
\draw[draw=none,fill=lightgray, opacity=0.5] (0-\a,70-\a) rectangle (0+\a,100-\a);
\draw[draw=none,fill=lightgray, opacity=0.5] (0+\a,70-\a) rectangle (20+\a,70+\a);
\draw[draw=none,fill=lightgray, opacity=0.5] (20-\a,40+\a) rectangle (20+\a,70-\a);

\draw plot[smooth] coordinates { (80+5*\b, 120-\a) (80+5*\b-10, 120-\a-30)
 (30+5*\b-10, 95-\a-10) (60+5*\b-10, 60-\a-10) (95,60)     (80+7*\b, 120-\a)
};

\end{tikzpicture}
\caption{The inductive proof of ${\mathbf H}_{k-1}(r)$ from ${\mathbf H}_{k-1}(r-1)$.}
\label{fig:isl}

\end{figure}
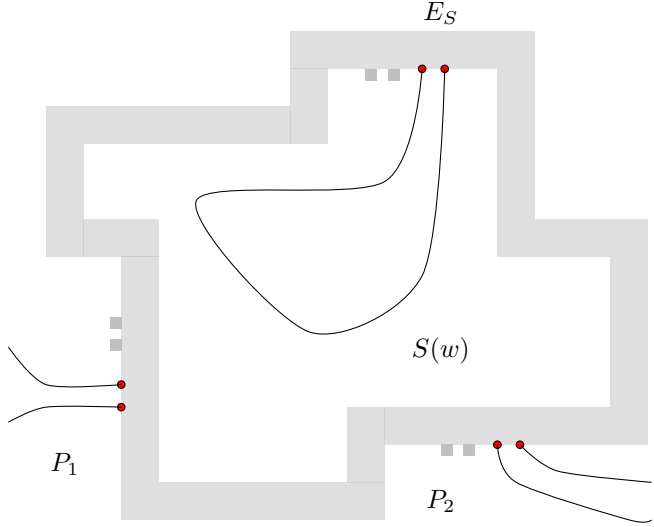


Let 
$$ B(w)=\partial_w(S)\cup E_{k-1}(S)\cup\bigcup_{i=1}^{m} E_{k-1}'(P_i).$$
$B(w)$ is illustrated by the grey area in Figure~\ref{fig:isl}, which is the $w$-buffer of the $(k-1)$-atom $S$ together with the parity change regions from $S(w)$, $P_1$ and $P_2$.

By the subinduction hypothesis ${\mathbf H}_{k-1}(r-1)$, there are at most $K\cdot t(P_i)$ many parity change regions in $E_{k-1}'(P_i)$. 
Also, by the induction hypothesis ${\mathbf H}_{k-1}$, there are at most $K$ many parity change regions in $E_{k-1}(S)$. 

We partition $\partial_w(S)$ into rectangular subregions by imposing vertical or horizontal boundary lines in between neighboring subregions. This is illustrated in Figure~\ref{fig:wcon}. The partition is done in a way so that each subregion in the partition does not split the endpoints of $L_i$ for $i=1,\dots, m,m+1$, no subregion contains more than one pair of endpoints of $L_i$, and the dividing lines do not come within distance $C''Ka_0$ of any of the parity change regions in $E_{k-1}(S)$ or $E_{k-1}'(P_i)$. Without loss of generality we may assume that the endpoints of $L_i$ for $i=1,\dots, m, m+1$ appear on the boundary of $\partial_w(S)$ in the counter clockwise order as $L_1, \dots, L_m, L_{m+1}$. For each $i=1,\dots, m, m+1$, let $W_i$ denote the rectangular subregion containing the endpoints of $L_i$ together with any parity change regions connecting to it. By refining the partition if necessary, we assume in addition that in between $W_i$ and $W_{i+1}$ for $1\leq i\leq m$, there is a rectangular subregion $W_i'$ in the partition not connecting to any parity change regions (in other words $W_i'$ is a rectangle).


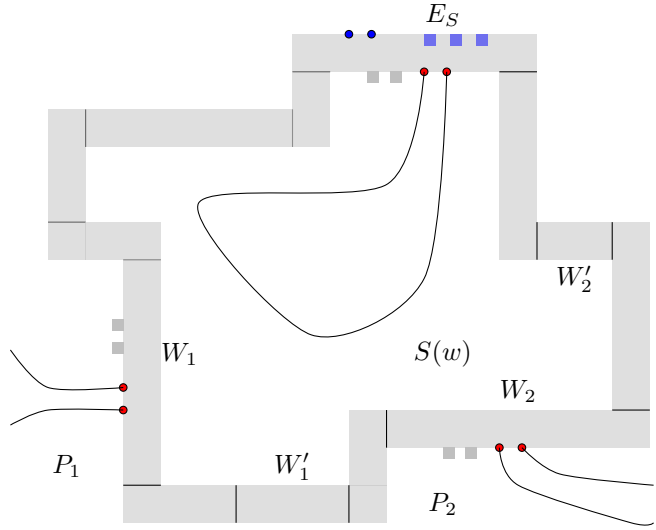
\begin{figure}[h]
\begin{tikzpicture}[scale=0.05]

\pgfmathsetmacro{\a}{5};
\pgfmathsetmacro{\b}{3};


\draw[draw=none,fill=lightgray, opacity=0.5] (20-\a,0-\a) rectangle (20+\a,40+\a);
\draw[draw=none, fill=lightgray, opacity=1.0] (20-\a-\b, 40) rectangle (20-\a, 40+\b);
\draw[draw=none, fill=lightgray, opacity=1.0] (20-\a-\b, 40+2*\b) rectangle (20-\a, 40+3*\b);
\draw[fill=red] (20-\a, 40-3*\b) circle(1.0);
\draw[fill=red]  (20-\a, 40-5*\b) circle(1.0);
\draw plot[smooth] coordinates { (20-\a, 40-3*\b) (-\a, 40-3*\b) (-10-\a, 40-3*\b+10)};
\draw plot[smooth] coordinates { (20-\a, 40-5*\b) (-\a, 40-5*\b) (-10-\a, 40-3*\b-10)};


\draw[draw=none,fill=lightgray, opacity=0.5] (20+\a,0-\a) rectangle (80+\a,0+\a);

\draw[draw=none,fill=lightgray, opacity=0.5] (80-\a,0+\a) rectangle (80+\a,20+\a);

\draw[draw=none,fill=lightgray, opacity=0.5] (80+\a,20-\a) rectangle (150+\a,20+\a);
\draw[draw=none, fill=lightgray, opacity=1.0] (100, 20-\a-\b) rectangle (100+\b, 20-\a);
\draw[draw=none, fill=lightgray, opacity=1.0] (100+2*\b, 20-\a-\b) rectangle (100+3*\b, 20-\a);
\draw[fill=red] (100+5*\b, 20-\a) circle(1.0);
\draw[fill=red]  (100+7*\b, 20-\a) circle(1.0);


\draw plot[smooth] coordinates { (100+5*\b, 20-\a) (100+5*\b+5, 20-\a-10) (140+5*\b-5, 20-\a-20)
(140+7*\b-5, 20-\a-20)};
\draw plot[smooth] coordinates { (100+7*\b, 20-\a) (100+7*\b+10, 20-\a-7) (140+7*\b-5, 20-\a-10)};

\draw[draw=none,fill=lightgray, opacity=0.5] (150-\a,20+\a) rectangle (150+\a,70+\a);
\draw[draw=none,fill=lightgray, opacity=0.5] (120-\a,70-\a) rectangle (150-\a,70+\a);
\draw[draw=none,fill=lightgray, opacity=0.5] (120-\a,70+\a) rectangle (120+\a,120+\a);

\draw[draw=none,fill=lightgray, opacity=0.5] (65-\a,120-\a) rectangle (120-\a,120+\a);
\draw[draw=none, fill=lightgray, opacity=1.0] (80, 120-\a-\b) rectangle (80+\b, 120-\a);
\draw[draw=none, fill=lightgray, opacity=1.0] (80+2*\b, 120-\a-\b) rectangle (80+3*\b, 120-\a);
\draw[fill=red] (80+5*\b, 120-\a) circle (1.0);
\draw[fill=red] (80+7*\b, 120-\a) circle (1.0);


\draw[fill=blue] (75, 120+\a) circle(1.0);
\draw[fill=blue]  (75+2*\b, 120+\a) circle(1.0);

\draw (55+\a, 110+\a) to (65+\a, 110+\a);
\draw (110+\a, 110+\a) to (120+\a, 110+\a);
\draw (120+\a, 70+\a) to (120+\a, 60+\a);
\draw (140+\a, 70+\a) to (140+\a, 60+\a);
\draw (140+\a, 20+\a) to (150+\a, 20+\a);
\draw (80+\a, 20+\a) to (80+\a, 10+\a);
\draw (70+\a, \a) to (70+\a, -\a);
\draw (10+\a, \a) to (20+\a, \a);
\draw (10+\a, 60+\a) to (20+\a, 60+\a);
\draw (-\a, 70+\a) to (10-\a,70+\a);
\draw (10-\a, 90+\a) to (10-\a, 100+\a);
\draw (55+\a, 90+\a) to (55+\a, 100+\a);
\draw (40+\a, \a) to (40+\a, -10+\a);

\draw[draw=none,fill=blue, opacity=0.5] (95,131-2*\b) rectangle (95+\b,131-3*\b);
\draw[draw=none,fill=blue, opacity=0.5] (102,131-2*\b) rectangle (102+\b,131-3*\b);
\draw[draw=none,fill=blue, opacity=0.5] (109,131-2*\b) rectangle (109+\b,131-3*\b);

\draw[draw=none,fill=lightgray, opacity=0.5] (65-\a,100-\a) rectangle (65+\a,120-\a);
\draw[draw=none,fill=lightgray, opacity=0.5] (0-\a,100-\a) rectangle (65-\a,100+\a);
\draw[draw=none,fill=lightgray, opacity=0.5] (0-\a,70-\a) rectangle (0+\a,100-\a);
\draw[draw=none,fill=lightgray, opacity=0.5] (0+\a,70-\a) rectangle (20+\a,70+\a);
\draw[draw=none,fill=lightgray, opacity=0.5] (20-\a,40+\a) rectangle (20+\a,70-\a);

\draw plot[smooth] coordinates { (80+5*\b, 120-\a) (80+5*\b-10, 120-\a-30)
 (30+5*\b-10, 95-\a-10) (60+5*\b-10, 60-\a-10) (95,60)     (80+7*\b, 120-\a)
};

\node at (100, 40) {$S(w)$};
\node at (0, 10) {$P_1$};
\node at (100, 0) {$P_2$};
\node at (100, 130) {$E_S$};
\node at (30,40) {$W_1$};
\node at (60,10) {$W_1'$};
\node at (120, 30) {$W_2$};
\node at (135, 60) {$W_2'$};


\end{tikzpicture}
\caption{A partition of $B(w)$ into rectangular subregions with parity change regions.}
\label{fig:wcon}

\end{figure}


Because $w$ is even, the total charge of $\partial_w(S)$ is $0$. Thus the total charge of $B(w)$ is bounded by 
$$ K+\sum_{i=1}^m K\cdot t(P_i)=K\cdot t(S). $$
We place an appropriate number of parity change regions along $E_S$ so that the adjusted region $B'(w)$ has total charge $0$. In Figure~\ref{fig:wcon} these parity change regions are illustrated as small blue regions.

We are now ready to construct a partial lining $L'$ of $B'(w)$ that connects the existing linings $L_1,\dots, L_m, L_{m+1}$. After this lining $L'$ is constructed, we find two adjacent points on $E_S$ that are also adjacent in $L'$ and have distance at least $C''Ka_0$ from the ends of $E_S$ and at least $C''Ka_0$ from the parity change regions, and designate them $x_S,y_S$ as required.

The basic strategy of our construction is still to start from $W_1$ and, going counter clockwise, successively extend the partial lining from one subregion in the partition of $B’(w)$ to the next, while ``transporting" the total charges to the next region by adding or subtracting an appropriate number of parity change regions along the common edge between the current subregion and the next one. Our parameters are sufficiently large to carry out this strategy. For instance, by Lemma~\ref{lem:bounded_charge} the accumulation of charges along the way is bounded by $C''K+K$, and thus there are no more than $(C''+1)K$ many parity change regions needed during the construction of the extension of the partial lining from one subregion to another. Because $w> 4a_0(p_0+q_0)(C''K)^2$, the necessary number of parity change regions can be appropriately placed on the edge between the current subregion and the next one. However, this strategy will become problematic when the next subregion is $W_2$, since Corollary~\ref{cor:mtt} does not guarantee the existence of a lining of $W_2$ with the endpoints of $L_2$ connected by an edge. For this we will adopt an modification as follows.

To simplify notation we assume that $W_1, W_1'$ and $W_2$ are adjacent in the partition of $B'(w)$. This is illustrated in Figure~\ref{fig:W1W2}. We first divide $W_1'$ further into two rectangular regions both connected to $W_1$ and $W_2$. The side lengths of $U_1$ only need to be at least $4$. This will make the side lengths of $U_0$ sufficiently large for our construction below. We first place an appropriate number of parity change regions on the edge between $W_1$ and $U_0$ so that the modified region $W_1$ has total charge $0$. Pick an end point $z$ on the boundary between $W_1$ and $U_0$ and apply Corollary~\ref{cor:mtt} to obtain two line sections in $W_1$ with exit points $z$ and $z'$, where $z'$ is adjacent to $z$ and is also on the boundary between $W_1$ and $U_0$. Then we place an appropriate number of parity change regions on the boundary between $U_0$ and $W_2$ so that the total charge of the modified $U_0$ is $0$. Apply Theorem~\ref{thm:mtt} to obtain a lining of $U_0$ with $z, z'$ as endpoints.

In $W_2$ place an appropriate number of parity change regions on the edge between $W_2$ and the next subregion in the partition so that the total charge of the modified $W_2$ region is $0$. Apply Theorem~\ref{thm:mtt} to obtain a lining of $W_2$ with the endpoints of $L_2$ as endpoints.

\begin{center}
\begin{figure}[h]
\begin{tikzpicture}[scale=0.1]

\draw (0,0) to (20,0) to (20,-2) to (22,-2) to (22,0) to (25,0) to (25,-2) to (27,-2) to (27,0) to (80,0) to (80,-2) to (82,-2) to (82,0) to (85,0) to (85,-2) to (87,-2) to (87,0) to (100,0);

\draw (0,0) to (0,30) to (100,30);


\draw (37.5,0) to (37.5,21) to (35.5, 21) to (35.5, 23) to (37.5, 23) to (37.5, 26) to (35.5, 26) to (35.5, 28) to (37.5, 28) to (37.5, 30);
\draw (75,0) to (75,21) to (73, 21) to (73, 23) to (75, 23) to (75, 26) to (73, 26) to (73, 28) to (75, 28) to (75, 30);
\draw (100,0) to (100,11) to (98,11) to (98,13) to (100, 13) to (100, 16) to (98, 16) to (98,18) to (100, 18) to (100,21) to (98, 21) to (98, 23) to (100, 23) to (100, 26) to (98, 26) to (98, 28) to (100, 28) to (100, 30);

\draw[fill] (10,0) circle (0.5);
\draw[fill] (12.5,0) circle (0.5);
\draw[fill] (37.5,15) circle (0.5);
\draw[fill] (37.5,17.5) circle (0.5);
\draw[fill] (92,0) circle (0.5);
\draw[fill] (94.5,0) circle (0.5);
\draw[fill, blue] (37,2) circle(0.5);
\draw[fill, blue] (37,4) circle (0.5);
\draw[fill, blue] (38,2) circle (0.5);
\draw[fill, blue] (38,4) circle (0.5);
\draw[fill, blue] (75.5,4.5) circle (0.5);
\draw[fill, blue] (75.5,6.5) circle (0.5);
\draw[fill, blue] (74.5,4.5) circle (0.5);
\draw[fill, blue] (74.5,6.5) circle (0.5);

\draw plot[smooth] coordinates { (10,0) (8,25) (15,20) (25,20) (37.5, 17.5)};
\draw plot[smooth] coordinates { (10,0) (11,-3) (10,-7)};
\draw plot[smooth] coordinates { (12.5,0) (12,-2) (15,-5)};
\draw plot[smooth] coordinates { (12.5,0) (20, 10) (37, 2)};
\draw plot[smooth] coordinates {(37,4) (30, 8) (37.5, 15)};

\draw plot[smooth] coordinates { (37.5,17.5) (45,25) (65,25) (72,17.5)
 ( 70,10) (45,17)(37.5, 15) };

\draw[dashed] (37.5, 8) to (75,8);
\draw (37,2) to (37,4);
\draw (75.5,4.5) to (75.5,6.5);

\draw plot[smooth] coordinates { (75.5,6.5)   (80, 22.5) (95,22.5) (94.5,0)};
\draw plot[smooth] coordinates {(92,0) (90,5) ( 85, 12) (75.5, 4.5) };
\draw plot[smooth] coordinates {(92,0) (90,-2) (92,-5)};
\draw plot[smooth] coordinates {(94.5,0) (96,-3) (98,-4)};

\node at (7, -5) {$L_1$};
\node at (95,-5) {$L_2$};
\node at (25, 32) {$W_1$};
\node at (87, 32) {$W_2$};
\node at (60, 32) {$W_1'$};
\node at (43,27) {$U_0$};
\node at (43,3) {$U_1$};
\node at (35,16) {$z$};

\end{tikzpicture}

\caption{Connecting $L_1$ with $L_2$.}
\label{fig:W1W2}
\end{figure}
\end{center}

Now find two adjacent points $x_0,x_1$ on the boundary between $W_1$ and $U_1$ which are also adjacent in the lining. Let $y_0, y_1$ be the corresponding points on the boundary of $U_1$. Similarly find two adjacent points $x_0', x_1'$ on the boundary between $W_2$ and $U_1$. Let $y_0', y_1'$ be the corresponding points on the boundary of $U_1$. Apply Lemma~\ref{lem:k=0} to obtain two line sections in $U_1$ with $x_0', x_1'$ and $y_0', y_1'$ as endpoints. Remove the edge in between $x_0$ and $x_1$, and connect $x_0$ with $x_0'$ and $x_1$ with $x_1'$. Similarly, remove the edge in between $y_0$ and $y_1$, and connect $y_0$ with $y_0'$ and $y_1$ with $y_1'$. The resulting lining is as required.

Finally, find two adjacent points (not shown in Figure~\ref{fig:W1W2}) on the boundary between $W_2$ and the next subregion, remove the edge in between them, and designate them as the terminal points of the lining. We are now ready to continue the algorithm and extend the lining to the next subregion.

Repeating the same algorithm, we obtain a lining of the entire region from $W_1$ to $W_{m+1}$. If there are more subregions between $W_{m+1}$ and $W_1$, we continue the algorithm to cover them as well, except that there will not be any more parity change regions necessary beyond $W_{m+1}$. Hence we are able to extend the lining to $B'(w)$ as required. This finishes the construction of the lining as a subinduction step. It is easy to see that the subinduction hypothesis is maintained.

Next we consider the case where some part of the boundary of $S$ is not on the $k-1$ level. In this case we need to extend the linings $L_1, \dots, L_m, L_{m+1}$ to the cover the region $\partial_w(S)-\partial_w(A)$, which is not topologically a loop but still connected. In this case we need to modify our algorithm as follows. After we divide the region $\partial_w(S)-\partial_w(A)$ into rectangular subregions, the one containing the endpoints of $L_{m+1}$, $W_{m+1}$, might be in the middle rather than at one end. In this case we apply our algorithm from the two far ends of the partition and work our way toward the middle, ending in $W_{m+1}$ as the last region to be dealt with. Noting that we still have $\partial_w(S)-\partial_w(A)$ has total charge $0$ as $w$ is even, the number of parity change regions that are needed to be placed on $E_S$ is predetermined, and so when our construction reaches $W_{m+1}$, it automatically will have total charge $0$. So the above algorithm conneting $L_1$ with $L_2$ can be performed on three regions simultaneously with $W_{m+1}$ in the middle. The resulting lining satisfies the subinduction hypothesis.

Now we have completed the subinduction and have obtained a lining for the entire buffered $k$-atom $A$ except for a final set $E$ of parity change regions coming from the $(k-1)$-atom
corresponding to the root of the tree $T$. Superficially, it seems that we have as many as 
$C'' K$ many parity changes required for the set $E$. However, by Lemma~\ref{lem:bounded_charge} again, the total charge of $A$ is still at most $K$.
Since we have now defined a partital lining of $A$ except for the points in $E$,
we in fact have that the number of parity change regions in $E$ is bounded by $K$. This is the key point, that the number of parity change regions
required as we exit to the higher level atom (the $k$-atom $A$ in our notation) is bounded by a 
fixed constant independent of the level $k$ of the construction (which is necessary as the width 
$w$ of the buffers is a fixed number).

This established the inductive hypothesis ${\mathbf H}_k$ for all $k$. Since  
$$F(2^{\Z^2})=\bigcup_k \bigcup\{A(w)\,:\, \mbox{ $A$ is a $k$-atom}\},$$ 
we have constructed a Borel lining for $F(2^{\Z^2})$.
This completes the proof of Theorem~\ref{thm:cls}.

\thebibliography{999}

\bibitem{BHTinf}
F. Bencs, A. Hr\v{u}skov\'{a}, and L.M. T\'{o}th,
Factor-of-iid Schreier decorations of lattices in Euclidean spaces.
arXiv:2101.12577.

\bibitem{BC2021}
A. Bernshteyn and C.T. Conley,
Equitable colouring of Borel graphs. \textit{Forum Math. Pi} 9 (2021), Paper No. e12, 34pp.

\bibitem{CU2024}
N. Chandgotia and S. Unger,
Borel factors and embeddings of systems in subshifts, \textit{Israel J. Math.}, to appear.

\bibitem{CJMST2020}
C. Conley, S. Jackson, A. Marks, B. Seward, and R. Tucker-Drob,
Hyperfiniteness and Borel combinatorics. \textit{J. Eur. Math. Soc. (JEMS)} 22 (2020), no. 3, 877--892.

\bibitem{CM2020}
C. Conley and A.S. Marks,
Distance from marker sequences in locally finite Borel graphs. \textit{Trends in Set Theory}, 89--92. \textit{Contemp. Math.} 752, Amer. Math. Soc., Providence, RI, 2020.

\bibitem{CMT2016}
C. Conley, A.S. Marks, and R. Tucker-Drob,
Brooks' theorem for measurable colorings. \textit{Forum Math. Sigma} 4 (2016), Paper No. e16, 23pp.

\bibitem{CM2016}
C. Conley and B. Miller,
A bound on measurable chromatic numbers of locally finite Borel graphs. \textit{Math. Res. Lett.} 23 (2016), no. 6, 1633--1644.

\bibitem{CM2017}
C. Conley and B. Miller,
Measurable perfect mathcings for acyclic locally countable Borel graphs. \textit{J. Symb. Log.} 82 (2017), no. 1, 258--271.

\bibitem{GJ2015}
S. Gao and S. Jackson,
Countable abelian group actions and hyperfinite equivalence relations. \textit{Invent. Math.} 201 (2015), no. 1, 309--383.

\bibitem{GJKS2022}
S. Gao, S. Jackson, E. Krohne, and B. Seward,
Forcing constructions and countable Borel equivalence relations. \textit{J. Symb. Log.} 87 (2022), no. 3, 873--893.

\bibitem{GJKSinf}
S. Gao, S. Jackson, E. Krohne, and B. Seward,
Continuous combinatorics of abelian group actions, \textit{Mem. Amer. Math. Soc.}, to appear.

\bibitem{GWWY}
S. Gao, T. Wang, R. Wang, B. Yan,
Continuous edge chromatic numbers of abelian group actions, in preparation.

\bibitem{GP2020}
J. Greb\'ik and O. Pikhurko,
Measurable versions of Vizing's theorem. \textit{Adv. Math.} 374 (2020), 107378, 40pp.

\bibitem{GRinf}
J. Greb\'ik and V. Rozho\v{n},
Local problems on grids from the perspective of distributed algorithms, finitary factors, and descriptive combinatorics. \textit{Adv. Math.} 431 (2023), no. 109241.

\bibitem{KM}
A.S. Kechris and A.S. Marks,
Descriptive graph combinatorics. http://math.caltech.edu/~kechris/papers/combinatorics20book.pdf.

\bibitem{KST1999}
A.S. Kechris, S. Solecki, and S. Todorcevic,
Borel chromatic numbers. \textit{Adv. Math.} 141 (1999), no. 1, 1-44.

\bibitem{Marks2016}
A. Marks,
A determinacy approach to Borel combinatorics. \textit{J. Amer. Math. Soc.} 29 (2016), no. 2, 579--600.

\bibitem{Pikhurko2021}
O. Pikhurko,
Borel combinatorics of locally finite graphs. \textit{Surveys in Combinatorics 2021}, 267--319. London Math. Soc. Lecture Note Ser., 470, Cambridge Univ. Press, Cambridge, 2021.

\bibitem{SSinf}
S. Schneider and B. Seward,
Locally nilpotent groups and hyperfinite equivalence relations. \textit{Math. Res. Lett.}, to appear.

\bibitem{Weilacher2020}
F. Weilacher, 
Marked groups with isomorphic Cayley graphs but different Borel combinatorics. \textit{Fund. Math.} 251 (2020), no. 1, 69--86.

\bibitem{Weilacher2022}
F. Weilacher,
Descriptive chromatic numbers of locally finite and everywhere two-ended graphs. \textit{Groups Geom. Dyn.} 16 (2022), no. 1, 141--152.

\bibitem{Weilacherinf}
F. Weilacher,
Borel edge colorings for finite dimensional groups.
\textit{Isral J. Math.}, to appear.

\end{document}